
\documentclass[11pt,reqno,tbtags,a4paper]{amsart}
\usepackage{amssymb}
\usepackage{url}
\usepackage[square,numbers]{natbib}
\bibpunct[, ]{[}{]}{;}{n}{,}{,}
{\makeatletter
\gdef\section{\@startsection{section}{1}%
  \z@{.7\linespacing\@plus\linespacing}{.5\linespacing}%
  {\normalfont\bfseries\centering}}
}

\title[Constrained $U$-statistics, random strings and permutations]
{Asymptotic normality for $m$-dependent and constrained $U$-statistics,
with applications to pattern matching in random strings and permutations}

\date{21 June, 2021;
revised and extended 8 March, 2022}

\author{Svante Janson}
\thanks{Supported by the Knut and Alice Wallenberg Foundation}
\address{Department of Mathematics, Uppsala University, PO Box 480,
SE-751~06 Uppsala, Sweden}
\email{svante.janson@math.uu.se}
\urladdr{http://www.math.uu.se/svante-janson}

\keywords{$U$-statistics; $m$-dependent; pattern matching; random strings;
  random permutations; asymptotic normality}
\subjclass[2020]{60F05; 05A05, 60C05, 68Q87}
%
%
%
%
%

\overfullrule 0pt 


\numberwithin{equation}{section}

\renewcommand\le{\leqslant}
\renewcommand\ge{\geqslant}

\allowdisplaybreaks

 \setlength{\textwidth}{404pt} 
 \setlength{\oddsidemargin}{12mm} 
 \setlength{\evensidemargin}{12mm}  

\marginparwidth=90pt 





\theoremstyle{plain}
\newtheorem{theorem}{Theorem}[section]
\newtheorem{lemma}[theorem]{Lemma}

\newtheorem{corollary}[theorem]{Corollary}
\newtheorem{conj}[theorem]{Conjecture}

\theoremstyle{definition}

\newtheorem{problem}[theorem]{Problem}

\newtheorem{exampleqqq}[theorem]{Example}
\newenvironment{example}{\begin{exampleqqq}}
  {\hfill\qedsymbol\end{exampleqqq}}

\newtheorem{remarkqqq}[theorem]{Remark}
\newenvironment{remark}{\begin{remarkqqq}}
  {\hfill\qedsymbol\end{remarkqqq}}


\newenvironment{romenumerate}[1][-10pt]{
\addtolength{\leftmargini}{#1}\begin{enumerate}
 }{\end{enumerate}}

\newenvironment{PQenumerate}[1]{
\begin{enumerate}
 }{\end{enumerate}}

\newcounter{oldenumi}
{\setcounter{oldenumi}{\value{enumi}}
\begin{romenumerate} \setcounter{enumi}{\value{oldenumi}}}
{\end{romenumerate}}

\newcounter{thmenumerate}

\newcounter{xenumerate}   


\newcounter{pfcases}
\newcommand\resetcases{\setcounter{pfcases}{0}}

\newcommand\pfcase[1]{\smallskip\noindent\refstepcounter{pfcases}%
 \emph{Case \arabic{pfcases}: #1.}\noindent}

\newcommand{\refT}[1]{Theorem~\ref{#1}}
\newcommand{\refTs}[1]{Theorems~\ref{#1}}

\newcommand{\refL}[1]{Lemma~\ref{#1}}

\newcommand{\refR}[1]{Remark~\ref{#1}}

\newcommand{\refS}[1]{Section~\ref{#1}}
\newcommand{\refSs}[1]{Sections~\ref{#1}}
\newcommand{\refSS}[1]{Section~\ref{#1}}

\newcommand{\refE}[1]{Example~\ref{#1}}

\newcommand{\refApp}[1]{Appendix~\ref{#1}}




\begingroup
  \count255=\time
  \divide\count255 by 60
  \count1=\count255
  \multiply\count255 by -60
  \advance\count255 by \time
  \ifnum \count255 < 10 \xdef\klockan{\the\count1.0\the\count255}
  \else\xdef\klockan{\the\count1.\the\count255}\fi
\endgroup

\newcommand\nopf{\qed}   

\DeclareMathOperator*{\sumx}{\sum\nolimits^{*}}

\newcommand{\sumk}{\sum_{k=1}^\infty}

\newcommand{\sumin}{\sum_{i=1}^n}

\newcommand{\sumjl}{\sum_{j=1}^\ell}
\newcommand{\sumjb}{\sum_{j=1}^\ellb}
\newcommand{\sumkn}{\sum_{k=1}^n}
\newcommand{\sumkl}{\sum_{k=1}^\ell}

\newcommand\set[1]{\ensuremath{\{#1\}}}

\newcommand\xpar[1]{(#1)}
\newcommand\bigpar[1]{\bigl(#1\bigr)}
\newcommand\Bigpar[1]{\Bigl(#1\Bigr)}

\newcommand\lrpar[1]{\left(#1\right)}
\newcommand\bigsqpar[1]{\bigl[#1\bigr]}
\newcommand\xsqpar[1]{[#1]}
\newcommand\Bigsqpar[1]{\Bigl[#1\Bigr]}

\newcommand\xcpar[1]{\{#1\}}

\newcommand\Bigcpar[1]{\Bigl\{#1\Bigr\}}

\newcommand\abs[1]{\lvert#1\rvert}
\newcommand\bigabs[1]{\bigl\lvert#1\bigr\rvert}
\newcommand\Bigabs[1]{\Bigl\lvert#1\Bigr\rvert}

\def\rompar(#1){\textup(#1\textup)}    
\newcommand\xfrac[2]{#1/#2}

\newcommand\innprod[1]{\langle#1\rangle}

\def\xexp(#1){e^{#1}}
\newcommand\ceil[1]{\lceil#1\rceil}
\newcommand\floor[1]{\lfloor#1\rfloor}

\newcommand\setn{\set{1,\dots,n}}

\newcommand\ntoo{\ensuremath{{n\to\infty}}}
\newcommand\Ntoo{\ensuremath{{N\to\infty}}}

\newcommand\xtoo{\ensuremath{{x\to\infty}}}

\newcommand\norm[1]{\lVert#1\rVert}
\newcommand\bignorm[1]{\bigl\lVert#1\bigr\rVert}
\newcommand\Bignorm[1]{\Bigl\lVert#1\Bigr\rVert}
\newcommand\lrnorm[1]{\left\lVert#1\right\rVert}

\newcommand\punkt{.\spacefactor=1000}    
\newcommand\iid{i.i.d\punkt}    
\newcommand\ie{i.e\punkt}
\newcommand\eg{e.g\punkt}

\newcommand\cf{cf\punkt}
\newcommand{\as}{a.s\punkt}
\newcommand{\aex}{a.e\punkt}

\newcommand{\tend}{\longrightarrow}
\newcommand\dto{\overset{\mathrm{d}}{\tend}}
\newcommand\pto{\overset{\mathrm{p}}{\tend}}
\newcommand\asto{\overset{\mathrm{a.s.}}{\tend}}
\newcommand\eqd{\overset{\mathrm{d}}{=}}

\newcommand\op{o_{\mathrm p}}

\newcommand\bbR{\mathbb R}

\newcommand\bbN{\mathbb N}

\newcommand\bbZ{\mathbb Z}

\newcounter{CC}
\newcounter{cc}

\renewcommand\Re{\operatorname{Re}}

\newcommand\E{\operatorname{\mathbb E{}}}
\renewcommand\P{\operatorname{\mathbb P{}}}

\newcommand\Var{\operatorname{Var}}
\newcommand\Cov{\operatorname{Cov}}

\newcommand\Be{\operatorname{Be}}

\newcommand\sgn{\operatorname{sgn}}

\newcommand\ga{\alpha}
\newcommand\gb{\beta}

\newcommand\gD{\Delta}
\newcommand\gf{\varphi}
\newcommand\gam{\gamma}

\newcommand\gl{\lambda}

\newcommand\go{\omega}

\newcommand\gs{\sigma}

\newcommand\gss{\sigma^2}

\newcommand\eps{\varepsilon}

\renewcommand\phi{\xxx}  

\newcommand\cA{\mathcal A}
\newcommand\cB{\mathcal B}

\newcommand\cD{\mathcal D}

\newcommand\cF{\mathcal F}

\newcommand\cI{\mathcal I}

\newcommand\cL{{\mathcal L}}

\newcommand\cS{{\mathcal S}}

\newcommand\indic[1]{\boldsymbol1\xcpar{#1}}

\newcommand\etta{\boldsymbol1}

\newcommand\qw{^{-1}}
\newcommand\qww{^{-2}}
\newcommand\qq{^{1/2}}
\newcommand\qqw{^{-1/2}}

\newcommand\intoi{\int_0^1}

\newcommand\oi{\ensuremath{[0,1]}}

\newcommand\ooo{[0,\infty)}

\newcommand\setoi{\set{0,1}}

\newcommand\dd{\,\mathrm{d}}
\newcommand\ddx{\mathrm{d}}

\newcommand{\gsf}{$\gs$-field}
\newcommand{\ui}{uniformly integrable}

\newcommand\lhs{left-hand side}
\newcommand\rhs{right-hand side}

\newcommand\xoo{_1^\infty}
\newcommand\xoox[1]{_1^{#1}}
\newcommand\xoon{\xoox{n}}
\newcommand\xooo{\xoo}
\renewcommand\xooo{\relax}

\newcommand\fS{\mathfrak{S}}

\newcommand\bw{\mathbf{w}}
\newcommand\bpi{\boldsymbol{\pi}}
\newcommand\Ustat{$U$-statistic}
\newcommand\Ustats{$U$-statistics}
\newcommand\hX{\widehat X}
\newcommand\hY{\widehat Y}

\newcommand\mdep{$m$-dependent}
\newcommand\elli{{\ell-1}}
\newcommand\nn{^{(n)}}

\newcommand\cDq{{\cD{=}}}
\newcommand\cDz{{\cD'{=}}}
\newcommand\mx{> m}
\newcommand\dx{d_*}

\newcommand\fx{f_*}
\newcommand\nuk{k}
\newcommand\hU{\widehat U}
\newcommand\ubbo[1]{\bigsqpar{#1-\E #1}}
\newcommand\cDoo{\cD_\infty}
\newcommand\tensor{\otimes}
\newcommand\hxi{\hat\xi}
\newcommand\hf{\hat f}
\newcommand\N{\mathsf{N}}
\newcommand\hcI{\widehat{\cI}}
\newcommand\summm[1]{\sum_{\substack{1\le i_1<\dots<i_{#1}\le n\\i_{j+1}-i_j>m}}}
\newcommand\mux{\nu}

\newcommand\AAA[1]{\textup{(A${}_{#1}$)}}
\newcommand\gamx{\gamma}
\newcommand\gamxx{\gamx^2}
\newcommand\FF{\widehat F}
\newcommand\jjj{^{(k,j)}}
\newcommand\jjjx{^{(k,j)*}}
\newcommand\anj[1]{a_{j,#1,n}}
\newcommand\hUnj{\widehat U_{n,j}}
\newcommand\bx{\mathbf{x}}

\newcommand\pthp{$p$-th power}
\newcommand\fnt{\floor{nt}}
\newcommand\nt{_{\floor{nt}}}
\newcommand\TS{\widehat S}
\newcommand\ellb{b}

\newcommand\tg{\widetilde g}
\newcommand\Npmx{{N_{\pm}(x)}}
\newcommand\nuqw{\nu\qw}


\newcommand{\Holder}{H\"older}

\newcommand\CSineq{Cauchy--Schwarz inequality}

\hyphenation{Upp-sala}

\begin{document}

\begin{abstract} 
We study (asymmetric)  $U$-statistics based on a stationary sequence of
$m$-dependent variables; moreover, we consider constrained $U$-statistics,
where the defining multiple sum only includes terms satisfying some
restrictions on the gaps between indices. Results include a law of large
numbers and a central limit theorem,
together with results on
rate of convergence,
moment convergence, functional convergence and a renewal theory version.

Special attention is paid to degenerate cases where, after the standard
normalization, the asymptotic variance vanishes; in these cases non-normal
limits occur after a different normalization.

The results are motivated by applications to pattern matching in random
strings and permutations. We obtain both new results and new proofs of old
results. 
\end{abstract}

\maketitle

\section{Introduction}\label{S:intro}

The  purpose of the present paper is to present some new results for
(asymmetric) \Ustats{} together with some applications. 
(See \refS{SU} for definitions.)
The results include a strong law of large numbers and a
central limit theorem (asymptotic normality), 
together with results on
rate of convergence,
moment convergence, functional convergence and a renewal theory version.

Many results of these types have been proved for \Ustat{s} under different
hypotheses by a large number of authors, from \citet{Hoeffding} and on.
The new feature of the results here, which are motivated by applications
discussed below, is the combination of the following:
\begin{romenumerate}
  
\item We consider, as in \eg{} \cite{SJIII}, \cite{SJ332} and \cite{HanQ} 
but unlike many   other authors, 
\emph{asymmetric \Ustats} and not just the symmetric case.
(See \refR{Rasymm}.)
\item We consider also \emph{constrained \Ustats}, where the summations are
  restricted as in \eqref{Uc} or \eqref{Uc=}.
\item 
The \Ustats{} are based on an underlying sequence that is not
necessarily  \iid{} (as is usually assumed); 
we assume only that the sequence is stationary and \emph{\mdep}.
(This case has been studied earlier by \eg{} \cite{Sen-m}, but not in the
present asymmetric case.)
\end{romenumerate}

The extension to the \mdep{} case might be of interest for some
applications, but for us the main motivation is that it allows us to reduce
the constrained versions to ordinary \Ustats; hence this extension is
implicitly used also when we apply the results for constrained \Ustats{}
based on \iid{} sequences.

\begin{remark}\label{Rincomplete1}
The combination of the three features (i)--(iii) above is new, but 
they have each been considered separately earlier.

In particular, constrained \Ustats{} 
are special cases of the large class of
 \emph{incomplete \Ustats} \cite{Blom}.
These are, in turn, special cases of the even more general 
\emph{weighted \Ustats}, see e.g.\ 
\cite{ShapiroH}, 
\cite{ONeilR}, 
\cite{Major}, 
\cite{RinottR}, 
\cite{HsingW}, 
\cite{Zhou}, 
\cite{HanQ}.  
(These references show  asymptotic normality under various conditions;
some also study degenerate cases with non-normal limits;
\cite{HanQ} includes the asymmetric case.) 
In view of our applications, we consider here only the
constrained case instead of trying to find suitable conditions for general
weights. 

Similarly,
\Ustats{} have been considered by many authors
for more general weakly dependent sequences
than \mdep{} ones.
In particular,
asymptotic normality has been shown under
various types of mixing conditions  by e.g.\
\cite{Sen-*},
\cite{Yoshihara76, 
Yoshihara92}, 
\cite{DehlingW}. 
We are not aware of any paper on asymmetric \Ustats{} with a mixing
condition on the variables.
Such results might be interesting for future research, but again in view of
our applications, we have not pursued this and consider here only the
\mdep{} case.
\end{remark}

There are thus many previous results yielding asymptotic normality
for \Ustat{s} under various condition.
One general feature, found already in the first paper \cite{Hoeffding},
is that there are degenerate cases where the asymptotic
variance vanishes (typically because of some internal cancellations). 
In such cases, the theorems only yield convergence to 0
and do not imply asymptotic normality; indeed, typically a different
normalization yields a non-normal limit.
It is often difficult to calculate the asymptotic variance exactly, and it
is therefore of great  interest to have simple criteria that show that the
asymptotic variance is non-zero.  
Such a criterion is well known for the standard case of 
(unconstrained) \Ustat{s} based on  \iid{} variables \cite{Hoeffding}.
We give corresponding (somewhat more complicated) criteria for the \mdep{} case
studied here, both in the unconstrained and constrained cases.
(This is one reason for considering only the \mdep{} case in the present
paper, and not more general weakly dependent sequences.)
We show the applicability of our criteria in some examples.

We, as many (but not all) of the references cited above, base our proof of
asymptotic normality on
the decomposition method of \citet{Hoeffding}, with appropriate
modifications.
As pointed out by an anonymous referee, an alternative method 
is to use dependency graphs together with Stein's method
which under an extra moment assumption
yields our main results on asymptotic normality 
together
with an upper bound on the
rate of convergence.
We do not use this method in the main parts of the paper,  partly
because it does not seem to yield simple criteria for non-vanishing of the
asymptotic variance; however, as a complement, we use this method to give
some results on rate of convergence.

\subsection{Applications}
The background motivating our general results is given by some parallel
results for 
pattern matching in random strings and in random permutations
that earlier have been shown by different methods, but easily follow from
our results; we describe
these results here and return to them (and some new results)
in \refSs{Sword} and \ref{Sperm}.
Further applications to pattern matching in random permutations restricted
to
two classes of permutations
are given in \cite{SJ367}.

First,
consider a random string $\Xi_n=\xi_1\cdots\xi_n$
consisting of $n$ \iid{} random letters from a
finite  alphabet $\cA$
(in this context, this is known as a \emph{memoryless source}), 
and consider the number of occurences of a given word
$\bw=w_1\dotsm w_\ell$  as a subsequence;
to be precise,
an \emph{occurrence} of $\bw$ in $\Xi_n$ is 
an increasing sequence of indices $i_1<\dots<i_\ell$ in $[n]=\setn$ such that
\begin{align}
  \label{occur}
\xi_{i_1}\xi_{i_2}\dotsm\xi_{i_\ell}=\bw, 
\qquad \text{\ie, $\xi_{i_k}=w_k$ for every $k\in[\ell]$}
.\end{align}
This number, $N_n(\bw)$ say, was studied by
\citet{FlajoletSzV} who proved that $N_n(\bw)$ is asymptotically normal as
\ntoo.

\citet{FlajoletSzV} studied also a \emph{constrained} version, 
where we are given also
numbers $d_1,\dots,d_{\ell-1}\in\bbN\cup\set\infty=\set{1,2,\dots,\infty}$ 
and  count only occurences of $w$ such that
\begin{align}
  \label{constr}
i_{j+1}-i_j\le d_j,
\qquad 1\le j<\ell.
\end{align}
(Thus the $j$th gap in $i_1,\dots,i_\ell$ has length strictly less than $d_j$.)
We write $\cD:=(d_1,\dots,d_{\ell-1})$, 
and let $N_n(\bw;\cD)$ be the number
of occurrences of $\bw$ that satisfy the constraints \eqref{constr}.
It was shown in \cite{FlajoletSzV} that, for any fixed $\bw$ and $\cD$, 
$N_n(\bw,\cD)$ is asymptotically normal as \ntoo.
See also the book by \citet[Chapter 5]{JacSzp}.

\begin{remark}\label{Rfull}
  Note that $d_j=\infty$ means no constraint for the $j$th gap.
In particular,
$d_1=\dots=d_{\ell-1}=\infty$ yields the unconstrained case;
we denote this trivial (but important) constraint $\cD$ by $\cDoo$.

In the other extreme case, 
if $d_j=1$, then $i_j$ and $i_{j+1}$ have to be adjacent.
In particular,
in the completely constrained case 
$d_1=\dots=d_{\ell-1}=1$, then $N_n(\bw;\cD)$ 
counts  occurences of $\bw$ as a
substring $\xi_i\xi_{i+1}\cdots\xi_{i+\ell-1}$.
Substring counts have been studied by many authors;
some references with central limit theorems or local limit theorems under
varying conditions are
\cite{BenderK}, 
\cite{RegSzp98}, 
\cite{NicodemeSF},
\cite[Proposition IX.10, p.~660]{FlajoletS}.
See also
\cite[Section 7.6.2 and Example 8.8]{Szp} 
and
\cite{JacSzp}; the latter book discusses not only substring and subsequence
counts but also other versions of substring matching problems in random strings.

Note also that the case when all $d_i\in\set{1,\infty}$ means that
$\bw$ is a concatenation $\bw_1\dotsm\bw_b$ (with $\bw$ broken at positions
where $d_i=\infty$), such that an occurence 
now is an occurence of each $\bw_i$ as a substring, with these substrings 
in order and non-overlapping, 
and with arbitrary gaps in between. (A special case of the 
\emph{generalized subsequence problem} in \cite[Section 5.6]{JacSzp};
the general case can be regarded as a sum of such counts over a set of $\bw$.)
\end{remark}

There are similar results for random permutations.
Let $\fS_n$ be the set of the $n!$ permutations of $[n]$.
If $\pi=\pi_1\cdots\pi_n\in\fS_n$  and $\tau=\tau_1\cdots\tau_\ell\in\fS_\ell$,
then an \emph{occurrence} of the pattern $\tau$ in $\pi$ is 
an increasing sequence of indices $i_1<\dots<i_\ell$ in $[n]=\setn$ such that
the order relations in $\pi_{i_1}\cdots\pi_{i_\ell}$ are the same as in
$\tau_1\cdots\tau_\ell$, \ie, $\pi_{i_j}<\pi_{i_k}\iff \tau_j<\tau_k$.

Let $N_n(\tau)$ be the number of occurences of $\tau$ in $\bpi$ when 
$\bpi=\bpi\nn$
is uniformly random in $\fS_n$. \citet{Bona-Normal} proved that $N_n(\tau)$ is
asymptotically normal as \ntoo, for any fixed $\tau$.

Also for permutations, one can consider, and count, constrained occurrences
by again imposing the restriction \eqref{constr} for some
$\cD=(d_1,\dots,d_{\ell-1})$.  
In analogy with strings, we let $N_n(\tau,\cD)$ be the number of constrained
occurences of $\tau$ in $\bpi\nn$ when $\bpi\nn$ is uniformly random in $\fS_n$.
This random number seems to mainly have been studied in the
case when each $d_i\in\set{1,\infty}$, \ie, some $i_j$ are required to be
adjacent to the next one -- such constrained patterns are in the permutation
context known as
\emph{vincular} patterns.
\citet{Hofer} proved asymptotic normality of $N_n(\tau,\cD)$ as \ntoo,
for any fixed $\tau$ and vincular $\cD$. 
The extreme case with $d_1=\dots=d_\elli=1$ was earlier treated by
\citet{Bona3}.
Another (non-vincular)  case that has been studied is
\emph{$d$-descents}, given by $\ell=2$,  $\tau=21$ and $\cD=(d)$;
\citet{Bona-d} shows asymptotic normality
and \citet{Pike} gives a rate of convergence.
 
We unify these results by considering \Ustat{s}.
It is well known and easy to see
that the number $N_n(\bw)$ of unconstrained occurences of a given subsequence
$\bw$ in a random string $\Xi_n$
can be written as an asymmetric $U$-statistic; see 
\refS{Sword} and \eqref{Uword} for details.
There are  general results on asymptotic 
normality of \Ustat{s} that extend the basic result by
\cite{Hoeffding} to the asymmetric case,   see \eg{}
\cite[Corollary 11.20]{SJIII}, \cite{SJ332}.
Hence,
asymptotic normality of $N_n(\bw)$ 
follows directly from these general results.
Similarly, it is well known that the pattern count $N_n(\tau)$
in a random permutation also can be written
as a $U$-statistic, 
see \refS{Sperm} for details,
and again this can be used to prove
asymptotic normality. 
(See \cite{SJ287}, with an alternative proof by this method of the result
by \citet{Bona-Normal}.)

The constrained case is different, since the constrained pattern counts are
not \Ustat{s}. However, they can be regarded as \emph{constrained \Ustats},
which we define in \eqref{Uc} below in analogy with 
the constrained counts above.
As said above, we show in the present paper general limit theorems for such
constrained \Ustats, which thus immediately apply to the constrained pattern
counts discussed above in random strings and permutations.

The basic idea in the proofs is that a constrained \Ustat{} based on a
sequence $(X_i)$ can be written (possibly up to a small error) as an
unconstrained \Ustat{} based on another sequence $(Y_i)$ of random
variables,
where the new sequence $(Y_i)$ is \mdep{} (with a different $m$)
if $(X_i)$ is. (However, even if $(X_i)$ is independent, $(Y_i)$ is in
general not; this is our main motivation for considering \mdep{} sequences.)
The unconstrained \mdep{} case then is treated by standard methods from the
independent case, with appropriate modifications.

\refS{Sprel} contains some preliminaries.
The unconstrained and constrained \Ustat{s} are defined in \refS{SU}, where
also the main theorems are stated.
The degenerate case, when the asymptotic variance in the
central limit theorem \refT{TUM}, \ref{TUMD}, or \ref{TUUN} vanishes,
is discussed later in \refS{S0}, when more notation has been introduced; 
\refTs{T0}, \ref{T0D} and \ref{TUUN0}, repectively,
give criteria that can be used to show that the asymptotic variance is
non-zero in an application. 
On the other hand, \refE{E0} shows that the degenerate
case can occur in new ways for constrained \Ustat{s}.

The reduction to the unconstrained case and some other lemmas are given in
\refS{Slem}, and then the proofs of the main theorems are  completed in
\refSs{Smean}--\ref{SLLN} and \ref{Srate}--\ref{Srenew}.
\refS{Sword} gives applications to the  problem on pattern matching in
random  strings discussed above.
Similarly, \refS{Sperm} gives applications to pattern matching in random
permutations.
Some further comments and open problems
are given in \refS{Sfurther}.
The appendix contains some further results on subsequence counts in 
random strings.

\vfill
\section{Preliminaries}\label{Sprel}

\subsection{Some notation}\label{SSnot}

A \emph{constraint} is, as in \refS{S:intro},  
a sequence $\cD=(d_1,\dots,d_\elli)\in (\bbN\cup\set\infty)^\elli$, for some
given $\ell\ge1$.
Recall that the special constraint
$(\infty,\dots,\infty)$ is denoted by $\cDoo$.
Given a constraint $\cD$, define $b=b(\cD)$ by
\begin{align}\label{b}
b=b(\cD):
=\ell-|\set{j:d_j<\infty}|
=1+|\set{j:d_j=\infty}|
.\end{align}
We say that $b$ is the number of \emph{blocks} defined by $\cD$, see
further \refS{Slem} below.

For a random variable $Z$, and $p>0$, we let 
$\norm{Z}_p:=\bigpar{\E\xsqpar{|Z|^p}}^{1/p}$.

We use $\dto$, $\pto$, and $\asto$, for
convergence of random variables in distribution, probability, and almost
surely (a.s.), respectively.
For a sequence of random variables $(Z_n)$, and a sequence $a_n>0$, we write
$Z_n=\op(a_n)$  when $Z_n/a_n \pto0$.

Unspecified limits are as \ntoo.
$C$ denotes unspecified constants, which may be different at each occurrence.
($C$ may depend on parameters that are regarded as fixed, for example the
function $f$ below; this will be clear from the context.)

We use the convention $\binom nk:=0$ if $n<0$. (We will always have $k\ge0$.) 
Some further standard notation:
$[n]:=\setn$.
 $\max\emptyset:=0$.
 All functions are tacitly assumed to be measurable.

\subsection{\mdep{} variables}\label{SSm}
For reasons mentioned in the introduction, 
we will consider  \Ustat{s} not only based on sequences of
independent random variables, but also based on \mdep{} variables.

Recall that a (finite or infinite) sequence of random variables
$(X_i)_{i}$ is  \emph{$m$-dependent}
if the
two families $\set{X_i}_{i\le k}$ and $\set{X_i}_{i> k+m}$ of random
variables are independent of each other for every $k$.
(Here, $m\ge0$ is a given integer.)
In particular, 0-dependent is the same as independent; thus the 
important independent case is included as the special case $m=0$ below.

It is well known that if $(X_i)_{i\in I}$ is \mdep, and
$I_1,\dots,I_r\subseteq I$ are sets of indices such that
$\operatorname{dist}(I_j,I_k):=\inf\set{|i-i'|:i\in I_j,i'\in I_k}>m$ 
when $j\neq k$, then the families (vectors) of random variables
$(X_i)_{i\in I_1}$, \dots, $(X_i)_{i\in I_r}$ are mutually independent of
each other. (To see this, note first that it suffices to consider the case when
each $I_j$ is an interval; then use the definition and induction on $r$.)
We will use this property without further comment.

In practice, \mdep{} sequences usually occur as \emph{block factors}, \ie{} 
they can be expressed as
\begin{align}\label{block}
  X_i:=h(\xi_i,\dots,\xi_{i+m})
\end{align}
for some \iid{} sequence $(\xi_i)$ of random variables (in some measurable
space $\cS_0$), and a fixed function $h$ on $\cS_0^{m+1}$.
(It is obvious that \eqref{block} then defines a 
stationary \mdep{} sequence.)

\section{\Ustat{s} and main results}\label{SU}

Let $X_1,X_2,\dots$ be a sequence of random variables, taking values in
some measurable space $\cS$,
and let $f:\cS^\ell\to\bbR$ be a (measurable) function of $\ell$ variables,
for some
$\ell\ge1$. 
Then the corresponding \emph{\Ustat} is 
the (real-valued) random variable defined for each $n\ge0$  by
\begin{equation}
  \label{U}
U_n=U_n(f)
=U_n\bigpar{f;(X_i)\xooo}
:=\sum_{1\le i_1<\dots<i_\ell\le n} f\bigpar{X_{i_1},\dots,X_{i_\ell}}
.\end{equation}
\Ustat{s} were introduced by \citet{Hoeffding}, who proved a general central
limit theorem; the present paper gives an extension of his result that builds on
his methods.

\begin{remark}
Of course, for the definition \eqref{U} it suffices to have a finite
sequence $(X_i)\xoon$, but we will in the present paper only
consider the initial segments of an infinite sequence.
\end{remark}

\begin{remark}
Many authors, including \citet{Hoeffding},
define $U_n$ by dividing
the sum in \eqref{U}
  by $\binom n\ell$,
  the number of terms in it.
  We find it more convenient for our purposes
to use the unnormalized version above.
\end{remark}

\begin{remark}\label{Rasymm}
  Many authors, including \citet{Hoeffding},
assume that $f$ is a symmetric function of its $\ell$ variables.
In this case, the order of the variables does not matter, and we can in
\eqref{U} sum
over all sequences $i_1,\dots,i_\ell$ of $\ell$ distinct elements of $\setn$,
up to an obvious factor of $\ell!$. 
(\cite{Hoeffding} gives both versions.)
Conversely, if we sum over all such sequences, we may without loss of
generality
assume that $f$ is symmetric.
However, in the present paper 
(as in several earlier papers by various authors)
we consider the general case of \eqref{U}
without assuming symmetry, which we for
emphasis  call an \emph{asymmetric $U$-statistic}. 
(This is essential in our applications to pattern matching.)
Note that for independent $(X_i)_1^n$, the asymmetric case can be
reduced to the symmetric case by the trick
in \cite[Remark 11.21, in particular (11.20)]{SJIII},
see also \cite[(15)]{SJ287} and \eqref{Usym} below.
However, this trick does not work in the \mdep{} or constrained cases
studied here, so we cannot use it here.
\end{remark}

As said in the introduction, we also consider 
\emph{constrained \Ustat{s}}.
Given a constraint $\cD=(d_1,\dots,d_\elli)$, 
we define the {constrained \Ustat}
\begin{equation}  \label{Uc}
  U_n(f;\cD)
=  U_n(f;\cD;(X_i)\xooo)
:=\sum_{\substack{1\le i_1<\dots<i_\ell\le n\\i_{j+1}-i_j\le d_j}}
  f\bigpar{X_{i_1},\dots,X_{i_\ell}},
\qquad n\ge0
,\end{equation}
where we thus impose the constraints \eqref{constr} on the indices.

We define further the \emph{exactly constrained \Ustat}
\begin{equation}
  \label{Uc=}
  U_n(f;\cDq)
=  U_n(f;\cDq;(X_i)\xooo)
:=\sum_{\substack{1\le i_1<\dots<i_\ell\le n\\i_{j+1}-i_j=d_j
      \text{ if }d_j<\infty}}
  f\bigpar{X_{i_1},\dots,X_{i_\ell}},
\qquad n\ge0
,\end{equation}
where we thus specify each gap either exactly or (when $d_j=\infty$) 
not at all.
In the vincular case, when all $d_j$ are either 1 or $\infty$,
there is no difference and we have $U_n(f;\cD)=U_n(f;\cDq)$.

Note that, trivially, each constrained \Ustat{} can be written as a sum of 
exactly constrained \Ustats:
\begin{align}\label{Ucsum}
  U_n(f;\cD)=\sum_{\cD'} U_n(f;\cDz),
\end{align}
where we sum over all constraints $\cD'=(d'_1,\dots, d'_\ell)$ with
\begin{align}\label{cd'}
  \begin{cases}
1\le d_j'\le d_j,& d_j<\infty,
\\
    d'_j=\infty, & d_j=\infty.
  \end{cases}
\end{align}

\begin{remark}\label{Rincomplete2}
As said in the introduction,
  the [exactly] constrained \Ustats{} thus belong to the large class of
  \emph{incomplete \Ustats} \cite{Blom}, 
where the summation in \eqref{U} is restricted
  to some, in principle arbitrary, subset of the set of all $\ell$-tuples 
$(i_1,\dots,i_\ell)$ in $[n]$.
\end{remark}

The standard setting, in \cite{Hoeffding} and many other papers,
is to assume that the underlying random variables  $X_i$ are \iid;
we consider in the present paper a more general case, 
and we will assume only that
$X_1,X_2,\dots$ is an infinite stationary \emph{$m$-dependent} sequence, 
for some fixed integer $m\ge0$;
See \refSS{SSm} for the definition, and recall in particular that the
special case $m=0$ 
yields the case of independent variables $X_i$.

We will consider limits as \ntoo.
The sequence $X_1,X_2,\dots$ (and thus the space $\cS$ and the integer $m$)
and the function $f$ (and thus $\ell$)
will be fixed, and do not depend on $n$.

We will throughout assume the following  moment condition
for $p=2$; at a few places (always explicitly stated)
we also assume it for some larger $p$: 
\vskip6pt
\begin{PQenumerate}{\AAA{p}}
  \item\label{ALp} \qquad
$\E|f(X_{i_1},\dots,X_{i_\ell})|^p<\infty$\qquad for every $i_1<\dots<i_\ell$.
\end{PQenumerate}
\medskip
Note that in the independent case ($m=0$), it suffices to verify \ref{ALp} 
for a single sequence $i_1,\dots,i_\ell$, for example $1,\dots,\ell$.
In general, it suffices to verify \ref{ALp} for all sequences with $i_1=1$ and
$i_{j+1}-i_j\le m+1$ for every $j\le\elli$, since the stationarity and
$m$-dependence imply
that every larger gap can be
reduced to $m+1$ without changing the distribution of
$f(X_{i_1},\dots,X_{i_\ell})$. 
Since there is only a finite number of such sequences,  
it follows that
that \AAA2 is
equivalent to the uniform bound
\begin{align}
  \label{al2}
\E|f(X_{i_1},\dots,X_{i_\ell})|^2\le C 
\qquad\text{for every $i_1<\dots<i_\ell$},
\end{align}
and similarly for \ref{ALp}.

\subsection{Expectation and law of large numbers}

We first make an elementary observation on the expectations
$\E U_n(f;\cD)$ and $\E U_n(f;\cDq)$. These can  be calculated
exactly by taking the expectation inside the sums in \eqref{Uc} and 
\eqref{Uc=}. 
In the independent case, all terms have the same expectation, so it remains
only to count the number of them.
In general,
because of the $m$-dependence of $(X_i)$, the 
expectations of the terms in \eqref{Uc=} are not all equal, but
most of them coincide, and it is still easy to find the asymptotics.

\begin{theorem}
  \label{TE}
Let $(X_i)\xoo$ be a stationary \mdep{} sequence of random variables with
values in a measurable space $\cS$, let $\ell\ge1$, and let
$f:\cS^\ell\to\bbR$ satisfy 
\AAA2.
Then, as \ntoo, 
with $\mu$ given by \eqref{mu} below,
\begin{align}
  \E U_n(f)& = \binom{n}{\ell}\mu + O\bigpar{n^{\ell-1}}
= \frac{n^\ell}{\ell!}\mu + O\bigpar{n^{\ell-1}}
\label{te0}
.\end{align}
More generally,
let $\cD=(d_1,\dots,d_{\ell-1})$ be a constraint, and let $b:=b(\cD)$. 
Then, as \ntoo, 
for some real numbers $\mu_\cD$ and $\mu_{\cDq}$ given by \eqref{muD} and
\eqref{muD=}, 
\begin{align}
  \E U_n(f;\cD)& = \frac{n^b}{b!}\mu_\cD + O\bigpar{n^{b-1}},\label{te}
\\
\E U_n(f;\cDq)& = \frac{n^b}{b!}\mu_\cDq + O\bigpar{n^{b-1}}\label{te=}
.\end{align}

If $m=0$, \ie, the sequence $(X_i)\xoo$ is \iid, then, moreover,
\begin{align}\label{0mu}
  \mu&=\mu_\cDq=\E f(X_1,\dots, X_\ell),
\\\label{0muD}
\mu_\cD& = \mu\prod_{j:d_j<\infty} d_j 
=\prod_{j:d_j<\infty} d_j \cdot \E f(X_1,\dots, X_\ell)
.\end{align}
\end{theorem}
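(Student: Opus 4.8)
The plan is to prove \eqref{te=} first, since \eqref{te} then follows immediately from the finite decomposition \eqref{Ucsum}--\eqref{cd'} (each constrained count is a finite sum of exactly constrained counts, and only those $\cD'$ with the same $b$ contribute the leading term), and \eqref{te0} is the special case $\cD=\cDoo$ of \eqref{te}, for which $b=\ell$ and $\mu=\mu_{\cDoo}$. So the work is entirely in the exactly constrained case.

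First I would fix an exactly constrained $\cD=(d_1,\dots,d_{\ell-1})$ and write $J:=\set{j:d_j=\infty}$, so $|J|=b-1$ and the free indices among $i_1<\dots<i_\ell$ are determined by choosing the positions of the $b$ ``blocks'': the terms in \eqref{Uc=} are parametrized by the starting index of each block together with the prescribed internal gaps $d_j$ for $j\notin J$, while consecutive blocks are separated by a gap $i_{j+1}-i_j$ that ranges freely (subject to being $\ge 1$, and to everything staying in $[n]$). Taking expectations inside the sum, $\E U_n(f;\cDq)=\sum \E f(X_{i_1},\dots,X_{i_\ell})$. Here I would invoke stationarity together with $m$-dependence exactly as in the discussion around \eqref{al2}: the distribution of $(X_{i_1},\dots,X_{i_\ell})$, and hence the summand's expectation, depends only on the gap pattern $(i_2-i_1,\dots,i_\ell-i_{\ell-1})$, and moreover is unchanged when any gap $>m$ is replaced by $m+1$. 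Since each inter-block gap is eventually (for all but $O(1)$ boundary choices) larger than $m$, all but a negligible fraction of the terms share one common expectation value; I would \emph{define} $\mu_{\cDq}$ to be exactly that common value (this is the content of the displayed definition \eqref{muD=} referred to in the statement), namely $\mu_{\cDq}=\E f(X_{i_1},\dots,X_{i_\ell})$ for any gap pattern in which every $j\in J$-gap equals $m+1$ and every $j\notin J$-gap equals $d_j$.

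The counting step is then routine: the number of valid index tuples is the number of ways to place $b$ blocks of prescribed total internal length inside $[n]$ with $b-1$ positive inter-block gaps, which is $\binom{n-c}{b}$ for a constant $c=c(\cD)$ depending only on $\ell$ and the finite $d_j$'s, hence $n^b/b!+O(n^{b-1})$. The number of ``exceptional'' tuples — those where some inter-block gap is $\le m$, or where a block abuts the boundary in a way that changes the expectation — is $O(n^{b-1})$, since fixing any one gap to one of the $\le m$ small values (or fixing a block to a boundary region) removes a degree of freedom; and all summands are bounded in absolute value by a constant via \AAA2 (equivalently \eqref{al2}). Combining, $\E U_n(f;\cDq)=\binom{n-c}{b}\mu_{\cDq}+O(n^{b-1})=\frac{n^b}{b!}\mu_{\cDq}+O(n^{b-1})$, which is \eqref{te=}.

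Finally, for the $m=0$ (\iid) case: here there is no $m$-dependence correction, every summand in \eqref{Uc=} has expectation exactly $\E f(X_1,\dots,X_\ell)$ by independence and identical distribution, giving $\mu_{\cDq}=\mu=\E f(X_1,\dots,X_\ell)$; and in \eqref{Uc} the number of terms with a prescribed block structure having $j$-th internal gap $d_j'\le d_j$ summed over $1\le d_j'\le d_j$ contributes a leading factor $\prod_{j:d_j<\infty}d_j$, yielding \eqref{0muD}. I expect the only mildly delicate point to be bookkeeping the boundary/small-gap exceptional terms carefully enough to see they are genuinely $O(n^{b-1})$ and not larger; everything else is stationarity plus $m$-dependence applied exactly as already set up before \eqref{al2}, and elementary counting.
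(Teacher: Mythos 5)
Your proposal is correct, and it takes a genuinely different route from the paper's. The paper proves \eqref{te0} directly (counting terms with all gaps $>m$, which are $m$-independent and share the common expectation $\mu$), and then obtains \eqref{te=} by invoking \refL{L2} to rewrite $U_n(f;\cDq)$ as an \emph{unconstrained} $U$-statistic $U_{n-D}(g;(Y_i))$ in the block variables $Y_i=(X_i,\dots,X_{i+M-1})$, to which the unconstrained result is applied verbatim. You instead analyze the exactly constrained sum \eqref{Uc=} directly: stationarity plus $m$-dependence show that the expectation of each summand depends only on the gap pattern and is invariant under shrinking any gap $>m$ down to $m+1$, so all but $O(n^{b-1})$ terms share a common value, which you take as the definition of $\mu_{\cDq}$; the counting and error bookkeeping are then elementary. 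The two definitions of $\mu_{\cDq}$ agree — the paper's \eqref{muD=} evaluates $f$ at a tuple whose inter-block gaps in $X$-indices exceed $m$, exactly your canonical gap pattern — so the results coincide. What the paper's route buys is that \refL{L2} is set up once and reused throughout (variance bounds, CLT, functional limit theorem); your route is more self-contained and transparent for the expectation alone, at the cost of not leveraging the machinery that the rest of the paper needs anyway. You also reorganize the logical order (exactly constrained $\to$ constrained $\to$ unconstrained as a special case, with $\mu_{\cDoo}=\mu$ checked from the definitions), which is fine. No gaps.
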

The straightforward proof is given in \refS{Smean},
where we also give formulas for $\mu_\cD$ and $\mu_\cDq$
in the general case, 
although in an application it might be simpler to find the leading term
of the expectation directly.

Next, we have a corresponding strong law of large numbers,
proved in \refS{SLLN}.
This extends well known results in  the independent case, see
\cite{Sen-LLN,Hoeffding-LLN,SJ332}.

\begin{theorem}\label{TLLN}
Let $(X_i)\xoo$ be a stationary \mdep{} sequence of random variables with
values in a measurable space $\cS$, let $\ell\ge1$, and let
$f:\cS^\ell\to\bbR$ satisfy 
\AAA2.
Then,  as \ntoo, 
with $\mu$ given by \eqref{mu},
\begin{align}
  n^{-\ell} U_n(f) &\asto  \frac{1}{\ell!}\mu \label{tlln10}
.\end{align}
More generally,
let $\cD=(d_1,\dots,d_{\ell-1})$ be a constraint, and let $b:=b(\cD)$. 
Then, as \ntoo, 
\begin{align}
  n^{-b} U_n(f;\cD) &\asto  \frac{1}{b!}\mu_\cD, \label{tlln1}
\\
  n^{-b} U_n(f;\cDq) &\asto  \frac{1}{b!}\mu_\cDq,\label{tlln1=}
\end{align}
where $\mu_\cD$ and $\mu_{\cDq}$, as in \refT{TE}, are given by \eqref{muD}
and \eqref{muD=}.

Equivalently, 
\begin{align}
  n^{-\ell} \ubbo{U_n(f)} &\asto  0,\label{tlln20}
\\
  n^{-b} \ubbo{U_n(f;\cD)} &\asto  0,\label{tlln2}
\\
  n^{-b} \bigsqpar{U_n(f;\cDq)-\E U_n(f;\cDq)} &\asto  0.\label{tlln2=}
\end{align}
\end{theorem}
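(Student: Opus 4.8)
The plan is to reduce everything to the basic case \eqref{tlln10}--\eqref{tlln20} (the unconstrained strong law) and then to the law of large numbers for partial sums of a stationary $m$-dependent sequence. First I would observe that by \refT{TE} the normalized means $n^{-\ell}\E U_n(f)$, $n^{-b}\E U_n(f;\cD)$ and $n^{-b}\E U_n(f;\cDq)$ converge to $\mu/\ell!$, $\mu_\cD/b!$ and $\mu_\cDq/b!$ respectively; hence the three ``Equivalently'' statements \eqref{tlln20}--\eqref{tlln2=} are indeed equivalent to \eqref{tlln10}--\eqref{tlln1=}, and it suffices to prove the centered versions, i.e.\ that the fluctuation of $U_n$ around its mean is $o(n^\ell)$ a.s.\ (and similarly with $b$ in the constrained cases). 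In fact, by \eqref{Ucsum} the constrained statement \eqref{tlln2} is a finite sum of exactly-constrained statements \eqref{tlln2=} (note each $\cD'$ in that sum has the same block number $b$, since the positions of the infinite entries are unchanged), so the whole theorem follows once we prove \eqref{tlln20} and \eqref{tlln2=}.

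Next I would invoke the reduction to ordinary \Ustats{} promised in \refS{Slem}: an exactly constrained \Ustat{} $U_n(f;\cDq)$ based on the $m$-dependent sequence $(X_i)$ can be written, up to a boundary error of order $O(n^{b-1})$ which is negligible after dividing by $n^b$, as an ordinary ($b$-variable) \Ustat{} $U_{n'}(g;(Y_i))$ based on a derived stationary $m'$-dependent sequence $(Y_i)$, with $n'=n-O(1)$ and $g$ satisfying \AAA2 (inherited from $f$ via \eqref{al2}). Granting that lemma, it remains only to prove the unconstrained strong law \eqref{tlln20} for a general stationary $m$-dependent sequence. For this I would use the standard Hoeffding decomposition adapted to the $m$-dependent case (the same decomposition that underlies the central limit theorems \refT{TUM} etc.): write the centered \Ustat{} as a sum of ``projected'' pieces of lower order. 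The top-order piece is, up to constants, a sum over single indices of a stationary $m$-dependent sequence of mean-zero integrable variables, to which the strong law of large numbers for $m$-dependent sequences applies, giving $o(n)$ hence $o(n^\ell)$ after the appropriate scaling; the remaining pieces involve fewer ``free'' indices and are trivially $O(n^{\ell-1})$ (or controlled by the same argument at lower order). An even softer route, sufficient here, is: under \AAA2 one has $\Var\bigl(U_n(f)\bigr)=O(n^{2\ell-1})$ by the $m$-dependence and Cauchy--Schwarz (terms with index sets farther than $m$ apart are uncorrelated), so $\Var\bigl(n^{-\ell}U_n(f)\bigr)=O(n^{-1})$; applying this along the subsequence $n=k^2$ together with Borel--Cantelli gives a.s.\ convergence on that subsequence, and monotonicity/interpolation between consecutive squares (using that the increments $U_{n+1}-U_n$ are sums of $O(n^{\ell-1})$ terms, each $L^1$-bounded, hence $\op$ and in fact a.s.\ $o(n^\ell)$ after a second Borel--Cantelli along $k^2$) fills the gaps.

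The main obstacle is the interpolation step in this second-moment argument: passing from a.s.\ convergence along $n=k^2$ to a.s.\ convergence along all $n$ requires a uniform control of $\max_{k^2\le n<(k+1)^2}\bigl|U_n-U_{k^2}\bigr|$, and because $f$ need not be symmetric or nonnegative one cannot simply use monotonicity of $U_n$ in $n$. The clean way around this is to not fight it and instead carry out the full Hoeffding-type decomposition and apply the genuine strong law of large numbers for $m$-dependent sequences to the leading term — this is presumably what \refS{SLLN} does, since the machinery (the decomposition, the bound \eqref{al2}, the reduction lemma of \refS{Slem}) is already in place. Either way, once \eqref{tlln20} is established for arbitrary stationary $m$-dependent $(X_i)$, the constrained statements follow formally from the reduction lemma and \eqref{Ucsum}, and the ``Equivalently'' reformulations follow from \refT{TE}, completing the proof.
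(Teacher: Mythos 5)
Your plan matches the paper's proof in outline: use \refT{TE} to pass between centered and uncentered forms, reduce the constrained cases to the unconstrained one via \refL{L2} and \eqref{Ucsum}, and prove the unconstrained case by the Hoeffding-type decomposition from \refS{Spf}. However, two technical claims in your description of the unconstrained step need repair. First, the top-order piece $\hU_n=\sum_{j}\sum_{k}a_{j,k,n}f_j(X_k)$ is \emph{not} ``up to constants, a sum over single indices'': the coefficients $a_{j,k,n}\sim n^{\ell-1}\psi_j(k/n)$ vary nontrivially with $k$, so the SLLN for the unweighted partial sums $S_{jn}:=\sum_{k\le n} f_j(X_k)$ does not apply to $\hU_n$ directly. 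The paper closes this gap by Abel (partial) summation: using the difference bound $a_{j,k,n}-a_{j,k+1,n}=O(n^{\ell-2})$ it reduces to $|n^{-\ell}\hU_{jn}|\le C\,n^{-1}\max_{k\le n}|S_{jk}|$, and \emph{then} invokes the SLLN for stationary $m$-dependent sequences, which gives $\max_{k\le n}|S_{jk}|=o(n)$ a.s.

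Second, the residual $U_n(\fx;\mx)$ is a sum of $\Theta(n^\ell)$ terms and is not ``trivially $O(n^{\ell-1})$''; what actually holds is the orthogonality estimate $\Var\bigl[U_n(f)-\hU_n\bigr]=O(n^{2\ell-2})$ (this is \eqref{fit}, which requires the vanishing one-variable projections of $\fx$ and the $\mx$ gap restriction enforcing independence inside each term). The decisive point for the \emph{strong} law --- and this is why no subsequence is needed --- is that $\Var\bigl(n^{-\ell}[U_n(f)-\E U_n(f)-\hU_n]\bigr)=O(n^{-2})$ is summable, so Markov plus Borel--Cantelli gives $\asto 0$ along the full sequence. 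Your observation that the unstructured second-moment bound $\Var(U_n)=O(n^{2\ell-1})$ gives only $O(n^{-1})$ and hence would require a delicate interpolation between squares is exactly the reason the paper decomposes first: the decomposition sends the non-summable $O(n^{-1})$ into the linear term (handled by the genuine SLLN via Abel summation), leaving a summable $O(n^{-2})$ remainder.
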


\begin{remark}\label{RLLN}
For convenience, we assume \AAA2 in \refT{TLLN} as in the rest of the paper,
which leads to a simple proof. 
We conjecture that the theorem 
holds assuming only \AAA1 (i.e., finite first moments)
instead of \AAA2, as in \cite{Hoeffding-LLN,SJ332} for the
independent case.
\end{remark}

\subsection{Asymptotic normality}
We have the following theorems yielding asymptotic
normality.
The proofs are given in  \refS{Spf}. 

The first theorem is for the unconstrained case, and extends the basic
theorem by \citet{Hoeffding} for symmetric \Ustat{s} based on independent
$(X_i)\xoo$ to the  asymmetric and \mdep{} case.
Note that both these extensions have earlier been treated, but separately.
For symmetric \Ustat{s} in the
\mdep{} setting, asymptotic normality was proved by \citet{Sen-m}
(at least assuming a third moment);
moreover, bounds on the rate of convergence
(assuming a moment condition)
were given by \citet{MalevichA}.
The asymmetric case with independent $(X_i)\xoo$ has been treated \eg{}
in 
\cite[Corollary 11.20]{SJIII} and \cite{SJ332}; furthermore, 
as said in \refR{Rasymm},
for  independent $(X_i)$, the asymmetric case can be
reduced to the symmetric case by the method in 
\cite[Remark 11.21]{SJIII}.

\begin{theorem}\label{TUM}
Let $(X_i)\xoo$ be a stationary \mdep{} sequence of random variables with
values in a measurable space $\cS$, let $\ell\ge1$, and let
$f:\cS^\ell\to\bbR$ satisfy 
\AAA2.
Then, as \ntoo, 
\begin{align}\label{tum1}
  \Var\bigsqpar{U_n(f)} / n^{2\ell-1}\to\gss
\end{align}
for some $\gss=\gss(f)\in\ooo$,
and
\begin{align}\label{tum2}
  \frac{U_n(f)-\E U_n(f)}{n^{\ell-1/2}} \dto \N(0,\gss).
\end{align}
\end{theorem}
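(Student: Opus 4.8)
The plan is to follow Hoeffding's decomposition method, adapted to the \mdep{} setting. First I would reduce everything to a centered situation: replacing $f$ by $f-\mu$ changes $U_n(f)$ only by $\E U_n(f)$ up to a deterministic term of order $n^{\ell-1}$ (by \refT{TE}), so it suffices to prove \eqref{tum2} with $\E U_n(f)$ subtracted, and \eqref{tum1} is unaffected; hence I may assume $\mu=0$. Next, for $J\subseteq[\ell]$ I would introduce the projections $\Pi_J f$ obtained by conditioning $f(X_{i_1},\dots,X_{i_\ell})$ on the subfamily of variables indexed by $J$ and subtracting the lower-order projections, exactly as in the classical Hoeffding decomposition, but being careful that in the \mdep{} case the conditional expectations depend (for fixed pattern of which indices are close together) only on finitely many relative positions, so there are only finitely many distinct ``kernels'' to track. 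This yields a decomposition $U_n(f)-\E U_n(f)=\sum_{k=1}^{\ell}\binom{\ell}{k}$-many sums of the projected kernels of ``order'' $k$ (number of free indices), where order-$k$ pieces should contribute variance of order $n^{2\ell-k}$.

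Then I would do the variance computation: expand $\Var[U_n(f)]$ as a double sum over pairs of increasing $\ell$-tuples $(i_1,\dots,i_\ell)$ and $(i_1',\dots,i_\ell')$, and observe that by \mdep{}ence the covariance of two terms vanishes unless the two index sets come within distance $m$ of each other in a ``linked'' way. Counting the pairs of tuples whose union spans a set of size $s$ shows the dominant contribution comes from $s=2\ell-1$, i.e.\ tuples overlapping in essentially one coordinate (up to the $m$-window), giving the $n^{2\ell-1}$ scaling; the limit constant $\gss$ is then an explicit (absolutely convergent) sum over relative shifts of covariances of the order-$1$ projected kernels, and manifestly $\gss\ge0$. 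This also shows $\Var[U_n(f)]=O(n^{2\ell-1})$ and that the projected parts of order $k\ge2$ have variance $o(n^{2\ell-1})$, so by Chebyshev they are $\op(n^{\ell-1/2})$ and can be discarded; the asymptotics \eqref{tum1} follows.

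It remains to prove asymptotic normality of the leading, order-$1$ part, which after the dust settles is of the form $\sum_{i=1}^n h_i$ where $h_i$ is (a shifted, truncated-at-the-boundary version of) a fixed function of $X_{i},\dots,X_{i+\ell-1}$ times a deterministic weight of order $n^{\ell-1}$ — in other words, up to rescaling by $n^{\ell-1}$ it is a sum of a stationary \mdep{} sequence with a mild triangular-array dependence on $n$ through boundary effects. For this I would invoke the classical central limit theorem for \mdep{} stationary sequences (Hoeffding--Robbins), checking the Lindeberg-type condition from \AAA2, after verifying that the edge terms (indices within $O(1)$ of $1$ or $n$) contribute only $O(n^{\ell-1})=o(n^{\ell-1/2})$ and so are negligible. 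Assembling the pieces via Slutsky gives \eqref{tum2}.

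The main obstacle I anticipate is purely bookkeeping rather than conceptual: in the asymmetric \mdep{} case the Hoeffding projections are no longer indexed just by a subset $J\subseteq[\ell]$ but effectively by $J$ together with the combinatorial ``type'' recording which of the chosen indices are within distance $m$ of an unchosen index, so one must organize the decomposition so that (a) the cross terms between different orders still vanish or are negligible, and (b) the leading-order sum genuinely reduces to a single \mdep{} stationary sequence to which a standard CLT applies. Getting the variance count to land exactly on the exponent $2\ell-1$, with the boundary corrections absorbed into the error term, is where care is needed; everything else is a routine adaptation of Hoeffding's argument.
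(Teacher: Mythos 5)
Your overall plan --- Hoeffding decomposition, variance count giving exponent $2\ell-1$, CLT for the order-one piece --- matches the paper's strategy, but you leave unresolved what you yourself identify as the main obstacle, and it is not merely bookkeeping. Defining the projections by conditioning $f(X_{i_1},\dots,X_{i_\ell})$ on a subfamily of the actual (dependent) $X_i$'s gives kernels that depend on the relative positions of the indices, and the resulting decomposition no longer enjoys the exact orthogonality that makes the cross terms vanish; controlling them would be genuinely delicate, not a routine adaptation. The paper sidesteps this entirely by a preliminary reduction your sketch lacks: \refL{L>} shows that $\Var\bigpar{U_n(f)-U_n(f;\mx)}=O\bigpar{n^{2\ell-3}}$, where $U_n(f;\mx)$ restricts the sum in \eqref{Uc>} to $\ell$-tuples with all gaps $>m$. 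On such tuples the $X_{i_j}$ are \emph{independent}, so the projections $f_i$ can be defined once and for all via i.i.d.\ copies $\hX_1,\dots,\hX_\ell$ as in \eqref{fi1}, giving a single configuration-free kernel per coordinate and exact vanishing of the one-variable projections of the residual $\fx$; the ``types'' you worry about never appear. A second, smaller issue: the CLT you ultimately need is for an $m$-dependent \emph{triangular array} (the deterministic weights multiplying $f_j(X_k)$ depend on $n$), not for a fixed stationary sequence, so Hoeffding--Robbins does not directly apply; the paper invokes Orey's theorem (\refT{TM}) for this. Without the reduction to $U_n(f;\mx)$ I do not see how your step (a) --- orthogonality or negligibility of cross terms across ``types'' --- gets verified, so I would count the proposal as having a genuine gap, though one that the single lemma \refL{L>} fills cleanly.
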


The second theorem extends \refT{TUM} to the
constrained cases.

\begin{theorem}\label{TUMD}
Let $(X_i)\xoo$ be a stationary \mdep{} sequence of random variables with
values in a measurable space $\cS$, let $\ell\ge1$, and let
$f:\cS^\ell\to\bbR$ satisfy 
\AAA2.
Let $\cD=(d_1,\dots,d_{\ell-1})$ be a constraint, and let $b:=b(\cD)$. 
Then, as \ntoo, 
\begin{align}\label{tumd1}
  \Var\bigsqpar{U_n(f;\cD)} / n^{2b-1}\to\gss
\end{align}
  for some $\gss=\gss(f;\cD)\in\ooo$,
and
\begin{align}\label{tumd2}
  \frac{U_n(f;\cD)-\E U_n(f;\cD)}{n^{b-1/2}} \dto \N(0,\gss).
\end{align}

The same holds, with some (generally different) $\gss=\gss(f;\cDq)$,
for the exactly constrained
$U_n(f;\cDq)$.
\end{theorem}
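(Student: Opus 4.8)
The plan is to reduce each constrained \Ustat{} to an ordinary unconstrained one based on a suitable auxiliary \mdep{} sequence, and then invoke \refT{TUM}. If $\cD=\cDoo$ then $b=\ell$ and $U_n(f;\cD)=U_n(f;\cDq)=U_n(f)$, so the assertion is exactly \refT{TUM}; hence assume $\cD\neq\cDoo$, so that $1\le b\le\ell-1$. The positions $1,\dots,\ell$ split into $b$ consecutive \emph{blocks}, with a break between $j$ and $j+1$ exactly when $d_j=\infty$; within each block all gaps are finite, and by \eqref{cd'} this block structure is shared by $\cD$ and by every one of the finitely many constraints $\cD'$ occurring in \eqref{Ucsum}.

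First I would introduce the window sequence $Y_i:=(X_i,X_{i+1},\dots,X_{i+K})\in\cS^{K+1}$, $i\ge1$, with $K:=\sum_{j:d_j<\infty}d_j$ a fixed integer. As in \eqref{block} the sequence $(Y_i)$ is stationary, and since $Y_i$ is a function of $X_i,\dots,X_{i+K}$ it is $(m+K)$-dependent. Now fix one of the constraints $\cD'$ of \eqref{cd'}. In the sum \eqref{Uc=} for $U_n(f;\cDz)$ the indices $i_1<\dots<i_\ell$ split into the $b$ blocks, and block $r$ is determined by its leader $k_r$ (its smallest index): its indices are $k_r,k_r+d'_{a_r},k_r+d'_{a_r}+d'_{a_r+1},\dots$, all in $[k_r,k_r+s'_r]$, where $a_r$ is the first position of block $r$ and $s'_r$ the sum of its (finite) gaps. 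Setting $\sigma_1:=0$, $\sigma_{r+1}:=\sigma_r+s'_r$ (so $\sigma_{b+1}\le K$) and shifting to $\tilde k_r:=k_r-\sigma_r$, each index $k_r+t$ of block $r$ becomes $\tilde k_r+(\sigma_r+t)$ with $\sigma_r+t\in[0,K]$, hence a coordinate of $Y_{\tilde k_r}$; and the admissibility conditions --- $k_1\ge1$, $k_{r+1}$ strictly larger than the last index of block $r$, and the last index of block $b$ at most $n$ --- become precisely $1\le\tilde k_1<\tilde k_2<\dots<\tilde k_b\le n-\sigma_{b+1}$, with \emph{no} upper bounds relating the $\tilde k_r$ (this is where the hypothesis that the block breaks are the $\infty$-gaps of $\cD$ is used). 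Consequently
\[
  U_n(f;\cDz)=U_{n-\sigma_{b+1}}\bigpar{\tg_{\cD'};(Y_i)},
\]
where $\tg_{\cD'}\colon(\cS^{K+1})^b\to\bbR$ is the fixed measurable function that reads the coordinates $\sigma_r,\sigma_r+d'_{a_r},\dots$ off its $r$th argument and feeds them to $f$. Moreover, for strictly increasing $\tilde k_1<\dots<\tilde k_b$ the coordinates taken from distinct windows never overlap (the largest $X$-index used in block $r$ is $\tilde k_r+\sigma_{r+1}$, and the smallest used in block $r+1$ is $\tilde k_{r+1}+\sigma_{r+1}>\tilde k_r+\sigma_{r+1}$), so $\tg_{\cD'}$ always equals $f$ applied to a strictly increasing $\ell$-tuple of the $X_i$; hence $(\tg_{\cD'},(Y_i))$ satisfies \AAA2, by \eqref{al2}.

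Then I would assemble these. Changing the upper summation limit from $n-\sigma_{b+1}$ to $n$ alters $U_{n-\sigma_{b+1}}(\tg_{\cD'};(Y_i))$ by a sum of $O(n^{b-1})$ terms, each a value of $f$ at a strictly increasing $\ell$-tuple and hence uniformly bounded in $L^2$ by \eqref{al2}; so by Minkowski's inequality the alteration is $O_{L^2}(n^{b-1})$. Summing over the finitely many $\cD'$ of \eqref{cd'}, using \eqref{Ucsum} and the linearity of $U_n(\,\cdot\,;(Y_i))$ in its function argument, yields
\[
  U_n(f;\cD)=U_n\bigpar{g;(Y_i)}+R_n,\qquad g:=\textstyle\sum_{\cD'}\tg_{\cD'},\qquad\norm{R_n}_2=O\bigpar{n^{b-1}},
\]
where $g$, a finite sum, again satisfies \AAA2. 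Now \refT{TUM}, applied to the unconstrained \Ustat{} $U_n(g;(Y_i))$ --- based on the stationary $(m+K)$-dependent sequence $(Y_i)$ and the function $g$ of $b$ variables --- gives $\Var[U_n(g;(Y_i))]/n^{2b-1}\to\gss:=\gss(g)\in\ooo$ and $\bigpar{U_n(g;(Y_i))-\E U_n(g;(Y_i))}/n^{b-1/2}\dto\N(0,\gss)$. Since $\norm{R_n}_2=O(n^{b-1})=o(n^{b-1/2})$, we have $\Var[R_n]=o(n^{2b-1})$ and, by the \CSineq, $\Cov\bigpar{U_n(g;(Y_i)),R_n}=o(n^{2b-1})$, whence $\Var[U_n(f;\cD)]/n^{2b-1}\to\gss$, proving \eqref{tumd1}; and $R_n-\E R_n=\op(n^{b-1/2})$, so from $U_n(f;\cD)-\E U_n(f;\cD)=\bigpar{U_n(g;(Y_i))-\E U_n(g;(Y_i))}+\bigpar{R_n-\E R_n}$ and Slutsky's theorem we obtain \eqref{tumd2}. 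The exactly constrained $U_n(f;\cDq)$ is itself the term of \eqref{Ucsum} with $\cD'=\cD$, so the same reduction gives the \emph{exact} identity $U_n(f;\cDq)=U_{n-K}\bigpar{\tg_{\cD};(Y_i)}$ and \refT{TUM} applies directly, with a generally different limit $\gss(f;\cDq)$; the degenerate possibility $\gss=0$ is not excluded and is analysed in \refS{S0}.

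The step I expect to be the real work is the reparametrization in the second paragraph: one must check with care that after the block-leader shift the indices $\tilde k_1<\dots<\tilde k_b$ range over a full unconstrained simplex (this is exactly where the $\infty$-entries of $\cD$ are used), that $\tg_{\cD'}$ is a bona fide index-free measurable function of $b$ arguments, and --- the one genuine subtlety --- that $(\tg_{\cD'},(Y_i))$ inherits \AAA2, which rests on the non-overlap of the per-block windows, so that $f$ is never evaluated at a tuple with repeated or out-of-order indices. Everything afterwards --- the $O(n^{b-1})$ boundary estimates, the linearity manipulation, and the Slutsky argument --- is routine, the only essential ingredient being \refT{TUM}.
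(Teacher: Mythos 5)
Your proposal is correct and follows the same route as the paper: reduce each constrained $U$-statistic to an unconstrained one over the windowed sequence $(Y_i)$, control the boundary discrepancy, and invoke \refT{TUM}. The only cosmetic difference is that the paper packages the window construction as \refL{L2} and the $O(n^{b-1})$ boundary control via \refL{L1}'s estimate \eqref{l1=} (fed through \refL{L3}), whereas you fold those steps inline and bound the remainder $R_n$ directly by Minkowski using \eqref{al2} — a slightly cruder but perfectly adequate argument that yields the same $O_{L^2}(n^{b-1})$ bound.
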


\begin{remark}\label{RCW}
It follows immediately by
the Cram{\'e}r--Wold device  \cite[Theorem 5.10.5]{Gut}
(i.e., considering linear combinations),
that \refT{TUM} extends in the obvious way
to joint convergence for any finite number of 
different $f:\cS^\ell\to\bbR$,
with $\gss$ now a covariance matrix. 
Moreover, the proof shows that this holds also for a family 
of different $f$ with (possibly) different $\ell\ge1$.

Similarly, \refT{TUMD} extends 
to joint convergence for any finite number of 
different $f$ (possibly with different $\ell$ and $\cD$);
this follows by the proof below, which reduces the results to \refT{TUM}.
\end{remark}

\begin{remark}
The asymptotic variance $\gss$ in \refTs{TUM} and \ref{TUMD} can  
be calculated explicitly, see \refR{Rgss}.
\end{remark}

\begin{remark}
  Note that it is possible that the asymptotic variance $\gss=0$ in
  \refTs{TUM} and \ref{TUMD}; in this case, \eqref{tum2} and \eqref{tumd2} 
just give convergence in probability to 0.
This degenerate case is discussed in \refS{S0}.
\end{remark}

\begin{remark}
We do not 
consider extensions to triangular arrays
where $f$ or $X_i$ (or both) depend on $n$.
In the symmetric \mdep{} case,
such a result (with fixed $\ell$ but possibly increasing $m$, under suitable
conditions)
has been shown by \cite{MalevichA}, with a bound on the rate of
convergence.
In the independent case, 
results for triangular arrays
are given by \eg{} \cite{RubinVitale} and \cite{SJ48};
see also \cite{SJ349} for the special case of substring counts
$N_n(\bw)$ with $\bw$ depending on $n$ (and growing in length).
It seems to be an interesting (and challenging) open problem
to formulate useful general theorems
for constrained \Ustats{} in such settings.
\end{remark}

\subsection{Rate of convergence}\label{SSrate}

Under stronger moment assumptions on $f$,
an alternative method of proof 
(suggested by a referee)
yields the asymptotic normality 
in \refTs{TUM} and \ref{TUMD}
together with an upper bound
on the rate of convergence, provided $\gss>0$.

In the following theorem of Berry--Esseen type 
we assume, for simplicity,  that $f$ is bounded
(as it is in our applications in \refSs{Sword}--\ref{Sperm});
see further \refR{Rrate1}.
Let $d_K$ denote the Kolmogorov distance between distributions; 
recall that for two distributions $\cL_1,\cL_2$
with distribution functions $F_1(x)$ and $F_2(x)$,
$d_K=d_K(\cL_1,\cL_2):=\sup_x|F_1(x)-F_2(x)|$; we use also the notation 
$d_K(X,\cL_2):=d_K(\cL(X),\cL_2)$ for a random variable $X$.

\begin{theorem}\label{Trate}
  Suppose in addition to the hypotheses in \refT{TUM} or \ref{TUMD} that 
$\gss>0$ and that $f$ is bounded. Then,
\begin{align}\label{trate}
d_K\Bigpar{\frac{U_n-\E U_n}{\sqrt{\Var U_n}}, \N(0,1)}
=O\bigpar{n\qqw},
\end{align}
where $U_n$ denotes $U_n(f)$, $U_n(f,\cD)$ or $U_n(f;\cDq)$.
\end{theorem}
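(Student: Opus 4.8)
The plan is to prove \refT{Trate} by the dependency-graph/Stein's method route alluded to in the introduction. The key observation is that all three statistics $U_n(f)$, $U_n(f;\cD)$ and $U_n(f;\cDq)$ can be written as a sum $W=\sum_{\alpha\in A} Y_\alpha$ of random variables indexed by the admissible tuples $\alpha=(i_1,\dots,i_\ell)$, where $Y_\alpha=f(X_{i_1},\dots,X_{i_\ell})$. Since $(X_i)$ is \mdep, two summands $Y_\alpha$ and $Y_\beta$ are independent whenever the index sets $\{i_1,\dots,i_\ell\}$ and $\{j_1,\dots,j_\ell\}$ are at distance $>m$; hence the graph on $A$ with an edge between $\alpha$ and $\beta$ when $\min|i_r-j_s|\le m$ is a dependency graph for $(Y_\alpha)$. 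First I would record that this graph has maximum degree $D=O(n^{\ell-1})$ in the unconstrained case and $D=O(n^{b-1})$ in the constrained cases: fixing one index of $\beta$ to lie within $m$ of some index of $\alpha$ leaves $O(n^{\ell-2})$ (resp.\ $O(n^{b-2})$) choices for the remaining free indices, and there are $O(1)$ bounded choices for which pair of coordinates is pinned and what the gap is.

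Next I would invoke a standard normal-approximation bound for sums with a dependency graph — for instance the Berry--Esseen-type bound of \citet{Rinott} or \cite[Theorem 2.7]{ChenGS}, which gives, for $W=\sum_\alpha Y_\alpha$ with dependency graph of maximum degree $D$ and $\|Y_\alpha\|_\infty\le B$,
\begin{align}\label{eq:stein}
d_K\Bigpar{\frac{W-\E W}{\sqrt{\Var W}},\N(0,1)}
\le C\,\frac{D^2\,|A|\,B^3}{(\Var W)^{3/2}}
+ C\,\frac{D^{3/2}\,|A|^{1/2}\,B^2}{\Var W}.
\end{align}
Since $f$ is bounded we may take $B=O(1)$. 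In the unconstrained case $|A|=\binom n\ell=O(n^\ell)$ and $D=O(n^{\ell-1})$, while by \refT{TUM} (using $\gss>0$) we have $\Var W = \Var U_n(f) \sim \gss n^{2\ell-1}$, so $(\Var W)^{3/2}\asymp n^{3\ell-3/2}$; plugging in, the first term of \eqref{eq:stein} is $O\bigpar{n^{2\ell-2+\ell-3\ell+3/2}}=O(n\qqw)$ and the second term is $O\bigpar{n^{(3\ell-3)/2+\ell/2-2\ell+1}}=O(n\qqw)$ as well. The constrained cases are identical with $\ell$ replaced by $b$ throughout, using \refT{TUMD} and $\Var U_n(f;\cD)\sim\gss n^{2b-1}$ (resp.\ the exactly constrained version); the bound $D=O(n^{b-1})$ follows from the block structure developed in \refS{Slem}, or can be read off directly from \eqref{Uc}, \eqref{Uc=} since pinning any gap that was free to be within $m$ of an index of $\alpha$ removes one degree of freedom.

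The main obstacle is bookkeeping rather than anything deep: I must make sure the dependency-graph degree bound is genuinely $O(n^{\ell-1})$ (resp.\ $O(n^{b-1})$) and not larger. The subtlety is that $\alpha$ and $\beta$ are "dependent" as soon as \emph{some} coordinate of one is within $m$ of \emph{some} coordinate of the other; one must check that specifying which coordinate pair is close, together with the ($\le 2m+1$) possible offsets, costs only an $O(1)$ factor and pins exactly one index, leaving the correct number $O(n^{\ell-2})$ of free indices for $\beta$ — and in the constrained case that the remaining free indices of $\beta$ still range over $\Theta(n)$ values each only in the $b-1$ "block-start" coordinates, the within-block coordinates being determined up to $O(1)$ by the (finite) constraints. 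Once that combinatorial estimate is in hand, the proof is just substitution into \eqref{eq:stein}. A secondary point worth a sentence is that $\Var U_n$ and $\Var U_n(f;\cD)$ differ from their asymptotic equivalents $\gss n^{2\ell-1}$, $\gss n^{2b-1}$ only by lower-order terms, so replacing one by the other in the denominator changes the bound by at most a factor $1+o(1)$; this is legitimate precisely because we assumed $\gss>0$. I would also remark (cf.\ \refR{Rrate1}, to which the statement defers) that boundedness of $f$ is used only to control the third absolute moments $\E|Y_\alpha|^3$, so the argument extends to $f\in L^3$ (or $L^{2+\delta}$ with a correspondingly weaker rate) at the cost of a messier statement.
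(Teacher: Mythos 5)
Your argument is essentially identical to the paper's proof: it builds the same dependency graph on the set of admissible index tuples, bounds its maximum degree by $O(n^{b-1})$ via the same combinatorial argument, and applies a Rinott-type Berry--Esseen bound for sums over a dependency graph, using $\Var U_n\sim\gss n^{2b-1}$ (which requires $\gss>0$). The only cosmetic discrepancy is that your displayed Stein bound omits the linear-in-$D$ term present in \cite[Theorem 2.2]{Rinott}, but that term is also $O(n^{-1/2})$, so the conclusion and the method are the same.
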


In the symmetric and unconstrained case, this (and more) was shown
by \citet{MalevichA}.
The proof of \refT{Trate}
is given in \refS{Srate}, together with further remarks.

\subsection{Moment convergence}

\refTs{TUM} and \ref{TUMD} include convergence of the first (trivially)
and second moments in \eqref{tum2} and \eqref{tumd2}.
This extends to higher moments under a corresponding moment condition on $f$.
(The unconstrained case with independent $X_i$
was shown in \cite[Theorem 3.15]{SJ332}.)

\begin{theorem}\label{Tmom}
Suppose in addition to the hypotheses in \refTs{TUM} or \ref{TUMD} 
that \AAA{p} holds for some real $p\ge2$.
Then all absolute and ordinary moments of order up to $p$ converge 
in \eqref{tum2} or \eqref{tumd2}.
\end{theorem}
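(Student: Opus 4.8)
The plan is to prove moment convergence by combining the distributional convergence already established in \refTs{TUM} and \ref{TUMD} with a uniform integrability argument; the standard route is to show that the normalized variables are bounded in $L^q$ for some $q>p$, which together with $\dto$ gives convergence of all moments of order $\le p$. Concretely, writing $V_n := \bigpar{U_n-\E U_n}/n^{\ell-1/2}$ (respectively with exponent $b-1/2$ in the constrained case), it suffices to prove that $\sup_n \E|V_n|^q < \infty$ for some $q>p$; since $\dto$ plus $L^q$-boundedness implies $\{|V_n|^p\}_n$ is uniformly integrable, and uniform integrability plus convergence in distribution gives convergence of the $p$-th absolute moment, and similarly for the ordinary moments of integer order $\le p$ (and for non-integer $p$ one gets the $p$-th absolute moment and all lower ones). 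So the whole theorem reduces to a moment bound.

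The key step, then, is the moment bound: \emph{under \AAA{p}, $\E\bigabs{U_n-\E U_n}^p = O\bigpar{n^{p(\ell-1/2)}}$}, and the analogous statement with $b$ in the constrained cases. First I would reduce the constrained statements to the unconstrained one exactly as the main theorems do: by the reduction lemma of \refS{Slem}, a constrained \Ustat{} based on the \mdep{} sequence $(X_i)$ equals (up to a negligible error term, whose moments are easily controlled since it involves $O(n^{b-1})$ bounded-in-$L^p$ terms) an unconstrained \Ustat{} of order $b$ based on a new \mdep{} sequence $(Y_i)$, and \AAA{p} for $f$ transfers to the corresponding condition for the new kernel. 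So it is enough to prove the $L^p$ bound for $U_n(f)$ itself. For that I would use the Hoeffding-type decomposition $U_n - \E U_n = \sum_{k=1}^{\ell} U_n^{(k)}$ into its "components" (the modification of Hoeffding's projection decomposition appropriate to the asymmetric \mdep{} setting, as developed in the proof of \refT{TUM}); by Minkowski's inequality it suffices to bound $\norm{U_n^{(k)}}_p$ for each $k$, and the dominant term is $k=1$, of size $n^{\ell-1/2}$, with the higher-order components of strictly smaller order. Each component $U_n^{(k)}$ is, after the \mdep{} reduction, essentially a sum over $\binom{n}{k}$-many $\ell$-tuples of a centered kernel that is degenerate of order $k$; the $L^p$ bound for such sums follows from a Rosenthal-type / Marcinkiewicz--Zygmund-type inequality for sums of $m$-dependent, appropriately "martingale-like" blocks, exactly as in the independent case treated in \cite[proof of Theorem 3.15]{SJ332}, with the \mdep{} structure handled by grouping indices into blocks separated by gaps $>m$ so that standard independent-case moment inequalities apply block-wise.

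The main obstacle I expect is the $L^p$ bound for the degenerate components $U_n^{(k)}$ in the \mdep{} (and hence, after reduction, in the constrained) case: in the independent case these are clean multilinear degenerate sums for which Rosenthal's inequality and induction on $\ell$ give the bound $\norm{U_n^{(k)}}_p = O\bigpar{n^{\ell-k/2}}$ directly, but $m$-dependence means the summands are not independent and one must first re-express each component as a sum of $O(1)$-many genuinely independent blocks (splitting the index range into runs of consecutive indices and exploiting that tuples whose index-sets are $>m$-separated are independent), keeping track that the centering is preserved at each level of the decomposition. This is somewhat technical bookkeeping but structurally identical to what was already done for the second moment in the proof of \refT{TUM}; the rest of the argument (Minkowski to combine components, uniform integrability to pass from the $L^q$ bound to moment convergence, and the reduction lemma for the constrained cases) is routine. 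Finally, convergence of the ordinary (signed) moments follows from convergence of the absolute moments of all orders $\le p$ together with the elementary fact that a sequence of real random variables converging in distribution with bounded $(p+\eps)$-th absolute moments has convergent moments of every order $\le p$.
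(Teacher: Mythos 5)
Your overall strategy — distributional convergence plus uniform integrability implies moment convergence — is the same as the paper's, and your reduction of the constrained cases to the unconstrained one via the lemmas of \refS{Slem} is also how the paper proceeds. But there is a genuine gap in how you propose to get uniform integrability. You write that "it suffices to prove $\sup_n \E|V_n|^q<\infty$ for some $q>p$," and you then go on to prove a moment bound \emph{only at exponent $p$}, namely $\E|U_n-\E U_n|^p=O(n^{p(\ell-1/2)})$, which is all that \AAA{p} can give you. These two steps do not fit together: $L^p$-boundedness of $V_n$ yields uniform integrability of $|V_n|^r$ only for $r<p$, so your argument proves convergence of moments of order strictly less than $p$, not up to and including $p$ as the theorem asserts. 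You cannot hope to establish $L^q$-boundedness with $q>p$ under \AAA{p} alone; indeed already for $\ell=1$ and independent $X_i$ this would amount to asking $S_n/\sqrt n$ to be bounded in $L^q$ knowing only $\E|X_1|^p<\infty$.

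The paper closes this gap by a truncation argument (\refLs{LUI} and \ref{LLUI}): one writes $f=f_B+(f-f_B)$ with $f_B:=f\,\indic{|f|\le B}$; since $f_B$ is bounded, \AAA{q} holds for $f_B$ for \emph{every} $q$, so the $L^p$ bound applied to $f_B$ at exponent $2p$ gives a uniformly $p$-th power integrable family, while the same $L^p$ bound applied to $f-f_B$ gives an $L^p$ error that is small uniformly in $n$ as $B\to\infty$; \refL{LLUI} then pieces these together. You would need to add this truncation step to make your argument reach the exponent $p$ itself. A secondary (but not fatal) deviation: for the $L^p$ bound you propose to use the Hoeffding projection decomposition into degenerate components and "Rosenthal-type" inequalities, whereas the paper uses instead the \emph{ordered} decomposition \eqref{FF}--\eqref{F}, which produces genuine martingale differences (after splitting into $m+1$ interleaved subsequences to handle the $m$-dependence) and allows a direct application of Burkholder's inequality — this is rather more elementary than invoking Rosenthal-type bounds for degenerate $U$-statistics, and the paper also obtains the maximal-function version for free, which is needed later in \refSs{Sfun} and \ref{Srenew}.
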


The proof is given in \refS{Smom},
where we also give related
estimates for maximal functions.

\subsection{Functional limit theorems}\label{SSfun}

We can extend \refT{TUMD} to functional convergence.
For unconstrained \Ustat{s},
this was done by \citet{MillerSen}
in the classical case of independent $X_i$ and symmetric $f$;
the asymmetric case is \cite[Theorem 3.2]{SJ332};
furthermore, \citet{Yoshihara92} proved the case of dependent $X_i$
satisfying a suitable mixing condition 
(assuming a technical condition on $f$ besides symmetry).

\begin{theorem}  \label{TG}
Suppose that \AAA2 holds.
Then as \ntoo, with $b=b(\cD)$,
in $D\ooo$,
\begin{align}\label{tg1}
  \frac{U\nt(f;\cD)-\E U\nt(f;\cD)}{n^{b-1/2}}\dto Z(t),\qquad t\ge0,
\end{align}
where $Z(t)$ is a continuous centered Gaussian process.
Equivalently, in $D\ooo$, 
\begin{align}\label{tg2}
  \frac{U\nt(f;\cD)-(\mu_\cD/b!)n^bt^b}{n^{b-1/2}}&\dto Z(t),\qquad t\ge0
.\end{align}

The same holds for exact constraints.
Moreover, the results hold jointly for any finite set of $f$ and $\cD$
(possibly with different $\ell$ and $b$), with limits $Z(t)$ depending on
$f$ and $\cD$.
\end{theorem}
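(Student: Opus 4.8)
The plan is to reduce the functional statement to the finite-dimensional central limit theorem of \refT{TUMD} (in its joint/Cram\'er--Wold form, \refR{RCW}) together with a tightness argument in $D\ooo$. First I would observe that the two formulations \eqref{tg1} and \eqref{tg2} are equivalent: by \refT{TE} we have $\E U\nt(f;\cD)=(\mu_\cD/b!)\fnt^b+O(\fnt^{b-1})=(\mu_\cD/b!)n^bt^b+O(n^{b-1})$ uniformly for $t$ in a compact interval, and the error, after dividing by $n^{b-1/2}$, tends to $0$ uniformly; so the two centering sequences differ by a term that is $o(1)$ in $D[0,T]$ for each fixed $T$. Hence it suffices to prove \eqref{tg1}.

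For the finite-dimensional distributions, fix $0\le t_1<\dots<t_r$. I want to write each $U\nt(f;\cD)$ for $t=t_k$ in terms of quantities already covered by \refT{TUMD}. The natural device is to split the index range $[\fnt]$ according to a partition point: writing, for $s<t$, $U\nt(f;\cD)-U_{\fns}(f;\cD)$ as a sum over $\ell$-tuples not entirely contained in $[\fns]$, one sees (using the reduction to block-factor / \mdep{} structure from \refS{Slem}, exactly as in the proof of \refT{TUMD}) that increments over disjoint, well-separated blocks are, up to negligible boundary terms of lower order, independent and each of the form of a constrained \Ustat{} over a shorter interval. Thus any fixed linear combination $\sum_k a_k U_{\floor{nt_k}}(f;\cD)$ is, up to an $\op(n^{b-1/2})$ error, a linear combination of the independent block contributions, and the Lindeberg/Lyapunov CLT for triangular arrays of $m$-dependent summands — precisely the machinery underlying \refT{TUMD} — gives joint asymptotic normality with the covariance $\Cov(Z(s),Z(t))$ obtained as a limit of normalized covariances $n^{-(2b-1)}\Cov(U_{\fns}(f;\cD),U\nt(f;\cD))$. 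Since $U\nt(f;\cD)$ is monotone in the obvious sense only in special cases, I would instead argue directly that this covariance converges: the same computation that yields \eqref{tumd1} (carried out with two different truncation levels $\fns$ and $\fnt$) shows the limit exists and is a function $c(s,t)$ that is continuous (indeed, one checks $c(s,t)=c(s\wedge t,s\wedge t)+\text{(cross terms)}$, all polynomial-type limits in $s,t$), which identifies the limiting Gaussian process $Z$.

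For tightness in $D\ooo$ it is enough to prove tightness in $D[0,T]$ for each $T$, and for that I would use the standard moment criterion: it suffices to bound, for $0\le s\le t\le T$,
\begin{align}\label{TGtight}
  \E\Bigabs{\frac{U\nt(f;\cD)-\E U\nt(f;\cD)-\bigpar{U_{\fns}(f;\cD)-\E U_{\fns}(f;\cD)}}{n^{b-1/2}}}^{p}
  \le C\,(t-s)^{\gamma}
\end{align}
for some $p$ and $\gamma>1$, or — since we only assume \AAA2 — the weaker $L^2$ second-moment version together with a chaining/three-increment argument. The increment $U\nt(f;\cD)-U_{\fns}(f;\cD)$ is itself a sum of constrained \Ustat{s} over index ranges of total length $O(n(t-s)+1)$, so the variance estimate behind \eqref{tumd1}, applied to these pieces, gives a bound of order $n^{2b-1}\bigpar{(t-s)+1/n}$; this is the discrete-time analogue of a Brownian-type bound and, via the standard argument for càdlàg processes with no fixed jumps (jumps here are of size $O(n^{b-1}\cdot\text{(number of }\ell\text{-tuples through one point)})=O(n^{b-1})=o(n^{b-1/2})$), yields tightness. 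The continuity of the limit $Z$ then follows because the limiting covariance $c(s,t)$ is continuous and the approximating processes have vanishing maximal jump after normalization.

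Finally, the joint statement over finitely many $(f,\cD)$ is immediate: apply the Cram\'er--Wold device to all the coordinate processes simultaneously, the finite-dimensional convergence then being a single instance of the joint \refT{TUMD} (\refR{RCW}), and tightness of a finite vector of tight families being automatic. The case of exact constraints $\cDq$ is identical, using the corresponding parts of \refTs{TE} and \ref{TUMD}.

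\medskip
The main obstacle I expect is the tightness estimate \eqref{TGtight}: one must show the second moment of a normalized increment of a \emph{constrained} \Ustat{} over a short interval $[\fns,\fnt]$ is $O\bigpar{(t-s)+1/n}$ uniformly, and because the constraint couples indices across the cut point $\fns$, the increment is not simply a constrained \Ustat{} on $[\fns+1,\fnt]$ but also includes "straddling" tuples; controlling the latter requires the block decomposition of \refS{Slem} and a careful count showing their contribution is of the same (or lower) order. Everything else is a routine adaptation of the corresponding unconstrained functional CLT (\cite[Theorem 3.2]{SJ332}, \cite{MillerSen}) combined with the reductions already performed for \refT{TUMD}.
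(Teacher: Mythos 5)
Your plan — finite-dimensional convergence via Cram\'er--Wold plus a tightness estimate — is a genuinely different route from the one the paper takes, but as sketched it has two real gaps.

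First, the finite-dimensional argument is not sound as stated. You write $U_{\fnt}(f;\cD)-U_{\fns}(f;\cD)$ as a sum over $\ell$-tuples not contained in $[\fns]$ and assert that ``increments over disjoint, well-separated blocks are, up to negligible boundary terms of lower order, independent.'' For a \Ustat{} of order $\ell\ge2$ this is false: a tuple with $i_\ell\in(\fns,\fnt]$ typically still has $i_1,\dots,i_{\ell-1}$ spread over $[1,\fns]$, and the Hajek-projection part of the increment contains a term of the form $\bigpar{a_{j,k,\fnt}-a_{j,k,\fns}}f_j(X_k)$ summed over $k\le\fns$, which is of the full order $n^{\ell-1/2}$ and is a functional of the \emph{old} variables. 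So the increment is not asymptotically independent of $U_{\fns}$, and the linear combination $\sum_k a_k U_{\floor{nt_k}}$ is not a constrained \Ustat{} with a fixed kernel but a genuinely $n$-dependent weighted one, so it is not ``a single instance of the joint \refT{TUMD}.'' You would have to carry out a Hoeffding-type decomposition anyway to identify what is and is not negligible.

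Second — the obstacle you yourself flag — the tightness estimate does not close under \AAA2 alone. Your second-moment bound $\E|X_n(t)-X_n(s)|^2\le C\bigpar{(t-s)+1/n}$ gives exponent $\gamma=1$ in Billingsley's moment criterion, which requires $\gamma>1$; passing to the three-increment form via Cauchy--Schwarz still yields only $|F(t)-F(r)|^{2\alpha}$ with $\alpha=1/2$, which is borderline and insufficient in general. The classical way to get tightness with only second moments is to exploit the partial-sum (martingale-type) structure directly, which is exactly what the paper does: it applies the Hoeffding decomposition \eqref{maya}, shows via \refL{LB} that the maximal function of the residual $U^*_n(\fx;>m)$ is $O(n^{\ell-1})=o(n^{\ell-1/2})$ in $L^2$, and then writes the main term $\hU_{\fnt,j}$ by summation by parts as a continuous functional of the partial-sum path $n^{-1/2}S_{\fnt}(f_j)$, to which Billingsley's $m$-dependent Donsker theorem applies. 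Tightness thus comes for free from the functional CLT for sums, and the fd distributions are read off from the same representation; there is no separate tightness estimate to prove. The constrained and exactly constrained cases then reduce to the unconstrained one via \refL{L2} and \eqref{Ucsum}, exactly as you anticipated. In short: your reduction from \eqref{tg2} to \eqref{tg1} and your handling of the constraint are fine, but the heart of the proof — fd convergence and tightness — really needs the Hoeffding projection plus a sharpened maximal inequality, and your block-independence heuristic does not substitute for it.
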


The proof is given in \refS{Sfun}.

\begin{remark}\label{RZ}
A comparison between \eqref{tg1} and \eqref{tumd2} yields
$Z(t)\sim \N(0,t^{2b-1}\gss)$, 
with $\gss$ as in \refT{TUMD}.
Equivalently, $\Var Z(t) = t^{2b-1}\gss$, which can be calculated by \refR{Rgss}.
Covariances $\Cov\bigpar{Z(s),Z(t)}$ 
can be calculated by the same method
and \eqref{Zt} in the proof; we leave the details to the reader.
Note that these covariances
determine the distribution of the process $Z$.
\end{remark}

\subsection{Renewal theory}\label{SSrenew}

Assume further that $h:\cS\to\bbR$ is
another (fixed) measurable function, with
\begin{align}\label{nu}
  \nu:=\E h(X_1)>0.
\end{align}
We define
\begin{align}\label{Sn}
  S_n=S_n(h):=\sumin h(X_i),
\end{align}
and, for $x>0$,
\begin{align}
  N_-(x)&:=\sup\set{n\ge0:S_n\le x},\label{N-}
\\
  N_+(x)&:=\inf\set{n\ge0:S_n> x}.\label{N+}
\end{align}
$N_-(x)$ and $N_+(x)$ are finite a.s.\ by
the law of large numbers for $S_n$ \eqref{llnh}; see further \refL{LN}.
We let $N_\pm(x)$ denote either $N_-(x)$ or $N_+(x)$, in statements and
formulas that are valid for both.

\begin{remark}\label{Rhf}
  In \cite{SJ332}, we consider instead of $h(x)$, more generally, a function
  of several variables, and define $N_\pm$ using the corresponding \Ustat{}
  instead of $S_n$. We believe that the results of the present paper can be
  extended  to that setting, but we have not pursued this,
and leave it as an open problem.
\end{remark}

\begin{remark}\label{R>=}
If $h(X_1)\ge0$ a.s., which often is assumed in renewal theory, 
then $N_+(x)=N_-(x)+1$. However, if $h$ may be negative (still assuming
\eqref{nu}), then $N_-(x)$ may be larger than $N_+(x)$.
Nevertheless, the difference is typically small, and we obtain the same
asymptotic results for both $N_+$ and $N_-$.
(We can also obtain the same results if we instead use $S_n<x$ or $S_n\ge x$
in the definitions.)
\end{remark}

In this situation, we have the following limit theorems, which extend
results in \cite{SJ332}.
Proofs are given in \refS{Srenew}.
For an application, see \cite{SJ367}.

\begin{theorem}\label{TUUN}
With the assumptions and notations of \refT{TUMD}, 
assume \AAA2, and
suppose also that
$\mux:=\E h(X_1)>0$
and $\E h(X_1)^2<\infty$.
Then, with notations as above, 
as \xtoo,
\begin{equation}\label{cvtauu}
\frac{U_{N_\pm(x)}(f;\cD)-\mu_\cD{\mux}^{-b}{b!}\qw x^{b}}
{x^{b-1/2}} 
\dto \N\bigpar{0,\gamx^2},
\end{equation}
for some $\gamxx=\gamxx(f;h;\cD)\ge0$.

The same holds for exact constraints.
Moreover, the results hold jointly for any finite set of $f$ and $\cD$
(possibly with different $\ell$ and $b$).
\end{theorem}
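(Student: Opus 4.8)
The plan is to combine the functional limit theorem \refT{TG} with a random time substitution, in the spirit of the corresponding renewal result in \cite{SJ332}. First, note that $S_n=\sumin h(X_i)=U_n(h)$ is the unconstrained \Ustat{} with $\ell=1$; for this $h$ we have $b(\cDoo)=1$, \AAA2 for $h$ is precisely the hypothesis $\E h(X_1)^2<\infty$, and, by \refT{TE} together with $\E U_n(h)=n\nu$, the constant $\mu_{\cDoo}$ for $h$ equals $\E h(X_1)=\nu>0$. Hence \refT{TG}, applied jointly to the pair $\{(f,\cD),(h,\cDoo)\}$, gives, in $D\ooo\times D\ooo$,
\begin{align*}
\Bigpar{\frac{U\nt(f;\cD)-(\mu_\cD/b!)\,n^bt^b}{n^{b-1/2}},\ \frac{S\nt-\nu n t}{n^{1/2}}}\dto\bigpar{Z(t),W(t)},
\end{align*}
where $Z$ is the continuous centered Gaussian process of \refT{TG} and $W$ is a continuous centered Gaussian process with $\Var W(t)=\gss(h)\,t$ (Brownian motion up to scaling, as $\ell=1$). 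Both limit processes have a.s.\ continuous paths; by the Skorokhod representation theorem I may assume this convergence holds a.s., locally uniformly in $t$. Write $W_n(t):=n^{-1/2}\bigpar{S\nt-\nu n t}$ and $Z_n(t):=n^{-(b-1/2)}\bigpar{U\nt(f;\cD)-(\mu_\cD/b!)n^bt^b}$, so that $S_k=\nu k+\sqrt n\,W_n(k/n)$ and $U_k(f;\cD)=(\mu_\cD/b!)\,k^b+n^{b-1/2}Z_n(k/n)$ for every integer $k\ge0$, with $(Z_n,W_n)\to(Z,W)$ a.s.\ locally uniformly.

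Next I would pin down the asymptotics of $N_\pm(x)$. The strong law of large numbers for $S_n$ (the case $\ell=1$ of \refT{TLLN}, or the classical \mdep{} LLN) gives $S_n/n\asto\nu>0$; together with the defining inequalities $S_{N_-(x)}\le x<S_{N_-(x)+1}$ (and the analogous $S_{N_+(x)-1}\le x<S_{N_+(x)}$) this yields that $N_\pm(x)$ is a.s.\ finite and $N_\pm(x)/x\asto\nuqw$. Put $n=n(x):=\floor{x/\nu}$, so $n\to\infty$ and $\tau_n:=N_\pm(x)/n\to1$ as \xtoo. Substituting the identity $S_k=\nu k+\sqrt n\,W_n(k/n)$ into the above inequalities and using that $W_n(s_n)\to W(1)$ whenever $s_n\to1$ (by local uniform convergence of $W_n$ and continuity of $W$ at $1$), one obtains
\begin{align*}
N_\pm(x)=\frac x\nu-\frac{\sqrt n}{\nu}\,W(1)+\op\bigpar{\sqrt n},
\end{align*}
with the \emph{same} $W(1)$ for $N_+$ and $N_-$; in particular $N_+(x)-N_-(x)=\op(\sqrt n)$, consistent with \refR{R>=}.

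Finally I would substitute $k=N_\pm(x)$ into the first component. Since $\floor{n\tau_n}=N_\pm(x)$, the identity from the first paragraph gives $U_{N_\pm(x)}(f;\cD)=\tfrac{\mu_\cD}{b!}\,N_\pm(x)^b+n^{b-1/2}Z_n(\tau_n)$, where $Z_n(\tau_n)\to Z(1)$ by the same Anscombe-type argument. Expanding $N_\pm(x)^b=(x/\nu)^b+b(x/\nu)^{b-1}\bigpar{N_\pm(x)-x/\nu}+\Op\bigpar{x^{b-1}}$, inserting the expansion of $N_\pm(x)$ from the previous paragraph, and using $\sqrt n\sim\sqrt{x/\nu}$, all error terms are $\op\bigpar{x^{b-1/2}}$ and one is left with
\begin{align*}
\frac{U_{N_\pm(x)}(f;\cD)-\mu_\cD\nu^{-b}(b!)\qw x^{b}}{x^{b-1/2}}
\dto\nu^{-(b-1/2)}\Bigpar{Z(1)-\frac{\mu_\cD}{(b-1)!\,\nu}\,W(1)}.
\end{align*}
The right-hand side is a linear combination of the jointly Gaussian pair $(Z(1),W(1))$, hence $\N(0,\gamxx)$ for some $\gamxx=\gamxx(f;h;\cD)\ge0$; this is \eqref{cvtauu}. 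The exactly constrained case is identical, using the $\cDq$-version of \refT{TG}, and joint convergence for finitely many $(f,\cD)$ follows at once since \refT{TG} already provides joint functional convergence.

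I expect the main obstacle to be the two random time changes above (the expansion of $N_\pm(x)$ and the substitution $Z_n(\tau_n)\to Z(1)$). They rest on having the \emph{joint} functional convergence $(Z_n,W_n)\to(Z,W)$ — this coupling is exactly what makes the limit in \eqref{cvtauu} a single Gaussian with the correct variance — and on transferring deterministic-time convergence to the random times $\tau_n\to1$. The latter is a standard Anscombe/continuous-mapping step, but some care is needed: one works on a Skorokhod-coupled space, uses local uniform convergence and continuity of the limit processes, and treats $N_+(x)$ and $N_-(x)$ separately (they need not differ by $1$ when $h$ takes negative values, so $(S_k)$ need not be monotone), checking that these discrepancies and the lower-order boundary terms are negligible at scale $\sqrt n$.
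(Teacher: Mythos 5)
Your proof is correct and follows essentially the same route as the paper's: joint functional convergence of $(U\nt(f;\cD),S\nt)$ from \refT{TG}, Skorohod coupling, an Anscombe-type random-time substitution using $N_\pm(x)/n\to$ const a.s., the expansion $\nu N_\pm(x)=x-W_h(\cdot)\sqrt{x}+o(\sqrt x)$ obtained from the defining inequalities of $N_\pm$, and a binomial expansion of $N_\pm(x)^b$. The only cosmetic difference is the normalization $n=\floor{x/\nu}$ (so the limit is evaluated at $t=1$) rather than the paper's $n=\ceil{x}$ (limit at $t=1/\nu$); the resulting expressions for $\gamxx$ are the same after scaling.
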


\begin{theorem} \label{TUUN2}
Suppose in addition to the hypotheses in \refT{TUUN} that $h(X_1)$ is
integer-valued and that $(X_i)\xoo$ are independent.
Then \eqref{cvtauu} holds also conditioned on $S_{N_-(x)}=x$ 
for integers $x\to\infty$.
\end{theorem}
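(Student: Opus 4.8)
The plan is to reduce the conditioned statement to \refT{TUUN} by exploiting independence and the arithmetic structure of $h(X_1)$. First I would fix attention on $N_-(x)$ (for integer $x$), since when $h(X_1)$ is integer-valued every partial sum $S_n$ is an integer, and the event $\set{S_{N_-(x)}=x}$ is precisely the event that the random walk $(S_n)$ actually hits the level $x$; call this event $H_x$. The key probabilistic fact is that, conditionally on $H_x$ and on $N_-(x)=n$, the pair $\bigpar{(X_1,\dots,X_n),\,n}$ has a tractable law: by independence, $\set{S_n=x,\ S_j<x\text{ for }j<n}$ together with $\set{S_{j}\le x\text{ for all }j>n\text{ up to the first strict crossing... }}$ — more carefully, $\set{N_-(x)=n}\cap H_x=\set{S_n=x}\cap\set{h(X_{n+1})\le0}\cap\dots$, and the ``future'' constraint involves only $X_{n+1},X_{n+2},\dots$, which are independent of $(X_1,\dots,X_n)$. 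So conditioning only affects the joint law of $(X_1,\dots,X_n)$ through the single constraint $S_n=x$, plus a reweighting of the distribution of $n$.

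The second step is to show that, under this conditioning, $N_-(x)/x\pto1/\nu$ still holds (indeed with fluctuations of order $\sqrt x$), just as in the unconditioned renewal law of large numbers; this is the conditioned analogue of \refL{LN}. I would get this from the local central limit theorem for the integer-valued random walk $S_n$: $\P(S_n=x)\sim (2\pi\sigma_h^2 n)\qqw$ uniformly for $n=x/\nu+O(\sqrt x)$ (where $\sigma_h^2=\Var h(X_1)$, which we may assume positive; the lattice span can be normalized to $1$ without loss of generality), so that the conditional law of $N_-(x)$ is a harmless reweighting — by a factor $\asymp 1$ over the relevant window — of its unconditional law. In particular $\P(H_x)\asymp x\qqw$, and for any event $A_n$ depending on $(X_1,\dots,X_n)$ one has $\P(A_{N_-(x)}\mid H_x)$ comparable to an average of $\P(A_n\mid S_n=x)$ over $n$ in the window.

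The third and main step is to transfer the central limit theorem \eqref{cvtauu} through this conditioning. Here I would follow the structure of the proof of \refT{TUUN} in \refS{Srenew}: write $U_n(f;\cD)$ via its Hoeffding-type decomposition, so that after centering the dominant contribution is a linear statistic $\sum_{i\le n} g(X_i)$ for an appropriate $g$ (plus a lower-order degenerate part), and combine it with $S_n=\sum_{i\le n}h(X_i)$. Evaluated at the stopping index $N_-(x)$ and conditioned on $S_{N_-(x)}=x$, the $S$-component is pinned, which removes exactly the part of the asymptotic variance coming from the fluctuation of $N_\pm$; what survives is the conditional variance of $\sum g(X_i)$ given $\sum h(X_i)=x$, and a two-dimensional local CLT for the pair $\bigpar{\sum g(X_i),\sum h(X_i)}$ shows this converges, after scaling by $x^{b-1/2}$, to a normal law with the \emph{same} variance $\gamx^2$ as in \eqref{cvtauu}. (That the constant is unchanged is the content of the claim: conditioning on $S_{N_-(x)}=x$ versus not conditioning gives the same limiting variance — this is consistent with the already-noted robustness across $N_+$, $N_-$, and strict/non-strict versions in \refR{R>=}.) The higher-order degenerate terms in the decomposition are $\op(x^{b-1/2})$ unconditionally by \refT{TUUN}, and since conditioning costs only a factor $\P(H_x)\qw=O(\sqrt x)$ in probability estimates while those terms are in fact $\Op(x^{b-1})$ or smaller in $L^2$, they remain $\op(x^{b-1/2})$ conditionally; one has to check this bookkeeping carefully.

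The main obstacle I expect is precisely this last bookkeeping: making rigorous the passage from unconditional $\op$/$\Op$ bounds to conditional ones given an event of probability only $\Theta(x\qqw)$, which forces one to upgrade the relevant error estimates from ``in probability'' to genuine $L^2$ (or higher-moment) bounds — available here because \AAA2 is assumed and $f$-dependent degenerate terms can be controlled in $L^2$ by the variance computations underlying \refT{TUMD} — and then to invoke a joint local limit theorem for $\bigpar{S_n,\sum g(X_i)}$ rather than just a one-dimensional one. The independence hypothesis is used essentially here (it makes $(X_1,\dots,X_n)$ genuinely i.i.d.\ so that local CLTs apply cleanly and the ``future'' defining $N_-$ decouples), which is why the theorem is stated only for independent $(X_i)$ and only for $N_-$.
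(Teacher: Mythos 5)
Your plan diverges from the paper's proof and contains a genuine error at its core. First, the estimate $\P(H_x)\asymp x^{-1/2}$ is wrong: because $h$ is integer-valued with positive drift, $\P(S_{N_-(x)}=x)=\sum_n\P(S_n=x,\,S_j>x\ \forall j>n)=q\sum_n\P(S_n=x)\to q/\nu>0$ (with $q$ the probability that a fresh walk stays strictly positive), so the conditioning event has probability bounded away from $0$ — a fact the paper uses explicitly in the proof of \refT{TUUNp}. Thus there is no ``conditioning on a $\Theta(x^{-1/2})$ event'' bookkeeping problem at all. Second, and more importantly, your claim that ``the $S$-component is pinned, which removes exactly the part of the asymptotic variance coming from the fluctuation of $N_\pm$'' misidentifies what the conditioning does: $S_{N_-(x)}-x$ is $O_p(1)$ (an overshoot), while $N_-(x)-x/\nu$ is $\Theta_p(\sqrt x)$, and conditioning on the $O(1)$-scale quantity does \emph{not} pin the $\sqrt x$-scale fluctuation of $N_\pm$. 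Your variance argument is then internally inconsistent — you first say one part of the variance is removed, then conclude the variance is unchanged — without resolving the tension.

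The paper's argument is both shorter and avoids local CLTs entirely. It introduces the intermediate stopping time $N_+(x_-)$ with $x_-:=\lfloor x-\ln x\rfloor$; since this is a stopping time and the $X_i$ are independent, the increments after $N_+(x_-)$ are independent of $U_{N_+(x_-)}(f)$. The event $\{S_{N_-(x)}=x\}$ equals $\{R(x-1)=1\}$ with $R$ the overshoot, and because $\gD(x)=x-S_{N_+(x_-)}\pto\infty$, the overshoot $R(x-1)$ is determined by the post-$N_+(x_-)$ increments and converges in distribution, hence is asymptotically independent of $U_{N_+(x_-)}(f)$. Conditioning therefore leaves the asymptotic law of $U_{N_+(x_-)}(f)$ unchanged, and $U_{N_+(x_-)}(f)-U_{N_\pm(x)}(f)$ is negligible by the functional limit theorem \eqref{tg2}. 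If you want to pursue a local-CLT route, the correct statement is that, given $N_-(x)=n$ and $S_n=x$, the conditioning pins $W_h(\nu^{-1})$ for that $n$, but $n$ itself remains random with (to leading order) the same Gaussian profile as unconditionally, so the joint law of $(Z(\nu^{-1}),W_h(\nu^{-1}))$ is unchanged — not the picture in your sketch.
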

We consider here tacitly only $x$ such that
$\P\bigpar{S_{N_-(x)}=x}>0$.

\begin{remark}
 We prove \refT{TUUN2} only for independent $X_i$ (which, in any
case, is our main interest as said in the introduction.)
It seems likely that the result can be extended to at least some
\mdep{} $(X_i)$,
using a modification of the proof below and the \mdep{} renewal theorem
(under some conditions) 
\cite[Corollary 4.2]{AlsmeyerH01}, but we have not pursued this.
\end{remark}

\begin{theorem}\label{TUUNp}
Suppose in addition to the hypotheses in \refT{TUUN} that \AAA{p} holds and
$\E\bigsqpar{|h(X_1)|^p}<\infty$ for every $p<\infty$.
Then all moments converge in \eqref{cvtauu}.

Under the additional hypothesis in \refT{TUUN2}, this holds also conditioned
on  $S_{N_-(x)}=x$.  
\end{theorem}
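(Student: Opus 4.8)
The plan is to deduce the moment convergence in \eqref{cvtauu} from the convergence in distribution already supplied by \refT{TUUN}, by establishing uniform integrability. Write
\[
  W_x:=\frac{U_{N_\pm(x)}(f;\cD)-\mu_\cD\mux^{-b}(b!)\qw x^{b}}{x^{b-1/2}},
  \qquad x\ge1,
\]
so that $W_x\dto\N(0,\gamxx)$ by \refT{TUUN}. Since the present hypotheses grant \AAA{q} and $\E\bigsqpar{|h(X_1)|^{q}}<\infty$ for \emph{every} $q<\infty$, it suffices to prove the single uniform bound $\E|W_x|^{q}=O(1)$ as $x\to\infty$, for every $q<\infty$: for a fixed $p$ this makes $|W_x|^{p}$ uniformly integrable, and uniform integrability together with $W_x\dto\N(0,\gamxx)$ yields convergence of all absolute and ordinary moments of order $\le p$. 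The conditioned version in \refT{TUUN2} will be treated at the end.

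Next I would split $x^{b-1/2}W_x=A_x+B_x$, where, with $\phi(n):=\E U_n(f;\cD)$, I set $A_x:=U_{N_\pm(x)}(f;\cD)-\phi\bigpar{N_\pm(x)}$ and $B_x:=\phi\bigpar{N_\pm(x)}-\mu_\cD\mux^{-b}(b!)\qw x^{b}$. For the renewal-fluctuation term $B_x$: by \refT{TE}, $\phi(n)=(\mu_\cD/b!)n^{b}+O(n^{b-1})$, so, with $R_x:=N_\pm(x)-\mux\qw x$ and the binomial expansion $N_\pm(x)^{b}-(\mux\qw x)^{b}=\sum_{j=1}^{b}\binom bj(\mux\qw x)^{b-j}R_x^{j}$, one obtains $|B_x|\le C\sum_{j=1}^{b}x^{b-j}|R_x|^{j}+C\,N_\pm(x)^{b-1}+Cx^{b-1}$. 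Standard renewal-theoretic moment estimates — derivable from Rosenthal's inequality for \mdep{} partial sums together with $\E\bigsqpar{|h(X_1)|^{r}}<\infty$ for all $r$ — give that $N_\pm(x)$ has all moments, with $\E\bigsqpar{N_\pm(x)^{r}}=O(x^{r})$, $\E|R_x|^{r}=O(x^{r/2})$, and $\P\bigpar{N_\pm(x)>2\mux\qw x}=O(x^{-r})$, for every $r<\infty$. Hence $\E\bigabs{x^{b-j}R_x^{j}}^{q}=O\bigpar{x^{(b-j)q}x^{jq/2}}=O\bigpar{x^{(b-j/2)q}}$, which equals $O\bigpar{x^{(b-1/2)q}}$ when $j=1$ and is $o$ of it when $j\ge2$, and the two remaining contributions are $o\bigpar{x^{(b-1/2)q}}$ as well; thus $\E\bigabs{B_x/x^{b-1/2}}^{q}=O(1)$. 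It is essential to note that $B_x$ is genuinely of the full order $x^{b-1/2}$ — the randomness of $N_\pm(x)$ feeds into the limiting variance $\gamxx$ — so it must be carried along, not discarded.

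For the term $A_x=U_{N_\pm(x)}(f;\cD)-\phi\bigpar{N_\pm(x)}$ I would split on the event $\set{N_\pm(x)\le2\mux\qw x}$. On it, $|A_x|\le M_x:=\max_{k\le\ceil{2\mux\qw x}}\bigabs{U_k(f;\cD)-\E U_k(f;\cD)}$, and the maximal-function estimates of \refS{Smom} give $\E\bigsqpar{M_x^{q}}=O\bigpar{x^{(b-1/2)q}}$. On the complementary, super-polynomially rare event, $|A_x|\le\bigabs{U_{N_\pm(x)}(f;\cD)}+\bigabs{\phi\bigpar{N_\pm(x)}}$, and one uses the crude a priori bound $\E\bigabs{U_n(f;\cD)}^{q}=O(n^{bq})$ — immediate from \AAA{q} in the uniform form \eqref{al2} together with $\bigabs{\sum_{1}^{M}a_i}^{q}\le M^{q-1}\sum|a_i|^{q}$ applied to the $M=O(n^{b})$ terms of $U_n(f;\cD)$ — the bound $|\phi(n)|=O(n^{b})$, the \CSineq, and the tail bound for $N_\pm(x)$; this contribution is $o\bigpar{x^{(b-1/2)q}}$ (indeed $o(1)$). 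Therefore $\E|A_x|^{q}=O\bigpar{x^{(b-1/2)q}}$, and combining with the estimate for $B_x$ gives $\E|W_x|^{q}=O(1)$ for every $q$, completing the proof of the moment convergence in \eqref{cvtauu}. The same argument, verbatim, covers the exactly-constrained statistic (with $\cDq$ in place of $\cD$); and since it yields uniform $L^{q}$-bounds, it also gives, via \Holder's inequality and the joint distributional convergence in \refT{TUUN}, the joint convergence of mixed moments for any finite family of $f$ and $\cD$.

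Finally, under the additional hypotheses of \refT{TUUN2} ($h(X_1)$ integer-valued and $(X_i)\xoo$ independent), the conditional convergence in distribution is \refT{TUUN2} itself, and conditional uniform integrability comes for free from the unconditional bound: since $\P\bigpar{S_{N_-(x)}=x}$ stays bounded away from $0$ as $x\to\infty$ along the relevant sublattice (by the renewal theorem), $\E\bigsqpar{|W_x|^{q}\mid S_{N_-(x)}=x}\le\E|W_x|^{q}/\P\bigpar{S_{N_-(x)}=x}=O(1)$ for every $q$, so $|W_x|^{p}$ is conditionally uniformly integrable for each $p$, and conditional moment convergence follows. I expect the real work to be in the third step: obtaining $L^{q}$-control of the constrained \Ustat{} at the \emph{random} index $N_\pm(x)$ with only polynomial moments available, which is precisely why one needs the maximal inequality for $U_k(f;\cD)$ over a window of length $\asymp x$ (rather than the pointwise bounds of \refT{Tmom}) together with the separate truncation on $\set{N_\pm(x)>2\mux\qw x}$.
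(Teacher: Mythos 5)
Your proposal is correct and is essentially the route the paper takes: the paper establishes the uniform $L^p$-bound of \refL{LM} (via the maximal-function estimates of \refT{TQ}, the tail bound for $N_\pm(x)$ of \refL{LS}, and the control of $(\nu N_\pm(x))^b-x^b$ through moment bounds on $x-\nu N_\pm(x)$), then deduces uniform integrability and moment convergence, and handles the conditional case exactly as you do by bounding $\P(S_{N_-(x)}=x)$ below. Your $A_x+B_x$ split (centering by $\phi(N_\pm(x))$) and your single truncation at $2\nu^{-1}x$ followed by appeal to super-polynomial tail decay are a cosmetic repackaging of the paper's $V_n=U_n(f;\cD)-(\mu_\cD/b!)n^b$ together with its dyadic decomposition over blocks $2^{k-1}Bx<N_\pm(x)\le 2^kBx$; you would still need that dyadic (or equivalent) sum to make the ``indeed $o(1)$'' claim rigorous when $N_\pm(x)$ is unbounded, but the ingredients you list (crude $O(n^{bq})$ bound, \CSineq, \refL{LS}-type tail estimate) are exactly what the paper uses there.
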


\begin{remark}
In \refT{TUUNp}, unlike \refT{Tmom}, we assume $p$-th moments for all $p$,
and conclude convergence of all moments. If we only want to show convergence
for a given $p$, some sufficient moment conditions on $f$ and $h$ can be
derived from the proof, but we do not know any sharp results and have not
pursued this.
Cf.\ \cite[Remark 6.1]{SJ332} and the references there.
\end{remark}

\section{Some lemmas}\label{Slem}
We give here some lemmas that will be used in the proofs in later sections.
In particular, they will enable us to reduce the constrained cases to the
unconstrained one.

Let $\cD=(d_1,\dots,d_\elli)$ be a given constraint.
Recall that $b=b(\cD)$ is given by \eqref{b},
and let $1=\gb_1<\dots<\gb_b$ be the indices in $[\ell]$ 
just after the unconstrained gaps; in other words, $\gb_j$ are defined by
$\gb_1:=1$ and $d_{\gb_j-1}=\infty$ for $j=2,\dots,b$.
For convenience we also define   $\gb_{b+1}:=\ell+1$.
We say that the constraint $\cD$ separates the index set $[\ell]$ 
into the $b$ \emph{blocks} $B_1,\dots,B_b$,
where $B_k:=\set{\gb_k,\dots,\gb_{k+1}-1}$.
Note that the constraints \eqref{constr} thus are constraints on $i_j$ for
$j$ in each block separately.

\begin{lemma}
  \label{L1}
Let $(X_i)\xoo$ be a stationary \mdep{} sequence of random variables with
values in $\cS$, let $\ell\ge1$, and let $f:\cS^\ell\to\bbR$ satisfy
\AAA2.
Let $\cD=(d_1,\dots,d_{\ell-1})$ be a constraint.
Then
\begin{align}\label{l1}
  \Var \bigsqpar{U_n(f;\cD)} = O\bigpar{n^{2b(\cD)-1}},
\qquad n\ge1.
\end{align}
Furthermore,
\begin{align}\label{l1=}
  \Var \bigsqpar{U_n(f;\cD)-U_{n-1}(f;\cD)} = O\bigpar{n^{2b(\cD)-2}},
\qquad n\ge1.
\end{align}

Moreover, the same estimates hold for $U_n(f;\cDq)$.
\end{lemma}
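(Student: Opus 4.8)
The plan is to prove \refL{L1} by reducing the constrained variance bound to a counting estimate over pairs of index tuples, carefully exploiting both the block structure imposed by $\cD$ and the \mdep ce of $(X_i)$. Write $U_n(f;\cD)=\sum_{\bi}f(X_{\bi})$, where $\bi=(i_1,\dots,i_\ell)$ ranges over the admissible tuples (satisfying \eqref{constr}), and $f(X_{\bi}):=f(X_{i_1},\dots,X_{i_\ell})$. Then
\begin{align}\label{planvar}
  \Var\bigsqpar{U_n(f;\cD)}
  = \sum_{\bi,\bj}\Cov\bigpar{f(X_{\bi}),f(X_{\bj})},
\end{align}
the sum over ordered pairs of admissible tuples. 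By \AAA2 and Cauchy--Schwarz, each covariance is $O(1)$, so the first step is purely combinatorial: bound the number of pairs $(\bi,\bj)$ for which the covariance can be nonzero. The key observation is that if the index sets $\set{i_1,\dots,i_\ell}$ and $\set{j_1,\dots,j_\ell}$ are at distance $>m$ from each other, then $f(X_{\bi})$ and $f(X_{\bj})$ are independent by \mdep ce, so the covariance vanishes. Hence I only need to count pairs whose index sets come within distance $m$; equivalently, some $i_a$ and $j_c$ satisfy $|i_a-j_c|\le m$.

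The second step is the counting itself, and this is where the block structure matters. An admissible tuple $\bi$ is determined by the $b$ "block-leader" coordinates $i_{\gb_1}<\dots<i_{\gb_b}$ together with the within-block offsets, of which there are only finitely many choices (each constrained gap $d_j<\infty$ contributes a bounded offset). So the number of admissible $\bi$ is $\Theta(n^b)$, parametrized essentially by the $b$ leaders. Now fix such a pair $(\bi,\bj)$ with a "close" contact $|i_a-j_c|\le m$: choose the leaders of $\bi$ freely ($O(n^b)$ ways), then the contact forces one coordinate of $\bj$ to lie in an interval of bounded length around a coordinate of $\bi$, which pins down the block of $\bj$ containing $j_c$ to $O(1)$ positions, leaving only $b-1$ free leaders for $\bj$ ($O(n^{b-1})$ ways). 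Summing over the finitely many choices of $a$, $c$ and the offsets gives $O(n^{b}\cdot n^{b-1})=O(n^{2b-1})$ pairs with nonzero covariance, and \eqref{planvar} yields \eqref{l1}. The exactly-constrained case $U_n(f;\cDq)$ is literally the same argument, since $\cDq$ also has exactly $b$ blocks and bounded within-block offsets; alternatively one can invoke \eqref{Ucsum} and note a finite sum of such \Ustats{} still satisfies the bound (using $\Var(A+B)\le 2\Var A+2\Var B$ and the same bound on cross-covariances, which are bounded by the counting argument applied to a mixed pair).

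For the increment bound \eqref{l1=}, note that $U_n(f;\cD)-U_{n-1}(f;\cD)=\sum_{\bi:\,i_\ell=n}f(X_{\bi})$ is the sum over admissible tuples whose \emph{largest} index equals $n$. Such a tuple is determined by the leader of its last block (which is within bounded distance of $n$, hence $O(1)$ choices) together with the other $b-1$ leaders, so there are $O(n^{b-1})$ of them. Repeating the covariance/counting argument above with this restricted family — free leaders now number $b-1$ on the $\bi$ side and, after a close contact, $b-2$ on the $\bj$ side (one leader of $\bj$ also being pinned near $n$ since its largest index is $n$, and another pinned by the contact), unless the contact is itself near $n$ in which case we still lose one degree of freedom — gives $O(n^{b-1}\cdot n^{b-1})=O(n^{2b-2})$ nonzero-covariance pairs, proving \eqref{l1=}; and again the same works for $\cDq$.

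The main obstacle is organizing the bookkeeping in the counting step so that the "degrees of freedom" argument is rigorous across \emph{all} ways two tuples can be close: one must range over which block of $\bi$ and which block of $\bj$ contain the close contact and over the bounded within-block offsets, and check in each case that fixing one contact coordinate genuinely reduces the count by a factor $n$ (not merely that it looks like it should). The cleanest way is to phrase it as: the leaders of $\bi$ give a factor $n^{b}$; then for each unordered pair of blocks and each offset pattern, the event $|i_a-j_c|\le m$ confines one leader of $\bj$ to an interval of length $2m+O(1)$ around a fixed linear combination of $\bi$'s coordinates, contributing $O(1)$, and the remaining $b-1$ leaders of $\bj$ contribute $O(n^{b-1})$ — with the finitely many cases summed at the end. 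For \eqref{l1=} one additionally tracks the constraint $i_\ell=n$ (resp.\ $j_\ell=n$) as pinning one further leader, and the only delicate sub-case is when the contact block of $\bj$ is also its last block, which still leaves at most $b-2$ free leaders on the $\bj$ side after using both $j_\ell=n$ and the contact (or exactly $b-1$ on one side and $b-1$ on the other when the pins coincide, still giving $n^{2b-2}$). None of this requires computing constants, only tracking which coordinates are forced.
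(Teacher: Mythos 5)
Your proof is correct and takes essentially the same approach as the paper: expand the variance as a double sum of covariances, use \AAA2 with Cauchy--Schwarz to bound each term, use $m$-dependence to kill terms where the two index sets are separated, and count the surviving pairs via the block-leader parametrization to get $O(n^{2b-1})$ (and $O(n^{2b-2})$ after pinning $i_\ell=n$). The only cosmetic difference is that for \eqref{l1=} the paper pins the last leader of $\bi$ alone and reruns the unchanged count on the $\bj$ side, which makes your sub-case analysis about whether the contact block of $\bj$ coincides with its last block unnecessary.
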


\begin{proof}
The definition \eqref{Uc} yields
  \begin{align}\label{b1}
\Var\bigsqpar{U_n(f;\cD)}&
=\sum_{\substack{1\le i_1<\dots<i_\ell\le n\\i_{k+1}-i_k\le d_k}}
\sum_{\substack{1\le j_1<\dots<j_\ell\le n\\j_{k+1}-j_k\le d_k}}
\Cov\bigpar{f\bigpar{X_{i_1},\dots,X_{i_\ell}},f\bigpar{X_{j_1},\dots,X_{j_\ell}}}
.  \end{align}
  Let $\dx$ be the largest finite $d_j$ in the constraint $\cD$, \ie,
  \begin{align}\label{D}
\dx:=\max_j \set{d_j:d_j<\infty}.    
  \end{align}
The constraints imply that for each block $B_q$ and all  indices $k\in B_q$, 
coarsely,
\begin{align}\label{b2}
0\le i_k-i_{\gb_q}\le \dx\ell
\qquad\text{and}
\qquad 0\le j_k-j_{\gb_q}\le \dx\ell
.\end{align}
It follows that if $|i_{\gb_r}-j_{\gb_s}|>\dx\ell+m$ for all $r,s\in[b]$,
then $|i_\ga-j_\gb|>m$ for all $\ga,\gb\in[\ell]$. 
Since $(X_i)\xoo$ is \mdep, this implies
that the two random vectors
$\bigpar{X_{i_1},\dots,X_{i_\ell}}$ and $\bigpar{X_{j_1},\dots,X_{j_\ell}}$
are independent, and thus the corresponding term in \eqref{b1} vanishes.

Consequently, we only have to consider terms in the sum in \eqref{b1}
such that 
\begin{align}\label{b3}
|i_{\gb_r}-j_{\gb_s}|\le \dx\ell+m  
\end{align}
for some $r,s\in[b]$. 
For each of the $O(1)$ choices of $r$ and $s$,
we can choose $i_{\gb_1},\dots,i_{\gb_b}$ in at most $n^b$ ways; then $j_{\gb_s}$
in $O(1)$ ways such that \eqref{b3} holds; then the remaining $j_{\gb_q}$
in $O(n^{b-1})$ ways; then, finally, all remaining $i_k$ and $j_k$ in $O(1)$
ways because of \eqref{b2}.  
Consequently, the number of non-vanishing terms in \eqref{b1} is
$O(n^{2b-1})$.
Moreover, each term is $O(1)$ by \eqref{al2} and the \CSineq, and thus
\eqref{l1} follows.

For \eqref{l1=}, we note that $U_n(f;\cD)-U_{n-1}(f;\cD)$ is the sum in
\eqref{Uc} with the extra restriction $i_\ell=n$.
Hence, its variance can be expanded as in \eqref{b1}, with the extra
restrictions $i_\ell=j_\ell=n$.
We then argue as above, but note that \eqref{b2} and $i_\ell=n$ imply that
there are only $O(1)$ choices of $i_b$, and hence $O(n^{b-1})$ choices of
$i_1,\dots,i_b$. We thus obtain $O\bigpar{n^{2b-2}}$ non-vanishing terms in
the sum, and \eqref{l1=} follows.

The argument for the exactly constrained $U_n(f;\cDq)$ is the same (and
slightly simpler). (Alternatively, we could do this case first, and
then use \eqref{Ucsum} to obtain the results for $U_n(f;\cD)$.)
\end{proof}

The next lemma is the central step in the reduction to the unconstrained case.

\begin{lemma}
  \label{L2}
Let $(X_i)\xoo$, $f:\cS^\ell\to\bbR$, 
and  $\cD=(d_1,\dots,d_{\ell-1})$ be
as in \refL{L1},  
and let
\begin{align}\label{ld}
  D:=\sum_{j:d_j<\infty}d_j.
\end{align}
Let $M>D$ and define 
\begin{align}\label{YX}
Y_i:=(X_i,X_{i+1},\dots,X_{i+M-1})\in \cS^M ,
\qquad i\ge1
.\end{align}
Then there exists a function $g=g_{\cDq}:(\cS^M)^b\to\bbR$ such that
for every $n\ge 0$,
\begin{align}\label{lx}
U_n(f;\cDq;(X_i)\xooo) = \sum_{j_1<\dots<j_b\le n-D}g\bigpar{Y_{j_1},\dots,Y_{j_b}}
=U_{n-D}\bigpar{g;(Y_i)\xooo},
\end{align}
with $U_{n-D}(g):=0$ when $n<D$.
Furthermore,
\begin{align}\label{ly}
  \E \bigabs{g\bigpar{Y_{j_1},\dots,Y_{j_b}}}^2<\infty,
\end{align}
for every $j_1<\dots<j_b$.
\end{lemma}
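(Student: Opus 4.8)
The plan is to exploit the \emph{block decomposition} of $\cD$ recalled above: inside each block $B_k=\set{\gb_k,\dots,\gb_{k+1}-1}$ the constraint in \eqref{Uc=} pins the indices $i_j$ down as a prescribed progression, so a constrained $\ell$-tuple $(i_1,\dots,i_\ell)$ is determined by the $b$ block-start indices $i_{\gb_1}<\dots<i_{\gb_b}$, the only remaining freedom being that consecutive blocks must not overlap. First I would fix notation for the offsets. For $k\in[b]$ and $j\in B_k$ set $c_j:=\sum_{r=\gb_k}^{j-1}d_r$ (an empty sum for $j=\gb_k$, so $c_{\gb_k}=0$; every $d_r$ occurring here is finite, since the infinite $d_r$ sit at $r=\gb_2-1,\dots,\gb_b-1$), let $D_k:=c_{\gb_{k+1}-1}$ be the span of block $k$, and let $\gs_k:=D_1+\dots+D_{k-1}$, so that $\gs_1=0$ and $\gs_{b+1}=\sum_{k=1}^b D_k=\sum_{j:d_j<\infty}d_j=D$. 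Writing $q(j)\in[b]$ for the block containing $j$, put $\hat c_j:=\gs_{q(j)}+c_j$; since $c_j\le D_{q(j)}$ and $\gs$ is nondecreasing, $\hat c_j\le\gs_{q(j)+1}\le\gs_{b+1}=D\le M-1$, and this bound (needing exactly $M>D$) is what makes the construction below legitimate.

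Next I would record the reindexing. If $1\le i_1<\dots<i_\ell\le n$ satisfies the constraint of \eqref{Uc=} (that is, $i_{j+1}-i_j=d_j$ whenever $d_j<\infty$), then $i_j=i_{\gb_{q(j)}}+c_j$ for every $j$, and the surviving conditions on the block starts are $i_{\gb_1}\ge1$, then $i_{\gb_{k+1}}>i_{\gb_{k+1}-1}=i_{\gb_k}+D_k$ for $1\le k<b$ (the inter-block gaps $d_{\gb_{k+1}-1}=\infty$ being unconstrained), and $i_\ell=i_{\gb_b}+D_b\le n$. Setting $\iota_k:=i_{\gb_k}-\gs_k$ one computes $\iota_{k+1}-\iota_k=i_{\gb_{k+1}}-i_{\gb_k}-D_k=i_{\gb_{k+1}}-i_{\gb_{k+1}-1}$, so the non-overlap conditions become precisely $\iota_1<\dots<\iota_b$; moreover $\iota_1=i_{\gb_1}\ge1$ and $\iota_b+D=i_{\gb_b}+D_b=i_\ell\le n$. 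Hence $(i_1,\dots,i_\ell)\mapsto(\iota_1,\dots,\iota_b)$ is a bijection from the set of $\ell$-tuples summed over in \eqref{Uc=} (for the given $n$) onto $\set{1\le\iota_1<\dots<\iota_b\le n-D}$, with inverse $i_j:=\iota_{q(j)}+\hat c_j$ (automatically increasing, by the same computation run backwards).

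Finally I would define $g=g_{\cDq}:(\cS^M)^b\to\bbR$, for $y_k=(y_k^{(0)},\dots,y_k^{(M-1)})\in\cS^M$, by
\begin{align*}
  g(y_1,\dots,y_b):=f\bigpar{y_{q(1)}^{(\hat c_1)},\dots,y_{q(\ell)}^{(\hat c_\ell)}},
\end{align*}
which is meaningful since $\hat c_j\le M-1$. With $y_k=Y_{\iota_k}$ we get $y_{q(j)}^{(\hat c_j)}=X_{\iota_{q(j)}+\hat c_j}=X_{i_{\gb_{q(j)}}+c_j}=X_{i_j}$, hence $g(Y_{\iota_1},\dots,Y_{\iota_b})=f(X_{i_1},\dots,X_{i_\ell})$; summing over the bijection of the previous paragraph gives \eqref{lx}, and when $n<D$ (indeed whenever $n-D<b$) both sides vanish, so the stated convention is consistent. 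For \eqref{ly}: given any $\iota_1<\dots<\iota_b$, the tuple $i_j:=\iota_{q(j)}+\hat c_j$ is increasing, so \AAA2 in its uniform form \eqref{al2} yields $\E\abs{g(Y_{\iota_1},\dots,Y_{\iota_b})}^2=\E\abs{f(X_{i_1},\dots,X_{i_\ell})}^2\le C<\infty$. I do not expect a genuine obstacle: the whole content is the verification that the single shift $\iota_k=i_{\gb_k}-\gs_k$ simultaneously (i) trivializes the exact within-block gap constraints, (ii) turns the inter-block non-overlap constraints into strict monotonicity of $\iota_1,\dots,\iota_b$, and (iii) turns $i_\ell\le n$ into $\iota_b\le n-D$, while $M>D$ is forced exactly so that each $X_{i_j}$ occurs among the $M$ coordinates of $Y_{\iota_{q(j)}}$. (It is also worth noting, for later use, that $(Y_i)$ is a stationary $(m+M-1)$-dependent sequence, being a block factor of $(X_i)$.)
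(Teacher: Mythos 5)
Your proof is correct and follows essentially the same route as the paper's: you introduce the within-block and cumulative offsets (your $c_j$, $D_k$, $\gs_k$, $\hat c_j$ are the paper's $t_{qr}$, $u_q$, $v_q$, $v_q+t_{qr}$), shift the block-start indices so the exactly constrained sum becomes an unconstrained sum over increasing $b$-tuples in $\set{1,\dots,n-D}$, and define $g$ by extracting the appropriate coordinates of the $Y$'s. The only cosmetic difference is that you perform the reindexing in a single shift $\iota_k=i_{\gb_k}-\gs_k$, whereas the paper first passes to the raw block starts $\nuk_q=i_{\gb_q}$ (with constraints $\nuk_{q+1}>\nuk_q+u_q$) and then shifts by $v_q$.
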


\begin{proof}
  For each block $B_q=\set{\gb_q,\dots,\gb_{q+1}-1}$ defined by $\cD$,
let 
\begin{align}\label{jedl}
\ell_q&:=|B_q|=\gb_{q+1}-\gb_q,
\\\label{jedt}
t_{qr}&:=\sum_{j=1}^{r-1}d_{\gb_q+j-1},
\qquad r=1,\dots,\ell_q,
\\\label{jedu}
u_q&:=t_{q,\ell_q}=\sum_{j=1}^{\gb_{q+1}-\gb_q-1}d_{\gb_q+j-1},
\\\label{jedv}
v_q&:=\sum_{k<q}u_k
.\end{align}
Note that $t_{q1}=0$ for every $q$ and that $t_{qr},u_q<\infty$. (We stop
the summation in \eqref{jedu} just before the next infinite $d_j$, which
occurs for 
$j=\gb_{q+1}-1$ provided $q<b$.) 
Note also that 
\begin{align}
  \label{yoda}
u_b+v_b=\sum_{k\le b} u_k=D.
\end{align}
We then rewrite \eqref{Uc=} as,
letting
$\nuk_q:=i_{\gb_q}$ and
grouping the arguments of $f$ according to the blocks of $\cD$
(using an obvious notation for this),
\begin{align}\label{luke}
U_n(f;\cDq)
&= \sum_{\substack{1\le \nuk_1<\nuk_2<\dots<\nuk_b\le n-u_{b},\\ \nuk_{q+1}>\nuk_q+u_q}}
f\bigpar{(X_{\nuk_1+t_{1r}})_{r=1}^{\ell_1} ,\dots,(X_{\nuk_b+t_{br}})_{r=1}^{\ell_b} 
}
.\end{align}
Change summation variables by  $\nuk_q=j_q+v_q$.
Then \eqref{luke} yields, recalling \eqref{jedv}--\eqref{yoda},
\begin{align}\label{leia}
U_n(f;\cDq)
= \sum_{1\le j_1<j_2<j_b\le n-D}
f\bigpar{(X_{j_1+v_1+t_{1r}})_{r=1}^{\ell_1} ,\dots,(X_{j_b+v_b+t_{br}})_{r=1}^{\ell_b} 
}
.\end{align}
Define, for
$y_i=(y_{ik})_{k=1}^M\in \cS^M$,
\begin{align}\label{han}
  g(y_1,\dots,y_b)
=f\bigpar{(y_{1,v_1+t_{1r}+1})_{r=1}^{\ell_1} ,\dots,(y_{b,v_b+t_{br}+1})_{r=1}^{\ell_b} }.
\end{align}
(Note that 
$v_j+t_{jr}+1\le v_j + u_j +1 \le D+1\le M$.)
We have $Y_j=(X_{j+k-1})_{k=1}^M$, and thus \eqref{han} yields
 \begin{align}\label{solo}
g(Y_{j_1},\dots,Y_{j_b})
=f\bigpar{(X_{j_1+v_1+t_{1r}})_{r=1}^{\ell_1} ,\dots,(X_{j_b+v_b+t_{br}})_{r=1}^{\ell_b}} .
 \end{align}
Consequently, \eqref{lx} follows from
 \eqref{leia} and \eqref{solo}.

Furthermore, \eqref{ly} follows from \eqref{solo} and \AAA2.
\end{proof}

\begin{lemma}
  \label{L3}
Let $(X_i)\xoo$
and  $\cD=(d_1,\dots,d_{\ell-1})$ be
as in \refL{L1},
and let $M$ and $Y_i$ be as in \refL{L2}.
For every $f:\cS^\ell\to\bbR$
such that \AAA2 holds,
there  exist  functions $g_{\cD},g_{\cDq}:(\cS^M)^b\to\bbR$ such that
\eqref{ly} holds for both, and
\begin{align}\label{obi}
\Var\Bigsqpar{U_n\bigpar{f;\cD ;(X_i)\xooo}-U_{n}\bigpar{g_{\cD};(Y_i)\xooo}}
&=O\bigpar{n^{2b(\cD)-2}},
\\\label{wan}
\Var\Bigsqpar{U_n(f;\cDq ;(X_i)\xooo)-U_{n}\bigpar{g_{\cDq};(Y_i)\xooo}}
&=O\bigpar{n^{2b(\cD)-2}}
.\end{align}
\end{lemma}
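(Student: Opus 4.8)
The plan is to exhibit the functions $g_{\cD}$ and $g_{\cDq}$ explicitly, using \refL{L2} for the exactly constrained case and the block decomposition for the ordinary constrained case, and then to bound the variances of the differences by the same counting argument as in \refL{L1}.

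First I would handle $g_{\cDq}$: \refL{L2} already gives a function $g_{\cDq}:(\cS^M)^b\to\bbR$ satisfying \eqref{ly} and the exact identity
\begin{align}\label{plan1}
U_n(f;\cDq;(X_i)\xooo) = U_{n-D}\bigpar{g_{\cDq};(Y_i)\xooo}.
\end{align}
Hence
\begin{align}\label{plan2}
U_n(f;\cDq;(X_i)\xooo)-U_n\bigpar{g_{\cDq};(Y_i)\xooo}
= -\bigsqpar{U_n\bigpar{g_{\cDq};(Y_i)\xooo}-U_{n-D}\bigpar{g_{\cDq};(Y_i)\xooo}},
\end{align}
which is an unconstrained \Ustat{} of order $b$ based on the sequence $(Y_i)$, restricted to terms with $j_b \in \{n-D+1,\dots,n\}$ — that is, a sum of at most $D$ increments $U_k(g_{\cDq};(Y_i))-U_{k-1}(g_{\cDq};(Y_i))$. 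Since $(Y_i)$ is stationary and \mdep{} (it is a block factor of an \mdep{} sequence, hence $(M-1+m)$-dependent, and \AAA2 holds for $g_{\cDq}$ by \eqref{ly}), \refL{L1} applied to the unconstrained constraint $\cDoo$ of order $b$ (equivalently, the increment bound \eqref{l1=} for the unconstrained case, which is covered by \refL{L1} with $\cD=\cDoo$) gives that each such increment has variance $O\bigpar{k^{2b-2}}=O\bigpar{n^{2b-2}}$; summing the $O(1)$ increments and using $(\Var\sum)\qq \le \sum (\Var)\qq$ yields \eqref{wan}.

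For $g_{\cD}$ I would use \eqref{Ucsum}: $U_n(f;\cD) = \sum_{\cD'}U_n(f;\cDz)$, where $\cD'$ ranges over the $O(1)$ constraints satisfying \eqref{cd'}. Crucially, every such $\cD'$ has the same infinite gaps as $\cD$, hence the same blocks and the same $b(\cD')=b(\cD)$; moreover each has $D(\cD')\le D(\cD)<M$, so \refL{L2} applies to each $\cDz$ with the \emph{same} $M$ and the \emph{same} sequence $(Y_i)$, producing functions $g_{\cDz}:(\cS^M)^b\to\bbR$. Setting $g_{\cD}:=\sum_{\cD'}g_{\cDz}$, linearity of \Ustats{} in $f$ gives $U_n(g_{\cD};(Y_i))=\sum_{\cD'}U_n(g_{\cDz};(Y_i))$, so
\begin{align}\label{plan3}
U_n(f;\cD;(X_i)\xooo)-U_n\bigpar{g_{\cD};(Y_i)\xooo}
= \sum_{\cD'}\bigsqpar{U_n(f;\cDz;(X_i)\xooo)-U_n\bigpar{g_{\cDz};(Y_i)\xooo}},
\end{align}
a sum of $O(1)$ terms each of variance $O\bigpar{n^{2b-2}}$ by the already-established \eqref{wan} (applied to each $\cDz$); the triangle inequality for the $L^2$ norm gives \eqref{obi}. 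Finally $g_{\cD}$ satisfies \eqref{ly} as a finite sum of functions that do, and $g_{\cDq}$ is exactly the one from \refL{L2}.

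The only real subtlety — which I'd want to state carefully rather than wave at — is the invariance of the block structure under the refinement \eqref{cd'}: since $d'_j=\infty$ iff $d_j=\infty$, the indices $\gb_1<\dots<\gb_b$ and hence $b(\cD')=b(\cD)$ are unchanged, which is exactly what makes the single function $g_{\cD}$ (of $b$ variables, on $(\cS^M)^b$) well defined. Everything else is the same stationarity-plus-$m$-dependence counting already carried out in \refL{L1}, now applied to the derived sequence $(Y_i)$; I expect no new obstacle there beyond noting that $(Y_i)$ is a block factor and therefore stationary and $(m+M-1)$-dependent.
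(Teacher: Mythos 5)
Your proof is correct and follows essentially the same route as the paper: apply \refL{L2} to get the exact identity $U_n(f;\cDq)=U_{n-D}(g_{\cDq};(Y_i))$, write the difference as a telescoping sum of $D=O(1)$ increments, bound each increment's variance via \eqref{l1=} of \refL{L1} applied to the unconstrained $\cDoo$ and the stationary $(m+M-1)$-dependent sequence $(Y_i)$ (with \eqref{ly} supplying the moment hypothesis), and combine by Minkowski; then obtain \eqref{obi} by summing \eqref{wan} over the $O(1)$ exact constraints $\cD'$ in \eqref{Ucsum}, with $g_{\cD}:=\sum_{\cD'}g_{\cDz}$. Your explicit remark that the refinement \eqref{cd'} preserves the block structure (so each $\cD'$ yields a function of the same $b$ arguments on $(\cS^M)^b$, with a uniform $M$) correctly identifies the point that makes $g_{\cD}$ well defined, which the paper uses implicitly.
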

\begin{proof}
  First, letting $g_{\cDq}$ be as in \refL{L2}, we have by \eqref{lx},
  \begin{align}\label{mcduff}
    U_n(f;\cDq ;(X_i)\xooo)-U_{n}\bigpar{g_{\cDq};(Y_i)\xooo}
&=
U_{n-D}\bigpar{g_{\cDq}}-U_{n}\bigpar{g_{\cDq}}
\notag\\&
=-\sum_{k=1}^q\Bigpar{ U_{n-k+1}\bigpar{g_{\cDq}}-U_{n-k}\bigpar{g_{\cDq}}}
.  \end{align}
Thus \eqref{wan} follows by \eqref{l1=} in \refL{L1} applied to $g_{\cDq}$, 
the trivial constraint $\cDoo$
 (\ie, no constraint),
and
$(Y_i)\xoo$. 

Next, we recall \eqref{Ucsum} and define
\begin{align}\label{glamis}
  g_{\cD}= 
\sum_{\cD'} g_{\cDz},
\end{align}
again summing over all constraints $\cD'$ satisfying \eqref{cd'}.
This is a finite sum, 
and by \eqref{Ucsum} and \eqref{glamis},

\begin{align}\label{cawdor}
U_n(f;\cD ;(X_i)\xooo)-U_{n}\bigpar{g_{\cD};(Y_i)\xooo}
= \sum_{\cD'}
\bigpar{U_n(f;\cDz ;(X_i)\xooo)-U_{n}\bigpar{g_{\cDz};(Y_i)\xooo}}
\end{align}
and thus \eqref{obi} follows from \eqref{wan}.
\end{proof}

To avoid some of the problems caused by dependencies between the
$X_i$,
we follow \citet{Sen-m} and introduce
another type of constrained \Ustat{s}, where we require the
gaps beteen the summation indices to be large, instead of small as in
\eqref{Uc}. 
We need only one case, and  define
\begin{equation}  \label{Uc>}
  U_n(f;\mx)
:=\sum_{\substack{1\le i_1<\dots<i_\ell\le n\\i_{j+1}-i_j>m}}
  f\bigpar{X_{i_1},\dots,X_{i_\ell}},
\qquad n\ge0
,\end{equation}
summing only over terms where all gaps $i_{j+1}-i_j>m$, $j=1,\dots,\ell-1$.
(The advantage is that in each term in \eqref{Uc>}, the variables
$X_{i_1},\dots,X_{i_\ell}$ are independent.)


\begin{lemma}
  \label{L>}
Let $(X_i)\xoo$ and $f:\cS^\ell\to\bbR$
be as in \refL{L1}.
Then,
\begin{align}\label{l4}
  \Var\bigpar{U_n(f)-U_n(f;\mx)}
=O\bigpar{n^{2\ell-3}}.
\end{align}
\end{lemma}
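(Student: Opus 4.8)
The plan is to expand $U_n(f) - U_n(f;\mx)$ as a difference of two multiple sums, observe that what remains is precisely the sum over $\ell$-tuples $1\le i_1<\dots<i_\ell\le n$ that violate \emph{at least one} of the gap conditions $i_{j+1}-i_j>m$, and then bound the variance of that sum by the same counting argument used in \refL{L1}. The key structural fact is that any $\ell$-tuple with $i_{j+1}-i_j\le m$ for some $j$ has at least one ``short'' gap of size $\le m$; intuitively this costs one degree of freedom, so the number of such tuples is $O(n^{\ell-1})$, and after squaring and using $m$-dependence only $O(n^{\ell-1}\cdot n^{\ell-1}/n) = O(n^{2\ell-2})$ pairs survive — but a short gap in \emph{both} tuples gains another power, bringing us down to $O(n^{2\ell-3})$. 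Let me make this precise.

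Write $W_n := U_n(f) - U_n(f;\mx) = \sum' f(X_{i_1},\dots,X_{i_\ell})$, where $\sum'$ runs over $1\le i_1<\dots<i_\ell\le n$ such that $i_{j+1}-i_j\le m$ for at least one $j\in[\ell-1]$. Then
\begin{align}\label{varWn}
  \Var(W_n) = \sum' \sum'' \Cov\bigpar{f(X_{i_1},\dots,X_{i_\ell}),f(X_{j_1},\dots,X_{j_\ell})},
\end{align}
where both $\sum'$ and $\sum''$ are over such constrained tuples $(i_\bullet)$ and $(j_\bullet)$ respectively. Each term is $O(1)$ by \eqref{al2} and the \CSineq. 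As in \refL{L1}, the covariance vanishes unless $|i_\ga-j_\gb|\le m$ for some $\ga,\gb\in[\ell]$ (otherwise the two vectors are independent by $m$-dependence). So I count non-vanishing terms: fix the $O(1)$ choice of which pair $(\ga,\gb)$ is close; fix the $O(1)$ choice of which gap in $(i_\bullet)$ is short and which gap in $(j_\bullet)$ is short. Now choose $i_1<\dots<i_\ell$: once the short gap (of size between $1$ and $m$, i.e.\ $O(1)$ choices) is placed, this is really a choice of $\ell-1$ free indices in $[n]$, hence $O(n^{\ell-1})$ ways. Similarly there are $O(n^{\ell-1})$ choices for $(j_\bullet)$ if unconstrained, but the proximity condition $|i_\ga - j_\gb|\le m$ pins $j_\gb$ to within $O(1)$ of an already-chosen value, removing one further degree of freedom: $O(n^{\ell-2})$ ways. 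Multiplying, the number of non-vanishing terms is $O(1)\cdot O(n^{\ell-1})\cdot O(n^{\ell-2}) = O(n^{2\ell-3})$, which gives \eqref{l4}.

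I would present the counting in the order: (1) set up the difference $W_n$ and its variance expansion \eqref{varWn}; (2) invoke $m$-dependence to reduce to terms with a close pair of indices; (3) do the degrees-of-freedom count, being careful to extract one power of $n$ from the short gap in $(i_\bullet)$, one more from the short gap in $(j_\bullet)$, and one more from the proximity pinning; (4) multiply each of the $O(n^{2\ell-3})$ surviving terms by its $O(1)$ bound. The main obstacle — really the only subtle point — is bookkeeping in step (3): one must check that the ``short gap'' constraints on $(i_\bullet)$ and on $(j_\bullet)$ genuinely each remove an \emph{independent} power of $n$, rather than overlapping with the power removed by the proximity condition. This is fine because the short-gap constraint is about the internal structure of each tuple (reducing $\ell$ free coordinates to $\ell-1$), while the proximity constraint links the two tuples; these act on different ``coordinates,'' so the savings are additive, and one arrives at $n^{\ell-1}\cdot n^{\ell-2}$ rather than $n^{\ell-1}\cdot n^{\ell-1}$ or $n^\ell\cdot n^{\ell-2}$. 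One should also note the degenerate case $\ell=1$: then there are no gaps, $W_n\equiv 0$, and $n^{2\ell-3}=n^{-1}$, so the bound holds trivially (vacuously); it is worth a one-line remark that we may assume $\ell\ge2$.
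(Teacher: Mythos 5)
Your proof is correct, and it takes a genuinely different route from the paper's. The paper proves \refL{L>} by first establishing the inclusion--exclusion identity
\begin{align*}
  U_n(f;\mx)=\sum_{J\subseteq[\ell-1]}(-1)^{|J|}U_n(f;\cD_J),
\end{align*}
where $\cD_J$ sets $d_j:=m$ for $j\in J$ and $d_j:=\infty$ otherwise (see \eqref{bada}--\eqref{cdj}), so that
$U_n(f)-U_n(f;\mx)=-\sum_{J\neq\emptyset}(-1)^{|J|}U_n(f;\cD_J)$;
then each summand has $b(\cD_J)=\ell-|J|\le\ell-1$, so \refL{L1} gives $\Var U_n(f;\cD_J)=O(n^{2\ell-3})$, and Minkowski finishes. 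Your proof instead expands $\Var(W_n)$ directly and re-runs the degrees-of-freedom count from scratch: the short gap in $(i_\bullet)$ costs one power of $n$, the short gap in $(j_\bullet)$ costs another, and the proximity condition $|i_\ga-j_\gb|\le m$ costs a third, landing on $O(n^{2\ell-3})$ surviving $O(1)$ terms. The counts are the same in spirit, but the paper's version is more modular: by channelling the difference through the constrained $U$-statistics $U_n(f;\cD_J)$, it reuses \refL{L1} verbatim and, more importantly, sets up the identity \eqref{bada}, which is needed again (e.g.\ in the proofs of \refT{TUM}, \refT{TQ}, and \refT{TG}). Your direct argument is self-contained but effectively re-proves a case of \refL{L1} inline. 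One small remark on your bookkeeping: when the proximity index $\gb$ coincides with one end of the short gap $(j_{k'},j_{k'+1})$, the pinning of $j_\gb$ automatically pins both $j_{k'}$ and $j_{k'+1}$, so the two savings do not overlap; you assert this is ``fine'' but it deserves the explicit case check, since at first glance the short-gap saving and the proximity saving could appear to collapse into one. Your note on the vacuous case $\ell=1$ is correct and worth keeping.
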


\begin{proof}
  We can express the type of constrained \Ustat{} in \eqref{Uc>}
as a combination of
constrained \Ustat{s} of the previous type
by the following inclusion--exclusion argument:
\begin{align}
    \label{bada}
  U_n(f;\mx)
&=\sum_{1\le i_1<\dots<i_\ell\le n}
  f\bigpar{X_{i_1},\dots,X_{i_\ell}} \prod_{j=1}^\elli\indic{i_{j+1}-i_j>m}
\notag\\
&=\sum_{1\le i_1<\dots<i_\ell\le n}  f\bigpar{X_{i_1},\dots,X_{i_\ell}}
  \prod_{j=1}^\elli\Bigpar{1-\indic{i_{j+1}-i_j\le m}}
\notag\\
&=\sum_{J\subseteq[\ell-1]}(-1)^{|J|}\sum_{1\le i_1<\dots<i_\ell\le n}
  f\bigpar{X_{i_1},\dots,X_{i_\ell}} \prod_{j\in J}\indic{i_{j+1}-i_j\le m}
\notag\\
&=\sum_{J\subseteq[\ell-1]}(-1)^{|J|}U_n(f;\cD_J),
\end{align}
where we sum over the $2^\elli$ subsets $J$ of $[\elli]$, and use the
  constraints
  \begin{align}\label{cdj}
    \cD_J:=(d_{Jj})_{j=1}^\elli
\qquad \text{with}
\qquad d_{Jj}=
    \begin{cases}
      m,&j\in J,\\ 
      \infty,& j\notin J.
    \end{cases}
  \end{align}
We have $b(\cD_{J})=\ell-|J|$, and thus $b(\cD_J)<\ell$ unless $J=\emptyset$.
Moreover, $\cD_\emptyset=(\infty,\dots,\infty)=\cDoo$, 
and thus means no constraint,
so $U_n(f;\cD_\emptyset)=U_n(f)$, the unconstrained \Ustat.
Consequently, by \eqref{bada} and \refL{L1},
\begin{align}\label{oy}
  \Var\bigpar{U_n(f)-U_n(f;\mx)}
=\Var\Bigpar{\sum_{J\neq\emptyset}(-1)^{|J|-1}U_n(f;\cD_J)}
=O\bigpar{n^{2\ell-3}},
\end{align}
which proves the estimate \eqref{l4}.
\end{proof}

\subsection{Triangular arrays}\label{SStri}
We will also use a central limit theorem for 
\mdep{} triangular arrays satisfying the Lindeberg condition,
 which we  state as \refT{TM} below.
The theorem is implicit in \citet{Orey}; it follows from his theorem there
exactly as his corollary, which however is stated for a sequence and not for
a triangular array.
See also \citet[Theorem 2.1]{Peligrad}, which 
contains the theorem below (at
least for $\gss>0$; the case $\gss=0$ is trivial),
and is much more general in that it only assumes
strong mixing instead of $m$-dependence.

Recall that
a \emph{triangular array} is an array
$(\xi_{ni})_{1\le i\le n<\infty}$ of random variables, such that the
variables $(\xi_{ni})_{i=1}^n$ in a single row
are defined on a common probability space. 
(As usual, it is only for convenience that we require that the $n$th row has
length $n$; the results extend to arbitrary lengths $N_n$.)
We are here mainly interested in the case when each row is an \mdep{}
sequence;
in this case, we say that $(\xi_{ni})$ is an \emph{\mdep{} triangular array}.
(We make no assumption on the relation between variables in different rows;
these may even be defined on different probability spaces.)

\begin{theorem}[\citet{Orey}]\label{TM}
  Let $(\xi_{ni})_{1\le i\le n<\infty}$ be an \mdep{} triangular array of
  real-valued random variables with
  $\E\xi_{ni}=0$. 
Let $\TS_n:=\sumin\xi_{ni}$. Assume that, as \ntoo,
\begin{align}\label{m1}
  \Var \TS_n \to\gss\in\ooo,
\end{align}
that $\xi_{ni}$ satisfy the Lindeberg condition
\begin{align}\label{m2}
  \sumin \E\bigsqpar{\xi_{ni}^2\indic{|\xi_{ni}|>\eps}} \to 0,
\qquad\text{for every $\eps>0$},
\end{align}
and that
\begin{align}\label{m3}
  \sumin \Var \xi_{ni} = O(1).
\end{align}
Then, as \ntoo,
\begin{align}
  \label{mm}
\TS_n\dto \N(0,\gss).
\end{align}
\nopf
\end{theorem}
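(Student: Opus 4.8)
The plan is to prove \refT{TM} by the classical big-block/small-block decomposition, preceded by a truncation that compensates for the absence of any moment assumption beyond the second. I would first dispose of the degenerate case: if $\gss=0$ then $\Var\TS_n\to0$ by \eqref{m1}, hence $\TS_n\pto0$ and \eqref{mm} is immediate; so assume $\gss>0$. Two elementary observations will be used throughout. First, the Lindeberg condition \eqref{m2} together with $\E\xi_{ni}=0$ forces uniform asymptotic negligibility, $\delta_n:=\max_i\Var\xi_{ni}\to0$, because $\Var\xi_{ni}\le\eps^2+\sum_j\E\bigsqpar{\xi_{nj}^2\indic{|\xi_{nj}|>\eps}}$ for every $\eps>0$. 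Second, for any $m$-dependent family of mean-zero square-integrable variables $(\zeta_i)_{i\in I}$ one has $\Var\bigpar{\sum_{i\in I}\zeta_i}\le(2m+1)\sum_{i\in I}\Var\zeta_i$, since in the expanded variance the term $\Cov(\zeta_i,\zeta_j)$ vanishes unless $|i-j|\le m$ and is then at most $\tfrac12(\Var\zeta_i+\Var\zeta_j)$.

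Next I would truncate. By \eqref{m2} and a diagonal argument there is a sequence $K_n\to0$ with $\sum_{i=1}^n\E\bigsqpar{\xi_{ni}^2\indic{|\xi_{ni}|>K_n}}\to0$. Replacing each $\xi_{ni}$ by $\xi_{ni}\indic{|\xi_{ni}|\le K_n}$, re-centred to have mean $0$, changes $\TS_n$ by $\sum_i(\eta_{ni}-\E\eta_{ni})$ with $\eta_{ni}:=\xi_{ni}\indic{|\xi_{ni}|>K_n}$; by the second observation this has variance $\le(2m+1)\sum_i\E\eta_{ni}^2\to0$, so the change is $L^2$-negligible. It preserves \eqref{m1} (with the same $\gss$), \eqref{m3} holds a fortiori, and from now on I may assume $|\xi_{ni}|\le K_n\to0$.

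Then I would block. Fix integers $p_n\to\infty$ with $p_n=o(n)$ and $p_nK_n\to0$ (possible because $K_n\to0$), and partition $[n]$ into consecutive pieces alternating between big blocks $B_k$ of length $p_n$ and small blocks of length $m$, with leftover pieces of length $<p_n+m$ at the two ends; there are $r_n\sim n/p_n\to\infty$ big blocks, and set $U_k:=\sum_{i\in B_k}\xi_{ni}$. The offset of the partition is a free parameter with $p_n+m$ values, and averaging over it I may choose it so that the union $G$ of the small blocks has $\sum_{i\in G}\Var\xi_{ni}\le\frac{m}{p_n+m}\sum_{i=1}^n\Var\xi_{ni}=O(1/p_n)$ by \eqref{m3}; the end pieces contain $O(p_n)$ indices and hence variance $O(p_nK_n^2)\to0$. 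Writing $\TS_n=\sum_kU_k+\Delta_n$, the remainder $\Delta_n$ (the sum of $\xi_{ni}$ over the indices outside the big blocks) is again a sum of $m$-dependent mean-zero variables, so $\Var\Delta_n\to0$ by the second observation, whence $\Delta_n\to0$ in $L^2$ and $\sum_k\Var U_k=\Var\bigpar{\sum_kU_k}\to\gss$. Crucially, the $B_k$ are pairwise at distance $>m$, so by $m$-dependence $U_1,\dots,U_{r_n}$ are independent, with $\E U_k=0$ and $|U_k|\le p_nK_n\to0$.

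Finally I would establish the CLT for the independent array $(U_k)$. Since $|U_k|\le a_n:=p_nK_n\to0$ gives $\bigabs{\E e^{\ii tU_k}-1+\tfrac12t^2\Var U_k}\le\tfrac16|t|^3a_n\Var U_k$, and $\max_k\Var U_k\le(2m+1)p_n\delta_n\to0$, taking logarithms term by term and summing over the independent blocks yields $\E e^{\ii t\sum_kU_k}\to\exp(-t^2\gss/2)$ for every $t$; with $\Delta_n\pto0$ this gives $\E e^{\ii t\TS_n}\to\exp(-t^2\gss/2)$, and \eqref{mm} follows from \Levy's continuity theorem. (Alternatively one may quote the Lindeberg--Feller theorem for $(U_k)$, whose Lindeberg condition is trivial because $|U_k|\le a_n<\eps$ eventually.) I expect the blocking step to be the one real obstacle: the point is to arrange, using only \eqref{m2} and \eqref{m3}, that the big blocks are uniformly asymptotically negligible while the small-block-plus-endpiece remainder is $L^2$-negligible — and the preliminary truncation together with the averaging-over-offsets trick is precisely what makes both hold.
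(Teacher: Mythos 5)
The paper does not prove \refT{TM}: it appends $\qed$ to the statement and refers to \citet{Orey} (where the result is implicit and stated only for sequences) and to \citet{Peligrad} (which covers the strictly more general strong-mixing case). There is therefore no paper proof to compare against, and what you have written is a genuine self-contained argument. It is the classical truncation-plus-Bernstein big-block/small-block scheme, which is also the method underlying Orey's original proof, so the route is the expected one rather than a novel alternative.

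Your execution is correct. The $(2m+1)$-factor variance bound for $m$-dependent sums is right; deriving uniform asymptotic negligibility $\max_i\Var\xi_{ni}\to0$ from \eqref{m2} alone is right; the diagonal choice of $K_n\to0$ and the $L^2$-negligibility of the truncation change (via the variance bound and \eqref{m2}) is right; the averaging-over-offsets device to make the small-block variance $O(1/p_n)$ uses \eqref{m3} correctly and also correctly notes that the end pieces have variance $O(p_nK_n^2)\to0$; and once the big-block sums $U_k$ are seen to be independent, bounded by $p_nK_n\to0$, with $\sum_k\Var U_k\to\gss$, the Lindeberg--Feller theorem (or your direct characteristic-function estimate) finishes the job, with \eqref{m1} only needed to identify the limiting variance. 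Two cosmetic points worth adding if you write this up: the simultaneous requirements $p_n\to\infty$, $p_n=o(n)$, $p_nK_n\to0$ are met by, e.g., $p_n:=\min\bigpar{\ceil{K_n^{-1/2}},\floor{n^{1/2}}}$; and after re-centring, the truncated variables satisfy $|\tilde\xi_{ni}|\le K_n+|\E[\xi_{ni}\indic{|\xi_{ni}|>K_n}]|$, which is $\le K_n+o(1)$ uniformly in $i$ by Cauchy--Schwarz and \eqref{m2}, so one should replace $K_n$ by this slightly larger sequence before blocking. Neither point affects the validity of the proof.
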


Note that \refT{TM} extends the standard Lindeberg--Feller 
central limit theorem for triangular arrays with row-wise independent variables
(see \eg{} \cite[Theorem 7.2.4]{Gut}), to which it reduces when $m=0$.

\begin{remark}
In fact, the assumption \eqref{m3} is not needed in \refT{TM}, see
\cite{SJ361}. However, it is easily verified in our case 
(and many other applications), so we need only this classical result.
\end{remark}

\section{The expectation}\label{Smean}

The expectation of a (constrained) \Ustat{s}, and in particular its leading
term, is easily found from the definition. Nevertheless, we give a detailed
proof of \refT{TE}, for completeness and for later reference.

\begin{proof}[Proof of \refT{TE}]
  Consider first the unconstrained case. We take expectations in \eqref{U}.
The sum in \eqref{U} has
$\binom{n}{\ell}$ terms.
We consider first the terms that satisfy the restriction
$i_{j+1}> i_j+m$ for every $j\in[\elli]$. (I.e., the terms in \eqref{Uc>}.)
As noted above, in each such term, 
the variables $X_{j_1},\dots,X_{j_\ell}$ are independent.
Hence, let 
$(\hX_i)_1^\ell$ be an \emph{independent} sequence of random variables in
$\cS$, each with the same distribution as $X_1$ (and thus as each $X_j$), and
define
\begin{equation}\label{mu}
  \mu:=\E f(\hX_1,\dots,\hX_\ell).
\end{equation}
Then
\begin{align}\label{mu2}
  \mu = \E f\bigpar{X_{i_1},\dots,X_{i_\ell}}
\end{align}
for every sequence of indices $i_1,\dots,i_\ell$ with $i_{j+1}>i_j+m$ for all
$j\in[\elli]$. 
Moreover, the number of terms in \eqref{U} that do \emph{not} satisfy these
constraints is $O\bigpar{n^{\elli}}$,
and their expectations are uniformly $O(1)$ as a
consequence of \eqref{al2}.
Thus, \eqref{te0} follows from \eqref{U}.

Next, consider the exactly constrained case.
We use \refL{L2}
and then apply the unconstrained case just treated 
to $g$ and $(Y_i)\xooo$;
this yields
\begin{align}\label{eu=3}
 \E U_n\bigpar{f;\cDq}
=
\E U_{n-D}\bigpar{g;(Y_i)\xooo}
=\binom{n-D}{b} \E g(\hY_1,\dots,\hY_b)+ O\bigpar{n^{b-1}}
\end{align}
with $\hY_1,\dots,\hY_b\eqd Y_1$ independent. Using \eqref{solo}, and the
notation there, this
yields \eqref{te=} with
 \begin{align}\label{muD=}
\mu_{\cDq}:=\E g(Y_{j_1},\dots,Y_{j_b})
=\E f\bigpar{(X_{j_1+v_1+t_{1r}})_{r=1}^{\ell_1},\dots,(X_{j_b+v_b+t_{br}})_{r=1}^{\ell_b}} 
, \end{align}
for any sequence $j_1,\dots,j_b$ with $j_{k+1}-j_k\ge m+M$ for all $k\in[b-1]$.
(Note that $(Y_i)\xoo$ is $(m+M-1)$-dependent.)

Finally, the constrained case \eqref{te} follows by \eqref{te=} and the
decomposition \eqref{Ucsum}, with
\begin{align}\label{muD}
  \mu_{\cD}:=\sum_{\cD'}\mu_{\cDz},
\end{align}
summing over all $\cD'$ satisfying \eqref{cd'}.

In the independent case $m=0$, the results above simplify. First, for the
unconstrained case, the formula for $\mu$ in \eqref{0mu} is a special case
of \eqref{mu2}.  
Similarly, in the exactly unconstrained case, \eqref{muD=} yields the
formula for $\mu_\cDq$ in \eqref{0mu}. Finally, \eqref{0mu} shows that
$\mu_\cDq$ does not depend on $\cD$, and thus all terms in the sum in
\eqref{muD} are equal to $\mu$. Furthermore, it follows from \eqref{cd'}
that the number of terms in the sum is $\prod_{d_j<\infty}d_j$, and
\eqref{0muD} follows.

Alternatively, in the independent case, all terms in the sums in \eqref{U},
\eqref{Uc} and \eqref{Uc=} have the same expectation $\mu$ given by
\eqref{0mu}, and the result follows by counting the number of terms. In
particular, exactly,
\begin{align}
  \E U_n(f)=\binom n\ell \mu
\end{align}
and, with $D$ given by \eqref{ld},
\begin{align}
  \E U_n(f;\cDq)=\binom {n-D}{b} \mu
.\end{align}
\end{proof}

\section{Asymptotic normality}\label{Spf}

The general idea to prove \refT{TUM} 
is to use the projection method by \citet{Hoeffding},
together with  modifications as in \cite{Sen-m} to treat $m$-dependent variables
and modifications as in  \eg{} \cite{SJ332} to treat the asymmetric case.
We then obtain the constrained version \refT{TUMD} by reduction to the
unconstrained case.

\begin{proof}[Proof of \refT{TUM}]
We first note that by \refL{L>},
it suffices to prove \eqref{tum1}--\eqref{tum2} for $U_n(f;\mx)$.
(This uses standard arguments with Minkowski's inequality and
Cram\'er--Slutsky's theorem \cite[Theorem 5.11.4]{Gut}, respectively; 
we omit the details. The same arguments are used several times below without
comment.) 

As commented above, 
the variables inside each term in the sum in \eqref{Uc>} are independent;
this enables us to use Hoeffding's decomposition for the independent case, 
which we (in the present, asymmetric case) define as follows. 

As in \refS{Smean},
let $(\hX_i)_1^\ell$ be an {independent} sequence of random variables in
$\cS$, each with the same distribution as $X_1$. 
Recall $\mu$ defined in \eqref{mu}, and, for $i=1,\dots,\ell$,
define the function $f_i$ as the one-variable projection
\begin{align}
f_i(x):
= &\E f\bigpar{\hX_1,\dots,\hX_{i-1},x,\hX_{i+1},\dots,\hX_\ell}-\mu.
\label{fi1}
\intertext{
Equivalently,
}
  f_i(\hX_i)
=&\E \bigpar{f(\hX_1,\dots,\hX_\ell)\mid \hX_i}-\mu 
\label{fi2}
.\end{align}
(In general, $f_i$ is defined only $\cL(\hX_i)$-\aex, but it does not matter which
version we choose.)
Define also the residual function
\begin{align}
\fx(x_1,\dots,x_d)
&:=
f(x_1,\dots,x_d) - \mu - \sumjl f_j(x_j).
\label{hi}
\end{align}

Note that the variables $f_i(X_j)$ are centered by \eqref{mu} and \eqref{fi2}:
\begin{align}\label{h0}
\E f_i(X_j)=\E f_i(\hX_i)=0.
\end{align}
Furthermore, \AAA2 implies that 
$f_i(\hX_i)$, and thus  each $f_i(X_j)$,
is square integrable.

The essential property of $\fx$ is that, 
as an immediate consequence of the definitions and \eqref{h0}, 
its one-variable projections vanish:
\begin{align}
\E \bigpar{\fx(\hX_1,\dots,\hX_\ell)\mid \hX_i=x}
= \E \fx\bigpar{\hX_1,\dots,\hX_{i-1},x,\hX_{i+1},\dots,\hX_\ell}
=0
\label{fi0}
.\end{align}

We assume from now on for simplicity that $\mu=0$; the general case follows
by replacing $f$ by $f-\mu$.
Then \eqref{Uc>} and \eqref{hi} yield, by counting the terms where $i_j=k$
for given $j$ and $k$,
\begin{align}\label{maya}
  U_n(f;\mx)
&=
\sum_{\substack{1\le i_1<\dots<i_\ell\le n\\i_{j+1}-i_j>m}}
 \Bigpar{\sumjl f_j(X_{i_j})+ \fx\bigpar{X_{i_1},\dots,X_{i_\ell}}}
\notag\\&
=\sumjl \sumkn \binom{k-1-(j-1)m}{j-1}\binom{n-k-(\ell-j)m}{\ell-j} f_j(X_k)
+U_n(\fx;\mx)
.\end{align}

Let us first dispose of the last term in \eqref{maya}.
Let $i_1<\dots<i_\ell$ and $j_1<\dots<j_\ell$ be two sets of indices such that
the constraints $i_{k+1}-i_k>m$ and $j_{k+1}-j_k>m$ in \eqref{Uc>} hold.
First, as in the proof of \refL{L1},
if also $|i_\ga-j_\gb|>m$ for all $\ga,\gb\in[\ell]$, then
all $X_{i_\ga}$ and $X_{j_\gb}$ are independent; thus
$\fx(X_{i_1},\dots,X_{i_\ell})$ and $\fx(X_{j_1},\dots,X_{j_\ell})$ are
independent, and
\begin{align}\label{tun}
  \E\bigsqpar{\fx(X_{i_1},\dots,X_{i_\ell})\fx(X_{j_1},\dots,X_{j_\ell})}
=  \E \fx(X_{i_1},\dots,X_{i_\ell}) \E \fx(X_{j_1},\dots,X_{j_\ell})
=0.
\end{align}
Moreover, suppose that 
$|i_\ga-j_\gb|>m$ for all but one pair $(\ga,\gb)\in[\ell]^2$, 
say for $(\ga,\gb)\neq(\ga_0,\gb_0)$.
Then the pair $(X_{i_{\ga_0}},X_{j_{\gb_0}})$ is independent of
all the variables 
$\set{X_{i_\ga}:\ga\neq\ga_0}$ and $\set{X_{j_\gb}:\gb\neq\gb_0}$,
and all these are mutually independent.
Hence, 
recalling \eqref{fi0}, \as
\begin{align}\label{katun}
&  \E\bigsqpar{\fx(X_{i_1},\dots,X_{i_\ell})\fx(X_{j_1},\dots,X_{j_\ell})
\mid X_{i_{\ga_0}},X_{j_{\gb_0}}}
\\\notag&
\qquad= \E\bigsqpar{ \fx\xpar{X_{i_1},\dots,X_{i_\ell}}\mid X_{i_{\ga_0}}} 
\E \bigsqpar{\fx(X_{j_1},\dots,X_{j_\ell})\mid X_{j_{\gb_0}}}
=0.
\end{align}
Thus, taking the expectation, we find that unconditionally
\begin{align}\label{baktun}
  \E\bigsqpar{\fx(X_{i_1},\dots,X_{i_\ell})\fx(X_{j_1},\dots,X_{j_\ell})}
=0.
\end{align}
Consequently, if we expand $\Var\bigsqpar{U_n(\fx;\mx)}$ in analogy with
\eqref{b1}, then all terms where $|i_\ga-j_\gb|\le m$ for at most one pair
$(\ga,\gb)$ will vanish. 
The number of remaining terms, \ie, those with at least two such pairs
$(\ga,\gb)$, is $O(n^{2\ell-2})$, and
each term is $O(1)$, by \AAA2 and the \CSineq. Consequently,
\begin{align}\label{tzol}
  \Var\bigsqpar{U_n(\fx;\mx)}
=O\bigpar{n^{2\ell-2}}.
\end{align}
Hence, we may ignore the final term $U_n(\fx;\mx)$ in \eqref{maya}.

We turn to the main terms in \eqref{maya}, i.e., the double sum; 
we denote it by $\hU_n$ and write it as
\begin{align}\label{jsb}
\hU_n=\sumjl \sumkn a_{j,k,n}f_j(X_k),
\end{align}
where we thus define
\begin{align}\label{ajkn}
  a_{j,k,n}&:=
 \binom{k-1-(j-1)m}{j-1}\binom{n-k-(\ell-j)m}{\ell-j} 
\notag\\&\phantom:
=\frac{1}{(j-1)!\,(\ell-j)!} k^{j-1}(n-k)^{\ell-j} + O(n^{\ell-2})
,\end{align}
where the $O$ is uniform over all $k\le n$ and $j\le\ell$.
Define the polynomial functions, for $j=1,\dots,\ell$,
\begin{align}\label{psi}
  \psi_j(x):=\frac{1}{(j-1)!\,(\ell-j)!} x^{j-1}(1-x)^{\ell-j},
\qquad x\in\bbR.
\end{align}
Then \eqref{ajkn} yields,
again uniformly for all $k\le n$ and $j\le\ell$, 
\begin{align}\label{wolf}
a_{j,k,n}=n^{\ell-1}\psi_j(k/n)+O\bigpar{n^{\ell-2}}
.\end{align}

The expansion \eqref{jsb} yields
\begin{align}\label{krk}
  \Var \hU_n
=
\sum_{i=1}^\ell \sum_{j=1}^\ell 
\sumkn \sum_{q=1}^na_{i,k,n}a_{j,q,n}\Cov\bigsqpar{f_i(X_k),f_j(X_q)} ,
\end{align}
where all terms with $|k-q|>m$ vanish because the sequence $(X_i)$ is \mdep.
Hence, with $r_-:=\max\set{-r,0}$ and $r_+:=\max\set{r,0}$,
\begin{align}\label{kyrie}
  \Var \hU_n
=
\sum_{i=1}^\ell \sum_{j=1}^\ell 
\sum_{r=-m}^m
\sum_{k=1+r_-}^{n-r_+}a_{i,k,n}a_{j,k+r,n}\Cov\bigsqpar{f_i(X_k),f_j(X_{k+r})} .
\end{align}
The covariance in \eqref{kyrie} is independent of $k$;
we thus define, for any $k>r_-$,
\begin{align}\label{gijr}
  \gam_{i,j,r}:=\Cov\bigsqpar{f_i(X_k),f_j(X_{k+r})}
\end{align}
and obtain
\begin{align}\label{elei}
  \Var \hU_n
=
\sum_{i=1}^\ell \sum_{j=1}^\ell 
\sum_{r=-m}^m\gam_{i,j,r}
\sum_{k=1+r_-}^{n-r_+}a_{i,k,n}a_{j,k+r,n}.
\end{align}
Furthermore, by \eqref{wolf},
\begin{align}\label{amadeus}
n^{2-2\ell} \sum_{k=1+r_-}^{n-r_+}a_{i,k,n}a_{j,k+r,n}
&= \sum_{k=1+r_-}^{n-r_+}
  \bigpar{\psi_i(k/n)+O(n\qw)}\bigpar{\psi_j(k/n)+O(n\qw)}
\notag\\&
= \sum_{k=1+r_-}^{n-r_+} \bigpar{\psi_i(k/n)\psi_j(k/n)+O(n\qw)}
\notag\\&
= \sum_{k=1}^{n} \psi_i(k/n)\psi_j(k/n)+O(1)
\notag\\&
= \int_0^n \psi_i(x/n)\psi_j(x/n)\dd x+O(1)
\notag\\&
= n\int_0^1 \psi_i(t)\psi_j(t)\dd t+O(1)
.\end{align}
Consequently, \eqref{elei} yields
\begin{align}\label{agnus}
n^{1-2\ell}  \Var \hU_n
=
\sum_{i=1}^\ell \sum_{j=1}^\ell 
\sum_{r=-m}^m\gam_{i,j,r}
\intoi\psi_i(t)\psi_j(t)\dd t
+O\bigpar{n\qw}.
\end{align}
Since \eqref{l4}, \eqref{maya}, and \eqref{tzol} yield
\begin{align}\label{fit}
\Var\bigsqpar{  U_n(f)-\hU_n} = O\bigpar{n^{2\ell-2}},
\end{align}
the result \eqref{tum1} follows from \eqref{agnus}, with
\begin{align}\label{gss}
\gss
=
\sum_{i=1}^\ell \sum_{j=1}^\ell 
\sum_{r=-m}^m\gam_{i,j,r}
\intoi\psi_i(t)\psi_j(t)\dd t
.\end{align}

Next, we use \eqref{jsb} and write
\begin{align}\label{yngve}
n^{\frac12-\ell} \hU_n
=\sumkn Z_{kn},
\end{align}
with
\begin{align}\label{frej}
  Z_{kn}:=\sum_{j=1}^\ell n^{\frac12-\ell}a_{j,k.n}f_j(X_k)
.\end{align}
Since $Z_{kn}$ is a function of $X_k$, it is evident that $(Z_{kn})$ is an
\mdep{} triangular array with centered variables. Furthermore, 
$\E Z_{kn}=0$ as a consequence of \eqref{h0}.

We apply \refT{TM} to $(Z_{kn})$, so $\TS_n=n^{\frac12-\ell}\hU_n$ by \eqref{yngve},
and verify first its conditions.
The condition \eqref{m1} holds by 
\eqref{agnus} and \eqref{gss}.
Write $Z_{kn}=\sumjl Z_{jkn}$ with
\begin{align}
  Z_{jkn}:= n^{\frac12-\ell}a_{j,k.n}f_j(X_k)
\end{align}
Since \eqref{ajkn} yields $|a_{j,k,n}|\le n^{\ell-1}$,
we have, for $\eps\ge0$, 
\begin{align}\label{van}
  \E\bigsqpar{Z_{jkn}^2\indic{|Z_{jkn}|>\eps}}
\le n\qw\E \bigsqpar{|f_j(X_k)|^2
\indic{\abs{f_j(X_k)}>\eps n\qq}}
\end{align}
The distribution of $f_j(X_k)$ does not depend on $k$, and thus the Lindeberg
condition \eqref{m2}  for each triangular array $(Z_{jkn})_{k,n}$ follows
from \eqref{van}.
The Lindeberg condition \eqref{m2} for $(Z_{nk})_{k,n}$ then follows easily.
Finally, taking $\eps=0$ in \eqref{van} yields $\E Z_{jkn}^2\le Cn\qw$, and thus 
$\E Z_{kn}^2\le Cn\qw$, which shows \eqref{m3}.

We have shown that \refT{TM} applies, and thus, recalling \eqref{yngve} and
\eqref{h0},
\begin{align}\label{as}
n^{\frac12-\ell} \bigpar{ \hU_n-\E \hU_n}
= n^{\frac12-\ell} \hU_n
=\sumkn Z_{kn}
  \dto \N(0,\gss).
\end{align}
The result \eqref{tum2} now follows from \eqref{as} and \eqref{fit}.
\end{proof}

\begin{proof}[Proof of \refT{TUMD}]
\refL{L3} implies that it suffices to consider 
$U_n\bigpar{g;(Y_i)\xooo}$ instead of $U_n(f;\cD)$
or $U_n(f;\cDq)$.
Note that the definition \eqref{YX} implies that $(Y_i)\xoo$ is a stationary 
$m'$-dependent sequence, with $m':=m+M-1$. 
Hence, the result follows from \refT{TUM} applied to $g$ and $(Y_i)\xoo$.
\end{proof}

\begin{remark}\label{Rbeta}
The integrals in \eqref{gss} are standard Beta integrals
\cite[5.12.1]{NIST}; we have
\begin{align}\label{beta}
  \intoi\psi_i(t)\psi_j(t)\dd t&
=\frac{1}{(i-1)!\,(j-1)!\,(\ell-i)!\,(\ell-j)!}
\intoi t^{i+j-2}(1-t)^{2\ell-i-j}\dd t
\notag\\&
=\frac{(i+j-2)!\,(2\ell-i-j)!}
  {(i-1)!\,(j-1)!\,(\ell-i)!\,(\ell-j)!\,(2\ell-1)!}.
\end{align}
\end{remark}

\begin{remark}\label{Rgss}
In the unconstrained case \refT{TUM}, the asymptotic variance $\gss$ is given by
\eqref{gss} together with \eqref{gijr}, \eqref{fi1} and \eqref{beta}.

In the constrained cases, the proof above
shows that $\gss$ is given by \eqref{gss} applied to 
the function $g$ given by \refL{L3} and
$(Y_i)\xoo$ given by \eqref{YX} (with $M=D+1$ for definiteness);
note that this also entails replacing $\ell$ by $b$ and $m$ by $m+M-1=m+D$
in the formulas above.
In particular, in the exactly constrained case \eqref{Uc=},
it follows from \eqref{fi1} and \eqref{han} that, 
with $y=(x_1,\dots,x_M)\in\cS^M$
and other notation as in \eqref{jedl}--\eqref{jedv} and \eqref{muD=},
\begin{align}\label{kut}
g_i(x_1,\dots,x_M)
=\E
f\bigpar{(X_{j_1+v_1+t_{1r}})_{r=1}^{\ell_1} 
,\dots,(x_{1+v_i+t_{ir}})_{r=1}^{\ell_i},
\dots,(X_{j_b+v_b+t_{br}})_{r=1}^{\ell_b}}
-\mu_\cDq 
,\end{align}
where the $i$th group of variables consists of the given $x_i$, and
the other $b-1$ groups contain variables $X_i$,
and $j_1,\dots,j_b$ is any sequence of indices that has large enough gaps:
$j_{i+1}-j_i> m+M-1= m+D$. 

In the constrained case \eqref{Uc}, $g=g_\cD$ is obtained as the sum
\eqref{glamis}, and thus each $g_i$ is a similar sum of functions that can
be obtained as \eqref{kut}. (Note that $M:=D+1$ works in \refL{L2}
for all terms by \eqref{cd'}.)
Then, $\gss$ is given by \eqref{gss} (with
substitutions as above).
\end{remark}

\section{Law of large numbers}\label{SLLN}

\begin{proof}[Proof of \refT{TLLN}]
  Note first that if $R_n$ is any sequence of random variables such that
  \begin{align}\label{r1}
    \E R_n^2=O\bigpar{n\qww},
  \end{align}
then Markov's inequality and the Borel--Cantelli lemma show that 
$R_n\asto0$.

We begin with the unconstrained case, $\cD=\cDoo=(\infty,\dots,\infty)$.
We may assume, as in the proof of \refT{TUM}, that $\mu=0$.
Then \eqref{fit} holds, and thus by the argument just given,
and recalling that $\E \hU_n=0$ by \eqref{jsb} and \eqref{h0},
\begin{align}\label{rov}
  n^{-\ell} \bigsqpar{U_n(f)-\E U_n(f) - \hU_n}\asto 0.
\end{align}
Hence, to prove \eqref{tlln20}, it suffices to
prove $n^{-\ell}\hU_n\asto0$.

For simplicity, we fix $j\in[\ell]$, and define, with $f_j$ as above given
by \eqref{fi1},
\begin{align}\label{sjn}
  S_{jn}
=S_{jn}(f)
:=S_n(f_j)
:=\sumkn f_j(X_k)
\end{align}
and, using partial summation,
\begin{align}\label{oden}
  \hU_{jn}:=\sumkn a_{j,k,n}f_j(X_k)
=\sum_{k=1}^{n-1}(a_{j,k,n}-a_{j,k+1,n})S_{jk} + a_{j,n,n}S_{jn}.
\end{align}
The sequence $(f_j(X_k))_k$ is \mdep, stationary and with  $\E|f_j(X_k)|<\infty$.
As is well known, the strong law of large number holds
for stationary \mdep{} sequences with finite means.
(This follows by considering the subsequences $(X_{(m+1)n+q})_{n\ge0}$,
which for each fixed $q\in[m+1]$ is an \iid{} sequence.)
Thus, by \eqref{sjn} and \eqref{h0},
\begin{align}\label{tor}
S_{jn}/n\asto \E f_j(X_k)=0.  
\end{align}
In other words, \as{} $S_{jn}=o(n)$, and thus also
\begin{align}\label{frigg}
  \max_{1\le k\le n}|S_{jk}| =o(n)\qquad \text{a.s.}
\end{align}
Moreover, \eqref{ajkn} implies $a_{j,k,n}-a_{j,k+1,n}=O(n^{\ell-2})$.
Hence, \eqref{oden} yields
\begin{align}
n^{-\ell} \hU_{jn}
=\sum_{k=1}^{n-1} O(n\qww)\cdot S_{jk} + O(n\qw) \cdot S_{jn}
\end{align}
and thus, using \eqref{frigg},
\begin{align}
\bigabs{n^{-\ell} \hU_{jn}}
\le C n\qw \max_{k\le n}| S_{jk}|
=o(1)\qquad\text{a.s.}
\end{align}
Consequently,
\begin{align}
  n^{-\ell}\hU_n  = \sumjl n^{-\ell}\hU_{jn }\asto 0,
\end{align}
which together with \eqref{rov} yields the
desired result \eqref{tlln20}.

Next, for an exact constraint $\cDq$, we use \refL{L2}.
Then \eqref{lx} together with the just shown result applied to $g$ and $(Y_i)$
yields
\begin{align}
  n^{-b}\ubbo{U_n(f;\cDq)} 
=  n^{-b}\ubbo{U_{n-D}(g)} 
\asto0.
\end{align}
This proves \eqref{tlln2=}, and \eqref{tlln2} follows by \eqref{Ucsum}.

Finally, using \refT{TE}, \eqref{tlln10}--\eqref{tlln1=} are equivalent to
\eqref{tlln20}--\eqref{tlln2=}.
\end{proof}

\section{The degenerate case}\label{S0}

As is well known, even in the original symmetric and independent case studied in
\cite{Hoeffding}, the asymptotic variance $\gss$ in \refT{TUM} may vanish
also in non-trivial cases. In such cases, \eqref{tum2} is still valid, but
says only that the \lhs{} converges to 0 in probability.
In the present section, we characterize this degenerate case in \refTs{TUM}
and \ref{TUMD}.
Note that in applications, it is frequently natural to guess that
$\gss>0$, but this is sometimes surprisingly difficult to prove.
One purpose of the theorems below is to assist in showing $\gss>0$; see the
applications in \refSs{Sword} and \ref{Sperm}.

For an unconstrained \Ustat{} and an independent sequence $(X_i)\xoo$ (the
case $m=0$ of \refT{TUM}), 
it is known, and not difficult to see, 
that $\gss=0$ if and only if every projection $f_i(X_1)$
defined by \eqref{fi1} vanishes a.s.,
see \cite[Corollary 3.5]{SJ332}. 
(This is included in the theorem below
by taking $m=0$ in \ref{T0gam},
and it is also the correct interpretation of
\ref{T0Y} when $m=0$.)
In the \mdep{} case, the situation is similar, but somewhat more
complicated, as shown by the following theorem.
Note that $S_n(f_j)$ defined in \eqref{bl2} below equals $S_{jn}$; for later
applications we find this change of notation convenient.

\begin{theorem}\label{T0}
With assumptions and notation as in \refT{TUM},
define also 
$f_i$ by \eqref{fi1},
$\gam_{i,j,r}$ by \eqref{gijr} and
$S_{jn}$ by \eqref{sjn}.
Then,
the following are equivalent.
\begin{romenumerate}
  
\item \label{T0gss}
  \begin{align}\label{t0gss}
    \gss=0.
  \end{align}

\item \label{T0Un}
  \begin{align}\label{t0un}
\Var U_n = O\bigpar{n^{2\ell-2}}.
  \end{align}

\item \label{T0gam}
  \begin{align}\label{t0gam}
    \sum_{r=-m}^m\gam_{i,j,r}=0,\qquad \forall i,j\in[\ell].
  \end{align}

\item \label{T0Cov}
  \begin{align}
\Cov\bigsqpar{S_{in},S_{jn}}/n \to0\text{ as \ntoo}    
\quad\forall i,j\in[\ell].
  \end{align}

\item \label{T0Var}
  \begin{align}
\Var\bigsqpar{S_{jn}}/n \to0\text{ as \ntoo}    
\quad\forall j\in[\ell].
  \end{align}

\item \label{T0Y}
For each $j\in[\ell]$ there exists a stationary sequence
$(Z_{j,k})_{k=0}^\infty$ of\/ 
$(m-1)$-dependent random variables such that \as
\begin{align}\label{fy}
  f_j(X_k)=Z_{j,k}-Z_{j,k-1},
\qquad k\ge1.
\end{align}
\end{romenumerate}

Moreover, suppose that the sequence $(X_k)\xoo$ is a block factor given by
\eqref{block} for some function $h$ and \iid{} $\xi_i$, and that $\gss=0$.
Then, in \ref{T0Y}, we may take $Z_{j,k}$ as block factors
\begin{align}\label{bl1}
  Z_{j,k}=\gf_j(\xi_{k+1},\dots,\xi_{k+m}),
\end{align}
for some functions $\gf_j:\cS_0^m\to\bbR$.
Hence, for every $j\in[\ell]$ and $n\ge1$,
\begin{align}\label{bl2}
S_n(f_j)
:=\sumkn f_j(X_k)
=Z_{j,n}-Z_{j,0}
=\gf_j(\xi_{n+1},\dots,\xi_{n+m})
-\gf_j(\xi_{1},\dots,\xi_{m}),
\end{align}
and thus $S_{n}(f_j)$
is independent of $\xi_{m+1},\dots,\xi_{n}$ 
for every
$j\in[\elli]$ and $n>m$.
\end{theorem}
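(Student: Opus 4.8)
The plan is to reduce \ref{T0gss}--\ref{T0Var} to the second-moment computations already carried out in the proof of \refT{TUM} (\refS{Spf}), and to obtain the coboundary statement \ref{T0Y} by a martingale argument, the block-factor refinement then being a sharpening of the latter.

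\emph{Step 1: the equivalences \ref{T0gss}--\ref{T0Var}.} I would run the cycle $\ref{T0Un}\Rightarrow\ref{T0gss}\Rightarrow\ref{T0gam}\Rightarrow\ref{T0Un}$ together with $\ref{T0gam}\Leftrightarrow\ref{T0Cov}\Leftrightarrow\ref{T0Var}$. Here $\ref{T0Un}\Rightarrow\ref{T0gss}$ is immediate from \eqref{tum1}. For $\ref{T0gss}\Rightarrow\ref{T0gam}$, expand (using stationarity and $m$-dependence) $\Cov\bigsqpar{S_{in},S_{jn}}=n\sum_{r=-m}^m\gam_{i,j,r}+O(1)$; thus the symmetric matrix $\gS:=\bigpar{\sum_{r=-m}^m\gam_{i,j,r}}_{i,j=1}^\ell$ is the limit of the positive semidefinite matrices $n\qw\bigpar{\Cov\bigsqpar{S_{in},S_{jn}}}_{i,j}$, hence positive semidefinite; by \eqref{gss}, $\gss=\intoi\sum_{i,j}\gS_{ij}\psi_i(t)\psi_j(t)\dd t$, an integral over $\oi$ of a nonnegative polynomial, so $\gss=0$ forces that polynomial to vanish identically, and writing $\gS=B^{\mathsf T}B$ this gives $\sum_j\gS_{ij}\psi_j\equiv0$ for every $i$; since $\psi_1,\dots,\psi_\ell$ are linearly independent (they are, up to scaling, the Bernstein basis of degree $<\ell$), $\gS=0$, which is \eqref{t0gam}. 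The same expansion gives $\ref{T0gam}\Leftrightarrow\ref{T0Cov}$, and $\ref{T0Cov}\Leftrightarrow\ref{T0Var}$ is the Cauchy--Schwarz inequality. Finally $\ref{T0gam}\Rightarrow\ref{T0Un}$: \eqref{t0gam} with $i=j$ and the expansion give $\Var\bigsqpar{S_{jn}}=O(1)$, and then partial summation as in \eqref{oden}, using $a_{j,k,n}-a_{j,k+1,n}=O(n^{\ell-2})$ from \eqref{ajkn}, yields $\Var\bigsqpar{\hU_n}=O\bigpar{n^{2\ell-2}}$; with \eqref{fit} this is \eqref{t0un}.

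\emph{Step 2: the coboundary condition \ref{T0Y}.} If \eqref{fy} holds, then $S_n(f_j)=Z_{j,n}-Z_{j,0}$ is bounded in probability (the two terms being \iid{} for $n>m-1$), while \refT{TM} applied to the \mdep{} sequence $(f_j(X_k))_k$ gives that $n\qqw S_n(f_j)$ is asymptotically normal with variance $\lim_n\Var\bigsqpar{S_{jn}}/n$; boundedness in probability forces this variance to be $0$, i.e.\ \ref{T0Var}. Conversely, assume \ref{T0Var} and fix $j$. I would use the martingale--coboundary (Gordin-type) decomposition of the stationary \mdep{} sequence $(f_j(X_k))_k$ (after extending it two-sidedly): with $\cF_k:=\gs(X_i:i\le k)$ and $g_k:=\sum_{i=1}^m\E\bigsqpar{f_j(X_{k+i})\mid\cF_k}$ (a finite sum, since $f_j(X_{k+i})\perp\cF_k$ for $i>m$), the variables $D_k:=f_j(X_k)+g_k-g_{k-1}$ form a stationary martingale-difference sequence for $(\cF_k)$; summing, $S_n(f_j)=\sum_{k=1}^nD_k+g_0-g_n$, so $\Var\bigsqpar{S_n(f_j)}/n\to\Var D_1$, and \ref{T0Var} forces $\Var D_1=0$, hence $D_k\equiv0$. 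Thus $f_j(X_k)=g_{k-1}-g_k$, which is \eqref{fy} with $Z_{j,k}:=-g_k$, a stationary sequence. The step I expect to be the main obstacle is the \emph{sharp} dependence range: this $Z_{j,k}$ is only seen to be a function of boundedly many $X_\ell$ near index $k$, hence $O(m)$-dependent, whereas \ref{T0Y} demands $(m-1)$-dependence; squeezing out the last unit of range requires conditioning on a finer, ``local'' $\gs$-field isolating the randomness freshly entering at step $k$, which is exactly what the block-factor structure supplies.

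\emph{Step 3: the block-factor refinement.} In the block-factor case I would argue directly (this also re-proves $\ref{T0Var}\Rightarrow\ref{T0Y}$ here). Write $f_j(X_k)=F(\xi_k,\dots,\xi_{k+m})$ with $\E F=0$, and set $\gf_j(x_1,\dots,x_m):=-\sum_{k=1}^m\E\bigsqpar{f_j(X_k)\mid\xi_1=x_1,\dots,\xi_m=x_m}$ and $Z_{j,k}:=\gf_j(\xi_{k+1},\dots,\xi_{k+m})$; being a block factor of a length-$m$ window of \iid{} variables, $(Z_{j,k})$ is stationary and $(m-1)$-dependent, so \eqref{bl1} holds. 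Now put $W_k:=Z_{j,k}-Z_{j,k-1}-f_j(X_k)$, a centered block factor of $\xi_k,\dots,\xi_{k+m}$. A short computation with conditional expectations — using only independence of the $\xi_i$, and a telescoping cancellation in $\gf_j$ — gives $\E\bigsqpar{W_k\mid\xi_k,\dots,\xi_{k+m-1}}=0$. Hence the partial sums $R_n:=\sum_{k=1}^nW_k=Z_{j,n}-Z_{j,0}-S_n(f_j)$ form a martingale for the filtration $\gs(\xi_1,\dots,\xi_{n+m})$, so $\E R_n^2=\sum_{k=1}^n\E W_k^2=n\,\E W_1^2$; on the other hand $\E R_n^2\le C\bigpar{1+\Var\bigsqpar{S_n(f_j)}}=o(n)$ by \ref{T0Var} (which holds since $\gss=0$). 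Therefore $\E W_1^2=0$, so $W_k\equiv0$ and $f_j(X_k)=Z_{j,k}-Z_{j,k-1}$. Finally \eqref{bl2} follows by telescoping, and since for $n>m$ its right-hand side is a function of $\xi_1,\dots,\xi_m,\xi_{n+1},\dots,\xi_{n+m}$ alone, it is independent of $\xi_{m+1},\dots,\xi_n$, which is the last assertion.
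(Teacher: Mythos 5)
Your Steps 1 and 3 are sound. Step 1 re-proves the paper's Lemma~\ref{LAB} in situ: rather than working with abstract positive-definite Gram matrices, you write the limit covariance matrix $\gS$ as $B^{\mathsf T}B$ and use linear independence of the $\psi_j$ to force $B=0$; this is the same content as \refL{LAB} but argued concretely via the nonnegative polynomial $\sum_{i,j}\gS_{ij}\psi_i\psi_j$, and it is entirely correct. Step 3 is a genuinely attractive, self-contained proof of the block-factor refinement: your telescoping identity $\E\bigsqpar{W_1\mid\xi_1,\dots,\xi_m}=0$ checks out (it reduces, after shifting and using that $\E\bigsqpar{f_j(X_{m+1})\mid\xi_2,\dots,\xi_m}=0$ by independence and centring, to $A-\gf_j(\xi_1,\dots,\xi_m)=\E\bigsqpar{f_j(X_1)\mid\xi_1,\dots,\xi_m}=B$), whence $R_n$ is a martingale, $n\E W_1^2=\E R_n^2=o(n)$, and $W_k\equiv0$. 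This replaces the paper's appeal to \cite[Theorem~2]{SJ286} by a direct martingale computation, which is a real improvement in self-containedness.

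However, Step~2 has a genuine gap in the direction \ref{T0Var}$\implies$\ref{T0Y} for \emph{general} stationary $m$-dependent $(X_i)$, and the gap is exactly where you flagged discomfort. Your Gordin corrector is $g_k:=\sum_{i=1}^m\E\bigsqpar{f_j(X_{k+i})\mid\cF_k}$ with $\cF_k=\gs(X_i:i\le k)$, and after showing $D_k\equiv0$ you set $Z_{j,k}:=-g_k$. This $Z_{j,k}$ is only $\cF_k$-measurable, i.e.\ a function of the \emph{entire} past $X_i$, $i\le k$; your parenthetical remark that it ``is only seen to be a function of boundedly many $X_\ell$ near index $k$, hence $O(m)$-dependent'' is not justified. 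The $m$-dependence $(X_j)_{j\le a}\perp(X_j)_{j>a+m}$ gives $\E\bigsqpar{g_k\mid\cF_{k-m}}=0$, but an orthogonality relation is not an independence relation, and there is no finite $M$ for which $\E\bigsqpar{f_j(X_{k+i})\mid\cF_k}=\E\bigsqpar{f_j(X_{k+i})\mid X_{k-M+1},\dots,X_k}$ holds in general (the required conditional independence fails because the middle-window variables $X_{k-M+1},\dots,X_k$ may carry information about both the far past and $X_{k+i}$). Consequently $(Z_{j,k})$ is stationary but is not shown to be $(m-1)$-dependent, or even $M$-dependent for any $M$, which is precisely what \ref{T0Y} demands. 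The paper closes this by citing \cite[Theorem~1]{SJ286}, which is a non-trivial result that does \emph{not} follow from the standard Gordin decomposition; your Step~3 resolves the issue only in the block-factor case, whereas the theorem's items (i)--(vi) are asserted for arbitrary stationary $m$-dependent sequences. To complete your argument along these lines you would either need to reproduce the construction of \cite{SJ286} or invoke that result directly; a ``finer local $\gs$-field isolating the fresh randomness'' is exactly what is not available outside the block-factor case.
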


To prove \refT{T0}, 
we begin with a well known algebraic lemma; for completeness we include a proof.
\begin{lemma}\label{LAB}
  Let $A=(a_{ij})_{i,j=1}^\ell$ and $B=(b_{ij})_{i,j=1}^\ell$ be symmetric real
  matrices such that $A$ is positive definite and $B$ is positive
  semidefinite. Then 
  \begin{align}\label{lab}
    \sum_{i,j=1}^\ell a_{ij}b_{ij}=0
\iff b_{ij}=0\quad\forall i,j\in[\ell].
  \end{align}
\end{lemma}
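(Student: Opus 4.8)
The quantity $\sum_{i,j=1}^\ell a_{ij}b_{ij}$ is the Frobenius (Hilbert--Schmidt) inner product of $A$ and $B$, and since both matrices are symmetric it equals $\Tr(AB)$. The implication $\Leftarrow$ in \eqref{lab} is trivial, so the whole content is the forward implication, and the plan is to reduce it to the elementary fact that a positive semidefinite matrix with vanishing trace is the zero matrix.

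First I would diagonalize $B$: by the spectral theorem for real symmetric matrices, $B=\sum_{k=1}^\ell \gl_k v_kv_k^{\mathsf{T}}$ for some orthonormal basis $(v_k)_{k=1}^\ell$ of $\bbR^\ell$ consisting of eigenvectors, with eigenvalues $\gl_k\ge0$ (the nonnegativity being exactly the positive semidefiniteness of $B$). Then
\begin{align*}
\sum_{i,j=1}^\ell a_{ij}b_{ij}=\Tr(AB)=\sum_{k=1}^\ell \gl_k\,\Tr\bigpar{A v_kv_k^{\mathsf{T}}}=\sum_{k=1}^\ell \gl_k\, v_k^{\mathsf{T}}A v_k .
\end{align*}
Because $A$ is positive definite and each $v_k\neq0$, we have $v_k^{\mathsf{T}}A v_k>0$ for every $k$; hence the last sum has nonnegative terms $\gl_k\,(v_k^{\mathsf{T}}A v_k)\ge0$. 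If it equals $0$, then every term is $0$, and since $v_k^{\mathsf{T}}A v_k>0$ this forces $\gl_k=0$ for all $k$. Therefore $B=\sum_{k}\gl_kv_kv_k^{\mathsf{T}}=0$, i.e.\ $b_{ij}=0$ for all $i,j$, which is the forward implication in \eqref{lab}.

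There is essentially no genuine obstacle here; this is a standard linear-algebra fact, and the only ingredients are the spectral theorem for the symmetric matrix $B$ and the strict inequality $v_k^{\mathsf{T}}A v_k>0$ coming from positive definiteness of $A$. (An equivalent matrix-form argument: write $A=A\qq A\qq$ with $A\qq$ the symmetric positive definite square root of $A$; then $\Tr(AB)=\Tr\bigpar{A\qq B A\qq}$, the matrix $A\qq B A\qq$ is positive semidefinite, so its vanishing trace forces $A\qq B A\qq=0$, and multiplying on both sides by $A\qqw$ gives $B=0$.)
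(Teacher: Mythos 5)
Your proof is correct, but it takes a genuinely different (and slightly cleaner) route than the paper's. The paper diagonalizes $A$ (the positive \emph{definite} matrix), writes $\sum_{i,j}a_{ij}b_{ij}=\sum_k\gl_k\innprod{v_k,Bv_k}$ with $\gl_k>0$, and uses semidefiniteness of $B$ to see that each $\innprod{v_k,Bv_k}$ vanishes; but this does not yet give $B=0$, so the paper needs an extra step — the \CSineq{} for the semidefinite form $\innprod{v,Bw}$ — to pass from the vanishing of the diagonal entries $\innprod{v_k,Bv_k}$ to the vanishing of all $\innprod{v_k,Bv_n}$. You instead diagonalize $B$ (the positive \emph{semidefinite} matrix), obtain $\Tr(AB)=\sum_k\gl_k\,v_k^{\mathsf T}Av_k$ with $\gl_k\ge0$ and $v_k^{\mathsf T}Av_k>0$, and the conclusion $\gl_k=0$ for all $k$ gives $B=0$ immediately, with no Cauchy--Schwarz step. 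What you buy by diagonalizing the matrix you are trying to kill is that its spectral decomposition already encodes the whole matrix, so once the eigenvalues vanish you are done; the paper's decomposition of $A$ is natural in context (the $a_{ij}$ are the Gram matrix of the $\psi_j$'s, and the $b_{ij}$ are limiting covariances), but it leaves a residual step. Your parenthetical alternative via $A^{1/2}BA^{1/2}$ is yet another correct and standard route.
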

\begin{proof}
 Since $A$ is positive definite, there exists an orthonormal 
basis $(v_k)_1^\ell$ 
in $\bbR^\ell$ consisting of eigenvectors of $A$, 
in other words $Av_k=\gl_kv_k$; furthermore,
the eigenvalues $\gl_k>0$.
Write $v_k=(v_{ki})_{i=1}^\ell$. We then have
\begin{align}\label{aij}
  a_{ij}=\sumkl \gl_k v_{ki}v_{kj}.
\end{align}
 Thus
\begin{align}\label{abij}
 \sum_{i,j=1}^\ell a_{ij}b_{ij}
=\sumkl \gl_k \sum_{i,j=1}^\ell b_{ij}v_{ki}v_{kj}
=\sumkl \gl_k \innprod{v_k,Bv_k}.
\end{align}
Since $B$ is positive semidefinite, all terms in the last sum are $\ge0$, so
the sum is 0 if and only if every term is, and thus
  \begin{align}\label{abij0}
    \sum_{i,j=1}^\ell a_{ij}b_{ij}=0
\iff \innprod{v_k,Bv_k}=0 \quad \forall k\in[\ell].
  \end{align}
By the \CSineq{} for the semidefinite bilinear form $\innprod{v,Bw}$
(or, alternatively by using $\innprod{v_k\pm v_n,B(v_k\pm v_n)}\ge0$)
it follows that this condition implies 
$\innprod{v_k,Bv_n}=0$ for any $k,n\in[\ell]$, and thus
  \begin{align}\label{abij00}
    \sum_{i,j=1}^\ell a_{ij}b_{ij}=0
\iff \innprod{v_k,Bv_n}=0 \quad \forall k,n\in[\ell].
  \end{align}
Since $(v_k)_1^\ell$ is a basis, this is further equivalent to $\innprod{v,Bw}=0$
for any $v,W\in\bbR^\ell$, and thus to $B=0$. This yields \eqref{lab}.
\end{proof}

\begin{proof}[Proof of \refT{T0}]
  The $\ell$ polynomials $\psi_j$, $j=1,\dots,\ell$, of degree $\ell-1$
defined by \eqref{psi}
are linearly independent (e.g., since the matrix of their coefficients in the
standard basis $\set{1,x,\dots,x^{\ell-1}}$ is upper triangular with non-zero
diagonal elements). Hence, the Gram matrix
$A=(a_{ij})_{i,j}$  with
\begin{align}\label{gram}
  a_{ij}:=\intoi \psi_i(t)\psi_j(t)\dd t
\end{align}
  is positive definite.

We have by \eqref{sjn}, similarly to \eqref{krk}--\eqref{elei},
\begin{align}\label{qwe}
  \Cov\bigpar{S_{in},S_{jn}}&
=\sumkn \sum_{q=1}^n \Cov\bigsqpar{f_i(X_k),f_j(X_q)}
= \sum_{r=-m}^m\sum_{k=1+r_-}^{n-r_+}\Cov\bigsqpar{f_i(X_k),f_j(X_{k+r})}
\notag\\&
= \sum_{r=-m}^m(n-|r|)\Cov\bigsqpar{f_i(X_k),f_j(X_{k+r})} 
= \sum_{r=-m}^m(n-|r|)\gam_{i,j,r}
\end{align}
and thus, as \ntoo,
\begin{align}\label{blim}
  \Cov\bigpar{S_{in},S_{jn}}/n&
\to \sum_{r=-m}^m\gam_{i,j,r}
=: b_{ij}.
\end{align}
Note that \eqref{gss} can be written
\begin{align}\label{gss2}
  \gss=\sum_{i,j=1}^\ell b_{ij}a_{ij}.
\end{align}
The covariance matrices $\bigpar{\Cov(S_{in},S_{jn})}_{i,j=1}^\ell$
are positive semidefinite, and thus so is the limit $B=(b_{ij})$ defined by
\eqref{blim}. 
Hence \refL{LAB} applies and yields, using \eqref{gss2} and the definition
of $b_{ij}$ in \eqref{blim}, the equivalence 
\ref{T0gss}$\iff$\ref{T0gam}.

Furthermore, \eqref{blim} yields
\ref{T0gam}$\iff$\ref{T0Cov}.

The implication
\ref{T0Cov}$\implies$\ref{T0Var} is trivial, and the converse follows by the
\CSineq.

If \ref{T0gam} holds, then \eqref{agnus} yields $\Var \hU_n =
O\bigpar{n^{2\ell-2}}$ , and \ref{T0Un} follows by \eqref{fit}. 
Conversely, \ref{T0Un}$\implies$\ref{T0gss} by \eqref{tum1}.

Moreover, for $m\ge1$,
\ref{T0Var}$\iff$\ref{T0Y} holds by \cite[Theorem 1]{SJ286},
recalling $\E f_j(X_k)=0$ by \eqref{h0}.
(Recall also that any stationary sequence $(W_k)\xoo$ 
of real random variables
can be extended to a
doubly-infinite stationary sequence $(W_k)_{-\infty}^\infty$.)
The case $m=0$ is trivial, since then \ref{T0Var} is equivalent to 
$\Var f_j(X_k)=0$ and thus $f_j(X_k)=0$ a.s.\ by \eqref{h0},
while \ref{T0Y} should be interpreted to mean that \eqref{fy} holds for some
non-random $Z_{j,k}=z_j$.

Finally, suppose that $(X_i)\xoo$ is a block factor. 
In this case,
\cite[Theorem 2]{SJ286} shows that $Z_{j,k}$ can be chosen 
as in  \eqref{bl1}. (Again, the case $m=0$ is trivial.)
Then \eqref{bl2} is an immediate consequence of \eqref{fy}--\eqref{bl1}.
\end{proof}

\begin{remark}\label{R0Y}
  It follows from the proof in \cite{SJ286} that in \ref{T0Y}, we can choose
  $Z_{jk}$ such that also the random vectors $(Z_{jk})_{j=1}^\ell$, $k\ge0$, form a
  stationary 
$(m-1)$-dependent sequence.
\end{remark}

\begin{theorem}\label{T0D}
With assumptions and notation as in \refT{TUMD}, define also $g_i$,
$i\in[b]$,  as in
\refR{Rgss}, \ie, by \eqref{kut} in the exactly constrained case and
otherwise
as a sum of such terms over all $\cD'$ given by \eqref{cd'}.
Let also (again as in \refR{Rgss}) 
$D$ be given by \eqref{ld} and $Y_k$ by \eqref{YX} with $M=D+1$.
Then $\gss=0$ if and only if for every $j\in[b]$, 
there exists a stationary sequence
$(Z_{j,k})_{k=0}^\infty$ of 
$(m+D-1)$-dependent random variables such that \as{}
\begin{align}\label{skam}
 g_j(Y_k)=Z_{j,k}-Z_{j,k-1},
\qquad k\ge1.
\end{align}

Moreover, if the sequence $(X_i)\xoo$ is independent and $\gss=0$, then
there exist  functions $\gf_j:\cS^{D}\to\bbR$
such that
\eqref{skam} holds with
\begin{align}\label{skams}
  Z_{j,k}=\gf_j(X_{k+1},\dots,X_{k+D}),
\end{align}
and consequently \as
\begin{align}\label{skamma}
  S_{n}(g_j)
:=\sumkn g_j(Y_k)
=\gf_j(X_{n+1},\dots,X_{n+D})
-\gf_j(X_{1},\dots,X_{D}),
\end{align}
and thus $S_{n}(g_j)$ is independent of $X_{D+1},\dots,X_n$ for every
$j\in[\elli]$ and $n>D$.
\end{theorem}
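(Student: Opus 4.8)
The plan is to obtain \refT{T0D} as a corollary of \refT{T0}, applied not to $f$ and $(X_i)\xoo$ but to the auxiliary function $g$ and the auxiliary sequence $(Y_i)\xoo$ furnished by the reduction carried out in \refS{Slem}. Recall that with $M:=D+1$ and $Y_i:=(X_i,\dots,X_{i+M-1})$ as in \eqref{YX}, \refLs{L2} and~\ref{L3} produce a function $g$ --- namely $g=g_{\cDq}$ in the exactly constrained case and $g=g_{\cD}=\sum_{\cD'}g_{\cDz}$ in general --- such that $U_n(f;\cDq)$ (respectively $U_n(f;\cD)$) differs from the unconstrained \Ustat{} $U_n\bigpar{g;(Y_i)\xooo}$ by a term of variance $O\bigpar{n^{2b-2}}$; in particular the $\gss$ appearing in \refT{TUMD} is exactly the asymptotic variance that \refT{TUM} attaches to $g$ and $(Y_i)\xoo$, as already observed in \refR{Rgss}. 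Moreover, by the very construction in \refR{Rgss}, the functions $g_j$, $j\in[b]$, occurring in the statement are precisely the one-variable projections of $g$ in the sense of \eqref{fi1}, formed relative to $(Y_i)\xoo$. Finally, \eqref{YX} shows that $(Y_i)\xoo$ is a stationary $(m+M-1)$-dependent, i.e.\ $(m+D)$-dependent, sequence.

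With this dictionary in hand, the first assertion is immediate: I would apply the equivalence \ref{T0gss}$\iff$\ref{T0Y} of \refT{T0} to $g$ and $(Y_i)\xoo$. Because $(Y_i)\xoo$ is $(m+D)$-dependent, this equivalence says that $\gss=0$ holds if and only if for each $j\in[b]$ there is a stationary $(m+D-1)$-dependent sequence $(Z_{j,k})$ with $g_j(Y_k)=Z_{j,k}-Z_{j,k-1}$ \as, which is exactly the content of \eqref{skam}.

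For the ``moreover'' part I would specialize to $m=0$, so that $(X_i)\xoo$ is \iid. Then the identity $Y_i=(X_i,\dots,X_{i+D})$ exhibits $(Y_i)\xoo$ as a block factor \eqref{block} based on the \iid{} sequence $(X_i)\xoo$ itself, with the trivial choice $h=\mathrm{id}$ on $\cS^{D+1}$ and with $D$ in the role of $m$ in \eqref{block}. Hence the block-factor clause of \refT{T0} applies to $g$ and $(Y_i)\xoo$ and, assuming $\gss=0$, lets me take $Z_{j,k}=\gf_j(X_{k+1},\dots,X_{k+D})$ for suitable functions $\gf_j:\cS^D\to\bbR$, which is \eqref{skams}. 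Telescoping \eqref{skam} then yields $S_n(g_j)=\sum_{k=1}^n g_j(Y_k)=Z_{j,n}-Z_{j,0}=\gf_j(X_{n+1},\dots,X_{n+D})-\gf_j(X_1,\dots,X_D)$, which is \eqref{skamma}; and for $n>D$ the right-hand side depends only on $X_1,\dots,X_D$ and $X_{n+1},\dots,X_{n+D}$, hence is independent of $X_{D+1},\dots,X_n$ by independence of the $X_i$.

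I do not expect any genuine obstacle here beyond bookkeeping: the whole proof is the reduction of \refS{Slem} followed by a citation of \refT{T0}. The two points that need care are (a) confirming that the $\gss$ in \refT{TUMD} really is the $\gss$ to which \refT{T0} is being applied --- this is exactly what \refL{L3} together with \refR{Rgss} gives --- and (b) the arithmetic of the dependence parameters, namely that $M=D+1$ turns $m$-dependence of $(X_i)$ into $(m+D)$-dependence of $(Y_i)$, so that \refT{T0} delivers increments $Z_{j,k}$ that are $(m+D-1)$-dependent, matching the statement.
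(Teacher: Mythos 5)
Your proposal is correct and follows essentially the same route as the paper's own proof: reduce to the unconstrained case via \refL{L3} with $M=D+1$, then invoke the equivalence \ref{T0gss}$\Leftrightarrow$\ref{T0Y} of \refT{T0} applied to $g$ and the $(m+D)$-dependent sequence $(Y_i)$, and for the second part use the block-factor clause of \refT{T0} with $D$ in the role of $m$.
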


\begin{proof}
As in the proof of \refT{TUMD},  it suffices to
consider $U_n(g)$ with $g$ given by  \refL{L3} (with $M=D+1$).
The first part then is an immediate  consequence of
\refT{T0}\ref{T0gss}$\Leftrightarrow$\ref{T0Y} applied to $g$
and $Y_i:=(X_i,\dots,X_{i+D})$, 
with appropriate
substitutions $\ell\mapsto b$ and $m\mapsto m+D$.
 
The second part follows similarly
by the last part of \refT{T0}, with $\xi_i=X_i$; 
note that then $(Y_i)$ is a block
factor as in \eqref{block}, with $m$ replaced by $D$.
\end{proof}

\begin{remark}
  Of course, under the assumptions of \refT{T0D}, also the other
  equivalences in \refT{T0} hold with the appropriate interpretations,
substituting $g$ for $f$ and so on.
\end{remark}

We give an example of a constrained \Ustat{} where $\gss=0$ in a somewhat
non-trivial way. 

\begin{example}\label{E0}
  Let $(X_i)\xoo$ be an infinite \iid{} symmetric random binary string,
\ie, $\cS=\setoi$ and $X_i\sim\Be(1/2)$ are i.i.d.
Let
\begin{align}\label{xf}
  f(x,y,z):=\indic{xyz=101}-\indic{xyz=011}
\end{align}
and consider the constrained \Ustat{}
\begin{align}\label{xu}
  U_n(f;\cD) = \sum_{1\le i<i+1<j\le n} f\bigpar{X_i,X_{i+1},X_j},
\end{align}
which thus has constraint $\cD=(1,\infty)$. 
(In this case, $U_n(f,\cD)=U_n(f;\cDq)$.)
Note that \eqref{xu} is a difference of two constrained subsequence counts.

Although the function \eqref{xf} might look non-trivial and innocuous at
first glance,  this turns out to be a degenerate case.
In fact, it is easily verified that
\begin{align}\label{x3}
  f(x,y,z)=(x-y)z,
\qquad x,y,z\in\setoi.
\end{align}
Hence, 
with $m=0$, $D=1$ and $M=D+1=2$,
\eqref{muD=} yields
\begin{align}\label{x4}
\mu_\cD=  \mu_\cDq
=\E g(Y_1,Y_3)=\E f(X_1,X_2,X_4)=0
\end{align}
while \eqref{kut} yields 
\begin{align}\label{x51}
  g_1(x,y)&=\E f(x,y,X_4)
=\E\bigsqpar{(x-y)X_4}
= \tfrac12(x-y),
\\\label{x52}
g_2(x,y)&=\E f(X_1,X_2,y)
=\E\bigsqpar{(X_1-X_2)y}
= 0.
\end{align}
Thus $g_2$ vanishes but not $g_1$. Nevertheless,
$g_1(Y_k)=g_1(X_k,X_{k+1})=\frac12(X_k-X_{k+1})$ is of the type in
\eqref{skam}--\eqref{skams}
(with $Z_{1,k}:=-\frac12X_{k+1}$).
Hence, \refT{T0D}
shows that $\gss=0$, and thus
\refT{TUMD} and \eqref{te} yield
$n^{-3/2}U_n(f;\cDq)\pto0$.

In fact, in this example we have by \eqref{x3}, for $n\ge3$,
\begin{align}\label{x6}
  U_n(f;\cD)&
=\sum_{j=3}^n\sum_{i=1}^{j-2}(X_i-X_{i+1})X_j
=\sum_{j=3}^n X_j(X_1-X_{j-1})
\notag\\&
=X_1\sum_{j=3}^n X_j  - \sum_{j=3}^n X_{j-1}X_j. 
\end{align}
Hence, by the law of large numbers for stationary \mdep{} sequences,
\begin{align}\label{x7}
n\qw  U_n(f;\cD)
\asto X_1\E X_2 - \E\bigsqpar{X_2X_3}
=\tfrac12 X_1 -\tfrac14
=\tfrac12\bigpar{X_1-\tfrac12}.
\end{align}
As a consequence, $n\qw U_n(f;\cD)$ has a non-degenerate limiting
distribution.
Note that this example differs in several respects from the degenerate cases
that may occur for standard \Ustats, 
\ie{}  unconstrained \Ustats{} based on independent $(X_i)\xooo$.
In this example, \eqref{x7} shows that
the asymptotic distribution is a linear
transformation 
of a Bernoulli variable, and is thus neither normal, nor of the type 
that appears 
as limits of degenerate standard \Ustats.
(The latter are polynomials in independent normal variables, in general
infinitely many, see 
\eg{}
\refT{Tworddeg} and, in general,
\cite{RubinVitale} and \cite[Chapter~11]{SJIII}.)
Moreover, the \as{} convergence to a non-degenerate limit is unheard of for
standard \Ustats, where the limit is mixing.
\end{example}

\subsection{The degenerate case in renewal theory}\label{SSDegRen}

In the renewal theory setting in \refT{TUUN}, the degenerate case is
characterized by a modified version of the conditions above.

\begin{theorem}\label{TUUN0}
  With the assumptions and notations of \refT{TUUN}, 
let $g_i$, $i\in[b]$, be as in 
\refT{T0D} and
\refR{Rgss}.
Then,
$\gamxx=0$ if and only if for every $j\in[b]$, the function
\begin{align}
\tg_j(y):=g_j(y)+\mu_\cD-{\frac{\mu_\cD}{\nu}}h(y_1),   
\qquad y=(y_1,\dots,y_b)\in\cS^b,
\end{align}
satisfies the condition \eqref{skam}.
Moreover,
if the sequence $(X_i)\xoo$ is independent and $\gamxx=0$, 
then the functions $\tg_j$ also satisfy
\eqref{skams}--\eqref{skamma}.
\end{theorem}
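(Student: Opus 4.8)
The plan is to reduce the centred, normalised statistic on the \lhs{} of \eqref{cvtauu} to a linear statistic of exactly the form of the leading term $\hU_n$ in the proof of \refT{TUM}, with the functions $\tg_j$ playing the role of the Hoeffding projections $f_j$, and then to decide when its limiting variance $\gamxx$ vanishes using the two ingredients already exploited in \refTs{T0} and \ref{T0D}: the algebraic \refL{LAB} and the coboundary characterisation of \cite[Theorems~1 and~2]{SJ286}. (We treat $U_n(f;\cD)$; the exactly constrained case is identical with $g_\cDq$ in place of $g_\cD$ throughout.)

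\emph{Step 1: a linear representation of $\gamxx$.} First I would write $n:=\floor{x/\nu}$ and $N:=N_\pm(x)$, take $M:=D+1$, let $Y_k$ be as in \eqref{YX} (so $(Y_k)\xoo$ is stationary and $(m+D)$-dependent), and let $g_j$ ($j\in[b]$) and $a_{j,k,n}$ be as in \refR{Rgss} and \eqref{ajkn}, so that $a_{j,k,n}=n^{b-1}\psi_j(k/n)+O(n^{b-2})$ by \eqref{wolf}. Combining \refL{L3}, the Hoeffding decomposition in the proof of \refT{TUM} (applied with $\ell\mapsto b$ and $m\mapsto m+D$ to $g:=g_\cD$ and $(Y_i)$) and \refT{TE},
\begin{align}\label{plan:hoef}
U_n(f;\cD)=\frac{\mu_\cD}{b!}\,n^{b}+\sum_{j=1}^{b}\sum_{k=1}^{n}a_{j,k,n}\,g_j(Y_k)+\Op\bigpar{n^{b-1}}.
\end{align}
The renewal central limit theorem for $\bigpar{h(X_i)}\xoo$ (a consequence of \refT{TM} for $S_n(h)$ together with the usual inversion argument of renewal theory) gives
\begin{align}\label{plan:N}
N=n-\nu\qw\sum_{k=1}^{n}\bigpar{h(X_k)-\nu}+\op\bigpar{n\qq}.
\end{align}
Evaluating \eqref{plan:hoef} at $N$, inserting \eqref{plan:N}, linearising $\tfrac{\mu_\cD}{b!}N^{b}$ around $n$, and bounding the Anscombe-type discrepancies between the $N$- and $n$-versions of the double sum by the second-moment estimates \eqref{l1=} and \eqref{obi}, I would obtain that the numerator of \eqref{cvtauu} equals
\begin{align}\label{plan:num}
\sum_{k=1}^{n}\sum_{j=1}^{b}a_{j,k,n}\,g_j(Y_k)-\frac{\mu_\cD}{(b-1)!\,\nu}\,n^{b-1}\sum_{k=1}^{n}\bigpar{h(X_k)-\nu}+\op\bigpar{n^{b-1/2}}.
\end{align}
The key simplification is then the binomial identity $\sum_{j=1}^{b}\psi_j(t)\equiv 1/(b-1)!$, which gives $n^{b-1}/(b-1)!=\sum_{j=1}^{b}a_{j,k,n}+O(n^{b-2})$ uniformly in $k\le n$; since $h(X_k)-\nu$ is a function of $Y_k$ through its first coordinate, \eqref{plan:num} then becomes $\sum_{k=1}^{n}\sum_{j=1}^{b}a_{j,k,n}\,\tg_j(Y_k)+\op\bigpar{n^{b-1/2}}$, with $\tg_j$ precisely the functions of the statement (centred, and square-integrable by \eqref{ly} and the moment hypotheses of \refT{TUUN}). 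Since $x^{b-1/2}\sim(\nu n)^{b-1/2}$, and since the variance computation \eqref{krk}--\eqref{agnus} in the proof of \refT{TUM} uses only square-integrability, stationarity and $m$-dependence of the summands, applying it to the family $(\tg_j)_{j=1}^{b}$ and $(Y_k)$ yields
\begin{align}\label{plan:gamma}
\gamxx=\nu^{-(2b-1)}\sum_{i,j=1}^{b}a_{ij}\,\tilde b_{ij},\qquad a_{ij}:=\intoi\psi_i(t)\psi_j(t)\dd t,\quad \tilde b_{ij}:=\lim_{n\to\infty}\tfrac1n\Cov\bigpar{S_n(\tg_i),S_n(\tg_j)},
\end{align}
where $S_n(\tg_j):=\sum_{k=1}^{n}\tg_j(Y_k)$ (cf.\ \eqref{skamma}).

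\emph{Step 2: characterising $\gamxx=0$.} From here the argument parallels \refTs{T0} and \ref{T0D}. The Gram matrix $A=(a_{ij})$ is positive definite, since $\psi_1,\dots,\psi_b$ are linearly independent (the remark opening the proof of \refT{T0}), and $B=(\tilde b_{ij})$ is positive semidefinite, being a limit of covariance matrices of $n\qqw\bigpar{S_n(\tg_1),\dots,S_n(\tg_b)}$. Hence \refL{LAB} gives $\gamxx=0$ iff $\tilde b_{ij}=0$ for all $i,j$, which by the \CSineq{} (as in the step \ref{T0Cov}$\Leftrightarrow$\ref{T0Var} of \refT{T0}) is equivalent to $\Var\bigsqpar{S_n(\tg_j)}/n\to0$ for every $j\in[b]$. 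By \cite[Theorem~1]{SJ286}, applied to the stationary $(m+D)$-dependent mean-zero sequence $\bigpar{\tg_j(Y_k)}_k$, this in turn holds iff $\tg_j(Y_k)=Z_{j,k}-Z_{j,k-1}$ a.s.\ for a stationary $(m+D-1)$-dependent sequence $(Z_{j,k})$, i.e.\ iff $\tg_j$ satisfies \eqref{skam}; this is the asserted equivalence. For the final assertion, when $(X_i)\xoo$ is independent, $(Y_k)=\bigpar{(X_k,\dots,X_{k+D})}_k$ is a block factor over the \iid{} $(X_i)$ with window $D+1$, so \cite[Theorem~2]{SJ286} (exactly as in the block-factor part of \refT{T0D}, with $m$ there replaced by $D$) shows that when $\gamxx=0$ one may take $Z_{j,k}=\gf_j(X_{k+1},\dots,X_{k+D})$, which is \eqref{skams}; \eqref{skamma} and the stated independence of $S_n(\tg_j)$ from $X_{D+1},\dots,X_n$ then follow by telescoping.

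\emph{Main obstacle.} The only step that is not essentially mechanical is the reduction in Step~1 --- the passage from the random index $N=N_\pm(x)$ to the deterministic $n=\floor{x/\nu}$ inside the double sum $\sum_{j,k}a_{j,k,N}g_j(Y_k)$, replacing the summation range $k\le N$ by $k\le n$ and the coefficients $a_{j,k,N}$ by $a_{j,k,n}$, and the verification that the resulting discrepancy is $\op(n^{b-1/2})$. This is an Anscombe-type estimate combining the renewal CLT for $\bigpar{h(X_i)}\xoo$ with the variance bounds \eqref{l1=} and \eqref{obi}, and is in effect the estimate that already underlies \refT{TUUN}; once it is in hand, the remaining algebra (the identity $\sum_j\psi_j\equiv 1/(b-1)!$, \refL{LAB}, and \cite{SJ286}) is routine.
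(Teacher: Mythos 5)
Your proposal is correct and arrives at exactly the same functions $\tg_j$, but it takes a genuinely different route from the paper. The paper's proof does not manipulate the double sum $\sum_{j,k}a_{j,k,n}g_j(Y_k)$ at all: it introduces the auxiliary symmetric kernel $H(y_1,\dots,y_b):=\sum_{j=1}^b h(y_j)$, notes the exact algebraic identity $U_n\bigl(H;(Y_i)\bigr)=\binom{n-1}{b-1}S_n(h)$, and combines the a.s.\ (Skorohod-coupled) functional limits from the proof of \refT{TUUN} for $U_n(g)$ and $U_n(H)$ to conclude $\gamxx=\nu^{1-2b}\gss(G)$ with $G:=g-\frac{\mu_\cD}{\nu}H$. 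Since the Hoeffding projections of $H$ are $H_j=h-\nu$ by symmetry, the projections of $G$ are exactly your $\tg_j$, and the whole characterization is then a one-line application of \refT{T0D} to $G$. Your argument rediscovers the same $G$ (without naming it) by computing directly with the coefficients $a_{j,k,n}$ and the binomial identity $\sum_{j=1}^{b}\psi_j\equiv 1/(b-1)!$, and then re-derives \refT{T0D} inline via \refL{LAB} and \cite{SJ286}. That is a legitimate alternative, and it has the pedagogical virtue of making explicit how the renewal compensation term $-\frac{\mu_\cD}{(b-1)!\,\nu}n^{b-1}\sum_k(h(X_k)-\nu)$ is absorbed uniformly into the weighted Hoeffding sum; the paper's $H$-trick buys you the same fact for free (and without the $O(n^{b-2})$ book-keeping) because $U_n(H)$ is a $U$-statistic by construction.

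The one place where the two routes are not equivalent in difficulty is the Anscombe step you flag as the "main obstacle." Citing the second-moment estimates \eqref{l1=} and \eqref{obi} will not, as stated, give that replacing $\sum_{k\le N}a_{j,k,N}g_j(Y_k)$ by $\sum_{k\le n}a_{j,k,n}g_j(Y_k)$ costs only $\op(n^{b-1/2})$: each of the $|N-n|=O_p(n^{1/2})$ discarded or added terms is of size $O_p(n^{b-1})$, so a brute-force bound gives only $O_p(n^{b-1/2})$ rather than $\op(n^{b-1/2})$, and the change of weights $a_{j,k,N}\mapsto a_{j,k,n}$ introduces a further non-trivial perturbation. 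What actually closes this gap is uniform-in-$t$ convergence, i.e.\ precisely the functional limit theorem \refT{TG} together with the Skorohod coupling already set up in the proof of \refT{TUUN}; this is why the paper proves \refT{TUUN0} by recycling the a.s.\ limits \eqref{tg2}, \eqref{ukr}, \eqref{uk4} rather than by re-estimating discrepancies. If you reroute your Step~1 to invoke the a.s.\ functional convergence (or, equivalently, the maximal-function bound \refT{TQ} to get tightness in $D\ooo$), the remainder of your argument is sound.
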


The proof is given in \refS{Srenew}.
Note that 
$\E\tg_j(Y_1)=0$
for each $j\in[b]$
by \eqref{h0} and \eqref{nu}.

\section{Rate of convergence}\label{Srate}

We use here a different method than in the rest of the paper.

\begin{proof}[Proof of \refT{Trate}]
We consider $U_n(f,\cD)$; the argument for
$U_n(f;\cDq)$ is identical, and
$U_n(f)$ is a special case.

Let $\cI$ denote the set of all indices $(i_1,\dots,i_\ell)$ in the sum
\eqref{Uc}; thus \eqref{Uc} can be written 
$U_n(f;\cD)=\sum_{I\in\cI} Z_I$, where
$Z_{i_1,\dots,i_\ell}:=f(X_{i_1},\dots,X_{i_\ell})$.
Note that the size $|\cI| \sim C n^b$ for some $C>0$, where $b=b(\cD)$.

We define a graph $\hcI$
with vertex set $\cI$ by putting an edge
between $I=(i_1,\dots,i_\ell)$ and $I'=(i'_1,\dots,i'_\ell)$ if and only if
$|i_j-i'_k|\le m$ for some $j,k\in\set{1\dots,\ell}$.
Let $\gD$ be 1 +  the maximum degree of the graph $\hcI$; it is easy to see
that $\gD=O(n^{b-1})$. 
Moreover, it follows from the $m$-dependence of $(X_i)$ that
$\hcI$ is a \emph{dependency graph} for the random variables
$(Z_I)_I$, meaning that if $A$ and $B$ are two disjoint subsets
of $\cI$ such that there is no  edge between $A$ and $B$, then the two
random vectors $(Z_I)_{I\in A}$ and $(Z_I)_{I\in B}$ are independent.

The result now follows from \cite[Theorem 2.2]{Rinott}, which in our
notation yields the bound,
with $\gss_n:=\Var U_n \sim \gss n^{2b-1}$
and $B:=2\sup|f|$ which implies $|Z_I-\E Z_I|\le B$ a.s. for every $I\in\cI$,
\begin{align}
  d_K&
\le \frac{1}{\gs_n}
\Bigcpar{(2\pi)\qqw \gD B 
+ 16\Bigpar{\frac{|\cI|\gD}{\gss_n}}\qq \gD B^2
+ 10\Bigpar{\frac{|\cI|\gD}{\gss_n}} \gD B^3
}
\notag\\&
\le C \frac{\gD}{\gs_n} 
\le C n\qqw,
\end{align}
since $|\cI|\gD\le C n^{b+b-1}\le C \gss_n$ and $B$ is a constant.
(Alternatively, one could use the similar bound in \cite[Theorem 2.1]{Fang}.)
\end{proof}

\begin{remark}\label{Rrate1}
  The assumption in \refT{Trate}
that $f$ be bounded can be relaxed to the 6th moment
  condition \AAA6
by using \cite[Theorem 2.1 instead of Theorem 2.2]{Rinott}
together with \Holder's inequality and straightforward estimates.

The similar bound \cite[Corollary 2]{BaldiR}
gives the weaker estimate $d_K=O\bigpar{n^{-1/4}}$, assuming
again that $f$ is bounded; 
this can be relaxed to 
\AAA4
by instead using
\cite[Theorem 1]{BaldiR}.

If we instead of the Kolmogorov distance use the Wasserstein distance $d_W$
(see \eg{} \cite[pp.~63--64]{ChenGS} 
for several equivalent definitions, and for several alternative names),
the estimate $d_W=O\bigpar{n\qqw}$ follows similarly 
from \cite[Theorem 1]{BKR}, assuming only 
the third moment condition
\AAA3;  
we omit the details.
(Actually, \cite{BKR} does not state the result for the Wasserstein distance
but for a weaker version called bounded Wasserstein distance; 
however, the same proof yields estimates for $d_W$.)
See also 
\cite[Theorem 3 and Remark 3]{Raic}, which yield the same estimate under 
\AAA3, and furthermore imply convergence in distribution assuming only
\AAA2. (This thus yields an alternative proof of \refTs{TUM} and \ref{TUMD}.)
\end{remark}

Returning to the Kolmogorov distance, 
we do not believe that the moment assumption \AAA6 in \refR{Rrate1} is best
possible. 
For unconstrained and symmetric \Ustat{s},
\citet[Theorem 2]{MalevichA} has shown bounds for the Kolmogorov distance,
which in particular show that then \AAA3 is sufficient to yield 
$d_K=O\bigpar{n\qqw}$; we conjecture that the same holds in our, more
general, setting.

\begin{conj}
  \refT{Trate} holds assuming only \AAA3 (instead of $f$ bounded).
\end{conj}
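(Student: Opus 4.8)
The plan is to prove the conjecture by the characteristic-function (Esseen smoothing) method, replacing the dependency-graph/Stein argument of \refS{Srate}, which is intrinsically a boundedness (or high-moment) method. The template is \citet{MalevichA}, who establish precisely this $O\bigpar{n\qqw}$ bound for \emph{symmetric} unconstrained \Ustats{} on a stationary \mdep{} sequence under a third-moment condition; what remains is to remove the symmetry assumption and to handle the constraints. The constraints I would dispose of as in the proof of \refT{TUMD} but via the \emph{exact} reduction rather than \refL{L3}: by \eqref{Ucsum} and the identity \eqref{lx} of \refL{L2} (with $M=D+1$, which serves all the relevant constraints by \refR{Rgss}), $U_n(f;\cD)$ is a \emph{finite} linear combination of unconstrained \Ustats{} $U_{n-D'}\bigpar{g_{\cDz};(Y_i)}$, $0\le D'\le D$, on the common $(m+D)$-dependent sequence $(Y_i)$ of \eqref{YX}, where each $g_{\cDz}$ inherits \AAA3 from the construction in \refL{L2}; likewise $U_n(f;\cDq)=U_{n-D}\bigpar{g;(Y_i)}$ exactly, and $U_n(f)$ is already unconstrained. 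Hoeffding-decomposing each summand as in the proof of \refT{TUM} and collecting terms gives $U_n(f;\cD)-\E U_n(f;\cD)=\hU_n+R_n$, where $\hU_n=\sumkn\tilde W_{k,n}$ is a \emph{single} weighted sum over an \mdep{} triangular array, each $\tilde W_{k,n}$ being a centered function of $Y_k$ assembled from the one-variable projections of the kernels $g_{\cDz}$, while $R_n$ is a single completely degenerate \Ustat{} (vanishing one-variable projections, \cf\ \eqref{fi0}) with $\Var R_n=O\bigpar{n^{2b-2}}$ by \eqref{fit}. So it suffices to prove a Berry--Esseen bound $O\bigpar{n\qqw}$ for $\hU_n$ and that adding back $R_n$ changes $d_K$ by only $O\bigpar{n\qqw}$.

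For $\hU_n$ this is routine. Write $\gs_n^2$ for the total variance, so that $\gs_n^2\sim\gss n^{2b-1}$---here the hypothesis $\gss>0$ enters, keeping $\gs_n$ of order $n^{b-1/2}$. Standard estimates (using $|a_{j,k,N}|\le N^{b-1}$, \cf\ \eqref{ajkn}, and that \AAA3 transfers to the $g_{\cDz}$ and their projections) give $\sumkn\E\bigabs{\tilde W_{k,n}}^3=O\bigpar{n\qqw}$, and the classical Berry--Esseen theory for \mdep{} sums (via a big-block/small-block decomposition, in the spirit of Tihomirov and Heinrich) then yields the sharp characteristic-function estimate $\bigabs{\E e^{\ii t\hU_n/\gs_n}-e^{-t^2/2}}\le C\bigpar{|t|+|t|^3}n\qqw e^{-ct^2}$ for $|t|\le c\sqrt n$; with Esseen's smoothing lemma and cut-off $T\sim\sqrt n$ this gives $d_K\bigpar{\hU_n/\gs_n,\N(0,1)}=O\bigpar{n\qqw}$.

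The hard part---and the reason the statement is still only a conjecture---is to show that adding back the degenerate remainder $R_n$ costs only $O\bigpar{n\qqw}$ in Kolmogorov distance. The crude $L^2$-perturbation bound is of no use: since $\norm{R_n}_2/\gs_n=O\bigpar{n\qqw}$, it yields only $O\bigpar{(n\qqw)^{2/3}}=O\bigpar{n\qqqw}$. Instead one must estimate $\E e^{\ii tW}=\E\bigsqpar{e^{\ii t\hU_n/\gs_n}\,e^{\ii tR_n/\gs_n}}$, with $W=(U_n(f;\cD)-\E U_n(f;\cD))/\gs_n$, \emph{without} discarding the Gaussian decay $e^{-ct^2}$ of the first factor: expand $e^{\ii tR_n/\gs_n}=1+\ii tR_n/\gs_n+O\bigpar{t^2R_n^2/\gs_n^2}$, use the complete degeneracy of $R_n$ so that the linear term almost cancels against $e^{\ii t\hU_n/\gs_n}$ after a suitable conditioning, and bound the quadratic term against the decay of the first factor, so that after division by $|t|$ the integrand is integrable out to $|t|\sim\sqrt n$. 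This is the mechanism behind the sharp Berry--Esseen bounds for \Ustats{} of \citet{MalevichA} in the symmetric \mdep{} case, and, for \iid{} variables, of van Zwet and of Callaert--Janssen (later refined by Bentkus--G\"otze). Carrying it through here demands that the conditioning respect simultaneously the block structure forced by the $m$-dependence of $(Y_i)$ and the fact that in the asymmetric case the degenerate kernel of $R_n$ genuinely mixes the $b$ distinct one-variable projections; this is where the real work lies, and where any unforeseen obstruction would surface.
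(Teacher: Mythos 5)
This statement is left as an open conjecture in the paper---the author gives no proof---so there is nothing internal to compare your proposal against. Your proposal, moreover, is not a proof, and you say as much: after reducing constraints via \refL{L2} and \eqref{Ucsum} and applying the Hoeffding decomposition, you isolate as the ``hard part'' the task of absorbing the completely degenerate remainder $R_n$ (with $\Var R_n=O(n^{2b-2})$) into the Kolmogorov bound at cost only $O(n^{-1/2})$, you correctly observe that the naive $L^2$-perturbation argument gives only $O(n^{-1/3})$, and you then sketch the characteristic-function mechanism (in the spirit of van Zwet and Callaert--Janssen for \iid{} variables, and of \citet{MalevichA} for symmetric \mdep{} $U$-statistics) that should in principle close the gap---while acknowledging explicitly that ``this is where the real work lies, and where any unforeseen obstruction would surface.'' That is exactly the right diagnosis of why this is a conjecture and not a theorem in the paper, and the roadmap you give is the natural one, but it does not constitute a proof.

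One secondary caveat: you label the Berry--Esseen bound for the main term $\hU_n=\sumkn \tilde W_{k,n}$ as ``routine,'' but this is itself nontrivial. The summands depend on $k$ through the polynomially varying weights $a_{j,k,n}$ (not just through $Y_k$), so one cannot invoke stationary \mdep{} CLT rates off the shelf; one needs either a Tihomirov/Heinrich-style characteristic-function estimate that tolerates smoothly varying weights with $|a_{j,k,n}|\le n^{b-1}$, or a careful big-block/small-block argument, together with a check that \AAA3 on $f$ propagates through the constraint reduction of \refL{L2} and the projections \eqref{fi1} to the Lyapunov quantities for $\tilde W_{k,n}$. This part is plausibly manageable, but it deserves to be flagged as work rather than folklore. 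In short: a sound plan of attack with a correct identification of the obstruction, but the conjecture remains open.
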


\begin{remark}\label{Rrate2}
If we do not care about the rate of convergence, 
we can for bounded $f$ alternatively 
obtain convergence in distribution in \eqref{trate}, and thus in
\eqref{tum2} and \eqref{tumd2}, by 
\cite[Theorem 2]{SJ58} using 
the dependency graph $\hcI$ in the proof of \refT{Trate}.
This can easily be extended to any $f$ satisfying the second moment
condition \AAA2 by a standard truncation argument.
\end{remark}

\section{Higher moments and maximal functions}
\label{Smom}

To prove \refT{Tmom}, we will show estimates for maximal functions that also
will be used in \refSs{Sfun} and \ref{Srenew}.
Let $p\ge2$ be fixed throughout the section; explicit and implicit constants
may thus depend on $p$.
We let 
\begin{align}\label{Ux}
U_n^*(f):=\max_{j\le n}|U_j(f)|,   
\end{align}
and use similar notation for maximal
functions of other sequences of random variables.

We use another decomposition of $f$ and $U_n(f)$
which was used in \cite{SJ332} for the independent
case ($m=0$); unlike Hoeffding's decomposition in \refS{Spf}, 
it focuses on the order of the arguments.

Recall from \refS{Smean} that $(\hX_i)_1^\ell$ are \iid{} with the
same distribution as $X_1$.
Let $\FF_0:=\mu$ defined in \eqref{mu} and, for $1\le k\le \ell$,
\begin{align}
  \FF_k(x_1,\dots,x_k)&:=\E f\bigpar{x_1,\dots,x_k,\hX_{k+1},\dots,\hX_\ell},
\label{FF}
\\
  F_k(x_1,\dots,x_k)&:=\FF_k\bigpar{x_1,\dots,x_k)-\FF_{k-1}(x_1,\dots,x_{k-1}}
.\label{F}
\end{align}
(These are defined at least for $\cL(X_1)$-\aex{} $x_1,\dots,x_k\in\cS$,
which is enough for our purposes.)
In other words, a.s.,
\begin{align}
  \label{FFF}
\FF_k(\hX_1,\dots,\hX_k)
=\E\bigpar{f(\hX_1,\dots,\hX_\ell)\mid \hX_1,\dots,\hX_k},
\end{align}
and 
thus 
$\FF_k(\hX_1,\dots,\hX_k)$, $k=0,\dots,\ell$, is a martingale, with
the martingale differences 
$F_k(\hX_1,\dots,\hX_k)$, $k=1,\dots,\ell$.
Hence, or directly from \eqref{FF}--\eqref{F},
for \aex{} $x_1,\dots,x_{k-1}$,
\begin{equation}
  \label{EFk}
\E F_k(x_1,\dots,x_{k-1},\hX_k)=0.
\end{equation}
Furthermore, if \ref{ALp} holds, 
then by \eqref{FFF} and Jensen's inequality,
\begin{align}
  \label{ub1}
\norm{\FF_k(\hX_1,\dots,\hX_k)}_p
\le\norm{f(\hX_1,\dots,\hX_\ell)}_p\le C,
\end{align}
and thus by \eqref{F},
\begin{align}
  \label{ub2}
\norm{F_k(\hX_1,\dots,\hX_k)}_p
\le2\norm{f(\hX_1,\dots,\hX_\ell)}_p\le C.
\end{align}

\begin{lemma}\label{LA}
Suppose that \ref{ALp} holds for some $p\ge2$,
and that $\mu=0$.
Then
\begin{align}\label{la}
  \bignorm{U^*_n(f;>m)}_p \le C n^{\ell-1/2}.
\end{align}
\end{lemma}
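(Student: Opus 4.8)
The plan is to combine the martingale-type decomposition $f=\sum_{k=1}^\ell F_k$ from \eqref{FF}--\eqref{F} (legitimate here because $\mu=0$ forces $\FF_0=0$ and $\FF_\ell=f$) with a martingale maximal inequality applied to interleaved subsequences. Let $U^{(k)}_j$ denote the sum in \eqref{Uc>}, with $n$ replaced by $j$ and the summand $f(X_{i_1},\dots,X_{i_\ell})$ replaced by $F_k(X_{i_1},\dots,X_{i_k})$; then $U_j(f;\mx)=\sum_{k=1}^\ell U^{(k)}_j$, and since $\max_{j\le n}\bigabs{U_j(f;\mx)}\le\sum_{k=1}^\ell\max_{j\le n}\bigabs{U^{(k)}_j}$, Minkowski's inequality reduces the claim to proving $\bignorm{\max_{j\le n}|U^{(k)}_j|}_p\le Cn^{\ell-1/2}$ for each fixed $k\in[\ell]$.

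Next I would fix $k$ and reorganize $U^{(k)}_j$ by the value $v:=i_k$ of the last argument-index of $F_k$. With
\[
  R_v:=\sum_{\substack{1\le i_1<\dots<i_{k-1}\le v-m-1\\ i_{r+1}-i_r>m}}F_k(X_{i_1},\dots,X_{i_{k-1}},X_v)
\]
(so $R_v=F_1(X_v)$ when $k=1$), one has $U^{(k)}_j=\sum_v c_j(v)R_v$, where $c_j(v)$ counts the admissible tails $v+m<i_{k+1}<\dots<i_\ell\le j$ with all gaps $>m$. These weights are \emph{deterministic}, \emph{nonincreasing in $v$}, and satisfy $0\le c_j(v)\le c_n(v)\le Cn^{\ell-k}$. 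The key structural point is that every tuple counted has all gaps $>m$, so by $m$-dependence the vector $(X_{i_1},\dots,X_{i_{k-1}},X_v)$ has the $k$-fold product law of $X_1$ and $X_v$ is independent of $\gs(X_i:i\le v-m-1)$; together with \eqref{EFk} this gives $\E\bigsqpar{R_v\mid\gs(X_i:i\le v-m-1)}=0$. Consequently, for each residue $a\in\set{0,\dots,m}$, the sequence $\bigpar{R_{a+t(m+1)}}_{t\ge0}$ is a martingale difference sequence for the filtration $\bigpar{\gs(X_i:i\le a+t(m+1))}_{t\ge0}$.

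The remaining step is to estimate each residue class. Fixing $a$, writing $v_t:=a+t(m+1)$ and $M^{(a)}_t:=\sum_{s\le t}R_{v_s}$ (a martingale with $O(n)$ increments), Abel summation of $\sum_t c_j(v_t)R_{v_t}$ against the monotone deterministic weights $c_j(\cdot)$ yields, uniformly in $j\le n$,
\[
  \Bigabs{\sum_t c_j(v_t)R_{v_t}}\le Cn^{\ell-k}\max_t\bigabs{M^{(a)}_t}.
\]
By Burkholder's inequality (with Doob's inequality) followed by Minkowski's inequality in $L^{p/2}$,
\[
  \Bignorm{\max_t\bigabs{M^{(a)}_t}}_p\le C\Bigpar{\sum_t\norm{R_{v_t}}_p^2}^{1/2}\le Cn^{k-1/2},
\]
because $R_v$ is a sum of at most $Cn^{k-1}$ terms, each with $L^p$-norm $\le C$ by \eqref{ub2} (this is where \AAA p enters), and there are $O(n)$ values of $t$. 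Summing over the $m+1$ residues and then over $k\in[\ell]$ gives $\bignorm{\max_{j\le n}|U^{(k)}_j|}_p\le Cn^{\ell-k}\cdot n^{k-1/2}=Cn^{\ell-1/2}$, which is \eqref{la}.

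The hard part is that $m$-dependence prevents $(U^{(k)}_j)_j$ from being a martingale in the single index $j$; this is precisely what the restriction to gaps $>m$ in \eqref{Uc>} is for, making the last active variable conditionally independent of the "past before the buffer" so that its $F_k$-projection vanishes by \eqref{EFk}, after which the classical splitting into $m+1$ interleaved martingales applies. The secondary nuisance is the dependence of the combinatorial weights $c_j(v)$ on $j$, which must be kept inside the maximum over $j$; this is handled by the Abel summation above, using that those weights are monotone in $v$ and deterministic, hence harmless for the martingale structure.
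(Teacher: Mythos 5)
Your argument is correct and follows essentially the same route as the paper: decompose $f=\sum_{k=1}^\ell F_k$, group the $k$-fold sums by the last active index $v=i_k$, observe that the $>m$ gap-condition together with \eqref{EFk} makes $\E[R_v\mid\cF_{v-m-1}]=0$, split into $m+1$ interleaved martingales, and apply Burkholder plus Minkowski in $L^{p/2}$. The one superficial difference is bookkeeping: the paper performs a second summation by parts to reduce $U^*_n(f;>m)$ to $\sum_k n^{\ell-k}U^*_n(F_k;>m)$ and then applies Burkholder to the latter, whereas you keep the $j$-dependent tail weights $c_j(v)$ explicit and use their monotonicity in $v$ to absorb them by Abel summation before invoking the same maximal inequality; these are equivalent and yield the identical $Cn^{\ell-1/2}$ bound.
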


\begin{proof}
We argue as in \cite[Lemmas 4.4 and 4.7]{SJ332} with some minor differences.
By \eqref{FF}--\eqref{F}, 
$f(x_1,\dots,x_\ell)
=\FF_\ell(x_1,\dots,x_\ell)
=\sumkl F_k(x_1,\dots,x_k)$
for \aex{} $x_1,\dots,x_\ell$,
and thus, a.s.,
\begin{equation}
  \begin{split}
U_n(f;>m)
&=\sumkl \summm{k} \binom{n-i_k-(\ell-k)m}{\ell-k} 
F_k\xpar{X_{i_1},\dots,X_{i_k}} 
\\
&=\sumkl \sumin \binom{n-i-(\ell-k)m}{\ell-k}\bigpar{U_i(F_k;>m)-U_{i-1}(F_k;>m)}
\\
&=U_n(F_\ell;>m)
+\sum_{k=1}^{\ell-1} \sum_{i=1}^{n-1}\binom{n-i-(\ell-k)m-1}{\ell-k-1}U_i(F_k;>m),
  \end{split}
\end{equation}
using a summation by parts and the identity 
$\binom{n'}{\ell-k}-\binom{n'-1}{\ell-k}=\binom{n'-1}{\ell-k-1}$.
In particular,
\begin{align}\label{kum}
|U_n(f;>m)|
&\le |U_n(F_\ell;>m)|+\sum_{k=1}^{\ell-1} \sum_{i=1}^{n-1} 
\binom{n-i-(\ell-k)m-1}{\ell-k-1}
U^*_n(F_k;>m)
\notag\\
&= |U_n(F_\ell;>m)|+\sum_{k=1}^{\ell-1} \binom{n-(\ell-k)m-1}{\ell-k} U^*_n(F_k;>m)
\notag\\&
\le \sum_{k=1}^{\ell} n^{\ell-k} U^*_n(F_k;>m).
\end{align}
Since the \rhs{} is weakly increasing in $n$, it follows that, a.s.,
\begin{equation}\label{kul}
U^*_n(f;>m)
\le \sum_{k=1}^{\ell} n^{\ell-k} U^*_n(F_k;>m).
\end{equation}
We thus may consider each $F_k$ separately.
Let $1\le k\le\ell$, and let
\begin{align}\label{gDU}
\gD U_n(F_k;>m):=U_n(F_k;>m)-U_{n-1}(F_k;>m)
.\end{align}
By the definition \eqref{Uc>},
$\gD U_n(F_k;>m)$
is a sum of $\binom{n-(k-1)m-1}{k-1}\le n^{k-1}$ terms
$F_k(X_{i_1},\dots,X_{i_{k-1}},X_n)$ that all have the same distribution
as $F_k\xpar{\hX_1,\dots,\hX_k}$,
and thus by Minkowski's inequality and \eqref{ub2},
\begin{equation}\label{swab}
\norm{\gD U_n(F_k;>m)}_p
\le {n}^{k-1}\norm{F_k(\hX_1,\dots,\hX_k)}_p 
\le C{n}^{k-1}
.\end{equation}
Furthermore, in each such term $F_k(X_{i_1},\dots,X_{i_{k-1}},X_n)$ 
we have $i_{k-1}\le n-m-1$.
Hence, if we let $\cF_i$ be the \gsf{} generated by $X_1,\dots,X_i$, then,
 by $m$-dependence,
$X_n$ is independent of $\cF_{i_{k-1}}$, whence
\eqref{EFk} implies
\begin{align}
  \E \bigpar{F_k\xpar{X_{i_1},\dots,X_{i_{k-1}},X_n}\mid \cF_{n-m-1}}
&=
  \E \bigpar{F_k\xpar{X_{i_1},\dots,X_{i_{k-1}},X_n}\mid X_{i_1},\dots,X_{i_{k-1}}}
\notag\\&=0.
\end{align}
Consequently,
\begin{align}\label{annaw}
  \E\bigpar{\gD U_n(F_k;>m)\mid \cF_{n-m-1}}=0.
\end{align}
In the independent case $m=0$ treated in \cite{SJ332}, 
this means that $U_n(F_k)$ is a martingale.
In general, we may as a substitute  split $U_n(F_k;>m)$
as a sum of $m+1$ martingales.
For $j=1,\dots,m+1$ and $i\ge1$, let
\begin{align}
\gD M_i\jjj&:=\gD U_{(i-1)(m+1)+j}(F_k;>m),\label{uba}
\\\label{ubb}
  M_i\jjj
&:=\sum_{q=1}^i\gD M_q\jjj
=\sum_{q=1}^i\gD U_{q(m+1)+j}(F_k;>m)
.\end{align}
Then, \eqref{annaw} implies that $(M_i\jjj)_{i\ge0}$ is a martingale, for
each $k$ and $j$. Hence, Burkholder's inequality 
\cite[Theorem 10.9.5]{Gut} yields, for the maximal function $M_n\jjjx$,
\begin{align}\label{ub3}
  \norm{M_n\jjjx}_p
\le C \Bignorm{\Bigpar{\sum_{i=1}^n |\gD M_i\jjj|^2}\qq}_p
= C \Bignorm{{\sum_{i=1}^n |\gD M_i\jjj|^2}}_{p/2}\qq.
\end{align}
Furthermore, Minkowski's inequality yields (since $p/2\ge1$),
using also \eqref{uba} and \eqref{swab}, for $n\ge1$,
\begin{align}\label{ub4}
 \Bignorm{{\sum_{i=1}^n |\gD M_i\jjj|^2}}_{p/2}
\le \sum_{i=1}^n \bignorm{|\gD M_i\jjj|^2}_{p/2}
= \sum_{i=1}^n \bignorm{\gD M_i\jjj}_{p}^2
\le C n^{1+2(k-1)}
.\end{align}
Combining \eqref{ub3} and \eqref{ub4} yields
\begin{align}\label{ub5}
  \norm{M_n\jjjx}_p
\le C n^{k-1/2}
.\end{align}
 It follows from \eqref{uba}--\eqref{ubb} that
\begin{align}\label{ub6}
U_n(F_k;>m) =\sum_{j=1}^{m+1} M\jjj_{\floor{(n-j)/(m+1)}+1}  .
\end{align}
Hence (coarsely),
\begin{align}\label{ub7}
U^*_n(F_k;>m) \le\sum_{j=1}^{m+1} M\jjjx_{n},
\end{align}
and thus \eqref{ub5} and Minkowski's inequality yield
\begin{align}\label{ub8}
  \bignorm{U^*_n(F_k;>m)}_p
\le C n^{k-1/2}
,\end{align}
for $k=1,\dots,\ell$.

The result \eqref{la} now follows from \eqref{ub8} and \eqref{kul} 
by a final application
of Minkowski's inequality.  
\end{proof}

\begin{theorem}\label{TQ}
  Suppose that \ref{ALp} holds for some $p\ge2$.
Then, with $b=b(\cD)$,
\begin{align}\label{tq1}
  \bignorm{\max_{j\le n}\bigabs{U_j(f;\cD)-\E U_j(f;\cD)}}_p 
&= O\bigpar{n^{b-1/2}},
\\\label{tqq}
  \bignorm{\max_{j\le n}\bigabs{U_j(f;\cD)-\frac{\mu_\cD}{b!} j^b }}_p 
&= O\bigpar{n^{b-1/2}},
\\\label{tq2}
  \bignorm{U^*_n(f;\cD)}_p& = O\bigpar{n^{b}}
.\end{align}

The same results hold for an exact constraint $\cDq$.
\end{theorem}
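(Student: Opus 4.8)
The plan is to reduce everything to establishing \eqref{tq1} (and its $\cDq$ analogue). Indeed, \refT{TE} gives $\max_{j\le n}\bigabs{\E U_j(f;\cD)-\tfrac{\mu_\cD}{b!}j^b}=O\bigpar{n^{b-1}}$, so \eqref{tqq} follows from \eqref{tq1} by the triangle inequality, and then \eqref{tq2} follows from \eqref{tqq} via $U^*_n(f;\cD)\le\max_{j\le n}\bigabs{U_j(f;\cD)-\tfrac{\mu_\cD}{b!}j^b}+\tfrac{|\mu_\cD|}{b!}n^b$; the same reasoning applies verbatim to $\cDq$. For \eqref{tq1} I would reduce in two steps. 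First, by \eqref{Ucsum} the centered quantity $U_j(f;\cD)-\E U_j(f;\cD)$ is a \emph{finite} sum over the $\cD'$ of \eqref{cd'} of the terms $U_j(f;\cDz)-\E U_j(f;\cDz)$, with $b(\cD')=b$ for every such $\cD'$, so Minkowski's inequality reduces \eqref{tq1} for $\cD$ to the same bound for each exact $\cDz$. Second, \refL{L2} (taking $M:=D+1$ and $Y_i:=(X_i,\dots,X_{i+D})$) gives the identity $U_j(f;\cDq)=U_{j-D}\bigpar{g;(Y_i)\xooo}$ for all $j$, where $(Y_i)\xoo$ is stationary and $(m+D)$-dependent and $g=g_{\cDq}$ satisfies \AAA{p} (the computation behind \eqref{ly} gives \AAA{p} unchanged, since \eqref{solo} identifies $g(Y_{j_1},\dots,Y_{j_b})$ with a value of $f$); hence $\max_{j\le n}\bigabs{U_j(f;\cDq)-\E U_j(f;\cDq)}\le\max_{i\le n}\bigabs{U_i\bigpar{g;(Y_i)\xooo}-\E U_i\bigpar{g;(Y_i)\xooo}}$, so it is enough to prove \eqref{tq1} in the unconstrained case, with $\ell$ and $m$ there replaced by $b$ and $m+D$.

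For the unconstrained case, write $U_j:=U_j(f)$. Replacing $f$ by $f-\mu$ changes $U_j$ and $\E U_j$ by the same amount $\binom j\ell\mu$, so I may assume $\mu=0$; then $\E U_j=O\bigpar{j^{\ell-1}}$ by \refT{TE}, and it suffices to show $\bignorm{U^*_n(f)}_p=O\bigpar{n^{\ell-1/2}}$. Here the tool is the inclusion--exclusion identity of \eqref{bada}, which gives, for every $j\le n$, $U_j(f)=U_j(f;\mx)-\sum_{\emptyset\ne J\subseteq[\ell-1]}(-1)^{|J|}U_j(f;\cD_J)$ with $\cD_J$ the constraint \eqref{cdj} and $b(\cD_J)=\ell-|J|\le\ell-1$. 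By \refL{LA} (which uses $\mu=0$ and \AAA{p}) we have $\bignorm{U^*_n(f;\mx)}_p=O\bigpar{n^{\ell-1/2}}$. For each fixed $J\ne\emptyset$, $U_j(f;\cD_J)$ runs over a subset of the $O\bigpar{n^{\ell-|J|}}$ index $\ell$-tuples in $[n]$ admissible for $\cD_J$, and each summand $f(X_{i_1},\dots,X_{i_\ell})$ has $L^p$ norm $\le C$ by \AAA{p}; hence $\max_{j\le n}\bigabs{U_j(f;\cD_J)}$ is dominated by one sum of $O\bigpar{n^{\ell-1}}$ such terms, so $\bignorm{\max_{j\le n}\bigabs{U_j(f;\cD_J)}}_p=O\bigpar{n^{\ell-1}}$ by Minkowski. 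Summing over $J$, $\bignorm{U^*_n(f)}_p=O\bigpar{n^{\ell-1/2}}+O\bigpar{n^{\ell-1}}=O\bigpar{n^{\ell-1/2}}$, which completes the plan.

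The substantive input is \refL{LA} (the martingale/Burkholder estimate for the large-gap \Ustat{}), which is already in hand, so I do not expect a genuine obstacle. The one point that needs care is that the maximal functions survive all of the above manipulations: this works because $\max_{j\le n}|\cdot|$ is subadditive, the identity of \eqref{bada} holds simultaneously for every index $j$, and the reduction \eqref{lx} is an exact identity involving only the harmless shift $j\mapsto j-D$, so a maximum over $j\le n$ on the left is bounded by a maximum over $i\le n$ on the right. A final routine check is that all error terms generated — by \refT{TE}, by the shift $j\mapsto j-D$, and by the constrained pieces $U_j(f;\cD_J)$ — are of order at most $n^{b-1}$, hence negligible against the target $n^{b-1/2}$.
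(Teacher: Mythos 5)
Your proposal is correct. It reaches the same bounds as the paper, but by a genuinely different (and arguably cleaner) logical route. The paper proves \eqref{tq1}--\eqref{tq2} simultaneously by induction on $b$: in the unconstrained step it expands $U_n(f)$ by the inclusion--exclusion identity \eqref{bada} and disposes of the terms $U^*_n(f;\cD_J)$, $J\ne\emptyset$, by invoking the \emph{induction hypothesis} — namely \eqref{tq2} applied to the constraints $\cD_J$ with $b(\cD_J)=\ell-|J|<\ell$. You instead notice that no induction is needed: since the sum defining $U_j(f;\cD_J)$ with $i_\ell\le n$ has only $O\bigpar{n^{b(\cD_J)}}$ terms and each term has uniformly bounded $L^p$ norm by \ref{ALp} (via the remark below \eqref{al2}), a crude term-by-term Minkowski bound already gives $\bignorm{\max_{j\le n}|U_j(f;\cD_J)|}_p=O\bigpar{n^{b(\cD_J)}}=O\bigpar{n^{\ell-1}}$, the same order the induction hypothesis would supply. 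This lets you establish the unconstrained case outright, then derive the constrained cases by the reductions in \refL{L2} and \eqref{Ucsum}, and finally obtain \eqref{tqq}, \eqref{tq2} from \eqref{tq1} via \refT{TE}. The content (\refL{LA}, \eqref{bada}, \refL{L2}) is identical; what your version buys is the elimination of the somewhat delicate simultaneous induction across the three cases, at no cost in generality or in the quality of the bounds.
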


\begin{proof}
We 
use induction on $b$.
We split the induction step into three cases.

\resetcases
\pfcase{no constraint, i.e., $\cD=\cDoo$ and $b=\ell$}
By \eqref{bada}--\eqref{cdj}, 
\begin{align}\label{tq3}
U_n(f)=U_n(f;\cD_\emptyset)= U_n(f;\mx)
-\sum_{J\neq\emptyset}(-1)^{|J|}U_n(f;\cD_J).
\end{align}
Thus,
\begin{align}\label{tq4}
U^*_n(f) 
\le U^*_n(f;\mx)+\sum_{J\neq\emptyset}U^*_n(f;\cD_J).
\end{align}

Suppose first that $\mu=0$; then  
\refL{LA} applies to $U^*_n(f;\mx)$. 
Furthermore, 
the induction hypothesis applies to each term in the sum in \eqref{tq4},
since $b(\cD_J)=\ell-|J|\le \ell-1=b-1$.
Hence, Minkowski's inequality yields
\begin{align}\label{tq6}
\bignorm{U^*_n(f)}_p 
\le \bignorm{U^*_n(f;\mx)}_p+\sum_{J\neq\emptyset}\bignorm{U^*_n(f;\cD_J)}_p
\le C n^{\ell-1/2}+ C n^{\ell-1}
.\end{align}
When $\mu=0$, 
\eqref{te0} yields
\begin{align}
  \label{tq7}
\E U_n(f)=O\bigpar{n^{\ell-1}}.
\end{align}
Now
\eqref{tq1} follows from \eqref{tq6} and \eqref{tq7}, 
which shows \eqref{tq1}
when $\mu=0$. The general case follows by considering $f-\mu$;
this does not affect $U_n(f)-\E U_n(f)$.

Finally, 
both \eqref{tqq} and
\eqref{tq2} follow from \eqref{tq1} and \eqref{te0}.

\pfcase{an exact constraint $\cDq$, $b<\ell$}
An immediate consequence of \eqref{lx} in \refL{L2} and Case 1 applied to
$g$; note that 
$g$ too satisfies \ref{ALp} by \eqref{solo}.

\pfcase{a constraint $\cD$, $b<\ell$}
A consequence of Case 2 by \eqref{Ucsum} and \eqref{muD}.
\end{proof}

\begin{lemma}\label{LUI}
 Suppose that \ref{ALp} holds for some $p\ge2$.
Let $b:=b(\cD)$.
Then
the sequences
\begin{align}\label{tui3}
n^{1/2-b}\max_{j\le n}\bigabs{U_j(f;\cD)-\E U_j(f;\cD)}
\qquad\text{and}\qquad
n^{-b}U^*_n(f;\cD)
\qquad(n\ge1)
\end{align}
are uniformly \pthp{} integrable.

The same holds for an exact constraint $\cDq$.
\end{lemma}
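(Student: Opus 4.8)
The plan is to derive \refL{LUI} from the $L^p$-estimates of \refT{TQ} by a truncation of $f$. Write $V_n$ for either of the two sequences displayed in \eqref{tui3} (the argument is identical for the two, and for an exact constraint $\cDq$). By \refT{TQ} we have $\sup_n\E V_n^p<\infty$, so $(V_n^p)$ is bounded in $L^1$; it therefore remains to verify the second half of the Vitali criterion, i.e.\ that for every $\eta>0$ there is $\gd>0$ such that $\E[V_n^p\indic{A}]<\eta$ for all $n\ge1$ and every event $A$ with $\P(A)<\gd$.

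A preliminary step is to record a \emph{quantitative} version of \refT{TQ}. Inspecting the proofs of \refL{LA} and \refT{TQ}, one sees that every estimate there (through \eqref{ub2}, \eqref{swab}, \eqref{ub4}, \eqref{ub8}, \eqref{tq6}, together with \eqref{te0}) is linear in $\norm{f}_{*,p}:=\max_{i_1<\dots<i_\ell}\norm{f(X_{i_1},\dots,X_{i_\ell})}_p$, which is finite by \eqref{al2}; hence the implied constants in \eqref{tq1}--\eqref{tq2} have the form $C(\ell,m,p,\cD)\,\norm{f}_{*,p}$. Now fix $\eta>0$ and split $f=f'+f''$ with $f':=f\indic{|f|\le K}$ and $f'':=f\indic{|f|>K}$ for a constant $K$ to be chosen large. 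Since $U_n(f;\cD)=U_n(f';\cD)+U_n(f'';\cD)$ termwise, the maximal function $U^*_n(\,\cdot\,;\cD)$ and the centered maximal function $\max_{j\le n}|U_j(\,\cdot\,;\cD)-\E U_j(\,\cdot\,;\cD)|$ are subadditive in the argument, so $V_n\le V_n'+V_n''$ with $V_n'$, $V_n''$ the corresponding quantities built from $f'$ and $f''$. The bounded function $f'$ satisfies \AAA{q} for every $q$, so applying \refT{TQ} with exponent $2p$ gives $\sup_n\E(V_n')^{2p}<\infty$; thus $\{(V_n')^p\}$ is bounded in $L^2$ and hence uniformly integrable.

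It remains to make $V_n''$ small. By dominated convergence, $\E\bigsqpar{|f(X_{i_1},\dots,X_{i_\ell})|^p\indic{|f(X_{i_1},\dots,X_{i_\ell})|>K}}\to0$ as $K\to\infty$, and, by stationarity and $m$-dependence (as noted after \eqref{al2}), only finitely many distributions of $f(X_{i_1},\dots,X_{i_\ell})$ occur, so this convergence is uniform over all index tuples; hence $\norm{f''}_{*,p}\to0$ as $K\to\infty$. By the quantitative form of \refT{TQ} we may therefore choose $K$ so large that $2^{p-1}\sup_n\E(V_n'')^p<\eta/2$. Then for every event $A$,
\begin{align*}
\E[V_n^p\indic{A}]\le 2^{p-1}\E[(V_n')^p\indic{A}]+2^{p-1}\E[(V_n'')^p]\le 2^{p-1}\E[(V_n')^p\indic{A}]+\tfrac\eta2,
\end{align*}
and since $\{(V_n')^p\}$ is uniformly integrable, the first term is $<\eta/2$ once $\P(A)<\gd$ for a suitable $\gd$, uniformly in $n$. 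This gives the Vitali criterion, hence the lemma; the other sequence in \eqref{tui3} and the exact-constraint case are handled verbatim. The one point that genuinely needs care is the preliminary step — the explicit linear dependence of the constants in \refT{TQ} on $\norm{f}_{*,p}$ — since this is exactly what makes the bound for the truncated remainder $f''$ small; alternatively, one can follow the corresponding lemma of \cite{SJ332} \emph{mutatis mutandis}.
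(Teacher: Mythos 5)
Your proof is correct and follows essentially the same route as the paper's: truncate $f$ at level $K$, observe that the implicit constant in Theorem~\ref{TQ} depends linearly on $\max_{i_1<\dots<i_\ell}\norm{f(X_{i_1},\dots,X_{i_\ell})}_p$ so the tail piece $f''$ contributes little, and handle the bounded piece $f'$ by applying Theorem~\ref{TQ} with exponent $2p$. The only cosmetic difference is that you unwind the Vitali criterion by hand, whereas the paper packages this last step as the separate elementary Lemma~\ref{LLUI}.
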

\begin{proof}
We consider the second sequence in \eqref{tui3}; the proof for the first
sequence 
differs only notationally.

We have so far let $f$ be fixed, so the constants above may depend on $f$.
However, it is easy to see that the proof of \eqref{tq2} yields
\begin{align}\label{tui8}
    \bignorm{U^*_n(f;\cD)}_p\le C_p  
\max_{i_1<\dots< i_\ell}\norm{f(X_{i_1},\dots,X_{i_\ell})}_p n^{b}, 
\end{align}
with $C_p$ independent of $f$ (but depending on $p$).
(Note that we only have to consider a finite set of
indices $(i_1,\dots,i_\ell)$, as discussed above \eqref{al2}.

Truncate $f$, and define, for  $B>0$,
$f_B(\bx):=f(\bx)\indic{|f(\bx)|\le B}$.
Then \eqref{tui8} yields
\begin{align}\label{tui9}
    \bignorm{n^{-b} U^*_n(f-f_B;\cD)}_p\le C_p  \eps(B),
  \end{align}
where
  \begin{align}
\eps(B):=
\max_{i_1<\dots< i_\ell}
\norm{f(X_{i_1},\dots,X_{i_\ell})\indic{|f(X_{i_1},\dots,X_{i_\ell})|>B}}_p 
\to 0
\end{align}
as $B\to\infty$. 

Let $q:=2p$.
Since $f_B$ is bounded, we may apply \eqref{tui8} (or \refT{TQ}) 
with $p$ replaced by $q$
and obtain
\begin{align}\label{sup}
\sup_n  \bignorm{n^{-\ell}U^*_n(f_B;\cD)}_{2p} <\infty.
\end{align}
Hence, for any $B$,
the sequence $n^{-\ell} U^*_n(f_B;\cD)$ is uniformly \pthp{} integrable.
Since
$ U^*_n(f;\cD) \le  U^*_n(f_B;\cD)+ U^*_n(f-f_B;\cD)$,
the result now follows from the following simple observation.
\end{proof}

\begin{lemma}\label{LLUI}
Let $1\le p<\infty$.
Let $(\xi_n)_{n\ge1}$ be a sequence of  random variables.
Suppose that for every $\eps>0$, there exist random variables 
$\eta^\eps_n$ and $\zeta^\eps_n$, $n\ge1$, such 
that 
\begin{romenumerate}
\item \label{LLUI1}
$|\xi_n|\le\eta^\eps_n+\zeta^\eps_n$,  
\item \label{LLUI2}
the sequence $(|\eta^\eps_n|^p)_n$ is \ui,
\item \label{LLUI3}
$\norm{\zeta^\eps_n}_p\le\eps$.
\end{romenumerate}
Then $\xpar{|\xi_n|^p}_{n}$ is uniformly integrable.
\end{lemma}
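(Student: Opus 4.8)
The plan is to deduce the uniform integrability of $\xpar{|\xi_n|^p}_n$ from the standard characterization: a sequence $(Y_n)$ of random variables is \ui{} if and only if $\sup_n\E|Y_n|<\infty$ and, for every $\tau>0$, there is a $\gd>0$ such that every event $A$ with $\P(A)<\gd$ satisfies $\sup_n\E\bigsqpar{|Y_n|\indic{A}}<\tau$ (equivalently, applying this with $A=\{|Y_n|>\gl\}$ and Markov's inequality, $\lim_{\gl\to\infty}\sup_n\E\bigsqpar{|Y_n|\indic{|Y_n|>\gl}}=0$). I would apply this with $Y_n:=|\xi_n|^p$. The only algebraic ingredient needed is the elementary convexity bound $(a+b)^p\le 2^{p-1}(a^p+b^p)$ for $a,b\ge0$ (valid since $p\ge1$), which together with condition~\ref{LLUI1} gives, for every $\eps>0$ and every $n$,
\[
|\xi_n|^p\le\bigpar{|\eta^\eps_n|+|\zeta^\eps_n|}^p\le 2^{p-1}\bigpar{|\eta^\eps_n|^p+|\zeta^\eps_n|^p}.
\]

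Carrying this out: first I would verify the $L^1$-bound by fixing any single value, say $\eps=1$. By \ref{LLUI2} the family $(|\eta^1_n|^p)_n$ is \ui{}, hence bounded in $L^1$, while \ref{LLUI3} gives $\E|\zeta^1_n|^p=\norm{\zeta^1_n}_p^p\le 1$; the displayed inequality then yields $\sup_n\E|\xi_n|^p\le 2^{p-1}\bigpar{\sup_n\E|\eta^1_n|^p+1}<\infty$. For the absolute-continuity condition, given $\tau>0$ I would \emph{first} choose $\eps>0$ small enough that $2^{p-1}\eps^p<\tfrac\tau2$; this choice fixes a particular pair of sequences $(\eta^\eps_n)_n$, $(\zeta^\eps_n)_n$. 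Since $(|\eta^\eps_n|^p)_n$ is \ui{} by \ref{LLUI2}, there is $\gd>0$ such that $\P(A)<\gd$ implies $\sup_n\E\bigsqpar{|\eta^\eps_n|^p\indic{A}}<\tau/2^p$, and then, for every such $A$ and every $n$, the displayed bound and \ref{LLUI3} give
\[
\E\bigsqpar{|\xi_n|^p\indic{A}}\le 2^{p-1}\E\bigsqpar{|\eta^\eps_n|^p\indic{A}}+2^{p-1}\E|\zeta^\eps_n|^p<\tfrac\tau2+2^{p-1}\eps^p<\tau.
\]
Together with the $L^1$-bound, this is exactly the characterization quoted above, so $\xpar{|\xi_n|^p}_n$ is \ui{}.

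The argument is entirely routine, and I do not anticipate a genuine obstacle; the one point worth flagging is the order of the quantifiers in the last step, namely that the truncation level $\eps$ (and hence the auxiliary sequences $\eta^\eps,\zeta^\eps$) must be selected as a function of the target tolerance $\tau$ \emph{before} the uniform integrability hypothesis~\ref{LLUI2} is invoked for that fixed $\eps$. Everything else is bookkeeping with Minkowski-type constants.
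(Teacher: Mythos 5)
Your proof is correct, and it follows the same idea as the paper: apply the convexity bound $(a+b)^p\le 2^{p-1}(a^p+b^p)$ to split $|\xi_n|^p$ into a UI piece and a small-$L^1$ piece, then invoke the standard characterization of uniform integrability (bounded $L^1$ norm plus uniform absolute continuity of the integrals). The paper compresses this to a one-line reduction to the case $p=1$ followed by a citation to Gut's textbook, whereas you spell out the quantifier order ($\eps$ chosen before $\delta$) explicitly — which is exactly the "simple exercise" the paper leaves to the reader.
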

\begin{proof}
Since \ref{LLUI1} implies $|\xi_n|^p \le 2^p|\eta^\eps_n|^p+2^p|\zeta^\eps_n|^p$,
it suffices (by appropriate substitutions) to consider the case $p=1$.
This is a simple exercise, using for example \cite[Theorem 5.4.1]{Gut}.
\end{proof}

\begin{proof}[Proof of \refT{Tmom}]
  An immediate consequence of \refTs{TUM}--\ref{TUMD} and the uniform
  integrability given by \refL{LUI}.
\end{proof}

\section{Functional convergence}\label{Sfun}

We begin by improving \eqref{la} in a special situation.
(We consider only $p=2$.)

\begin{lemma}\label{LB}
  Suppose that \AAA2 holds and that $\mu=0$ and
$f_i(X_i)=0$ a.s.\ for every   $i=1,\dots,\ell$. 
Then 
\begin{align}\label{lb}
  \bignorm{U^*_n(f;>m)}_2 \le C n^{\ell-1}.
\end{align}
\end{lemma}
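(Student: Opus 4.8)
The plan is to refine the proof of \refL{LA}, exploiting the extra hypothesis to save a factor $n\qq$ in the estimate for the martingale increments. Recall from that proof the decomposition $f=\sum_{k=1}^\ell F_k$ with $F_k$ as in \eqref{FF}--\eqref{F}, and (since $\mu=0$) the pointwise bound \eqref{kul}; thus it suffices to prove $\norm{U^*_n(F_k;>m)}_2\le Cn^{k-1}$ for each $k$. Besides the martingale-difference property \eqref{EFk}, $\E F_k(x_1,\dots,x_{k-1},\hX_k)=0$, the function $F_k$ has under the present hypotheses also the ``vanishing $k$th projection'' property $\E F_k(\hX_1,\dots,\hX_{k-1},x)=f_k(x)=0$ for $\cL(X_1)$-a.e.\ $x$, which follows directly from \eqref{FF}--\eqref{F}, \eqref{fi1}, and the assumption $f_k(X_1)=0$ a.s. In particular $F_1=f_1=0$ a.s., so $U_n(F_1;>m)=0$, and only $2\le k\le\ell$ need be considered.

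Fix $k\ge2$. As in \refL{LA}, $\gD U_n(F_k;>m):=U_n(F_k;>m)-U_{n-1}(F_k;>m)$ is a sum of terms $F_k(X_{i_1},\dots,X_{i_{k-1}},X_n)$ over $1\le i_1<\dots<i_{k-1}\le n-m-1$ with all internal gaps $>m$. The essential new ingredient is the improved variance bound $\Var\bigsqpar{\gD U_n(F_k;>m)}=O\bigpar{n^{2k-3}}$. Expand this variance over two such index sets $(i_\ga)_{\ga=1}^{k-1}$ and $(j_\gb)_{\gb=1}^{k-1}$; I claim the corresponding covariance of $F_k(X_{i_1},\dots,X_{i_{k-1}},X_n)$ and $F_k(X_{j_1},\dots,X_{j_{k-1}},X_n)$ vanishes unless $|i_\ga-j_\gb|\le m$ for some $\ga,\gb\in\set{1,\dots,k-1}$. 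Indeed, suppose no such ``coincidence'' occurs. Then, by $m$-dependence and since all the indices $i_\ga,j_\gb$ are at most $n-m-1$, the variable $X_n$ is independent of the two families $(X_{i_\ga})_\ga$ and $(X_{j_\gb})_\gb$, while these families are independent of each other and each consists of independent variables $\eqd X_1$. Conditionally on $X_n$, the two terms are therefore independent, and each has conditional expectation $f_k(X_n)$, since $\E F_k(\hX_1,\dots,\hX_{k-1},x)=f_k(x)$ for $\cL(X_1)$-a.e.\ $x$; as $f_k(X_n)=0$ a.s., the conditional, and hence the unconditional, covariance vanishes. The number of index pairs that survive is $O\bigpar{n^{k-1}\cdot n^{k-2}}=O\bigpar{n^{2k-3}}$, and each such covariance is $O(1)$ by \AAA2, \eqref{ub2}, and the \CSineq{}; hence $\norm{\gD U_n(F_k;>m)}_2\le Cn^{k-3/2}$.

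From here the argument parallels \refL{LA} verbatim: split $U_n(F_k;>m)$ into the $m+1$ martingales $M\jjj$ of \eqref{uba}--\eqref{ubb}; Burkholder's inequality (for $p=2$, equivalently Doob's maximal inequality together with orthogonality of increments) and the bound $\norm{\gD M_i\jjj}_2\le Ci^{k-3/2}$ give $\norm{M\jjjx_n}_2\le C\bigpar{\sum_{i=1}^n i^{2k-3}}\qq\le Cn^{k-1}$, using $2k-3\ge1$ for $k\ge2$. By \eqref{ub7} this yields $\norm{U^*_n(F_k;>m)}_2\le Cn^{k-1}$ for $2\le k\le\ell$ (and $=0$ for $k=1$), and then \eqref{kul} with Minkowski's inequality gives $\norm{U^*_n(f;>m)}_2\le\sum_{k=2}^\ell n^{\ell-k}\cdot Cn^{k-1}=O(n^{\ell-1})$, which is \eqref{lb}. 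The only genuinely new step, and the main obstacle, is the variance bound $\Var\bigsqpar{\gD U_n(F_k;>m)}=O(n^{2k-3})$: the top variable $X_n$, shared by both terms, already supplies ``one coincidence for free'', and the vanishing $k$th projection of $F_k$ then kills every term lacking a second coincidence --- the same cancellation mechanism that yields $\Var U_n(\fx;>m)=O(n^{2\ell-2})$ in the proof of \refT{TUM}, here applied to the martingale increments rather than to $U_n$ itself.
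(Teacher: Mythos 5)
Your proof is correct and follows essentially the same approach as the paper: both rest on the observation that the hypothesis $f_k=0$ makes the $k$th one-variable projection of $F_k$ vanish, which (conditioning on the shared top variable $X_n$ and using conditional independence) gives the crucial improved bound $\Var\bigsqpar{\gD U_n(F_k;>m)}=O(n^{2k-3})$, after which the Burkholder/Doob step and \eqref{kul} carry over verbatim from \refL{LA}. The only cosmetic difference is that you state the vanishing projection in the form $\E F_k(\hX_1,\dots,\hX_{k-1},x)=f_k(x)=0$ whereas the paper records the equivalent conditional statement \eqref{lb1}.
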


\begin{proof}
Note that \eqref{maya} and \eqref{tzol} immediately give this estimate for 
$\norm{U_n(f;>m)}_2$. To extend it to the maximal function
$U_n^*(f;>m)$,
we reuse the proof of \refL{LA} (with $p=2$), and analyse the terms 
$U^*_n(F_k;>m)$ further.
First,
by \eqref{FFF}, \eqref{fi2} and the assumptions,
for every $k\in[\ell]$,
\begin{align}\label{lb1}
  \E\bigpar{\FF_k(\hX_1,\dots,\hX_k)\mid\hX_k}
=
  \E\bigpar{f(\hX_1,\dots,\hX_\ell)\mid\hX_k}
=f_k(\hX_k)+\mu=0
.\end{align}

In particular, for $k=1$, \eqref{lb1} yields
$\FF_1(\hX_1)=0$ a.s.,
and thus 
\begin{align}
  \label{lb0}
U^*_n(F_1;>m)=0 \text{\quad  a.s.} 
\end{align}

For $k\ge2$, as said in the proof of \refL{LA},
$\gD U_n(F_k;>m)$
is a sum of $\le n^{k-1}$ terms
$F_k(X_{i_1},\dots,X_{i_{k-1}},X_n)$. 
Consider two such terms
$F_k(X_{i_1},\dots,X_{i_{k-1}},X_n)$ and
$F_k(X_{i'_1},\dots,X_{i'_{k-1}},X_n)$,
and suppose that $|i_j-i'_{j'}|>m$ for all $j,j'\in[k-1]$.
Then all variables $X_{i_j}, X_{i'_{j'}}$, and $X_n$ are independent,
and thus a.s.
\begin{align}
&\E\bigsqpar{F_k(X_{i_1},\dots,X_{i_{k-1}},X_n)F_k(X_{i'_1},\dots,X_{i'_{k-1}},X_n)
\mid X_n}  
\notag\\&\quad
=
\E\bigsqpar{F_k(X_{i_1},\dots,X_{i_{k-1}},X_n)\mid X_n}  
\E\bigsqpar{F_k(X_{i'_1},\dots,X_{i'_{k-1}},X_n)\mid X_n}  
=0,
\end{align}
by \eqref{lb1}.
Hence, taking the expectation,
\begin{align}\label{lb5}
&\E\bigsqpar{F_k(X_{i_1},\dots,X_{i_{k-1}},X_n)F_k(X_{i'_1},\dots,X_{i'_{k-1}},X_n)}  
=0,
\end{align}
unless $|i_j-i'_{j'}|\le m$ for some pair $(j,j')$.
For each $(i_1,\dots,i_{k-1})$, there are only $O(n^{k-2})$ such
$(i'_1,\dots,i'_{k-1})$, and for each of these, the expectation in \eqref{lb5}
is $O(1)$ by \AAA2 and the \CSineq. Consequently,
summing over all appearing
$(i_1,\dots,i_{k-1})$ and $(i'_1,\dots,i'_{k-1})$,
\begin{align}\label{lb7}
  \E \bigsqpar{\bigabs{\gD U_n(F_k;>m)}^2}&
=\sum_{i_1,\dots,i_{k-1},i'_1,\dots,i'_{k-1}}
\E\bigsqpar{F_k(X_{i_1},\dots,X_{i_{k-1}},X_n)F_k(X_{i'_1},\dots,X_{i'_{k-1}},X_n)}  
\notag\\&
= O\bigpar{n^{k-1}\cdot n^{k-2}}=O\bigpar{n^{2k-3}}.
\end{align}
We have gained a factor of $n$ compared to \eqref{swab}.
Hence, recalling \eqref{uba} and 
using \eqref{lb7} in \eqref{ub3}--\eqref{ub4} (which for $p=2$
essentially just is Doob's inequality), we improve \eqref{ub5} to
\begin{align}\label{ub5+}
  \norm{M_n\jjjx}_2
\le C n^{k-1}
.\end{align}
Finally, \eqref{ub5+} and \eqref{ub7} yield
\begin{align}\label{ub8+}
  \bignorm{U^*_n(F_k;>m)}_2
\le C n^{k-1}
,\end{align}
for $2\le k\le \ell$;
this holds trivially for $k=1$ too by \eqref{lb0}.
The result
follows by \eqref{kul} and \eqref{ub8+}.
\end{proof}

\begin{proof}[Proof of \refT{TG}]
We prove \eqref{tg1}; then \eqref{tg2} follows by \eqref{te}.
By replacing $f$ by $f-\mu$, we may assume that $\mu=0$.

Consider first the unconstrained case.
We argue as in \cite{SJ332}, with minor modifications.
We use \eqref{maya}, which we write as, cf.\ \eqref{jsb},
\begin{align}\label{tg3}
  U_n\xpar{f;>m}=
\sum_{j=1}^\ell\sum_{i=1}^n \anj{i} f_j(X_i)+ U_n(\fx;>m),
\end{align}
with, as in \eqref{ajkn},
\begin{align}\label{tg4}
  \anj{i}:=\binom{i-1-(j-1)m}{j-1}\binom{n-i-(\ell-j)m}{\ell-j}.
\end{align}
\refL{LB} applies to $\fx$ and shows that
\begin{align}\label{unf}
\norm{U^*_n(\fx;>m)}_2=O\bigpar{n^{\ell-1}}=o\bigpar{n^{\ell-1/2}},  
\end{align}
which implies that  the last term in \eqref{tg3} is negligible,
so we
concentrate on the sum.
Define $\gD \anj{i}:=\anj{i+1}-\anj{i}$
and, using a summation by parts,
\begin{align}\label{tg5}
  \hUnj:=\sumin \anj{i} f_j(X_i)
=\anj{n}S_n(f_j) -\sum_{i=1}^{n-1}\gD\anj{i}S_i(f_j).
\end{align}
Donsker's theorem extends to \mdep{} stationary sequences
\cite{Billingsley56},
and thus,
as \ntoo,
\begin{align}\label{tg6}
  n\qqw S\nt(f_j)\dto W_j(t)
\qquad\text{in $D\ooo$}
,\end{align}
for a continuous centered Gaussian process $W_j$ (a suitable multiple of
Brownian motion); 
furthermore, as is easily seen, 
this holds
jointly for $j=1,\dots,\ell$.
Moreover, 
define
\begin{equation}\label{psi2}
\psi_j(s,t):=\frac{1}{(j-1)!\,(\ell-j)!}s^{j-1}(t-s)^{\ell-j}.
\end{equation}
(Thus $\psi(s,1)=\psi(s)$ defined in \eqref{psi}; the present homogeneous
version is more convenient here.)
Let $\psi'_j(s,t):=\frac{\partial}{\partial s}\psi(s,t)$.
Then,
straightforward calculations (as in \cite[Lemma 4.2]{SJ332})
show that, extending \eqref{ajkn},
\begin{align}\label{tg7}
  \anj{i}&=\psi_j(i,n)+O(n^{\ell-2}),
\\\label{tg8}
\gD\anj{i}&=\psi'_j(i,n)
+O\bigpar{n^{\ell-3}+ n^{\ell-2}\indic{i\le m \text{ or }i\ge n-m}
}
\end{align}
uniformly for all $n,j,i$ that are relevant;
moreover, the error terms with negative powers, i.e., $n^{\ell-2}$ for
$\ell=1$ and $n^{\ell-3}$ for $\ell\le2$, vanish.

By the Skorohod coupling theorem \cite[Theorem~4.30]{Kallenberg},
we may assume that the convergences \eqref{tg6} hold a.s., and
similarly, see \eqref{unf}, a.s.
\begin{align}\label{tg23}
n^{1/2-\ell}U^*_n(\fx;>m)\to 0.
\end{align}
It then follows from \eqref{tg5}--\eqref{tg8} and the homogeneity of $\psi_j$
that a.s.,
uniformly for $t\in[0,T]$ for any fixed $T$,
\begin{align}\label{tg9}
  n^{1/2-\ell}\hU_{\fnt,j}&
=n^{1-\ell}\psi_j(\fnt,\fnt) W_j(t)
-\sum_{i=1}^{\fnt-1}n^{1-\ell}\psi'_j(i,\fnt)W_j(i/n)
+o(1)
\notag\\&
=\psi_j(t,t) W_j(t)
-\frac{1}{n}\sum_{i=1}^{\fnt-1}\psi'_j(i/n,t)W_j(i/n)
+o(1)
\notag\\&
=\psi_j(t,t) W_j(t)
-\int_0^t\psi'_j(s,t)W_j(s)\dd s
+o(1)
.\end{align}
Summing over $j\in[\ell]$, we obtain by \eqref{tg3}, \eqref{tg5},
\eqref{tg9}, and \eqref{tg23}, 
a.s.\ uniformly for $t\in[0,T]$ for any $T$,
\begin{align}\label{tg10}
  n^{1/2-\ell}U\nt(f;>m)&
=Z(t)
+o(1)
,\end{align}
where
\begin{align}\label{Zt}
Z(t):=\sumjl \Bigpar{\psi_j(t,t) W_j(t)
-\int_0^t\psi'_j(s,t)W_j(s)\dd s}
,\end{align}
which obviously is a centered Gaussian process.
We can rewrite \eqref{tg10} as
\begin{align}\label{tr1}
  n^{1/2-\ell}U\nt(f;>m)\to Z(t)
\qquad\text{in $D\ooo$}
.\end{align}

Finally we use \eqref{tq3}, which implies
\begin{align}
  \max_{k\le n} \bigabs{U_k(f)-U_k(f;>m)}
\le \sum_{J\neq\emptyset} U^*_n(f;\cD_J)
\end{align}
and thus, by \refT{TQ},
recalling $b(\cD_J)=\ell-|J|\le\ell-1$,
\begin{align}\label{tg12}
\bignorm{\max_{k\le n} \bigabs{U_k(f)-U_k(f;>m)}}_2&
\le \sum_{J\neq\emptyset} \bignorm{U^*_n(f;\cD_J)}_2
\le \sum_{J\neq\emptyset} C n^{b(\cD_J)}
\le C n^{\ell-1}.
\end{align}
It follows that,
in each $D[0,T]$ and thus in $D\ooo$,
\begin{align}\label{tr2}
n^{1/2-\ell}\bigpar{  U\nt(f)-U\nt(f;>m)}
\pto 0
.\end{align}
Furthermore, recalling the assumption $\mu=0$, 
$\E U_n(f)=O\bigpar{n^{\ell-1}}$ by \eqref{te0},
and thus
\begin{align}\label{tr13}
n^{1/2-\ell}\E U\nt(f)\to0 \qquad \text{in $\cD\ooo$}.  
\end{align}

The result \eqref{tg1} in the unconstrained case follows from 
\eqref{tr1}, \eqref{tr2} and \eqref{tr13}.

Joint convergence for several $f$ (in the unconstrained case)
follows by the same proof.

Finally, as usual, 
the exactly constrained case  follows by \eqref{lx} in \refL{L2} and
the constrained case then follows by \eqref{Ucsum}, 
using
joint convergence for all $g_{\cDz}$, 
with notation  as
in \eqref{Ucsum} and \refL{L2}.
To obtain joint convergence for several $f$ and $\cD$, we only have to
choose $M$ in \eqref{YX} large enough to work for all of them.
\end{proof}

\section{Renewal theory}\label{Srenew}

Note first that by the law of large numbers for \mdep{} sequences,
\begin{align}\label{llnh}
S_n/n=S_n(h)/n\asto \E h(X_1)=\nu 
\qquad \text{as \ntoo.}
\end{align}
Moreover, as in \eqref{tg6}, Donsker's theorem for \mdep{} sequences
\cite{Billingsley56} yields
\begin{align}\label{ukr}
  n\qqw {S\nt(h-\nu)}
=
  n\qqw \bigpar{S\nt(h)- \fnt\nu}\dto W_h(t)
\qquad\text{in $D\ooo$}
\end{align}
for a continuous centered Gaussian process $W_h(t)$.
As a simple consequence, we have the following
(the case $N_+$ is in \cite[Theorems 2.1 and 2.2]{SJ29}),
which extends the well known
case of independent $X_i$, see \eg{} \cite[Sections 3.4 and 3.10]{Gut-SRW}. 
\begin{lemma}\label{LN}
  As $x\to\infty$,
  \begin{align}
    N_\pm(x)/x&\asto 1/\nu,\label{ln1}
\\
S_{N_\pm(x)}(h) &= x + \op\bigpar{x\qq}.\label{ln2}
  \end{align}
\end{lemma}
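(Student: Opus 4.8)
The plan is to combine the elementary renewal sandwich inequalities with the law of large numbers \eqref{llnh} for \eqref{ln1}, and with a maximal-increment estimate for \eqref{ln2}. First I would note that, since $\nu>0$, \eqref{llnh} gives $S_n\asto\infty$; hence for each $x>0$ the set $\set{n\ge0:S_n\le x}$ is \as{} finite and nonempty (it contains $0$), so $N_-(x)<\infty$, and likewise $N_+(x)<\infty$; moreover $N_\pm(x)\asto\infty$ as \xtoo, since for every fixed $K$ we eventually have $x>\max_{n\le K}S_n$, forcing $N_\pm(x)>K$. The definitions \eqref{N-} and \eqref{N+} then yield the sandwich inequalities
\begin{align}\label{LN-sw}
S_{N_-(x)}\le x< S_{N_-(x)+1},
\qquad
S_{N_+(x)-1}\le x< S_{N_+(x)},
\end{align}
the first because the supremum in \eqref{N-} is attained while $N_-(x)+1$ lies outside the set, the second because $N_+(x)\ge1$ and $S_n\le x$ for every $n<N_+(x)$. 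Dividing \eqref{LN-sw} by $N_\pm(x)$ and letting \xtoo, and using $N_\pm(x)\asto\infty$, \eqref{llnh} and $\bigpar{N_\pm(x)\pm1}/N_\pm(x)\to1$, we obtain $x/N_\pm(x)\asto\nu$, which is \eqref{ln1}.

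For \eqref{ln2} I would use the overshoot bounds that follow from \eqref{LN-sw}, namely $0\le x-S_{N_-(x)}<h(X_{N_-(x)+1})$ and $0<S_{N_+(x)}-x\le h(X_{N_+(x)})$ (both increments being automatically positive), so in either case $\bigabs{S_{N_\pm(x)}(h)-x}\le\bigabs{h(X_N)}$, with $N:=N_-(x)+1$ and $N:=N_+(x)$ respectively. By \eqref{ln1}, $N\le M_x:=\ceil{Cx}$ for a suitable constant $C$, with probability tending to $1$. Hence it suffices to prove the maximal-increment estimate
\begin{align}\label{LN-mi}
\max_{1\le n\le N}\bigabs{h(X_n)}=o\bigpar{N\qq}
\qquad\text{a.s.\ as }\ntoo,
\end{align}
for then $\bigabs{h(X_N)}\le\max_{1\le n\le M_x}\bigabs{h(X_n)}=o\bigpar{x\qq}$ a.s.\ on an event of probability $\to1$, whence $\bigabs{S_{N_\pm(x)}(h)-x}=\op\bigpar{x\qq}$.

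The main obstacle is precisely \eqref{LN-mi}: one must resist deriving \eqref{ln2} from Donsker's theorem \eqref{ukr}, which only yields $S_{N_\pm(x)}(h)-x=\Op\bigpar{x\qq}$ and is too weak here. Instead I would prove \eqref{LN-mi} from the hypothesis $\E h(X_1)^2<\infty$ by a Borel--Cantelli argument: splitting $\set{1,\dots,N}$ into the $m+1$ arithmetic progressions of common difference $m+1$ reduces the claim to \iid{} sequences — the same subsequence device used for the law of large numbers in the proof of \refT{TLLN} — and for an \iid{} sequence $\sum_{n\ge1}\P\bigpar{|h(X_1)|>\eps n\qq}\le\eps^{-2}\E h(X_1)^2<\infty$ for every $\eps>0$, whence $h(X_n)/n\qq\asto0$ and therefore $\max_{1\le n\le N}|h(X_n)|=o\bigpar{N\qq}$ a.s. This establishes \eqref{LN-mi} and hence \eqref{ln2}; for $N_+$, \eqref{ln2} also follows from \cite[Theorems 2.1 and 2.2]{SJ29}.
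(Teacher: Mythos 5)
Your proof is correct, and it establishes \eqref{LN-sw}, \eqref{ln1}, and \eqref{ln2} by a valid route; but it differs from the paper's proof, and your dismissal of the Donsker route as ``too weak'' is mistaken. The paper's proof (which has a typo: ``\eqref{ln2} implies'' should read ``\eqref{ukr} implies'') derives from Donsker's theorem the estimate $S_{N+1}-S_N=\op(N\qq)$ and then concludes via the sandwich \eqref{S-+}, exactly as you do. Donsker in fact yields this: convergence in $D\ooo$ to a \emph{continuous} limit forces the jumps of the scaled process to vanish in probability uniformly on compacts, so $\max_{k\le nT}|h(X_k)-\nu|=\op(n\qq)$ for each fixed $T$, which combined with $N_\pm(x)=O_p(x)$ gives \eqref{ln2}. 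You are right that applying the one-dimensional CLT directly to $S_{N_\pm(x)}(h)$ would only give $O_p(x\qq)$, but that is not how the sandwich is used.

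Your alternative derivation of the maximal-increment bound is, however, entirely correct and a bit more elementary: the Borel--Cantelli argument on the $m+1$ arithmetic subsequences, using only $\E h(X_1)^2<\infty$, gives $h(X_n)=o(n\qq)$ a.s.\ and hence $\max_{k\le n}|h(X_k)|=o(n\qq)$ a.s., without invoking the \mdep{} invariance principle. The paper's route is shorter given that \eqref{ukr} is needed anyway in the subsequent proofs; yours is self-contained and uses exactly the stated hypothesis $\E h(X_1)^2<\infty$. Either is acceptable.

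One very minor point of presentation: when you write ``$0\le x-S_{N_-(x)}<h(X_{N_-(x)+1})$ \dots\ (both increments being automatically positive),'' it is worth emphasizing that positivity of these two particular increments is a consequence of the sandwich, not of any sign condition on $h$; this is what makes the subsequent bound $\bigabs{S_{N_\pm(x)}(h)-x}\le\bigabs{h(X_N)}$ legitimate even when $h$ may take negative values (cf.\ \refR{R>=}). You do note this parenthetically, so the proof is complete as written.
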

\begin{proof}
  Note that, by the definitions \eqref{N-}--\eqref{N+}, 
  \begin{align}\label{S-+}
S_{N_-(x)}(h)\le x < S_{N_-(x)+1} (h)
\qquad\text{and}\qquad
S_{N_+(x)-1}(h)\le x < S_{N_+(x)}(h).    
  \end{align}
Then \eqref{ln1} follows easily from \eqref{llnh}.
Furthermore, \eqref{ln2} implies that as \Ntoo, 
$S_{N+1}-S_N=\op \bigpar{N\qq}$,
and \eqref{ln2} follows.
We omit the standard details.
\end{proof}

\begin{proof}[Proof of \refT{TUUN}]
Note that \eqref{ukr} is the special (unconstrained) case $f=h$,
$\cD=()$,
$\ell=b=1$
of \eqref{tg1}.
By joint convergence in \refT{TG} for $(f,\cD)$ and $h$, 
we thus have \eqref{tg2} jointly with
\eqref{ukr}. We use again
the Skorohod coupling theorem 
and assume that \eqref{tg2}, \eqref{ukr}, and \eqref{ln2} hold a.s.

Take $n:=\ceil{x}$ and $t:=N_\pm(x)/n$, and let \xtoo.
Then, $t\to1/\nu$ a.s.\ by  \eqref{ln1}, and thus \eqref{tg2} implies,
a.s.,
\begin{align}\label{uk1}
  U_{N_\pm(x)}(f;\cD)&
=\frac{\mu_\cD}{b!} \Npmx^b+Z(\nuqw) n^{b-1/2}+o\bigpar{n^{b-1/2}}
\notag\\&
=\frac{\mu_\cD}{b!} \Npmx^b+Z(\nuqw) x^{b-1/2}+o\bigpar{x^{b-1/2}}
.\end{align}
Similarly, \eqref{ukr}  implies, a.s.,
\begin{align}\label{uk2}
  S_{N_\pm(x)}(h)= \Npmx\nu+W_h(\nuqw) x^{1/2}+o\bigpar{x^{1/2}}.
\end{align}
By \eqref{uk2} and \eqref{ln2}, we have \as{}
\begin{align}\label{uk3}
  \nu\Npmx 
=
  S_{N_\pm(x)}(h)-W_h(\nuqw) x^{1/2}+o\bigpar{x^{1/2}}
=
  x-W_h(\nuqw) x^{1/2}+o\bigpar{x^{1/2}}.
\end{align}
Thus, by the binomial theorem, a.s.,
\begin{align}\label{uk4}
 (\nu\Npmx)^b 
=
  x^b-b W_h(\nuqw) x^{b-1/2}+o\bigpar{x^{b-1/2}}.
\end{align}
Hence, \eqref{uk1} yields, a.s.,
\begin{align}\label{uk5}
  U_{N_\pm(x)}(f;\cD)
=\frac{\mu_\cD}{\nu^b b!}\bigpar{x^b-b W_h(\nuqw)x^{b-1/2}}
+Z(\nuqw) x^{b-1/2}+o\bigpar{x^{b-1/2}},
\end{align}
which yields \eqref{cvtauu} with
\begin{align}\label{ukgam}
  \gamxx=\Var\Bigsqpar{Z(\nuqw)-\frac{\mu_\cD}{\nu^b(b-1)!}W_h(\nuqw)}.
\end{align}
The exactly constrained case and
joint convergence follow similarly.
\end{proof}

\begin{proof}[Proof of \refT{TUUN0}]
We may as in the proof of \refT{TUUN} assume that 
\eqref{tg2}, \eqref{ukr},  and \eqref{uk4} hold a.s.
Recall that the proofs above
use the decomposition \eqref{Ucsum}
and \refL{L2} applied to every $\cDz$ there, with
$b:=b(\cD)$, $D$ given by \eqref{ld}, $M:=D+1$ (for definiteness),
and  $Y_i$ defined by \eqref{YX}.
Furthermore, \eqref{obi} holds with $g=g_\cD:\cS^b\to\bbR$ given by
\eqref{glamis}; in \eqref{tg1}--\eqref{tg2}, 
we thus have the same limit $Z(t)$ for 
$U_n(f;\cD ;(X_i)\xooo)$ and $U_{n}\xpar{g;(Y_i)\xooo}$.
We may assume that this limit holds a.s.\ also for $g$. 

Recall that $h:\cS\to\bbR$. We abuse notation and extend it to $\cS^M$ by
$h(x_1,\dots,x_M):=h(x_1)$; thus $h(Y_i)=h(X_i)$. In particular, 
$S_n(h;(X_i))=S_n(h;(Y_i))$, so we may write $S_n(h)$ without ambiguity. 
We define $H:(\cS^M)^b\to\bbR$ by
\begin{align}\label{H}
  H(y_1,\dots,y_b):=\sumjb h(y_j).
\end{align}
Note that \eqref{mu} and \eqref{fi1} applied to the function $H$ yield
\begin{align}\label{h1}
  \mu_H&:=\E H\bigpar{\hY_1,\dots,\hY_b}
=b\nu,
\\\label{h2}
H_j(y)&\phantom: =h(y)+(b-1)\nu-\mu_H
=h(y)-\nu.
\end{align}
(Since $H$ is symmetric, $H_j$ is the same for every $j$.)
 
In the unconstrained sum \eqref{U},
there are $\binom{n-1}{\ell-1}$ terms that contain $X_i$, for each $i\in[n]$.
Applying this to $H$ and $(Y_i)$, we obtain by \eqref{H}
\begin{align}\label{unH}
  U_n(H;(Y_i))=\binom{n-1}{b-1} S_n(h).
\end{align}
Hence, for each fixed $t>0$, by \eqref{ukr},
a.s.,
\begin{align}\label{uk6}
  U\nt(H;(Y_i))-\E U\nt(H;(Y_i))&
=\binom{\fnt-1}{b-1}S\nt(h-\nu)
\notag\\&
= \frac{(nt)^{b-1}}{(b-1)!}n\qq W_h(t)+o\bigpar{n^{b-1/2}}.
\end{align}
Combining \eqref{tg1} (for $g$ and $(Y_i)$)
and \eqref{uk6}, we obtain that, a.s.,
\begin{align}\label{vk1}
\frac{  U\nt\bigpar{g-\frac{\mu_\cD}{\nu} H}
- \E U\nt\bigpar{g-\frac{\mu_\cD}{\nu} H}}
{n^{b-1/2}}
=Z(t)-\frac{\mu_\cD t^{b-1}}{\nu(b-1)!}W_h(t)+o(1).
\end{align}
Taking $t=\nuqw$, we see that this converges to the random variable in
\eqref{ukgam}.
Let $G:=g-\mu_\cD\nu\qw H$. Then a comparison with \refT{TUM} (applied to
$G$)
shows that
\begin{align}
  \gamxx=t^{2b-1}\gss(G)=\nu^{1-2b}\gss(G).
\end{align}
In particular, $\gamxx=0$ if and only if $\gss(G)=0$,
and the result follows by \refT{T0D}, noting that
$G_j:=g_j-\mu_\cD\nu\qw H_j=g_j+\mu_\cD-\mu_\cD\nu\qw h$ by \eqref{h2}  
\end{proof}

\begin{proof}[Proof of \refT{TUUN2}]
  This can be proved as \cite[Theorem 3.13]{SJ332}, by first stopping at
  $N_+(x_-)$, with $x_-:=\floor{x-\ln x}$, and then continuing to $N_-(x)$;
we therefore only sketch the details.
Let $R(x):=S_{N_+(x)}-x>0$ be the overshoot at $x$,
and let $\gD(x):=x-S_{N_+(x_-)}=x-x_--R(x_-)$.
It is well known, see e.g.\ \cite[Theorem 2.6.2]{Gut-SRW}
that $R(x)$ converges in distribution as \xtoo. 
In particular, $\gD(x)\pto+\infty$ and thus $\P\xsqpar{\gD(x)>0}\to1$.
Since $N_+(x_-)$ is a stopping time and $(X_i)$ are independent, 
the increments of the random walk $S_n$
after $N_+(x_-)$ are independent of $U_{N_+(x_-)}(f)$, 
and it follows that
the overshoot $R(x-1)$ is asymptotically independent of $U_{N_+(x_-)}(f)$.
The event $\set{S_{N_-(x)}=x}$ equals $\set{R(x-1)=1}$, and thus 
the asymptotic distribution of $U_{N_+(x_-)}(f)$ conditioned on $S_{N_-(x)}=x$
is the same as without conditioning, and given by \eqref{cvtauu}.
Finally, the difference between $U_{N_+(x_-)}(f)$ and $U_{N_\pm(x)}(f)$ is
negligible, e.g.\ as a consequence of \eqref{tg2}.
\end{proof}

In the remainder of the section, we prove moment convergence.
Since we here consider different exponents $p$ simultaneously,
we let $C_p$ denote constants that may depend on $p$.

\begin{lemma}\label{LS}
  Assume that $\nu:=\E h(X_1)>0$ and that $\E |h(X_1)|^p<\infty$ for every
  $p<\infty$. 
Then, for every $p<\infty$, $A\ge 2/\nu$ and $x\ge1$, we have
\begin{align}\label{ls}
  \P\bigsqpar{N_\pm(x)\ge Ax}\le C_p(Ax)^{-p}.
\end{align}
\end{lemma}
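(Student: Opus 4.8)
The plan is to reduce the claim to a tail estimate for the underlying random walk $S_n=S_n(h)$. First I would note that, for any integer $n\ge1$: if $N_+(x)\ge n$ then $S_{n-1}\le x$ by \eqref{N+}, while if $N_-(x)\ge n$ then $S_k\le x$ for some integer $k\ge n$ by \eqref{N-}; in either case $S_k\le x$ for some integer $k\ge n-1$. Taking $n:=\ceil{Ax}$ and using a union bound therefore gives
\[
  \P\bigsqpar{N_\pm(x)\ge Ax}\le\sum_{k\ge\ceil{Ax}-1}\P\bigsqpar{S_k\le x}.
\]

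Next I would estimate $\P\bigsqpar{S_k\le x}$ for $k\ge\ceil{Ax}-1$. Since $\E S_k=\nu k$ and, because $A\ge2/\nu$ and $x\ge1$, one has $x\le\tfrac34\nu k$ for all such $k$ — this requires only a short computation, and a bounded range of small values of $Ax$ must be set aside and handled by the trivial bound $\P\le1$, at the cost of enlarging $C_p$ — it follows that $\set{S_k\le x}\subseteq\set{|S_k-\nu k|\ge\tfrac14\nu k}$. To control the right-hand side I would use the moment bound $\norm{S_k-\nu k}_q\le C_q k\qq$, valid for every $q\ge2$ under the assumption $\E|h(X_1)|^q<\infty$: split $S_k-\nu k$ into the $m+1$ subsums over indices lying in a fixed residue class modulo $m+1$, each a sum of at most $k$ independent centered copies of $h(X_1)-\nu$, apply the Marcinkiewicz--Zygmund inequality to each subsum, and combine by Minkowski's inequality. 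Markov's inequality then yields
\[
  \P\bigsqpar{S_k\le x}\le\Bigpar{\tfrac{4}{\nu}}^{q}k^{-q}\,\E|S_k-\nu k|^{q}\le C_q k^{-q/2}.
\]

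Finally I would sum over $k$: for $q>2$,
\[
  \P\bigsqpar{N_\pm(x)\ge Ax}\le C_q\sum_{k\ge\ceil{Ax}-1}k^{-q/2}\le C_q'\,(Ax)^{1-q/2}.
\]
Given $p$, choosing $q:=2p+2$ — which is permitted precisely because the hypothesis supplies finite moments of every order — gives $\P\bigsqpar{N_\pm(x)\ge Ax}\le C_p(Ax)^{-p}$, as claimed. (For $m=0$ this is a classical renewal estimate.) The only substantive ingredient is the moment inequality $\norm{S_k-\nu k}_q=O(k\qq)$ for stationary $m$-dependent sums; the rest is elementary. I expect the main obstacle to be bookkeeping rather than conceptual: checking that $x\le\tfrac34\nu k$ holds uniformly for $k\ge\ceil{Ax}-1$ given only $A\ge2/\nu$ and $x\ge1$ (which forces the separate, trivial treatment of a bounded range of $Ax$), and keeping track of the dependence of the constants on $p$, $m$, and $\nu$, all of which are regarded as fixed.
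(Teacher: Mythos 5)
Your argument is correct, but it takes a genuinely different route from the paper's.  The paper first reduces to $N_-$ via $N_+\le N_-+1$, then partitions the event $\{N_-(x)\ge Ax\}$ dyadically by the value of $N_-(x)$ into blocks $\{2^kAx\le N_-(x)\le 2^{k+1}Ax\}$, $k\ge0$; on each block it observes (via \eqref{S-+}) that $|S_{N_-(x)}(h-\nu)|\ge (A\nu/2)x$, applies Chebyshev, and then controls $\E\bigsqpar{|S^*_{\floor{2^{k+1}Ax}}(h-\nu)|^p}$ by the maximal inequality from \refT{TQ} (i.e.\ $\norm{S^*_n(h-\nu)}_p=O(n^{1/2})$, the case $\ell=b=1$).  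Summing the geometric series over $k$ and replacing $p$ by $2p$ gives \eqref{ls}.  You replace both the dyadic decomposition and the maximal inequality with a direct union bound over individual indices $k\ge\ceil{Ax}-1$ and a pointwise moment bound $\norm{S_k(h-\nu)}_q=O(k^{1/2})$, proved by splitting into $m+1$ residue-class subsums of \iid{} terms and applying Marcinkiewicz--Zygmund and Minkowski.  Both approaches hinge on exactly the same $L^q$ estimate for the \mdep{} random walk; yours avoids the maximal-function machinery at the cost of the union bound and a bit of bookkeeping.

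One small thing worth making explicit: you assert $x\le\tfrac34\nu k$ for $k\ge\ceil{Ax}-1$ and flag that a "bounded range of small values of $Ax$" must be set aside.  To confirm that this range really is bounded: for $Ax\ge 3$ one has $\ceil{Ax}-1\ge Ax-1\ge\tfrac23Ax\ge\tfrac43x/\nu$, whence $\tfrac34\nu k\ge x$ for all $k$ in the sum; and for $2/\nu\le Ax<3$ the trivial bound $\P\le1\le 3^p(Ax)^{-p}$ absorbs into $C_p$.  So the caveat you flagged does go through, and the proof is sound.
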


\begin{proof}
  We have $N_+(x)\le N_-(x)+1$; hence it suffices to consider $N_-(x)$.

Let $A\ge 2/\nu$.
If $N_-(x)\ge Ax$, then 
\eqref{S-+} implies
\begin{align}
  S_{N_-(x)}(h-\nu) \le x-N_-(x)\nu\le (1-A\nu)x \le -(A\nu/2) x.
\end{align}
Hence, for any $p\ge 2$, using \eqref{tq1} for $h$ 
(which is a well-known consequence of Doob's and Burkholder's, or
Rosenthal's, inequalities), 
\begin{align}\label{ls2}
\P\bigsqpar{Ax \le N_-(x) \le 2Ax}&
\le (A x\nu/2)^{-p} \E\bigsqpar{\bigabs{S_{N_-(x)}(h-\nu)}^p\indic{N_-(x)\le 2Ax}}
\notag\\&
\le (A \nu x/2)^{-p} \E\bigsqpar{\bigabs{S^*_{\floor{2A x}}(h-\nu)}^p}
\le C_p (Ax)^{-p} (2Ax)^{p/2}
\notag\\&
\le C_p (Ax)^{-p/2}.
\end{align}
We replace $p$ by $2p$ and 
$A$ by $2^k A$ in \eqref{ls2}, and sum for $k\ge0$; this yields
\eqref{ls}.
\end{proof}

\begin{lemma}\label{LM}
Assume that \AAA{p} and $\E |h(X_1)|^p<\infty$ hold for every
  $p<\infty$, and that
 $\nu:=\E h(X_1)>0$.
Then, for every $p\ge1$ and $x\ge1$,
\begin{align}\label{lmg}
  \Bignorm{U_\Npmx(f;\cD)-\frac{\mu_\cD}{\nu^bb!}x^b}_p
\le C_p x^{b-1/2}.
\end{align}
\end{lemma}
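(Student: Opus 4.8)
The plan is to telescope through $\frac{\mu_\cD}{b!}\Npmx^b$, writing
\begin{equation}\label{LMsplit}
  U_\Npmx(f;\cD)-\frac{\mu_\cD}{\nu^bb!}x^b
=\Bigpar{U_\Npmx(f;\cD)-\frac{\mu_\cD}{b!}\Npmx^b}
+\frac{\mu_\cD}{b!}\Bigpar{\Npmx^b-\nu^{-b}x^b},
\end{equation}
and to bound the $L^p$ norm of each of the two brackets by $C_px^{b-1/2}$. Throughout I fix $A:=2/\nu$ and the exponent $q:=2p(b+1)$, and decompose the probability space into the \emph{good} event $\bigset{\Npmx\le Ax}$ and the dyadic \emph{bad} shells $\bigset{2^kAx<\Npmx\le2^{k+1}Ax}$, $k\ge0$; since $2^kA\ge2/\nu$, \refL{LS} gives $\P\bigsqpar{\Npmx>2^kAx}\le C_p(2^kAx)^{-q}$.

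For the first bracket, on the good event we have $\Npmx\le Ax$, so it is, in absolute value, at most $\max_{j\le\ceil{Ax}}\Bigabs{U_j(f;\cD)-\frac{\mu_\cD}{b!}j^b}$, whose $L^p$ norm is $O\bigpar{x^{b-1/2}}$ by \eqref{tqq} of \refT{TQ}. On the $k$-th bad shell the first bracket is at most $U^*_{\ceil{2^{k+1}Ax}}(f;\cD)+\frac{\mu_\cD}{b!}(2^{k+1}Ax)^b$ in absolute value, and both summands have $L^{2p}$ norm $O\bigpar{(2^kx)^b}$ — the first by \eqref{tq2} of \refT{TQ} with $p$ replaced by $2p$, which is legitimate since \AAA{2p} is assumed. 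Hölder's inequality, in the form $\bignorm{Y\indic{E}}_p\le\norm{Y}_{2p}\P\bigsqpar{E}^{1/(2p)}$, then bounds the contribution of the $k$-th shell by
\begin{equation}\label{LMshell1}
C_p(2^kx)^b\,\P\bigsqpar{\Npmx>2^kAx}^{1/(2p)}
\le C_p(2^kx)^b(2^kx)^{-(b+1)}
= C_p2^{-k}x^{-1}.
\end{equation}
Summing over $k\ge0$ gives $O\bigpar{x^{-1}}=O\bigpar{x^{b-1/2}}$ (as $x\ge1$ and $b\ge1$), which settles the first bracket in \eqref{LMsplit}.

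For the second bracket I first control $\Npmx-x/\nu$. From the renewal sandwich \eqref{S-+} together with $S_n(h)=n\nu+S_n(h-\nu)$ one obtains, for both choices of sign,
\begin{equation}\label{LMsand}
\bigabs{\nu\Npmx-x}\le\nu+S^*_{\Npmx+1}(h-\nu),
\qquad\text{hence}\qquad
\bigabs{\Npmx-x/\nu}\le\nu^{-1}\bigpar{\nu+S^*_{\Npmx+1}(h-\nu)}.
\end{equation}
Combining this with the elementary inequality $\bigabs{u^b-w^b}\le b\max(u,w)^{b-1}\abs{u-w}$ for $u,w\ge0$, applied with $u=\Npmx$ and $w=x/\nu$, I split once more. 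On the good event, $\max(\Npmx,x/\nu)\le Ax$ (since $A>1/\nu$), and $\bignorm{S^*_{\ceil{Ax}+1}(h-\nu)}_p=O\bigpar{x^{1/2}}$ by \eqref{tq1} of \refT{TQ} applied to the one-variable function $h$ (so that $\ell=b=1$ and $\E U_j=j\nu$); hence the good-event part contributes $O\bigpar{x^{b-1}\cdot x^{1/2}}=O\bigpar{x^{b-1/2}}$. On the $k$-th bad shell, $\bigabs{\Npmx^b-\nu^{-b}x^b}\le\Npmx^b+\nu^{-b}x^b\le C(2^kx)^b$ — a deterministic bound — so Hölder's inequality and \refL{LS} again make the total bad-shell contribution $O\bigpar{x^{b-1}}=O\bigpar{x^{b-1/2}}$. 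Adding the two brackets of \eqref{LMsplit} yields \eqref{lmg}; the argument for an exact constraint $\cDq$ is identical, using the corresponding statements in \refT{TQ}.

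The step I expect to be the main obstacle is the bad event $\bigset{\Npmx\gg x}$, on which there is no deterministic bound on $\Npmx$: one must balance the polynomial-in-$x$ tail decay of \refL{LS} against the polynomial-in-$n$ growth of $\bignorm{U^*_n(f;\cD)}_q$ and $\bignorm{S^*_n(h-\nu)}_q$ from \refT{TQ}, via Hölder's inequality with the exponent $q$ chosen large enough (relative to $p$ and $b$) that the resulting dyadic series both converges and remains $O\bigpar{x^{b-1/2}}$. Everything else is routine bookkeeping with \eqref{S-+} and the inequality for $u^b-w^b$.
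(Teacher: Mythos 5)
Your proof is correct and follows essentially the same strategy as the paper's: the identical splitting of $U_{N_\pm(x)}(f;\cD)-\frac{\mu_\cD}{\nu^b b!}x^b$ into the two brackets, control of the first by a dyadic decomposition in $N_\pm(x)$ combined with Cauchy--Schwarz, the maximal-function bounds \eqref{tqq}/\eqref{tq2} of Theorem~\ref{TQ}, and the tail estimate of Lemma~\ref{LS}. The only minor divergence is in the treatment of $(\nu N_\pm(x))^b-x^b$: the paper first establishes the intermediate $L^p$-bound $\|x-\nu N_\pm(x)\|_p=O(x^{1/2})$ and then telescopes $u^b-w^b=\sum_{k}(u-w)u^kw^{b-1-k}$ using Minkowski and H\"older, whereas you argue pointwise via the mean-value inequality $|u^b-w^b|\le b\max(u,w)^{b-1}|u-w|$ and a second good/bad event split — both routes are sound and yield the same bound.
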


\begin{proof}
Let $V_n:=U_n(f;\cD)-\frac{\mu_\cD}{b!}n^b$, let $B:=2/\nu$, and choose $q:=2bp$.
Then the \CSineq, \eqref{tqq}, and \refL{LS} yield, with 
$V^*_x:=\sup_{n\le x}|V_n|$,
\begin{align}\label{lma}
\hskip1em&\hskip-1em 
\E\Bigabs{U_\Npmx(f;\cD)-\frac{\mu_\cD}{b!}\Npmx^b}^p
=  \E|V_\Npmx|^p
\notag\\&
=\E\bigsqpar{|V_\Npmx|^p\indic{\Npmx\le Bx}}
+\sumk \E\bigsqpar{|V_\Npmx|^p\indic{2^{k-1}Bx<\Npmx\le 2^kBx}}
\notag\\&
\le \E\bigsqpar{|V^*_{Bx}|^p}
+\sumk \E\bigsqpar{|V^*_{2^kBx}|^p\indic{\Npmx> 2^{k-1}Bx}}
\notag\\&
\le \E\bigsqpar{|V^*_{Bx}|^p}
+\sumk \E\bigsqpar{|V^*_{2^kBx}|^{2p}}\qq\P\bigsqpar{\Npmx> 2^{k-1}Bx}\qq
\notag\\&
\le C_p x^{p(b-1/2)}
+ \sumk C_p (2^kx)^{p(b-1/2)}C_q (2^{k-1}x)^{-q/2}
\le C_p x^{p(b-1/2)}
.\end{align}
In other words,
\begin{align}\label{lmb}
  \Bignorm{U_\Npmx(f;\cD)-\frac{\mu_\cD}{b!}\Npmx^b}_p
\le C_p x^{b-1/2}.
\end{align}
We may here replace $U_n(f;\cD)$ by $S_n(h)$ 
(and thus $b$ by 1 and $\mu_\cD$ by $\nu$). This yields
\begin{align}\label{lmc}
\bignorm{S_\Npmx(h)-\nu\Npmx}_p
\le C_p x^{1/2}
.\end{align}
By the same proof, this holds also if we replace $\Npmx$ by $\Npmx\mp1$.
Using \eqref{S-+}, it follows that
\begin{align}\label{lmd}
\bignorm{x-\nu\Npmx}_p
\le C_p x^{1/2}
.\end{align}
In particular, by Minkowski's inequality,
\begin{align}\label{lme}
\bignorm{\nu\Npmx}_p
\le x+\bignorm{x-\nu\Npmx}_p
\le C_p x
.\end{align}
Consequently, by Minkowski's and \Holder's inequalities, \eqref{lmd} and
\eqref{lme}, 
\begin{align}\label{lmf}
\bignorm{ (\nu\Npmx)^b-x^b}_p&
=\lrnorm{\sum_{k=0}^{b-1} (\nu\Npmx-x)(\nu\Npmx)^kx^{b-1-k}}_p
\notag\\&
\le
\sum_{k=0}^{b-1} \bignorm{\nu\Npmx-x}_{(k+1)p}\bignorm{\nu\Npmx}_{(k+1)p}^kx^{b-1-k}
\notag\\&
\le C_p x^{b-1/2}.
\end{align}
Combining \eqref{lmb} and \eqref{lmf}, we obtain \eqref{lmg}.
\end{proof}

\begin{proof}[Proof of \refT{TUUNp}]
We have shown that the \lhs{} of \eqref{cvtauu} is uniformly bounded in
$L^p$ for $x\ge1$.
By replacing $p$ with $2p$, say, this implies that these \lhs{s} are
uniformly $p$-th integrable, for every $p<\infty$,
which implies convergence of all moments in \eqref{cvtauu}.

The proof of \refT{TUUN2} shows that, under the assumptions there,
 $\P(S_{N_-(x)}=x)$ converges to a
positive limit as \xtoo; hence
$\P(S_{N_-(x)}=x)>c$ for some $c>0$ and all large $x$. This implies that the
uniform $p$-th integrability holds also after conditioning (for large $x$), 
and thus all moments converge in \eqref{cvtauu} also after conditioning.
\end{proof}

\section{Constrained pattern matching in words}\label{Sword}
As said in \refS{S:intro},
\citet{FlajoletSzV} studied the following problem;
see also  \citet[Chapter 5]{JacSzp}.
Consider a random string $\Xi_n=\xi_1\dotsm\xi_n$, where the letters $\xi_i$
are \iid{} random elements in some finite alphabet $\cA$.
(We may regard $\Xi_n$ as the initial part of an infinite string
$\xi_1\xi_2\dots$ of \iid{} letters.)
Consider also  a fixed word $\bw=w_1\dotsm w_\ell$
from the same alphabet.
(Thus, $\ell\ge1$ denotes the length of $w$; we keep $\bw$ and $\ell$ fixed.)
Let $N_n(\bw)$ be the (random) number of occurrences of $\bw$ in $\Xi_n$.
More generally, for any constraint $\cD=(d_1,\dots,d_{\elli})$, 
let $N_n(\bw;\cD)$ be the number of constrained occurrences.
This is a special case of the general setting in \eqref{U}--\eqref{Uc}, 
with
$X_i=\xi_i$, and, \cf{} \eqref{occur},
\begin{align}\label{wf}
  f(x_1,\dots,x_\ell)=\indic{x_1,\dots,x_\ell=\bw}
=\indic {x_i=w_i\;\forall i\in[\ell]}.
\end{align}
Consequently,
\begin{align}\label{Uword}
N_n(\bw;\cD)= U_n(f;\cD;(\xi_i))
\end{align}
with $f$ given by \eqref{wf}. 

Denote the distribution of the individual letters by
\begin{align}\label{px}
  p(x):=\P(\xi_1=x),
\qquad x\in\cA;
\end{align}
We will, without loss of generality, assume $p(x)>0$ for every $x\in\cA$.
Then, \eqref{Uword} and the general results above yield
the following result from \cite{FlajoletSzV},
with $b=b(\cD)$
given by \eqref{b}.
The unconstrained case (also in \cite{FlajoletSzV}) is a special case.
Moreover, the theorem holds also for
the exactly constrained case,
with $\mu_\cDq=\prod_i p(w_i)$ and  some $\gss(\bw;\cDq)$;
we leave the detailed statement to the reader.
A formula for $\gss$ is given in \cite[(14)]{FlajoletSzV};
we show explicitly that $\gss>0$ except in trivial (non-random) cases,
which seems omitted from \cite{FlajoletSzV}.

\begin{theorem}[{\citet{FlajoletSzV}}] \label{Tword}
With notations as above, as \ntoo,
\begin{align}\label{ts1}
\frac{  N_n(\bw;\cD)-\frac{\mu_\cD}{b!}n^b}{n^{b-\xfrac12}}
\dto N\bigpar{0,\gss}
\end{align}
for some $\gss=\gss(\bw;\cD)\ge0$, with
\begin{align}\label{ts2}
  \mu_\cD:= \prod_{d_j<\infty} d_j \cdot\prod_{i=1}^\ell p(w_i) 
.\end{align}
Furthermore, the all moments converge in \eqref{ts1}.

Moreover,  
if $|\cA|\ge2$, 
then $\gss>0$.
\end{theorem}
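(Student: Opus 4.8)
The convergence \eqref{ts1}, the value \eqref{ts2} of $\mu_\cD$, and the convergence of all moments in \eqref{ts1} are immediate specialisations of \refT{TUMD}, \refT{TE} and \refT{Tmom} to the bounded indicator $f$ of \eqref{wf}; so the only part of the theorem needing a separate argument is the strict inequality $\gss(\bw;\cD)>0$ when $|\cA|\ge2$. I would prove this by contradiction: assume $\gss:=\gss(\bw;\cD)=0$ and use the degenerate‑case criterion \refT{T0D} to reach a contradiction. The hypothesis $|\cA|\ge2$ enters only through the elementary observation that $p(x)\in(0,1)$ for every $x\in\cA$ — indeed $p(x)>0$ by assumption and $\sum_x p(x)=1$ forbids $p(x)=1$.

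The plan is to extract the contradiction already from the \emph{first} block $B_1=\set{1,\dots,\gb_2-1}$. Write $g=g_\cD$ and its projections $g_j$, $j\in[b]$, as in \refL{L2} and \refR{Rgss}, so $Y_i=(\xi_i,\dots,\xi_{i+M-1})$ with $M=D+1$ and $D=\sum_{d_j<\infty}d_j$. Since $f$ in \eqref{wf} is a product of one‑variable indicators, each exactly constrained $g_{\cDz}$ factorises over blocks, and because the block marginals $\prod_{i\in B_q}p(w_i)$ do not depend on the refinement $\cD'$, a short computation gives $g_1(Y_k)=\kappa\,\bigl(D_1(Y_k)-\E D_1(Y_1)\bigr)$ for a \emph{positive} constant $\kappa$, where $D_1(Y_k)$ counts the $(d_1,\dots,d_{\gb_2-2})$‑constrained occurrences of $w_1\dotsm w_{\gb_2-1}$ whose first index is exactly $k$. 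The decisive structural fact is that $D_1(Y_k)=\indic{\xi_k=w_1}\,\Theta_k$, where $\Theta_k\ge 0$ depends only on $\xi_{k+1},\xi_{k+2},\dots$ (hence is independent of $\xi_k$) and $\E\Theta_k>0$. By \refT{T0D} (with $m=0$, so the $\xi_i$ are independent), $\gss=0$ makes $g_1(Y_k)$, hence $D_1(Y_k)-\E D_1(Y_1)$, a coboundary; and since $\bigl(D_1(Y_k)\bigr)_k$ is a block factor over the i.i.d.\ sequence $(\xi_i)$ with window $\set{k,\dots,k+u_1}$, where $u_1:=d_1+\dots+d_{\gb_2-2}$ is the largest possible span of the first block, one may (by \cite{SJ286}, or by integrating the excess arguments out of the potential delivered by \refT{T0D}) take the potential in the tight form $\gf(\xi_{k+1},\dots,\xi_{k+u_1})$:
\[
 D_1(Y_k)-\E D_1(Y_1)=\gf(\xi_{k+1},\dots,\xi_{k+u_1})-\gf(\xi_k,\dots,\xi_{k+u_1-1})\qquad\text{a.s., for all }k .
\]

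It then remains to contradict this identity. If $u_1=0$ — that is, $B_1$ is a singleton, which includes the unconstrained case $\cD=\cDoo$ — the potential $\gf$ is a constant, the right side vanishes, and the identity forces $\indic{\xi_k=w_1}=p(w_1)$ a.s., which is impossible since $p(w_1)\in(0,1)$. If $u_1\ge1$, fix $k$ and regard both sides as functions of $\xi_k$ and of $\xi_{k+u_1}$ with the other coordinates frozen: comparing in $\xi_k$ shows that $\xi'\mapsto\gf(\xi',\xi_{k+1},\dots,\xi_{k+u_1-1})$ is constant on $\cA\setminus\set{w_1}$ and jumps by $\Theta_k$ at $w_1$, and since $|\cA|\ge2$ this jump is a genuine quantity; comparing in $\xi_{k+u_1}$ — on which the left side does not depend — then forces $\Theta_k$ to be a function of $(\xi_{k+1},\dots,\xi_{k+u_1-1})$ alone. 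But $\Theta_k$ visibly depends on $\xi_{k+u_1}$: exactly one of the occurrences contributing to $D_1(Y_k)$, namely the one with all internal gaps maximal, reaches the index $k+u_1$, so on the positive‑probability event that its earlier indices match the required letters (and the other contributing occurrences take prescribed patterns) $\Theta_k$ equals $\indic{\xi_{k+u_1}=w_{\gb_2-1}}$ plus a constant, a non‑degenerate random variable because $p(w_{\gb_2-1})\in(0,1)$. This contradiction proves $\gss(\bw;\cD)>0$; the exact‑constraint assertion $\gss(\bw;\cDq)>0$ is the special case in which the first block admits only its prescribed gaps, so that $D_1(Y_k)$ is a single indicator.

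I expect the only real work to be the non‑coboundary step just sketched; everything leading up to it is bookkeeping. It is worth stressing that this step is, deliberately, \emph{not} a second‑moment computation: a soft covariance estimate can at best recover the statement ``$\gss\ne0$'' from a computed $\gss$, never establish it a priori. Its content is that, with $D_1(Y_k)=\indic{\xi_k=w_1}\Theta_k$, the ``tail'' $\Theta_k$ cannot be absorbed into a block‑factor coboundary potential of tight window — precisely because the letter $w_{\gb_2-1}$ sitting at the far end of the longest occurrence of the first block still carries genuine randomness, which is exactly the force of the hypothesis $|\cA|\ge2$.
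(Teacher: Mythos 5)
Your reduction of \eqref{ts1}, \eqref{ts2} and moment convergence to \refT{TUMD}, \refT{TE} and \refT{Tmom} matches the paper exactly; the only issue is the proof that $\gss>0$, where you take a genuinely different route from the paper's. Both proofs argue by contradiction from $\gss=0$ via \refT{T0D}, but the paper's argument is considerably shorter: it picks a letter $a\neq w_1$, notes that $f(x_1,\dots,x_\ell)=0$ whenever $x_1=a$ so that $g_1(y_1)=-\mu_\cD$ whenever $y_{11}=a$, and then conditions on the constant-string event $\xi_1=\dots=\xi_D=a$. On this event \refT{T0D} forces $S_n(g_1)=-n\mu_\cD$ for all $n$ (by independence of $S_n(g_1)$ from $\xi_{D+1},\dots,\xi_n$), while the centering $\E g_1(Y_k)=0$ forces $\E[S_n(g_1)\mid \xi_1=\dots=\xi_D=a]=O(1)$; since $\mu_\cD>0$ these contradict each other for large $n$. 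By contrast, you factor $g_1(Y_k)=\kappa\bigl(\indic{\xi_k=w_1}\Theta_k-c\bigr)$ and analyse the coboundary potential itself, deducing that the potential's jump in the first argument equals $\Theta_k$ and hence cannot depend on $\xi_{k+u_1}$, while $\Theta_k$ does — this works but needs the further step of tightening the potential window from $D$ to $u_1$, and the structural bookkeeping (the factorisation, the claim about the unique maximal-gap occurrence, the positive-probability event) which the paper's argument avoids entirely.

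Two small cautions about your route. First, the tightening of the potential window to $u_1$ is correct but your alternative suggestion ``integrating the excess arguments out of the potential delivered by \refT{T0D}'' does not directly produce the claimed form: conditioning the coboundary identity on $\sigma(\xi_k,\dots,\xi_{k+u_1})$ yields $\tilde\gf(\xi_{k+1},\dots,\xi_{k+u_1})$ minus a function of $\xi_k,\dots,\xi_{k+u_1}$ (a window of size $u_1+1$), not a coboundary of window $u_1$; one really does need to re-apply \cite[Theorem 2]{SJ286} to the block-factor sequence $g_1(Y_k)$ of window $u_1+1$, which is the route you also cite, so this is a slip of exposition rather than a gap. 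Second, the phrase ``on which the left side does not depend'' is misstated — $\Theta_k$ itself does depend on $\xi_{k+u_1}$ — but the intended argument (the jump of $\gf$ in its first argument is a function of only $u_1-1$ later coordinates, so the forced identity $\Theta_k=\gf(a,\xi_{k+1},\dots,\xi_{k+u_1-1})-\gf(w_1,\xi_{k+1},\dots,\xi_{k+u_1-1})$ would make $\Theta_k$ independent of $\xi_{k+u_1}$) is sound.
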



\begin{proof}
By \eqref{Uword}, the convergence
\eqref{ts1} is an instance of \eqref{tumd2} in \refT{TUMD}
together with \eqref{te}  in \refT{TE}.  
The formula \eqref{ts2} follows from \eqref{0muD} since,
by \eqref{wf} and independence,
\begin{align}\label{w4}
  \mu:=\E f(\xi_1,\dots,\xi_\ell)=  
\P\bigpar{\xi_1\dotsm\xi_\ell=w_1\dotsm w_\ell}
=\prod_{i=1}^\ell\P(\xi_i=w_i).
\end{align}

Moment convergence follows by \refT{Tmom}; note that \AAA{p} is trivial,
since $f$ is bounded.

Finally, assume $|\cA|\ge2$ and
suppose that $\gss=0$. Then \refT{T0D} says that \eqref{skamma}
holds, and thus, for each $n>D$, the sum $S_{n}(g_j)$ is independent of
$\xi_{D+1},\dots,\xi_n$. We  consider only $j=1$.
Choose $a\in\cA$ with $a\neq w_1$. 
Consider first an exact constraint $\cDq$. Then $g_\cDq$ is given by
\eqref{han}. 
Since $f(x_1,\dots,x_\ell)=0$ whenever $x_1=a$, it follows from
\eqref{han} that $g(y_1,\dots,y_b)=0$ whenever
$y_1=(y_{1k})_{k=1}^M$ has $y_{11}=a$.
hence, \eqref{fi1} shows that
\begin{align}\label{w5}
  g_1(y_1)=-\mu_{\cDq}=-\mu,
\qquad\text{if }y_{11}=a.
\end{align}
Consequently, on the event $\xi_1=\dots=\xi_{n}=a$, 
we have, recalling \eqref{YX} and $M=D+1$, 
$g_1(Y_k)=g_1(\xi_k,\dots,\xi_{k+D})=-\mu$ for every $k\in[n]$. Thus,
\begin{align}\label{w6a}
  S_n(g_1)=-n\mu
\qquad\text{if}\quad 
\xi_1=\dots =\xi_n=a.
\end{align}
On the other hand, as noted above, the assumption $\gss=0$ implies that
$S_{n}(g_1)$ is independent of $\xi_{D+1}, \dots,\xi_n$.
Consequently, \eqref{w6a} implies
\begin{align}\label{w6}
  S_n(g_1)=-n\mu
\qquad\text{if}\quad 
\xi_1=\dots =\xi_D=a,
\end{align}
regardless of $\xi_{D+1}\dots,\xi_{n+D}$.
This is easily shown to lead to contradiction. For example,
we have, by \eqref{h0},
\begin{align}
  \label{w7}
\E g_1(Y_k)=\E g_1(\xi_k,\dots,\xi_{k+D})=0
,\end{align}
and thus, conditioning on $\xi_1,\dots,\xi_D$,
\begin{align}
 \E \bigpar{S_n(g_1)\mid \xi_1=\dots =\xi_D=a}
&=
\sumkn  
\E \bigpar{g_1(\xi_k,\dots,\xi_{k+D})
 \mid \xi_1=\dots =\xi_D=a}
\notag\\& 
=O(1),\label{w8}
\end{align}
since all terms with $k>D$ are unaffected by the conditioning and
thus vanish by \eqref{w7}; this contradicts
\eqref{w6} for large $n$, since $\mu>0$.
This contradiction shows that $\gss>0$ for an exact constraint $\cDq$.

Alternatively, instead of using the expectation as in \eqref{w8}, one might
easily show that if $n>D+\ell$, then $\xi_{D+1},\dots,\xi_n$ can be chosen 
such that \eqref{w6} does not hold.

For a constraint $\cD$, $g=g_\cD$ is given by a sum \eqref{glamis} of
exactly constrained cases $\cDz$.
Hence, by summing \eqref{w5} for these $\cDz$, it follows that \eqref{w5}
holds also for $g_\cD$ (with $\mu$ replaced by $\mu_{\cD}$). This leads to a
contradiction exactly as above.
\end{proof}

\refT{Tword} shows that, except in trivial cases, the asymptotic variance
$\gss>0$ for a subsequence count $N_n(\bw;\cD)$, and thus \eqref{ts1} 
yields a non-degenerate limit, and thus really shows asymptotic normality.
By the same proof, see also \refR{RCW}, \refT{Tword} extends to linear
combinations of different 
subsequence counts (in the same random string $\Xi_n$),
but in this case, it may happen that $\gss=0$, and then \eqref{ts1} 
has a degenerate limit and thus
yields only convergence in probability to 0.
(We consider only linear combinations with coefficients not depending on $n$.)
One such degenerate
example with constrained subsequence counts is discussed  in \refE{E0}.
There are also degenerate examples in the unconstrained case.
In fact, the general theory of degenerate (in this sense) \Ustats{} 
based on independent $(X_i)\xoo$
is well understood; 
for symmetric \Ustat{s} this case 
was characterized by \cite{Hoeffding} and studied in detail by
\cite{RubinVitale}, and their results were extended to the asymmetric case
relevant here in \cite[Chapter 11.2]{SJIII}.
In \refApp{Aword} we apply these general results to 
string matching
and give a rather detailed treatment of 
the degenerate cases of linear combinations of unconstrained
subsequence counts.
See also \cite{Even-ZoharEtAl2020} for further algebraic aspects 
of both non-degenerate and degenerate cases.

\begin{problem}\label{Pcword0}
\refApp{Aword} considers only the unconstrained case.  
\refE{E0} shows that for linear combinations of 
constrained pattern counts, there are further
possibilities to have $\gss=0$. It would be interesting to 
extend the results in \refApp{Aword} and characterize
these cases, and also to obtain limit theorems for such cases, extending 
\refT{Tworddeg} (in particular the case $k=2$); note again that the limit 
in \refE{E0} is of a different type than the ones occuring in unconstrained
cases (\refT{Tworddeg}).
We leave this as open problems.
\end{problem}

\section{Constrained pattern matching in permutations}
\label{Sperm}

Consider now random permutations.
As usual, we generate a random permutation $\bpi=\bpi\nn\in\fS_n$ 
by taking a sequence $(X_i)_1^n$
of \iid{}  random variables with a uniform distribution $X_i\sim U(0,1)$,
and then replacing the values $X_1,\dots,X_n$, in increasing order, 
by $1,\dots,n$.
Then, the number $N_n(\tau)$ of occurrences of a fixed permutation
$\tau=\tau_1\dotsm\tau_\ell$ 
in
$\bpi$ is given by the \Ustat{} $U_n(f)$ defined by \eqref{U} with
\begin{align}\label{ftau}
  f(x_1,\dots,x_\ell) := \prod_{1\le i<j\le \ell} \indic{x_i<x_j\iff \tau_i<\tau_j}.
\end{align}
Similarly, 
for any constraint $\cD=(d_1,\dots,d_{\elli})$, we have
for the number of constrained occurrences   of  $\tau$,
with the same $f$ given by \eqref{ftau},
\begin{align}\label{Uperm}
N_n(\tau;\cD)=U_n(f;\cD).
\end{align}

Hence, \refTs{TUM} and \ref{TUMD} yield the following result showing
asymptotic normality of the number of (constrained) occurrences.
As said in the introduction, the unconstrained case was shown by 
\citet{Bona-Normal}, the case $d_1=\dots=d_{\elli}=1$ by \citet{Bona3}
and the general vincular case by \citet{Hofer};
we extend it to general constrained cases.
The fact that $\gss>0$ 
was shown in \cite{Hofer} (in vincular cases); 
we give a shorter proof based on \refT{T0D}.
Again, the theorem
holds also for
the exactly constrained case, 
with $\mu_\cDq=1/\ell!$ and some $\gss(\tau;\cDq)$.

\begin{theorem}[largely \citet{Bona-Normal,Bona3} and \citet{Hofer}] 
\label{Tperm}
For any fixed permutatation $\tau\in\fS_\ell$ and constraint 
$\cD=(d_1,\dots,d_{\elli})$,
as \ntoo,
\begin{align}\label{tp1}
\frac{  N_n(\tau;\cD)-\frac{\mu_\cD}{b!}n^b}{n^{b-\xfrac12}}
\dto N\bigpar{0,\gss}
\end{align}
for some $\gss=\gss(\tau;\cD)\ge0$ and
\begin{align}\label{tp2}
  \mu_\cD:= \frac{1}{\ell!}\prod_{d_j<\infty} d_j. 
\end{align}
Furthermore, all moments converge in \eqref{tp1}.

Moreover,  
if $\ell\ge2$, 
then $\gss>0$.
\end{theorem}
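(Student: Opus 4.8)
The plan is to prove \refT{Tperm} by imitating the structure of the proof of \refT{Tword}. The convergence \eqref{tp1}, the formula \eqref{tp2}, and moment convergence all follow immediately from \refT{TUMD}, \refT{TE} (specifically \eqref{0muD}, since here $m=0$ and $\mu=\E f(X_1,\dots,X_\ell)=1/\ell!$ because each of the $\ell!$ orderings of $\ell$ \iid{} uniform variables is equally likely), and \refT{Tmom} (noting $f$ in \eqref{ftau} is bounded, so \AAA{p} is trivial for all $p$). The only substantive point is that $\gss>0$ when $\ell\ge2$; this is where essentially all the work lies, and it is the analogue of the last part of the proof of \refT{Tword}, but the combinatorics is slightly different because the relevant ``extreme'' event is not ``all letters equal'' but an event forcing the $X_i$ into a fixed order type.

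For the lower bound on $\gss$, I would argue by contradiction: suppose $\gss=0$. By \refT{T0D} (applied with $m=0$, so that the $(m+D-1)$-dependent sequence $Z_{j,k}$ becomes a block factor in the $X_i$), for each $j\in[b]$ the function $g_j$ satisfies \eqref{skamma}, so $S_n(g_j)=\gf_j(X_{n+1},\dots,X_{n+D})-\gf_j(X_1,\dots,X_D)$ is independent of $X_{D+1},\dots,X_n$ for every $n>D$. I would focus on $j=1$ and on an exact constraint $\cDq$ first, with $g=g_\cDq$ given by \eqref{han}. The key observation is that $f(x_1,\dots,x_\ell)=0$ unless the order relations among $x_1,\dots,x_\ell$ match those of $\tau$; in particular, since $\ell\ge2$, there is at least one pair of coordinates that must be comparably ordered, and one can pin down $g_1(y_1)$ on a suitable event of positive probability — for instance, on $\{y_{11}<\text{all other relevant coordinates}\}$ or $\{y_{11}>\text{all other relevant coordinates}\}$, whichever is consistent with the position $\beta_1=1$ of the first block, the indicator $f$ becomes a product not involving $x_1$ only in the degenerate case, but generically the value of $g_1$ on such an event is a \emph{nonconstant} function of $y_{11}$ (it equals $\P$ of an event whose probability depends monotonically on $y_{11}$), minus $\mu_\cDq$. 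Thus $g_1(Y_k)=g_1(X_k,\dots,X_{k+D})$ genuinely depends on $X_k$, and I would exhibit an event on which $S_n(g_1)$ is forced to be a nontrivial (growing, nonconstant) function of $X_{D+1},\dots,X_n$, contradicting the independence from $X_{D+1},\dots,X_n$. Concretely, mimicking \eqref{w6a}--\eqref{w8}: conditioning on an event that fixes $X_1,\dots,X_D$ inside a small interval, one computes $\E\bigpar{S_n(g_1)\mid \text{that event}}=O(1)$ since all terms with $k>D$ are unaffected and vanish by \eqref{h0} \eqref{w7}, whereas choosing instead an event that also constrains $X_{D+1},\dots,X_n$ to force many occurrences (or forbid them) makes $S_n(g_1)$ grow linearly in $n$; comparing the two gives the contradiction for $n$ large.

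For a general (non-exact) constraint $\cD$, $g=g_\cD$ is the finite sum \eqref{glamis} of the exactly constrained $g_\cDz$, so $g_1$ is the corresponding sum of the exactly constrained $(g_\cDz)_1$; summing the identity established in the exact case over all $\cD'$ satisfying \eqref{cd'} shows $g_1$ retains the relevant structure, and the same contradiction applies. The main obstacle, compared with the word case, is purely combinatorial: in \refT{Tword} the vanishing set of $f$ was ``some letter is wrong,'' and setting that letter to a fixed wrong value killed $f$ outright, giving the clean formula \eqref{w5}; for permutations $f$ is an indicator of a \emph{relative} order pattern, so one must choose the conditioning event carefully (placing $X_1,\dots,X_D$ into a tiny subinterval near $0$ or $1$, say) so that $g_1$ becomes a genuinely nonconstant — indeed, on a suitable sub-event, constant but with a value distinct from $-\mu_\cD$ — function, and then verify that the independence-from-$X_{D+1},\dots,X_n$ conclusion of \refT{T0D} is violated. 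Once the right event is identified, the contradiction is obtained exactly as in \eqref{w6}--\eqref{w8}, so I would write out only that combinatorial choice in detail and refer to the proof of \refT{Tword} for the rest.
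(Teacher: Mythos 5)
Your overall strategy matches the paper's: reduce (tp1), (tp2), and moment convergence to Theorems~\ref{TUMD}, \ref{TE}, and \ref{Tmom}, then argue $\gss>0$ by contradiction via Theorem~\ref{T0D} applied to $g_1$. However, your sketch of the $\gss>0$ step elides a case split that the paper's proof makes essential, and the two cases genuinely require different arguments. When $d_1<\infty$, the first block has $\ell_1>1$ elements, and (WLOG $\tau_1>\tau_2$) $f$ vanishes whenever $x_1<x_2$; this forces $g_\cD(y_1,\dots,y_b)=0$ on the event $y_{1,1}<\dots<y_{1,M}$, hence $g_1(y_1)=-\mu_\cD$ there, and then the expectation-comparison argument à la \eqref{w6}--\eqref{w8}, conditioning on $X_1<\dots<X_D<X_{n+1}<\dots<X_{n+D}$, yields the contradiction exactly as you describe. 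But when $d_1=\infty$, the first block is the singleton $\{1\}$, so $g_1(y_1)=f_1(y_{11})$ reduces to the one-variable Hoeffding projection, and the conditioning-on-an-extreme-event argument is not the right tool: there is no event that pins $g_1$ to a nonzero constant, since $g_1$ only depends on $y_{11}$ itself. The paper instead observes that $S_n(g_1)=\sum_k f_1(X_k)$ being independent of $X_{D+1}$ forces $f_1$ constant, while in fact $f_1(x)=\frac{1}{(\tau_1-1)!\,(\ell-\tau_1)!}x^{\tau_1-1}(1-x)^{\ell-\tau_1}-\mu$ is a nonconstant polynomial for $\ell\ge2$ — a shorter, more direct contradiction. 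Your parenthetical "(it equals $\P$ of an event whose probability depends monotonically on $y_{11}$)" hints at the right fact for this case, but your plan as written applies the $d_1<\infty$ machinery uniformly, which would not go through when the first block is a singleton. You should split into the two cases explicitly.
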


\begin{proof}
  This is similar to the proof of \refT{Tword}. 
By \eqref{Uperm}, the convergence
\eqref{tp1} is an instance of \eqref{tumd2} 
together with \eqref{te}. 
The formula \eqref{tp2} follows from \eqref{0muD} since
$  \mu:=\E f(X_1,\dots,X_\ell)$ by \eqref{ftau} is the probability that
$X_1,\dots,X_\ell$ have the same order as $\tau_1,\dots,\tau_\ell$, i.e.,
$1/\ell!$. 
Moment convergence follows by \refT{Tmom}.

Finally, suppose that $\ell\ge2$ but $\gss=0$. 
Then \refT{T0D} says that \eqref{skamma}
holds, and thus, for each $j$ and each $n>D$, 
the sum $S_{n}(g_j)$ is independent of
$X_{D+1},\dots,X_n$; we want to show that this leads to a
contradiction. 
We  choose again $j=1$, 
but we now consider two cases separately.

\pfcase{$d_1<\infty$}
Recall the notation in  
\eqref{jedl}--\eqref{jedv}, and note that in this case
\begin{align}\label{aw0}
  \ell_1>1,
\qquad t_{11}=0, \qquad t_{12}=d_1,
\qquad v_1=0
.\end{align}
Assume for definiteness that $\tau_1>\tau_2$. 
(Otherwise, we may exchange  $<$ and $>$ in the argument below.)
Then \eqref{ftau} implies that
\begin{align}\label{aw1}
  f(x_1,\dots,x_\ell)=0 
\qquad \text{if}\quad x_1<x_2.
\end{align}
Consider first the exact constraint $\cDq$. Then $g_\cDq$ is given by
\eqref{han}. Hence, \eqref{aw1} and \eqref{aw0}
imply that
\begin{align}\label{aw2}
g_\cDq(y_1,\dots,y_b)=0
\qquad\text{if}\quad
y_1=(y_{1,k})_{k=1}^M \text{ with } y_{1,1}<y_{1,1+d_1}.
\end{align}
In particular,
\begin{align}\label{aw3}
g_\cDq(y_1,\dots,y_b)=0
\qquad\text{if}\quad
y_{1,1}<y_{1,2}<\dots<y_{1,M}.
\end{align}
By \eqref{glamis}, the same holds for the constraint $\cD$.
Hence, \eqref{fi1} shows that, for $g=g_\cD$,
\begin{align}\label{tp5}
  g_1(y_1)=-\mu_{\cD}
\qquad\text{if}\quad
y_{1,1}<y_{1,2}<\dots<y_{1,M}.
\end{align}
Consequently, on the event $X_1<\dots<X_{n+D}$, 
we have, recalling \eqref{YX} and $M=D+1$, 
$g_1(Y_k)=g_1(X_k,\dots,X_{k+D})=-\mu_\cD$ for every $k\in[n]$, and thus
\begin{align}\label{tpp}
S_n(g_1)=-n\mu_\cD
\qquad\text{if}\quad
X_1<\dots<X_{n+D}. 
\end{align}
On the other hand, as noted above, the assumption $\gss=0$ implies that
$S_{n}(g_1)$ is independent of $X_{D+1}, \dots,X_n$.
Consequently, \eqref{tpp} implies that a.s.
\begin{align}\label{tp6}
  S_n(g_1)=-n\mu_\cD
\qquad\text{if}\quad X_1<\dots <X_D<X_{n+1}<\dots<X_{n+D}
.\end{align}
However, in analogy with \eqref{w7}--\eqref{w8}, 
we have
$\E g_1(Y_k)=0$
by \eqref{h0}, and thus
\begin{align}
&  \E \bigpar{S_n(g_1)\mid X_1<\dots <X_D<X_{n+1}<\dots<X_{n+D}}
\notag\\&\quad
=
\sumkn  
\E \bigpar{g_1(X_k,\dots,X_{k+D})
\mid X_1<\dots <X_D<X_{n+1}<\dots<X_{n+D}}
\notag\\&\quad
=O(1),\label{tp8}
\end{align}
since all terms with $D<k\le n-D$ are unaffected by the conditioning and
thus vanish. 
But \eqref{tp8} contradicts
\eqref{tp6} for large $n$, since $\mu_\cD>0$.
This contradiction shows that $\gss>0$ when $\ell\ge2$ and $d_1<\infty$.

\pfcase{$d_1=\infty$}
In this case, 
$\ell_1=1$.
Consider again first $\cDq$. 
Since $(X_i)$ are \iid, then \eqref{han} and \eqref{fi1} yield, 
choosing $j_i:=(D+1)i$, say,
\begin{align}
  g_1(y_1)
=\E g\bigpar{y_1,Y_{j_2},\dots,Y_{j_b}}-\mu
=\E f\bigpar{y_{11},X_2,\dots,X_\ell}-\mu
=f_1(y_{11}).
\end{align}
(With $\mu=1/\ell!$.)
Thus, recalling \eqref{YX},
\begin{align}\label{aw6}
  S_n(g_1)=\sumkn g_1(Y_k) = \sumkn f_1(X_k).
\end{align}
By \refT{T0D}, the assumption $\gss=0$ thus implies that the final sum in
\eqref{aw6} is independent of $X_{D+1}$, for any $n\ge D+1$.
Since $(X_i)$ are independent, this is possible only if 
$f_1(X_{D+1})=c$ \as{} for some constant $c$, \ie, 
if $f_1(x)=c$ for \aex{} $x\in(0,1)$. 

However, by \eqref{ftau},
$f\bigpar{x,X_2,\dots,X_\ell}=1$ if and only if $\tau_1-1$ prescribed $X_j$
are in $(0,x)$ and in a specific order, and the remaining $\ell-\tau_1$ ones
are in $(x,1)$ and in a specific order. Hence, \eqref{fi1} yields
\begin{align}
f_1(x)
  =\frac{1}{(\tau_1-1)!\,(\ell-\tau_1)!}x^{\tau_1-1}(1-x)^{\ell-\tau_1}-\mu.
\end{align}
Since $\ell\ge2$, $f_1(x)$ is a non-constant polynomial in $x$.

This is a contradiction, and shows that $\gss>0$ also when $d_1=\infty$. 
\end{proof}

\begin{remark}\label{Rperm0}
  Although, $\gss>0$ for each pattern count $N_n(\tau;\cD)$ 
with $\ell>1$, 
non-trivial linear combinations might have $\gss=0$, and thus variance of
lower order, even in the unconstrained case.
(Similarly to the case of patterns in strings in \refS{Sword} and
\refApp{Aword}.)
In fact, for the unconstrained case, it is shown in \cite{SJ287} that
for  permutations $\tau$ of a given length $\ell$, 
the $\ell!$ counts $N_n(\tau)$ converge jointly, after normalization as
above, to a multivariate normal distribution of dimension only $(\ell-1)^2$,
meaning that there is a linear space of dimension $\ell!-(\ell-1)^2$ 
of linear combinations 
that have $\gss=0$. This is further analyzed in
\cite{Even-Zohar}, where the spaces of linear combinations of $N_n(\tau)$
having variance $O\bigpar{n^{2\ell-r}}$ are characterized for each
$r=1,\dots,\ell-1$, using the representation theory of the symmetric group.
In particular, the highest degeneracy, 
with variance $\Theta\bigpar{n^{\ell+1}}$,
is obtained for the sign statistic
$U_n(\sgn)$, where $\sgn(x_1,\dots,x_\ell)$ is the sign of the permutation
defined by the order of $(x_1,\dots,x_n)$; in other words,
$U_n(\sgn)$ is the sum of the signs of the $\binom n\ell$ subsequences of
length $\ell$ of a random permutation $\bpi\nn\in\fS_n$.
For $\ell=3$, the asymptotic distribution of 
$n^{-(\ell+1)/2}U_n(\sgn)$ is of the type
in \eqref{Z2}, see \cite{FisherLee} and \cite[Remark 2.7]{SJ287}. 
For larger $\ell$, the asymptotic distribution can 
by the methods in \refApp{Aword}
be expressed as a
polynomial of degree $\ell-1$ in  infinitely many independent
normal variables, as in \eqref{tworddeg}; however, we do not know any
concrete such representation.


We expect that, in analogy with \refE{E0}, for linear combinations of 
constrained pattern counts, there are further
possibilities to have $\gss=0$. 
We have not pursued this, and we leave it as an open problem to characterize
these cases with $\gss=0$; moreover, it would also be interesting  
to extend the results of \cite{Even-Zohar}
characterizing cases with higher degeneracies to constrained cases.
\end{remark}

\section{Further comments}\label{Sfurther}

We discuss here briefly some possible extensions of the present work.
We have not pursued them, and they are left as open problems.

\subsection{Mixing and Markov input}
We have in this paper 
studied \Ustats{} based on a sequence $(X_i)$ that is allowed to be dependent,
but only under the rather strong assumption of $m$-dependence
(partly motivated by our application to constrained \Ustats).
It would be interesting to extend the results to 
weaker assumptions on $(X_i)$, for example that it is stationary with some
type of mixing property.
(See \eg{} \cite{Bradley} for various mixing conditions and central limit
theorems under some of them.)

Alternatively (or possibly as a special case of mixing conditions), 
it would be interesting to consider $(X_i)$ that form a stationary Markov
chain (under suitable assumptions).

In particular, it seems interesting to study
constrained \Ustats{} under such assumptions, since the mixing or Markov
assumptions typically imply strong dependence for sets of variables $X_i$ with
small gaps between the indices, but not if the gaps are large.

Markov models
are popular models for random strings.
Substring counts, \ie, the completely constrained case of subsequence counts
(see \refR{Rfull}) have been treated for Markov sources by \eg{}
\cite{RegSzp98},
\cite{NicodemeSF}
and \cite{JacSzp}.

A related model for random strings is a probabilistic dynamic source,
see \eg{} \cite[Section 1.1]{JacSzp}. 
For substring counts,
asymptotic normality has been shown by \cite{BourdonVallee06}.
For (unconstrained or constrained)
subsequence counts, 
asymptotic results on mean and variance 
are
special cases of \cite{BourdonVallee02} and
\cite[Theorem 5.6.1]{JacSzp}; 
we are not aware of any results on asymptotic normality in this setting.

\subsection{Generalized \Ustats}

\emph{Generalized \Ustats} (also called \emph{multi-sample \Ustats}
are defined similarly to \eqref{U}, but are based on two (for simplicity) 
sequences $(X_i)_1^{n_1}$ and $(Y_j)_1^{n_2}$ of random variables, with the
sum in \eqref{U} replaced by a sum over all $i_1<\dots<i_{\ell_1}\le n_1$ 
and $j_1<\dots<j_{\ell_2}\le n_2$, and $f$ now a function
of $\ell_1+\ell_2$ variables.
Limit theorems, including asymptotic normality, under  suitable conditions
are shown in \cite{Sen-generalized}, and extensions to asymmetric cases are
sketched in \cite[Example 11.24]{SJIII}.
We do not know any extensions to \mdep{} or constrained cases, but we expect
that such extensions are straightforward.

\appendix
\section{Linear combinations  for unconstrained subsequence counts} \label{Aword}

As promised in \refS{Sword}, we consider here unconstrained subsequence counts
in a random string $\Xi_n$ with \iid{} letters,
normalized as in \refT{Tword}, 
and study further the case of linear combinations of such normalized counts
(with coefficients not depending on $n$);
in particular, we study in some detail such linear combinations
that are
degenerate in the sense that the asymptotic variance $\gss=0$. 

The results are based on  the orthogonal decomposition 
introduced in the symmetric case by \citet{Hoeffding-LLN},
see also \citet{RubinVitale}; this is extended to the asymmetric case 
in \cite[Chapter 11.2]{SJIII}, but the treatment there
uses a rather heavy formalism, and we therefore give here a direct treatment 
in the present special case. (This case is
somewhat simpler than the general case since we only have
to consider finite-dimensional vector spaces below,
but otherwise the general case is similar.)
See also \cite{Even-ZoharEtAl2020},
which contains a much deeper algebraic study of
the asymptotic variance $\gss(f)$ and the
vector spaces below, 
and
in particular a spectral decomposition that refines \eqref{majk}.

Fix $\cA$ and the random string $(\xi_i)\xoo$.
 Assume, as in \refS{Sword}, that $p(x)>0$ for every $x\in\cA$.
Let $A:=|\cA|$, the number of different letters.

We fix also $\ell\ge1$ and consider all unconstrained subsequence counts
$N_n(\bw)$ with $|\bw|=\ell$.
There are $A^\ell$ such words $\bw$, and it follows from \eqref{Uword} and
\eqref{wf} that the linear combinations of these counts are precisely the
asymmetric \Ustat{s} \eqref{U} for all $f:\cA^\ell\to\bbR$, by the relation
\begin{align}\label{fU}
  \sum_{\bw\in\cA^\ell} f(\bw)N_n(\bw)=U_n(f).
\end{align}
Note that \refT{TUM}
applies to every $U_n(f)$
and thus \eqref{tum1} and \eqref{tum2} hold for some $\gss=\gss(f)\ge0$.
(As said above, this case of \refT{TUM}  with 
\iid{} $X_i$, \ie, the case $m=0$,
is treated also in
\cite[Corollary 11.20]{SJIII} and \cite{SJ332}.)

Let $V$ be the linear space of all functions $f:\cA^\ell\to\bbR$. 
Thus $\dim V=A^\ell$. 
Similarly, let $W$ be the linear space of all functions $h:\cA\to\bbR$,
\ie, all functions of a single letter;
thus $\dim W=A$.
Then $V$ can be identified with the tensor product $W^{\tensor\ell}$, with
the identification
\begin{align}\label{ht}
  h_1\tensor\dotsm\tensor h_\ell(x_1,\dots,x_\ell)=\prod_1^\ell h_i(x_i).
\end{align}
We regard $V$ as a (finite-dimensional) Hilbert space with inner product
\begin{align}\label{ipV}
  \innprod{f,g}_V
:=\E\bigsqpar{f(\Xi_n)g(\Xi_n)},
\end{align}
and, similarly, $W$ as a Hilbert space with inner product
\begin{align}\label{ipW}
  \innprod{h,k}_W
:=\E\bigsqpar{h(\xi_1)k(\xi_1)}.
\end{align}

Let $W_0$ be the subspace of $W$ defined by
\begin{align}\label{maj1}
  W_0:=\set{1}^\perp=\set{h\in W:\innprod{h,1}_W=0}
=\set{h\in W:\E h(\xi_1)=0}.
\end{align}
Thus, $\dim W_0=A-1$.

For a subset $\cB\subseteq \cA$, let $V_\cB$ be the subspace of $V$ spanned by 
all functions $h_1\tensor\dotsm\tensor h_\ell$ as in \eqref{ht} 
such that $h_i\in W_0$ if $i\in \cB$, and $h_i=1$ if $i\notin \cB$.
In other words, if we for a given $\cB$ define $W'_i:=W_0$ when $i\in \cB$ and
$W'_i=\bbR$ when $i\notin \cB$, then
\begin{align}\label{maj2}
  V_\cB=W'_1\tensor\dotsm\tensor W'_\ell
\cong W_0^{\tensor|\cB|}
.\end{align}
It is easily seen that these $2^A$ subspaces of $V$ are orthogonal, and that
we have an orthogonal decomposition
\begin{align}\label{majb}
  V=\bigoplus_{\cB\subseteq\cA} V_\cB.
\end{align}
Furthermore, for $k=0,\dots,\ell$, define
\begin{align}\label{maja}
  V_k:=\bigoplus_{|\cB|=k} V_\cB.
\end{align}
Thus, we have also an orthogonal decomposition
(as in \cite{Hoeffding-LLN} and \cite{RubinVitale})
\begin{align}\label{majk}
  V=\bigoplus_{k=0}^\ell V_k.
\end{align}
Note that, by \eqref{maj2} and \eqref{maja},
\begin{align}\label{maj4}
  \dim V_\cB = (A-1)^{|\cB|},
\qquad
  \dim V_k = \binom{\ell}{k}(A-1)^{k}.
\end{align}
Let $\Pi_\cB$ and $\Pi_k=\sum_{|\cB|=k}\Pi_\cB$ be the orthogonal projections of $V$ onto $V_\cB$
and $V_k$. Then, for any $f\in V$, we may consider its components 
$\Pi_kf\in V_k$.

First, $V_0$ is the 1-dimensional space of constant functions in $V$.
Trivially, if $f\in V_0$, then $U_n(f)$ is non-random, 
so $\Var U_n(f)=0$ for every $n$, and $\gss(f)=0$.
More interesting is that for any $f\in V$, we have 
\begin{align}
  \label{Pi0}
\Pi_0f=\E f(\xi_1,\dots,\xi_\ell)=\mu.
\end{align}

Next, it is easy to see that  taking $\cB=\set{i}$ yields
the projection $f_i$ defined by \eqref{fi1}, except that $\Pi_{\set i} f$ is
defined as a function on $\cA^\ell$; to be precise,
\begin{align}\label{Pi1}
  \Pi_{\set i}f(x_1,\dots,x_\ell)=f_i(x_i).
\end{align}
Recalling \eqref{fU}, this
leads to the following characterization of 
degenerate linear combinations of unconstrained subsequence counts.
\begin{theorem}
  \label{Tword0}
With notations and assumptions as above,
if $f:\cA^\ell\to\bbR$, then 
the following are equivalent.
\begin{romenumerate}
\item \label{Tword0a}
$\gss(f)=0$.
\item \label{Tword0b}
$f_i=0$ for every $i=1,\dots,\ell$.
\item \label{Tword0c}
$\Pi_1f=0$.
\end{romenumerate}
\end{theorem}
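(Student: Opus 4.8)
The plan is to prove the two equivalences \ref{Tword0b}$\iff$\ref{Tword0c} and \ref{Tword0a}$\iff$\ref{Tword0b} separately. The first is a purely algebraic unwinding of the orthogonal decomposition \eqref{majb}--\eqref{majk}, while the second is the independent ($m=0$) case of \refT{T0}, which I would reprove directly here via \refL{LAB} so as to keep the argument self-contained and to make the role of the Hilbert space structure visible.

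For \ref{Tword0b}$\iff$\ref{Tword0c}: by the definition \eqref{maja} we have $\Pi_1f=\sum_{|\cB|=1}\Pi_\cB f=\sum_{i=1}^\ell\Pi_{\set i}f$, and since the subspaces $V_{\set i}$, $i\in[\ell]$, are mutually orthogonal (a special case of \eqref{majb}), $\Pi_1f=0$ holds if and only if $\Pi_{\set i}f=0$ for every $i$. By \eqref{Pi1}, $\Pi_{\set i}f$ is the function $(x_1,\dots,x_\ell)\mapsto f_i(x_i)$ on $\cA^\ell$, which is the zero function exactly when $f_i(x)=0$ for all $x\in\cA$; as $p(x)>0$ for every $x\in\cA$ by assumption, this is the same as $f_i=0$ $\cL(\xi_1)$-\aex, \ie\ $f_i=0$. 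Hence \ref{Tword0b}$\iff$\ref{Tword0c}.

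For \ref{Tword0a}$\iff$\ref{Tword0b}: specialize \refT{TUM} and the formula \eqref{gss} to the present case $m=0$. Then only the term $r=0$ survives, so $\gss=\sum_{i,j=1}^\ell a_{ij}b_{ij}$, where $a_{ij}:=\intoi\psi_i(t)\psi_j(t)\dd t$ and $b_{ij}:=\gam_{i,j,0}=\Cov\bigsqpar{f_i(\hX_1),f_j(\hX_1)}=\E\bigsqpar{f_i(\hX_1)f_j(\hX_1)}$, the last step by \eqref{h0}. Exactly as in the proof of \refT{T0}, the polynomials $\psi_1,\dots,\psi_\ell$ defined by \eqref{psi} are linearly independent, so the Gram matrix $A=(a_{ij})$ is positive definite; and $B=(b_{ij})$ is the covariance matrix of the random vector $(f_1(\hX_1),\dots,f_\ell(\hX_1))$, hence positive semidefinite. \refL{LAB} then gives $\gss=0\iff b_{ij}=0$ for all $i,j$; and this last condition is equivalent to $b_{ii}=\E f_i(\hX_1)^2=0$ for all $i$ (one direction trivial, the other by the \CSineq{} $|b_{ij}|\le\sqrt{b_{ii}b_{jj}}$), hence to $f_i(\hX_1)=0$ \as{} for all $i$, hence to $f_i=0$ for all $i$ (again using $p(x)>0$). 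This is \ref{Tword0a}$\iff$\ref{Tword0b}, and the theorem follows.

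No serious obstacle is expected; the argument is essentially bookkeeping once \refT{TUM}, \refT{T0} and \refL{LAB} are in hand. The only point deserving a little care is keeping straight the two readings of ``$f_i$ vanishes'' — as an element of $W_0\subseteq W$ (equivalently, $\cL(\xi_1)$-\aex) and as a literal function on the finite set $\cA$ — which causes no trouble here precisely because $p(x)>0$ for all $x\in\cA$. Alternatively one could simply invoke \refT{T0} (the equivalence of \ref{T0gss} and \ref{T0Var}, read with the $m=0$ interpretation stated there) together with \eqref{Pi1}, or \cite[Corollary 3.5]{SJ332}, but the direct route via \refL{LAB} is short enough to write out.
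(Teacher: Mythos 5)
Your proof is correct and follows essentially the same route as the paper's: the paper disposes of \ref{Tword0a}$\iff$\ref{Tword0b} by citing \refT{T0} (as a special case, with $m=0$) and establishes \ref{Tword0b}$\iff$\ref{Tword0c} via \eqref{maja} and \eqref{Pi1}, exactly as you do. The only difference is expository — you inline the relevant $m=0$ specialization of the proof of \refT{T0} (formula \eqref{gss} with only $r=0$, \refL{LAB} applied to the Gram matrix $A$ and the covariance matrix $B$) rather than citing it, which is a legitimate choice you already flag at the end of your argument.
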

\begin{proof}
  The equivalence \ref{Tword0a}$\iff$\ref{Tword0b}
is a special case of \refT{T0}; as said above, this special case is also
given in \cite[Corollary 3.5]{SJ332}. 

The equivalence \ref{Tword0b}$\iff$\ref{Tword0c}
follows by \eqref{maja} and \eqref{Pi1}, which give
\begin{align}\label{peking}
  \Pi_1f=0 
\iff \Pi_{\set i}f=0\; \forall i
\iff f_i=0\; \forall i.
\end{align}
\end{proof}

Note that \eqref{maja} and \eqref{Pi1}
also yield
\begin{align}\label{maj}
  \Pi_1 f (x_1,\dots,x_\ell)
= \sum_{i=1}^\ell   \Pi_{\set i} f (x_1,\dots,x_\ell)
= \sum_{i=1}^\ell f_i(x_i)
.\end{align}

\begin{corollary}
  \label{Cword}
The $A^\ell$ different unconstrained subsequence counts $N_n(\bw)$ with
$\bw\in\cA^\ell$ converge joíntly, after normalization as in \eqref{ts1},
to a centered multivariate normal distribution in $\bbR^{A^\ell}$
whose support is a subspace of dimension $\ell(A-1)$.
\end{corollary}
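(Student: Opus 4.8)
The plan is to realise each count as an asymmetric \Ustat{} and then read everything off the machinery already developed. By \eqref{Uword} and \eqref{wf}, $N_n(\bw)=U_n(f_\bw)$, where $f_\bw\in V$ is the indicator of the single word $\bw$; these $A^\ell$ indicators form the coordinate basis of $V$, and for $c=(c_\bw)\in\bbR^{A^\ell}$ with $f:=\sum_\bw c_\bw f_\bw\in V$ we have $f(\bw)=c_\bw$ and $U_n(f)=\sum_\bw c_\bw N_n(\bw)$ by \eqref{fU}. First I would apply \refT{TUM} together with its Cram\'er--Wold extension (\refR{RCW}) to the finite family $\set{f_\bw:\bw\in\cA^\ell}$: this gives that the $\bbR^{A^\ell}$-valued vector of normalisations \eqref{ts1} of the $N_n(\bw)$ (with $\cD=\cDoo$, so $b=\ell$ and $\mu_\cD=\prod_i p(w_i)$) converges in distribution to a centered Gaussian vector with some positive semidefinite covariance matrix $\Sigma=(\sigma_{\bw\bw'})$. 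This already settles everything except the dimension of the support, which equals $\operatorname{rank}\Sigma$ since a centered Gaussian vector is supported exactly on the range of its covariance matrix.

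To compute $\operatorname{rank}\Sigma$ I would use linearity. For $c\in\bbR^{A^\ell}$ and $f:=\sum_\bw c_\bw f_\bw$, the linear combination $\sum_\bw c_\bw N_n(\bw)$ is $U_n(f)$; using \refT{TE} (and \eqref{Pi0}, so that the deterministic centering in \eqref{ts1} agrees with $\E U_n(f)$ up to $O\bigpar{n^{\ell-1}}$, hence up to an $\op(n^{\ell-1/2})$ term) and \refT{TUM} applied to $f$, the corresponding normalisation of $\sum_\bw c_\bw N_n(\bw)$ converges to $\N\bigpar{0,\gss(f)}$. Comparing with the already established joint limit, $c^\top\Sigma c=\gss(f)$ for every $c$; since $\Sigma$ is positive semidefinite this yields $c\in\ker\Sigma\iff\gss(f)=0$, under the coordinate isomorphism $c\leftrightarrow f$ of $\bbR^{A^\ell}$ with $V$.

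Finally I would invoke \refT{Tword0}: $\gss(f)=0$ iff $\Pi_1 f=0$, i.e.\ iff $f\in\ker\Pi_1$; by the orthogonal decomposition \eqref{majb}--\eqref{majk}, $\ker\Pi_1=\bigoplus_{k\neq1}V_k$, which by the dimension count \eqref{maj4} has dimension $\dim V-\dim V_1=A^\ell-\ell(A-1)$. Since the coordinate isomorphism carries $\ker\Sigma$ onto $\ker\Pi_1$, we get $\dim\ker\Sigma=A^\ell-\ell(A-1)$, so $\operatorname{rank}\Sigma=\ell(A-1)$, which is the asserted dimension of the support.

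I do not expect a genuine obstacle here: every step is short once \refT{TUM}, \refR{RCW}, \refT{TE} and \refT{Tword0} are in hand. The only points that need a little care are the two bookkeeping identifications — that $\Sigma$ is precisely the matrix of the symmetric bilinear form polarising $f\mapsto\gss(f)$, so that positive semidefiniteness turns the pointwise condition ``$\gss(f)=0$'' into membership in $\ker\Sigma$; and that the isomorphism $\bbR^{A^\ell}\cong V=\bigoplus_k V_k$ relevant to the rank count is the coordinate one $c\leftrightarrow\sum_\bw c_\bw f_\bw$, not the one coming from the inner product $\innprod{\cdot,\cdot}_V$ (which is irrelevant for dimensions). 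I would also record explicitly the standard fact that $\N(0,\Sigma)$ on $\bbR^{A^\ell}$ is supported on $\operatorname{range}(\Sigma)=(\ker\Sigma)^\perp$, a linear subspace of dimension $\operatorname{rank}\Sigma$.
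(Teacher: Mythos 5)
Your proposal follows the paper's own argument essentially verbatim: joint normality of the vector of normalized counts via \refT{TUM} and \refR{RCW}, identification of the degenerate directions with $\ker\Pi_1$ via \refT{Tword0}, and the dimension count from \eqref{maj4}. Your extra care about phrasing the rank computation through $\ker\Sigma$ and the \emph{coordinate} isomorphism $c\leftrightarrow f$ (rather than the $\innprod{\cdot,\cdot}_V$ inner product) is a small refinement of the paper's shorthand conclusion ``$L=V_1$'', which strictly speaking conflates two different inner products unless the letter distribution is uniform; but since only the dimension is asserted, both versions are correct and the route is the same.
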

\begin{proof}
  By \refR{RCW}, we have joint convergence in \refT{Tword} to some centered
  multivariate normal distribution in $V=\bbR^{A^\ell}$.
Let $L$ be the support of this distribution; then $L$ is a subspace of
$V$. Let $f\in V$.
Then, by \refTs{Tword} and \ref{Tword0},
\begin{align}\label{cw1}
  f\perp L& 
\iff
\sum _{\bw\in\cA^\ell}f(\bw) \frac{N_n(\bw)-\E N_n(\bw)}{n^{\ell-1/2}} \dto0
\iff \gss(f)=0
\notag\\&
\iff \Pi_1 f=0
\iff f\perp V_1.
\end{align}
Hence $L=V_1$, and the result follows by \eqref{maj4}.
\end{proof}

What happens in the degenerate case 
when $\Pi_1f=0$ and thus $\gss(f)=0$? 
For symmetric \Ustat{s}, this was considered
by \citet{Hoeffding} (variance)
and  \citet{RubinVitale} (asymptotic distribution), see also \citet{DynkinM}.
Their results extend to the present asymmetric situation as follows.
We make a final definition of a special subspace of $V$: let
\begin{align}
  V_{\ge k}:=\bigoplus_{i=k}^\ell V_i
=
\set{f\in V: \Pi_i f=0\text{ for } i=0,\dots,k-1}.
\end{align}
In particular, $V_{\ge 1}$ consists of all $f$ with $\E f(\Xi_n)=0$.
Note also that $\fx$ in \eqref{hi} by \eqref{Pi0} and \eqref{maj}
equals $f-\Pi_0 f-\Pi_1 f\in V_{\ge2}$.

\begin{lemma}\label{Ldeg}
Let\/ $0\le k\le\ell$.   
If\/ $f\in V_{\ge k}$, then
$\E U_n(f)^2 = O\bigpar{n^{2\ell-k}}$.
Moreover, if\/ $f\in V_{\ge k}\setminus V_{\ge k+1}$, then
$\E U_n(f)^2 = \Theta\bigpar{n^{2\ell-k}}$.
\end{lemma}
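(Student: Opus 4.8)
The plan is to combine the orthogonal decomposition $V=\bigoplus_\cB V_\cB$ from \eqref{majb} with the combinatorial structure of $U_n(\cdot)$. First dispose of $k=0$: since $V_{\ge0}=V$, and in the present \iid{} situation $\E U_n(f)=\binom n\ell\Pi_0f$ while $\Var U_n(f)=O(n^{2\ell-1})$ by \refL{L1} (with $\cD=\cDoo$), we get $\E U_n(f)^2=\binom n\ell^2(\Pi_0f)^2+O(n^{2\ell-1})$, which is $O(n^{2\ell})$ in general and $\Theta(n^{2\ell})$ exactly when $\Pi_0f\neq0$, i.e.\ when $f\notin V_{\ge1}$. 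So assume from now on $k\ge1$; then $f\in V_{\ge k}$ forces $\Pi_0f=\mu=0$, hence $\E U_n(f)=0$ and $\E U_n(f)^2=\Var U_n(f)$, and it remains to estimate this variance. For $f\in V$ and $\cB\subseteq[\ell]$, let $\gf_\cB\in W_0^{\tensor|\cB|}$ denote $\Pi_\cB f$ regarded as a function of only its essential arguments (those indexed by $\cB$), so that $\gf_\cB=0$ unless $|\cB|\ge k$.

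The key technical step is a vanishing-and-reshaping identity for the covariances $\Cov\bigpar{U_n(\Pi_\cB f),U_n(\Pi_{\cB'}f)}$. Expanding this into a double sum over increasing $\ell$-tuples $\mathbf i,\mathbf j$ (and using $\E(\Pi_\cB f)(\xi_{\mathbf i})=0$ for $\cB\neq\emptyset$, so that the covariance is the expectation of the product), one checks by conditioning on all coordinates but one that the term for $(\mathbf i,\mathbf j)$ vanishes unless $\set{i_a:a\in\cB}=\set{j_b:b\in\cB'}$: if some $i_{a_0}$ with $a_0\in\cB$ is not among $\set{j_b:b\in\cB'}$, integrate out $\xi_{i_{a_0}}$ while freezing the rest and use centredness of $\Pi_\cB f$ in that variable. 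In particular the covariance vanishes whenever $|\cB|\neq|\cB'|$, so $\Var U_n(f)=\sum_{j\ge k}\Var U_n(\Pi_jf)$; and when $|\cB|=|\cB'|=j$ the surviving term equals the fixed inner product $\innprod{\gf_\cB,\gf_{\cB'}}$ on $W^{\tensor j}$ (defined as in \eqref{ipW}), independent of the common index set. Since the number of pairs $(\mathbf i,\mathbf j)$ with $\set{i_a:a\in\cB}=\set{j_b:b\in\cB'}$ is $O(n^{2\ell-j})$ for $|\cB|=|\cB'|=j$ (choose the common $j$-set in $O(n^j)$ ways, then complete each of $\mathbf i,\mathbf j$ in $O(n^{\ell-j})$ ways), we obtain the upper bound $\Var U_n(f)=\sum_{j\ge k}O(n^{2\ell-j})=O(n^{2\ell-k})$.

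For the lower bound I would compute the leading term of $\Var U_n(\Pi_kf)$, using $\Var U_n(f)=\sum_{j\ge k}\Var U_n(\Pi_jf)=\Var U_n(\Pi_kf)+o(n^{2\ell-k})$. Grouping the terms of $U_n(\Pi_\cB f)$ by the values $s_1<\dots<s_k$ of the coordinates indexed by $\cB$ gives $U_n(\Pi_\cB f)=\sum_{1\le s_1<\dots<s_k\le n}P_n(\cB;\mathbf s)\,\gf_\cB(\xi_{s_1},\dots,\xi_{s_k})$, where $P_n(\cB;\mathbf s)$ is a product of $k+1$ binomial coefficients over the gaps of $\cB$ in $[\ell]$; as in \eqref{ajkn} this equals $n^{\ell-k}\Psi_\cB(\mathbf s/n)+O(n^{\ell-k-1})$ uniformly, with $\Psi_\cB(x_1,\dots,x_k):=\prod_{r=0}^k(x_{r+1}-x_r)^{m_r}/m_r!$ (here $x_0:=0$, $x_{k+1}:=1$, and $(m_0,\dots,m_k)$ is the composition of $\ell-k$ recording the gaps of $\cB$ in $[\ell]$), a polynomial strictly positive on the open simplex $\Delta_k:=\set{0<x_1<\dots<x_k<1}$. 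Combining this with the reshaping identity and a Riemann-sum estimate for $\sum_{\mathbf s}P_n(\cB;\mathbf s)P_n(\cB';\mathbf s)$ yields
\[
n^{k-2\ell}\Var U_n(\Pi_kf)\;\longrightarrow\;\int_{\Delta_k}\bignorm{\,\textstyle\sum_{|\cB|=k}\Psi_\cB(x)\,\gf_\cB\,}_{W^{\tensor k}}^2\dd x\;\ge\;0 .
\]
This limit vanishes iff $\sum_{|\cB|=k}\Psi_\cB(x)\gf_\cB=0$ for all $x\in\Delta_k$, i.e.\ (reading this off coordinatewise in a basis of $W^{\tensor k}$) iff the polynomials $\set{\Psi_\cB:|\cB|=k}$ are linearly independent. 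Passing to the simplex coordinates $y_r:=x_{r+1}-x_r$ ($\sum_{r=0}^ky_r=1$), $\Psi_\cB$ becomes $\prod_ry_r^{m_r}/m_r!$ and the compositions $(m_0,\dots,m_k)$ of $\ell-k$ are pairwise distinct; linear independence of these monomials restricted to $\sum_ry_r=1$ is elementary, since a homogeneous form vanishing on that hyperplane vanishes identically by scaling. Hence the displayed limit is $>0$ whenever some $\gf_\cB\neq0$, i.e.\ whenever $\Pi_kf\neq0$, which is precisely the case $f\notin V_{\ge k+1}$; together with the upper bound this gives $\E U_n(f)^2=\Var U_n(f)=\Theta(n^{2\ell-k})$.

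The main obstacle is this last step — ruling out cancellation among the $\binom\ell k$ order-$k$ blocks, i.e.\ showing that the limiting quadratic form on $V_k$ is positive definite. The chain of reductions (to positive-definiteness of that form, then to linear independence of the polynomials $\Psi_\cB$, and finally to injectivity of dehomogenisation on fixed-degree forms) is where the real content lies, whereas the upper bound, the vanishing-of-covariances identity, and the single-block ($\cB=\cB'$) part of the lower bound are routine index counting.
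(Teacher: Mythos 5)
Your proof is correct, and it follows the same route the paper sketches: the paper cites the variance expansion \eqref{b1} and Hoeffding's symmetric-case argument and otherwise omits the details. Your upper bound --- expand $\Var U_n(f)$ over pairs of increasing $\ell$-tuples, use centredness of the $V_\cB$-components to kill every pair whose sets of $\cB$-indexed coordinates differ, and count the survivors --- is exactly what the paper's hint indicates, and your handling of the $k=0$ case via the mean term is also right.

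What the paper glosses over, and where your write-up adds genuine content, is the $\Theta$ lower bound. Unlike the symmetric case, in the asymmetric setting the $\binom{\ell}{k}$ blocks $\cB$ of size $k$ can interact, and one must rule out cancellation in $\Var U_n(\Pi_kf)$. Your chain of reductions --- identify $\lim n^{k-2\ell}\Var U_n(\Pi_kf)$ as a positive-semidefinite integral over the simplex $\Delta_k$; show its positive-definiteness is equivalent to linear independence of the polynomials $\Psi_\cB$, $|\cB|=k$; pass to gap coordinates $y_0,\dots,y_k$, where these become distinct monomials of degree $\ell-k$; and conclude because a homogeneous form vanishing on $\sum_r y_r=1$ scales to zero everywhere --- is correct and is precisely the missing step. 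One point worth flagging, which you state correctly: the estimate $P_n(\cB;\mathbf s)=n^{\ell-k}\Psi_\cB(\mathbf s/n)+O\bigpar{n^{\ell-k-1}}$ does hold uniformly even near the boundary of the simplex, where a binomial factor of $P_n$ may vanish while $\Psi_\cB$ does not, because both sides are polynomials in the integer gaps with the same leading term.
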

\begin{proof}
  This is easily seen using the expansion \eqref{b1} without the constraint
$\cD$,
and similar to the symmetric case in \cite{Hoeffding};
cf.\ also (in the more complicated \mdep{} case) the cases $k=1$ in \eqref{l1}
and $k=2$ in \eqref{tzol}.
We  omit the details.
\end{proof}

We can now state a general limit theorem that also include degenerate
cases. 
\begin{theorem}\label{Tworddeg}
Let $k\ge1$ and suppose that $f\in V_{\ge k}$. Then
\begin{align}\label{tworddeg}
  n^{k/2-\ell} U_n(f) \dto Z,
\end{align}
where $Z$ is some  polynomial of degree $k$
in independent normal variables (possibly infinitely many).
Moreover, $Z$ is not degenerate unless $f\in V_{\ge k+1}$.
\end{theorem}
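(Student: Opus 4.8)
The plan is to mimic the Hoeffding decomposition argument used for \refT{TUM} but to now use the \emph{full} orthogonal decomposition $f=\sum_{\cB} \Pi_\cB f$ from \eqref{majb} rather than just the one‑variable projections. Since $f\in V_{\ge k}$ means $\Pi_\cB f=0$ whenever $|\cB|<k$, we can write $f$ as a sum of functions attached to index sets $\cB$ with $|\cB|\ge k$, and by linearity and the Cram\'er--Wold device it suffices to treat a single $f$ of the pure form $f=\Pi_\cB f\in V_\cB$ with $|\cB|=j\ge k$ (the contribution of the terms with $|\cB|>k$ is negligible after dividing by $n^{\ell-k/2}$, by \refL{Ldeg}). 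So first I would reduce to the case $f\in V_\cB$, $|\cB|=k$, and observe that such an $f$ factors as $f(x_1,\dots,x_\ell)=\prod_{i\in\cB} h_i(x_i)$ with $h_i\in W_0$ (more precisely a linear combination of such products, but again one may treat a single product term and then take linear combinations). Relabelling, we may assume $\cB=\{1,\dots,k\}$, so $f$ depends on only $k$ of its arguments; the remaining $\ell-k$ slots just contribute the combinatorial factor counting how the unused indices can be placed, exactly as in \eqref{maya}.

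Next, I would carry out the counting: for $f=h_1(x_1)\cdots h_k(x_k)$ (not depending on $x_{k+1},\dots,x_\ell$),
\begin{align}\label{degexp}
U_n(f) = \sum_{1\le i_1<\dots<i_k\le n}
\binom{n-i_k}{\ell-k}\binom{i_1-1}{0}\; h_1(X_{i_1})\cdots h_k(X_{i_k}) \cdot(\text{intermediate binomials}),
\end{align}
which after extracting the polynomial weights $a_{i_1,\dots,i_k,n}= n^{\ell-k}\Psi(i_1/n,\dots,i_k/n)+O(n^{\ell-k-1})$ (a product of Beta‑type polynomials in the normalised indices, the analogue of $\psi_j$ in \eqref{psi}) reduces $n^{k/2-\ell}U_n(f)$ to a degree‑$k$ polynomial statistic
$$
n^{-k/2}\sum_{1\le i_1<\dots<i_k\le n}\Psi(i_1/n,\dots,i_k/n)\,h_1(X_{i_1})\cdots h_k(X_{i_k})+o(1).
$$
This is precisely a (completely degenerate) $U$-statistic of order $k$ with a weight function, based on the centered variables $h_r(X_i)$; in the independent case its limit is a multiple Wiener--It\^o integral of order $k$ with respect to the Brownian motions $W_r$ arising as the Donsker limits of the partial sums $S_n(h_r)=\sum_{i\le n}h_r(X_i)$ (cf.\ \eqref{tg6}), and such integrals are exactly polynomials of degree $k$ in independent normal variables (expand the integral in a Hermite/chaos basis). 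In the \mdep{} case I would first reduce, via \refL{L>} and its proof, to the constrained sum with all gaps $>m$, where the variables $X_{i_1},\dots,X_{i_k}$ inside each term are independent, and then the same chaos argument applies with $W_r$ now a suitable multiple of Brownian motion (Donsker for \mdep{} sequences, \cite{Billingsley56}). The convergence itself can be obtained either by the method of moments (all mixed moments of the weighted $U$-statistic converge to the corresponding Gaussian chaos moments, using the combinatorial bookkeeping of \refL{Ldeg} to discard lower‑order index coincidences) or, more cleanly, by invoking the functional limit \refT{TG} together with a continuous‑mapping/Skorohod argument to pass from the partial‑sum processes $S\nt(h_r)$ to the iterated‑integral functional.

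The claim that $Z$ is non‑degenerate unless $f\in V_{\ge k+1}$ follows from the variance statement: by \refL{Ldeg}, $f\in V_{\ge k}\setminus V_{\ge k+1}$ gives $\E U_n(f)^2=\Theta(n^{2\ell-k})$, so $\Var(n^{k/2-\ell}U_n(f))$ converges to a strictly positive limit, which is $\E Z^2$ (using that the second moment converges, which is part of \refT{Tmom} since $f$ is bounded here, being a function on the finite set $\cA^\ell$); hence $Z\not\equiv 0$. Conversely if $f\in V_{\ge k+1}$ then $n^{k/2-\ell}U_n(f)\to0$ in $L^2$ by \refL{Ldeg}, so $Z=0$. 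I expect the main obstacle to be the clean identification of the limit as "a polynomial of degree $k$ in independent normal variables": one must argue that the iterated stochastic integral against the vector of Gaussian processes $(W_1,\dots)$ — which in general involves infinitely many independent normal coordinates once one expands each $W_r$ in an orthonormal basis — is genuinely a \emph{polynomial} (finite degree $k$) in those coordinates, and that the mixed‑moment computation closes up; handling the \mdep{} case adds the extra bookkeeping of the cross‑covariances $\gam_{i,j,r}$ of \eqref{gijr} inside the Gaussian limit, but this only changes the covariance structure of the $W_r$, not the polynomial‑in‑normals conclusion.
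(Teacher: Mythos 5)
Your proposal is essentially correct, but it takes a genuinely different route from the paper. The paper proves Theorem~\ref{Tworddeg} in two lines: it either invokes \cite[Theorem 11.19]{SJIII} directly, or reduces the asymmetric $U$-statistic to a symmetric one via a randomization trick — introducing an auxiliary i.i.d.\ sequence $\eta_i\sim U(0,1)$ and writing $U_n(f)$ in distribution as a symmetric $U$-statistic based on the pairs $(\xi_i,\eta_i)$ (equation~\eqref{Usym}) — and then cites \citet{RubinVitale}. Your proposal instead reconstructs the underlying argument from first principles: you orthogonally decompose $f$ into pieces $\Pi_\cB f\in V_\cB$, discard the $|\cB|>k$ components using the variance bound of Lemma~\ref{Ldeg}, reduce to a weighted degenerate $U$-statistic of order $k$ in a pure tensor $h_1\otimes\cdots\otimes h_k$ with $h_i\in W_0$, and identify the limit with an order-$k$ Wiener--It\^o integral against the Donsker limits of the partial-sum processes $S_n(h_r)$, which is precisely what the Rubin--Vitale/Janson machinery does internally. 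The blueprint is sound and also correctly handles the non-degeneracy claim via Lemma~\ref{Ldeg} and the second-moment convergence from Theorem~\ref{Tmom}. Two caveats: first, your remarks about the $m$-dependent case are superfluous — Appendix~\ref{Aword} is set explicitly in the i.i.d.\ setting, so $m=0$ throughout and Lemma~\ref{L>} is not needed. Second, the ``continuous-mapping/Skorohod'' route you call ``cleaner'' is more delicate than you let on: the iterated stochastic-integral functional is not continuous on $D[0,\infty)$, so one cannot simply apply the continuous-mapping theorem there; making that route rigorous requires finite-dimensional or polynomial approximation of the kernel (or falling back to the method-of-moments route you also mention). Your approach is more self-contained but reinvents what the citations already provide; the paper's proof buys brevity by borrowing Janson's multiple-Wiener-integral limit theorem for asymmetric $U$-statistics wholesale and using the symmetrization device, which you do not mention.
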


\begin{proof}
This follows by \cite[Theorem 11.19]{SJIII}.
As noted in  \cite[Remark 11.21]{SJIII}, it can also be reduced to the
symmetric case in
\cite{RubinVitale}
by the following trick.
Let $(\eta_i)\xoo$ be an \iid{} sequence, independent of
$(\xi_i)\xoo$, with $\eta_i\sim U(0,1)$; then
\begin{align}\label{Usym}
  U_n\bigpar{f;(\xi_i)}
\eqd \sumx_{i_1,\dots,i_\ell\le n} f\bigpar{\xi_{i_1},\dots,\xi_{i_\ell}}
\indic{\eta_{i_1}<\dots<\eta_{i_\ell}},
\end{align}
where $\sum^*$ denotes summation over all distinct $i_1,\dots,i_\ell\in[n]$,
and the sum in \eqref{Usym} can be regarded as a symmetric \Ustat{} based on
$(\xi_i,\eta_i)\xoo$. 
The result \eqref{tworddeg} then follows by 
\cite{RubinVitale}.
\end{proof}

\begin{remark}\label{Rworddeg}
 The case $k=1$ in \refT{Tworddeg} is just a combination of 
\refT{Tword} (in the unconstrained case) and \refT{Tword0}; then $Z$ is
simply a normal variable. When $k=2$, there is a canonical representation
(where the number of terms is finite or infinite)
\begin{align}\label{Z2}
  Z=\frac{1}{2(\ell-2)!}\sum_i \gl_i (\zeta_i^2-1),
\end{align}
where $\zeta_i$ are \iid{} $\N(0,1)$ random variables
and $\gl_i$ are the non-zero eigenvalues (counted with multiplicity) 
of a compact self-adjoint integral operator on 
$L^2(\cA\times\oi, \nu\times \ddx t)$, 
where $\nu:=\cL(\xi_1)$ is the distribution of a single letter
and $\ddx t$ is Lebesgue measure; the kernel $K$
of this integral operator can be constructed from $f$ by applying
\cite[Corollary 11.5(iii)]{SJIII} to the symmetric \Ustat{} in \eqref{Usym}.
We omit the details, but note that in the particular case
$k=\ell=2$, this kernel $K$ is given by
\begin{align}\label{jun1}
  K\bigpar{(x,t),(y,u)} = f(x,y)\indic{t<u} + f(y,x)\indic{t>u},
\end{align}
and thus the integral operator is
\begin{align}\label{jun2}
  h\mapsto Th(x,t)
:= \E\int_0^t f(\xi_1,x)h(\xi_1,u)\dd u + \E\int_t^1 f(x,\xi_1)h(\xi_1,u)\dd u
.\end{align}

When $k\ge3$, the limit $Z$ can be represented as a multiple stochastic
integral
\cite[Theorem 11.19]{SJIII},
but we do not know any canonical representation of it.
See also \cite{RubinVitale} and \cite{DynkinM}.
\end{remark}

We give two simple examples of limits in degenerate cases; in both cases $k=2$.
The second example shows that although the space $V$ has finite dimension, 
the representation \eqref{Z2} might require infinitely many terms.
(Note that the operator $T$ in \eqref{jun2} acts in an infinite-dimensional
space.)

\begin{example}
  \label{E21}
Let $\Xi_n$ be  a symmetric binary string, \ie,
$\cA=\setoi$ and $p(0)=p(1)=1/2$. Consider
\begin{align}\label{as1}
N_n(00)+N_n(11)-N_n(01)-N_n(10)
=U_n(f)
,\end{align}
with
\begin{align}\label{as2}
  f(x,y)&:=
\indic{xy=00}+\indic{xy=11}-\indic{xy=01}-\indic{xy=10}
\notag\\&\phantom:
=\bigpar{\indic{x=1}-\indic{x=0}}\bigpar{\indic{y=1}-\indic{y=0}}.
\end{align}
For convenience, we change notation and consider instead the 
letters $\hxi_i:=2\xi_i-1\in\set{\pm1}$; then $f$ corresponds to
\begin{align}\label{as3}
  \hf(\hat x,\hat y):=\hat x \hat y.
\end{align}
Thus
\begin{align}\label{as4}
  U_n\bigpar{f;(\xi_i)}&
= U_n\bigpar{\hf;(\hxi_i)}
=\sum_{1\le i<j\le n}\hxi_i\hxi_j
=
\frac12\lrpar{\lrpar{\sumin \hxi_i}^2-\sumin\hxi_i^2}
\notag\\&=
\frac12\lrpar{\sumin \hxi_i}^2-\frac{n}2.
\end{align}
By the central limit theorem, $n\qqw\sumin\hxi_i\dto \zeta\sim \N(0,1)$,
and thus \eqref{as4} implies
\begin{align}\label{as5}
  n\qw U_n(f) \dto \tfrac12(\zeta^2-1)
.\end{align}
This is an example of \eqref{tworddeg}, with $k=\ell=2$
and limit given by \eqref{Z2}, in this case with a single term in the sum
and $\gl_1$=1. 

Note that in this example, the function $f$ is  symmetric, so \eqref{as1} is
an example of a symmetric \Ustat{} and thus the result \eqref{as5} is also
an example of the limit result in \cite{RubinVitale}.
\end{example}

\begin{example}\label{E4}
  Let $\cA=\set{a,b,c,d}$, with $\xi_i$ having the symmetric distribution
  $p(x)=1/4$ for each $x\in\cA$.
Consider
\begin{align}\label{sw1}
N_n(ac)-N_n(ad)-N_n(bc)+N_n(bd)
=U_n(f)
,\end{align}
with, writing $\etta_y(x):=\indic{x=y}$,
\begin{align}\label{sw2}
  f(x,y)&:=
\bigpar{\etta_a(x)-\etta_b(x)} \bigpar{\etta_c(x)-\etta_d(x)}
\end{align}
Then, $\Pi_0f=\Pi_1 f=0$ by symmetry, so $f\in V_{\ge 2}=V_2$ (since
$\ell=2$).

Consider the integral operator $T$ on $L^2(\cA\times\oi)$ defined by
\eqref{jun2}. Let $h$ be an eigenfunction with eigenvalue $\gl\neq0$,
and write $h_x(t):=h(x,t)$.
The eigenvalue equation $Th=\gl h$ then is equivalent to, using \eqref{jun2}
and \eqref{sw2},
\begin{align}
  \gl h_a(t) &= \frac{1}{4}\int_t^1\bigpar{h_c(u)-h_d(u)}\dd u, \label{sw3a}
\\
  \gl h_b(t) &= \frac{1}{4}\int_t^1\bigpar{-h_c(u)+h_d(u)}\dd u, \label{sw3b}
\\
  \gl h_c(t) &= \frac{1}{4}\int_0^t\bigpar{h_a(u)-h_b(u)}\dd u, \label{sw3c}
\\
  \gl h_d(t) &= \frac{1}{4}\int_0^t\bigpar{-h_a(u)+h_b(u)}\dd u.\label{sw3d}
\end{align}
These equations hold \aex, but we can redefine $h_x(t)$ by these equations
so that they hold for every $t\in\oi$. Moreover, although originally we
assume only $h_x\in L^2\oi$, it follows from \eqref{sw3a}--\eqref{sw3d} that
the functions $h_x(t)$ are continuous in $t$, and then by induction that
they are infinitely differentiable on $\oi$.
Note also that \eqref{sw3a} and \eqref{sw3b} yield $h_b(t)=-h_a(t)$, and
similarly $h_d(t)=-h_c(t)$. Hence, we may reduce the system to
\begin{align}
  \gl h_a(t) &= \frac{1}{2}\int_t^1h_c(u)\dd u, \label{sw4a}
\\
  \gl h_c(t) &= \frac{1}{2}\int_0^th_a(u)\dd u. \label{sw4c}
\end{align}
By differentiation, for $t\in(0,1)$,
\begin{align}
  h_a'(t) &= -\frac{1}{2\gl}h_c(t),\label{sw6a}
\\
  h_c'(t) &= \frac{1}{2\gl}h_a(t)\label{sw6c}.
\end{align}
Hence, with $\go:=1/(2\gl)$,
\begin{align}\label{hc''}
h_c''(t)=-\go^2h_c(t).   
\end{align}
Furthermore, \eqref{sw4c} yields $h_c(0)=0$, and thus \eqref{hc''} has the
solution (up to a constant factor that we may ignore)
\begin{align}
  h_c(t) = \sin \go t = \sin \frac{t}{2\gl}.
\end{align}
By \eqref{sw6c}, we then obtain
\begin{align}
  h_a(t) = \cos \go t = \cos \frac{t}{2\gl}.
\end{align}
However, \eqref{sw4a} also yields $h_a(1)=0$ and thus we must have
$\cos(1/2\gl)=0$; hence
\begin{align}\label{swgl}
  \gl = \frac{1}{(2N+1)\pi},
\qquad N\in\bbZ.
\end{align}
Conversely, for every $\gl$ of the form \eqref{swgl}, the argument can be
reversed to find an eigenfunction $h$ with eigenvalue $\gl$.
It follows also that all these eigenvalues are simple.
Consequently, \refT{Tworddeg} and \eqref{Z2} yield
\begin{align}\label{swtor}
  n\qw U_n(f) \dto 
\frac{1}{2\pi}\sum_{N=-\infty}^\infty \frac{1}{2N+1}(\zeta_N^2-1)
=
\frac{1}{2\pi}\sum_{N=0}^\infty \frac{1}{2N+1}(\zeta_N^2-\zeta_{-N-1}^2)
\end{align}
where, as above, $\zeta_N$ are \iid{} and $\N(0,1)$.
A simple calculation, using the product formula for cosine
\cite[\S12]{EulerE61},
\cite[4.22.2]{NIST}, shows
that the moment generating function of the limit distribution $Z$ in
\eqref{swtor} is 
\begin{align}\label{swmgf}
  \E e^{s Z} = \frac{1}{\cos\qq(s/2)},
\qquad |\Re s|<\pi.
\end{align}

It can be shown that $Z\eqd \frac12\intoi B_1(t)\dd B_2(t)$
if $B_1(t)$ and $B_2(t)$ are two independent standard Brownian motions;
this is, for example, a consequence of \eqref{swmgf} and the 
calculation, using 
\cite{CM1945}
or
\cite[page 445]{RY} for the
final equality, 
\begin{align}
 \E{ e^{s\intoi B_1(t)\dd B_2(t)}}&=
\E  \E\bigsqpar{ e^{ s\intoi B_1(t)\dd B_2(t)}\mid B_1}
=\E e^{\frac{s^2}{2}\intoi B_1(t)^2\dd t}
\notag\\&
=\cos\qqw(s),
\qquad |\Re s|<\pi/2.
\end{align}
We omit the details, but note that this representation of the limit $Z$ is
related to the special form \eqref{sw2} of $f$;
we may, intuitively at least, 
interpret $B_1$ and $B_2$ as limits (by Donsker's theorem) of partial
sums of $\etta_a(\xi_i)-\etta_b(\xi_i)$ and $\etta_c(\xi_i)-\etta_d(\xi_i)$.
In fact, in this example
it is possible to give a rigorous proof of $n\qw U_n(f)\dto Z$ by this 
approach; again we omit the details.
\end{example}

\newcommand\AAP{\emph{Adv. Appl. Probab.} }
\newcommand\JAP{\emph{J. Appl. Probab.} }
\newcommand\JAMS{\emph{J. \AMS} }
\newcommand\MAMS{\emph{Memoirs \AMS} }
\newcommand\PAMS{\emph{Proc. \AMS} }
\newcommand\TAMS{\emph{Trans. \AMS} }
\newcommand\AnnMS{\emph{Ann. Math. Statist.} }
\newcommand\AnnPr{\emph{Ann. Probab.} }
\newcommand\CPC{\emph{Combin. Probab. Comput.} }
\newcommand\JMAA{\emph{J. Math. Anal. Appl.} }
\newcommand\RSA{\emph{Random Structures Algorithms} }
\newcommand\DMTCS{\jour{Discr. Math. Theor. Comput. Sci.} }

\newcommand\AMS{Amer. Math. Soc.}
\newcommand\Springer{Springer-Verlag}
\newcommand\Wiley{Wiley}

\newcommand\vol{\textbf}
\newcommand\jour{\emph}
\newcommand\book{\emph}
\newcommand\inbook{\emph}
\def\no#1#2,{\unskip#2, no. #1,} 
\newcommand\toappear{\unskip, to appear}

\newcommand\arxiv[1]{\texttt{arXiv:#1}}
\newcommand\arXiv{\arxiv}

\newcommand\xand{and }
\renewcommand\xand{\& }

\def\nobibitem#1\par{}
\renewcommand\MR[1]{\relax}

\section*{Acknowledgement}
I thank Wojciech Szpankowski for stimulating discussions 
on patterns in random strings
during the last 20 years.
Furthermore, I thank Andrew Barbour and Nathan Ross for help with references,
and the anonymous referees for helpful comments and further references;
in particular, \refT{Trate} is based on suggestions from a referee.

\end{document}